\newtheorem{thm}{Theorem}[section]
\newtheorem{prop}[thm]{Proposition}
\newtheorem{lem}[thm]{Lemma}
\newtheorem{cor}[thm]{Corollary}
\theoremstyle{definition}
\newtheorem{defn}[thm]{Definition}
\newtheorem{notn}[thm]{Notation}
\newtheorem{example}[thm]{Example}
\theoremstyle{remark}
\newtheorem{rem}[thm]{Remark}
\numberwithin{equation}{section}
\numberwithin{thm}{section}
\DeclareMathOperator{\Hom}{Hom}
\DeclareMathOperator{\rank}{rank}
\DeclareMathOperator{\chr}{char}
\DeclareMathOperator{\re}{Re}
\DeclareMathOperator{\im}{Im}
\DeclareMathOperator{\Mat}{Mat}
\DeclareMathOperator{\ord}{ord}
\DeclareMathOperator{\Vol}{Vol}
\DeclareMathOperator{\GL}{GL}
\DeclareMathOperator{\PGL}{PGL}
\DeclareMathOperator{\SL}{SL}
\DeclareMathOperator{\Pic}{Pic}
\DeclareMathOperator{\Tr}{Tr}
\DeclareMathOperator{\diag}{diag}
\DeclareMathOperator{\Nr}{Nr}
\DeclareMathOperator{\Ver}{Ver}
\DeclareMathOperator{\type}{type}
\newcommand{\dd}{\mathrm{d}}
\newcommand{\sep}{\mathrm{sep}}
\renewcommand{\diag}{\mathrm{diag}}
\renewcommand{\div}{\mathrm{div}}
\newcommand{\Har}{\mathrm{Har}}
\newcommand{\ab}{\mathrm{ab}}
\newcommand{\tor}{\mathrm{tor}}
\newcommand{\fm}{\mathfrak{m}}
\newcommand{\fn}{\mathfrak{n}}
\newcommand{\fp}{\mathfrak{p}}
\newcommand{\cB}{\mathcal{B}}
\newcommand{\cC}{\mathcal{C}}
\renewcommand{\cD}{\mathcal{D}}
\newcommand{\cE}{\mathcal{E}}
\newcommand{\cF}{\mathcal{F}}
\newcommand{\cI}{\mathcal{I}}
\renewcommand{\cL}{\mathcal{L}}
\newcommand{\cO}{\mathcal{O}}
\newcommand{\cP}{\mathcal{P}}
\newcommand{\cS}{\mathcal{S}}
\newcommand{\cW}{\mathcal{W}}
\newcommand{\A}{\mathbb{A}}
\newcommand{\C}{\mathbb{C}}
\newcommand{\F}{\mathbb{F}}
\newcommand{\gm}{\mathbb{G}}
\newcommand{\p}{\mathbb{P}}
\newcommand{\Q}{\mathbb{Q}}
\newcommand{\R}{\mathbb{R}}
\newcommand{\Z}{\mathbb{Z}}
\newcommand{\bone}{{\bm{1}}}
\newcommand{\boz}{\boldsymbol{z}}
\newcommand{\bok}{\boldsymbol{k}}
\newcommand{\Ed}{\vec{E}}
\newcommand{\eps}{\varepsilon}
\newcommand{\G}{\Gamma}
\newcommand{\To}{\longrightarrow}
\newcommand{\bs}{\setminus}
\newcommand{\Fi}{F_\infty}
\newcommand{\Ci}{\C_\infty}
\newcommand{\bcdot}{\boldsymbol{\cdot}}
\newcommand{\La}{\Lambda}
\newcommand{\la}{\lambda}
\newcommand{\Mod}[1]{\ (\mathrm{mod}\ #1)}
\newcommand{\abs}[1]{\lvert#1\rvert}
\newcommand{\ls}[2]{#1\left(\!\left(#2\right)\!\right)}
\newcommand{\dvr}[2]{#1\left[\!\left[#2\right]\!\right]}
\newcommand{\CC}{\mathbb{C}}
\newcommand{\RR}{\mathbb{R}}
\newcommand{\QQ}{\mathbb{Q}}
\newcommand{\FF}{\mathbb{F}}
\newcommand{\ZZ}{\mathbb{Z}}
\newcommand{\nfk}{\mathfrak{n}}
\newcommand{\pfk}{\mathfrak{p}}
\newcommand{\Ical}{\mathcal{I}}
\newcommand{\Pcal}{\mathcal{P}}
\newcommand{\Ecal}{\mathcal{E}}
\begin{document}
	
\title
{Drinfeld discriminant function and Fourier expansion of harmonic cochains}

\author{Mihran Papikian}
\address{Department of Mathematics, Pennsylvania State University, University Park, PA 16802, USA}
\email{papikian@psu.edu}
\author{Fu-Tsun Wei}
\address{Department of Mathematics, National Tsing Hua University, No.\ 101, Sec.\ 2, Kuang-Fu Rd., Hsinchu 30013, Taiwan}
\email{ftwei@math.nthu.edu.tw}

\subjclass[2010]{11G09, 11G18}

\keywords{Drinfeld discriminant function; modular units; harmonic cochains; cuspidal divisor group}

\begin{abstract} Let $\Fi=\ls{\F_q}{1/T}$ be the completion of $\F_q(T)$ at $1/T$. 
	We develop a theory of Fourier expansions for harmonic cochains on the edges of the Bruhat-Tits building 
	of $\PGL_r(\Fi)$, $r\geq 2$, generalizing an earlier construction of Gekeler for $r=2$. We then apply this 
	theory to study modular units on the Drinfeld symmetric space $\Omega^r$ over $\Fi$, and the cuspidal divisor groups of Satake compactifications 
	of certain Drinfeld modular varieties. In particular, we obtain a higher dimensional analogue of a result of Ogg 
	for classical modular curves $X_0(p)$ of prime level. 
\end{abstract}

\maketitle


\section{Introduction}

\subsection{Motivation} 
 The analogies between function fields and number fields have been 
extensively studies for more than a century and led to major discoveries in number theory. In particular, 
in \cite{Drinfeld}, Drinfeld introduced certain function field analogues of abelian varieties and their moduli spaces. 
Drinfeld modular varieties and their generalizations have played a crucial role in the   
proof of the Langlands conjectures over function fields. 

Let $F$ be the field of rational functions on a projective smooth curve over a finite field $\F_q$. Let 
$\Fi$ be the completion of $F$ at a  chosen place $\infty$, and $\Ci$ 
be the completion of an algebraic closure of $\Fi$. Over $\Ci$ 
there are two different analogues of the Siegel upper half-space. 
One is the Drinfeld symmetric space $\Omega^r$, $r\geq 2$, which is a rigid analytic space over $\Fi$ of dimension $r-1$, and the 
other is the Bruhat-Tits building $\cB^r$ of $\PGL_r(\Fi)$, which provides a ``skeleton'' for $\Omega^r$. 
Correspondingly, in the function field setting there are two different analogues of Siegel modular forms: Drinfeld modular forms, 
which are $\C_\infty$-valued holomorphic functions on $\Omega^r$, and automorphic forms, which are 
$\C$-valued functions on $\cB^r$. 

In \cite{GekelerExacSeq}, \cite{GekelerDMHR1}, \cite{GekelerDMHR2}, Gekeler observed that given a nowhere vanishing 
holomorphic function $u$ on $\Omega^r$, the logarithm $\log|u|$ of its absolute value may be regarded as a 
function on $\cB^r$.  Using this observation, he constructed a homomorphism $\cP: \cO(\Omega^r)^\times\to \Har^1(\cB^r, \Z)$ 
from the group of such $u$ to the group of $\Z$-valued harmonic $1$-cochains on $\cB^r$ introduced by de Shalit in \cite{deShalit}. 
Moreover, Gekeler proved that the sequence 
\begin{equation}\label{eq-1}
0\To \Ci^\times \To \cO(\Omega^r)^\times \overset{\cP}{\To} \Har^1(\cB^r, \Z) \To 0
\end{equation}
is exact and $\GL_r(\Fi)$-equivariant. This is a higher-rank analogue of a result of van der Put \cite{vdPut} for $r=2$. 
One can think of $\cP$ as a positive-characteristic version of the logarithmic derivative over $\C$. 

One prominent example of a holomorphic nonvanishing function on $\Omega^r$ is the Drinfeld discriminant function $\Delta_r$, which  
plays a role similar to the classical discriminant $\Delta$ in the context of modular forms on $\SL_2(\Z)$. In particular, $\Delta_r$ 
can be used to construct modular units for certain congruence groups, with applications in the arithmetic 
of Drinfeld modular varieties. On the other hand, the group $\Har^1(\cB^r, \Z)$ is important because it gives a natural $\Z$-structure on the 
first $\ell$-adic cohomology of $\Omega^r$, and it is also isomorphic to a space of automorphic forms on the adele group $\GL_r(\A_F)$; 
cf. \cite{Drinfeld}, \cite{deShalit}, \cite{SS}, \cite{AA}. 

The aim of this article is to develop a theory of Fourier expansions of harmonic $1$-cochains on $\cB^r$, and 
to apply this theory to deduce interesting facts about modular units, harmonic cochains, and Drinfeld modular varieties.  
For example, we determine to what extent roots may be extracted from certain modular units arising from $\Delta_r$, 
determine an explicit set of free generators of the subgroups of $\cO(\Omega^r)^\times$ fixed by certain congruence groups, 
and determine the exact orders of cuspidal divisor groups of certain Drinfeld modular varieties.  Our results extend to arbitrary $r\geq 2$ 
the results obtained by Gekeler for $r=2$ in \cite{GekelerImproper}, \cite{GekelerDelta}. 

Next, we describe in more details the main results of the paper. But before doing so, it is 
convenient to introduce some of the notation that will be used throughout the paper. 


\subsection{Notation} 
\begin{itemize}
	\item $\F_q$ denotes the finite field with $q$ elements. 
	\item $A=\F_q[T]$ is the polynomial ring in an indeterminate $T$. Given a nonzero ideal $\fn\lhd A$, by abuse of notation, we denote 
	by the same symbol the monic generator of $\fn$. The term ``prime of $A$'' will be used to mean ``nonzero prime ideal of $A$''.  
	\item $\F_\fp=A/\fp$, where $\fp\lhd A$ is prime. 
	\item $A_+$ is the set of nonzero  monic polynomials in $A$. 
	\item $F=\F_q(T)$ is the fraction field of $A$. In this paper we will be primarily dealing with the simplest function field $\F_q(T)$, which 
	is the field of rational functions on the projective line over $\F_q$. 
	\item $\deg: F\to \Z\cup \{-\infty\}$ assigns to a nonzero polynomial its degree in $T$ and $\deg(0)=-\infty$. 
	Note that $\ord_\infty:=-\deg$ is a discrete valuation on $F$.  The corresponding place of $F$ 
	is called the \textit{place at infinity} and denoted by $\infty$. 
	\item $\Fi=\ls{\F_q}{1/T}$ is the completion of $F$ with respect to $\ord_\infty$. Note that $1/T$ is a uniformizer at $\infty$.  
	\item $O_\infty=\dvr{\F_q}{1/T}$ is the ring of integers of $\Fi$. 
	\item $\Ci$ is the completed algebraic closure of $\Fi$. 
	\item $|\cdot|$ is the normalized absolute value on $\Fi$, extended to $\Ci$. In particular, for a nonzero $a\in A$ we have $\abs{a}=q^{\deg(a)}=\#A/(a)$ .  
	\item $r\geq 2$ is a fixed integer. 
	\item $\Omega^r$ is the Drinfeld symmetric space
	$$
	\left \{ (z_1, \dots, z_r)\in \Ci^r \mid \text{$z_1, \dots, z_r$ are linearly independent over $\Fi$ and $z_r=1$}\right \}. 
	$$ 
	\item $\cB^r$ is the Bruhat-Tits building of $\PGL_r(\Fi)$; see Section \ref{sHC} for a detailed description. 
	\item 
	$
	\cI^1=\left\{\begin{pmatrix} a & b \\ c & d\end{pmatrix} \in \GL_r(O_\infty)\ \bigg|\ c=\begin{pmatrix} c_2\\ \vdots\\ c_r\end{pmatrix} 
	\equiv \begin{pmatrix} 0\\ \vdots\\ 0\end{pmatrix}\Mod{T^{-1}} \right\}$. 
	\item $P(\Fi)=\begin{pmatrix} 1 & \ast \\ 0 & \GL_{r-1}(\Fi)\end{pmatrix}\subset \GL_r(\Fi)$. 
	\item $\G_0^r(\fn)=\left\{ \begin{pmatrix} a & b \\ c & d\end{pmatrix}\in \GL_r(A) \quad \bigg | \quad c=\begin{pmatrix} c_2\\ \vdots\\ c_r\end{pmatrix} 
	\equiv \begin{pmatrix} 0\\ \vdots\\ 0\end{pmatrix}\Mod{\fn} \right\}$, $0\neq \fn\in A$. 
	\item $\G_\infty=\begin{pmatrix} \GL_1(A) & \ast \\ 0 & \GL_{r-1}(A)\end{pmatrix}\subset \G_0^r(\fn)$. 
	
\end{itemize}

\subsection{Main results} In Section \ref{sHC}, we work over an arbitrary local field. Here 
we recall de Shalit's definition of harmonic 1-cochains on $\cB^r$ 
with values in an abelian group $R$. In fact, de Shalit \cite{deShalit} defined harmonic $s$-cochains for any $1\leq s\leq r-1$. These are functions 
on pointed $s$-simplices of $\cB^r$ satisfying four conditions. The significance of the group $\Har^s(\cB^r, R)$  of $R$-valued 
harmonic $s$-cochains is that it is isomorphic to the $\ell$-adic cohomology group $H^s_{\mathrm{et}}(\Omega^r, \Z_\ell)$ of $\Omega^r$ when $R=\Z_\ell$. 
(For local fields of characteristic $0$ this is proved in \cite{deShalit}, and for local fields of positive characteristic $p\neq \ell$ this 
follows from \cite{AA} and \cite{SS}.)  Also, $\Har^s(\cB^r, R)$ can be interpreted as a space of automorphic forms  
on the adele group $\GL_r(\A_F)$ with a special restriction at $\infty$; see \cite{AA} and \cite{SS}.
In this paper we will be dealing only with harmonic 1-cochains, which are functions on oriented 
edges of $\cB^r$. When $r=2$, harmonic 1-cochains correspond to ``flows'' on a tree and their 
story is much older (see \cite{Drinfeld}, \cite{SerreT}). Three of the conditions defining harmonic 1-cochains on $\cB^r$ for $r\geq 3$ 
are natural extensions of the corresponding conditions for $r=2$, but one of the conditions ((3) of Definition \ref{defnHarm-1}) 
is not visible when $r=2$. This condition guarantees that a harmonic 1-cochain on $\cB^r$ is uniquely determined by 
its restriction to the edges of type 1. On the other hand, the set of type-1 edges of $\cB^r$ is in natural bijection with the set of cosets $\GL_r(\Fi)/\Fi^\times \cI^1$. 
Our main result in Section \ref{sHC} is a translation of harmonicity conditions to $R$-valued functions on $\GL_r(\Fi)/\Fi^\times \cI^1$ 
(see Proposition \ref{propHarGLr}).  This is used in Section \ref{sFEHC}. 

In  Section \ref{sFEHC}, we define 
the Fourier expansion of a $\C$-valued function $h$ on $P(\Fi)$, left invariant under $\G_\infty$:
$$
h\left(\begin{pmatrix} 1 & \vec{x}y \\ & y\end{pmatrix}\right)=\sum_{\vec{a}\in A^{r-1}} h^*(\vec{a}, y)\psi(\vec{a}\vec{x}^t),
$$
where $\psi: \Fi\to \C^\times$ is an additive character. 
  Our construction generalizes a construction of Weil \cite{Weil} and Gekeler \cite{GekelerImproper} for $r=2$ (see also \cite{Pal}).  
We then translate  
the harmonicity condition of a $\C$-valued function on $\G_\infty\bs \GL_r(\Fi)/\Fi^\times \cI^1$ into a condition on its Fourier coefficients 
(see Proposition \ref{prop3.10}).  It essentially says that $h$ is harmonic if and only if $h^*(\vec{a}, y)$ depends only on $\abs{\det(y)}$, 
but not on the actual matrix $y$. 
Although this result is not essential for our later purposes, it is certainly of independent interest, since it gives a novel perspective 
on harmonic $1$-cochains. 

In Section \ref{sDDF}, we recall the definition of the Drinfeld discriminant function $\Delta_r$ and the Gekeler--van der Put map $\cP$ 
from \eqref{eq-1}. To a point $\boz=(z_1, \dots, z_r)\in \Omega^r$ one associates the lattice $\La_{\boz}=Az_1+\cdots+Az_r\subset \Ci$. 
On the other hand, the theory of Drinfeld modules associates to $\La_{\boz}$ a Drinfeld module $\phi^{\boz}$ of rank $r$ over $\Ci$, defined 
by an $\F_q$-linear polynomial  
$$
\phi^{\boz}_T=Tx+g_1(\boz)x^q+\cdots+g_r(\boz)x^{q^r}. 
$$
The $\Ci$-valued function $\Delta_r(\boz):=g_r(\boz)$ on $\Omega^r$ is the Drinfeld discriminant function. It is a Drinfeld cusp  
form for $\GL_r(A)$ of weight $q^r-1$. Moreover, $\Delta_r$ does not vanish on $\Omega^r$.  

In Section \ref{sES}, we compute the Fourier coefficients of $\cP(\Delta_r)$. The Kronecker Limit Formula (see Theorem \ref{thmKLF}) proved 
by the second author in \cite{WeiKLF} is essential for this calculation. It reduces the calculation of the 
Fourier coefficients of $\cP(\Delta_r)$ to a calculation of the Fourier coefficients of a certain Eisenstein series. The main result 
of this section is Theorem \ref{thmFEPDelta}, which gives a relatively simple expression for the Fourier coefficients of $\cP(\Delta_r)$. 
One interesting corollary of this theorem is the following. Breuer and Basson \cite{BB}, and independently Gekeler \cite{GekelerDMHR1}, 
have shown that there exists a holomorphic function $H$ on $\Omega^r$ such that $H^{q-1} =\Delta_r$. Using the Fourier 
coefficients of $\cP(\Delta_r)$, we prove that $q-1$ is the largest integer $m$ such that  there exists an $m$-th root of 
$\Delta_r$ in $\cO(\Omega^r)^\times$, i.e., the result of Breuer, Basson, and Gekeler is optimal. 

In Section \ref{sMU}, for a given nonzero $\fn\lhd A$ we define the modular unit 
$$\Theta_\fn(\boz)=\Delta_r(z_1,z_2, \dots, z_r)/\Delta_r(\fn z_1,z_2, \dots, z_r), \quad \boz=(z_1,z_2, \dots, z_r)\in \Omega^r.$$ 
It is easy to show that $\Theta_\fn(\boz)$ is invariant under the action of $\G_0^r(\fn)$ on $\Omega^r$. We compute the 
Fourier coefficients of $\cP(\Theta_\fn)$ using our earlier calculation of the Fourier coefficients of $\cP(\Delta_r)$. 
Then, using the Fourier coefficients of $\cP(\Theta_\fn)$ and some auxiliary arguments, we prove the following:
\begin{itemize}
	\item[(i)] The largest integer $m$ such that there exists an $m$-th root of $\Theta_\fn$ in $\cO(\Omega^r)^\times$ 
	is $$(q-1)\cdot \gcd(q^r-1, |\fn|-1).$$
	\item[(ii)] If $\fn$ is square-free, then the largest integer $m$ such that there exists an $m$-th root of $\Theta_\fn$ in $(\cO(\Omega^r)^\times)^{\G_0^r(\fn)}$ 
	is $\gcd(q^r-1, |\fn|-1).$
\end{itemize}

In Section \ref{sCD}, we combine the results of Section \ref{sMU} with some geometric arguments to prove two theorems 
about the group of $\G_0^r(\fn)$-invariant modular units $(\cO(\Omega^r)^\times)^{\G_0^r(\fn)}$, and the Satake compactification 
$X_0^r(\fn)$ of the Drinfeld modular variety $Y_0^r(\fn):=\G_0^r(\fn)\bs \Omega^r$. 
The modular variety $Y_0^r(\fn)$ is affine of dimension $r-1$, as was proved by Drinfeld \cite{Drinfeld}. Its 
Satake compactification $X_0^r(\fn)$ is obtained by gluing to $Y_0^r(\fn)$ certain $(r-2)$-dimensional irreducible varieties, called \textit{cusps}.  
The Satake compactifications 
of Drinfeld modular varieties were constructed (at different levels of generality and details of proof) by 
Gekeler \cite{GekelerSatake}, \cite{GekelerDMHR4}, 
Kapranov \cite{Kapranov},  Pink \cite{Pink}, and H\"aberli \cite{Haberli}.
In Section \ref{sCD}, we first show that a function $f\in (\cO(\Omega^r)^\times)^{\G_0^r(\fn)}$ 
is always meromorphic at the cusps. (This result might be of independent interest.) Then, using the orders of modular units at the 
cusps and the Fourier coefficients of $\cP(\Theta_\fn)$, we prove the following: 

\begin{thm}\label{thmA}
	Assume $\fn$ is square-free with $s$ prime divisors. The group $(\cO(\Omega^r)^\times)^{\G_0^r(\fn)}/\Ci^\times$ 
	is a free abelian group of rank $2^s-1$. Moreover, the $2^s-1$ modular units  
	$$\{\Theta_\fm \mid \fm\neq 1 \text{ is monic and divides $\fn$}\}$$ 
	generate a subgroup of finite index in $(\cO(\Omega^r)^\times)^{\G_0^r(\fn)}/\Ci^\times$. 
\end{thm}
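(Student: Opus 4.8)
The plan is to reduce the statement to an explicit computation of the divisors of the units $\Theta_\fm$ at the cusps of $X_0^r(\fn)$. Every $f\in (\cO(\Omega^r)^\times)^{\G_0^r(\fn)}$ is nowhere vanishing on $\Omega^r$ and, by the meromorphy result established earlier in this section, extends meromorphically to the Satake compactification; hence its divisor $\div(f)$ on $X_0^r(\fn)$ is supported on the codimension-one cusps. Since there are finitely many cusps, this yields a homomorphism
\begin{equation*}
\div\colon (\cO(\Omega^r)^\times)^{\G_0^r(\fn)}\To \bigoplus_{C}\Z\cdot[C],
\end{equation*}
where $C$ runs over the cusps. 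A function with trivial divisor has neither zeros nor poles on the proper variety $X_0^r(\fn)$ and is therefore constant, so the kernel of $\div$ is exactly $\Ci^\times$. Thus $(\cO(\Omega^r)^\times)^{\G_0^r(\fn)}/\Ci^\times$ injects into a free abelian group of finite rank and is itself free of rank at most the number of cusps; this already settles the freeness assertion.

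Next I would count the cusps: for $\fn$ square-free with $s$ prime divisors they are indexed by the monic divisors of $\fn$, of which there are $2^s$. Because the degree of a principal divisor with respect to an ample class on the proper variety $X_0^r(\fn)$ vanishes, the image of $\div$ lies in a subgroup of rank at most $2^s-1$. It therefore suffices to exhibit $2^s-1$ units whose divisors are linearly independent and which span a finite-index subgroup of the image; the natural candidates are the $\Theta_\fm$ with $1\neq \fm\mid \fn$.

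The key step is to compute the orders $\ord_C(\Theta_\fm)$ at every cusp $C$. The Fourier expansion developed in Section \ref{sFEHC} describes the behaviour of $\cP(\Theta_\fn)$ along the cusp at infinity, and its coefficients were determined in Section \ref{sMU}; since $\cP(f)$ is the harmonic cochain $\log|f|$, its Fourier coefficients control the growth of $\log|f|$ along the edges emanating toward each cusp, and hence the orders $\ord_C(f)$. To reach the remaining cusps I would conjugate by coset representatives in $\GL_r(\Fi)$ and use the $\GL_r(\Fi)$-equivariance of $\cP$ in \eqref{eq-1} to transport the Fourier computation, so that $\ord_C(\Theta_\fm)$ is read off the Fourier coefficients of the translated cochain. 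Assembling these into the matrix $M=\big(\ord_C(\Theta_\fm)\big)$, indexed by the $2^s-1$ units $\Theta_\fm$ and by a choice of $2^s-1$ cusps, reduces both remaining assertions to the nonsingularity of $M$.

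I expect this nonsingularity to be the main obstacle. Since $\fn$ is square-free, both the cusps and the units are indexed by divisors of $\fn$, that is, by subsets of the set of $s$ primes dividing $\fn$, and the order matrix should acquire a tensor-product structure over these primes. The plan is to exhibit $M$ as a minor of a Kronecker product of $s$ explicit $2\times 2$ matrices attached to the individual primes $\fp\mid \fn$, each nonsingular by the prime-level computation. Its determinant is then a nonzero product over $\fp\mid\fn$, which simultaneously shows that the $\Theta_\fm$ are $\Q$-linearly independent, forcing the rank to equal $2^s-1$, and that they generate a subgroup of finite index.
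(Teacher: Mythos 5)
Your first half is sound and essentially follows the paper's route: meromorphy at the cusps (Proposition \ref{propMerUnits}) gives the divisor map into $\bigoplus_{c\in C}\Z$, normality plus properness of $X_0^r(\fn)$ identifies the kernel with $\Ci^\times$, and a degree-zero relation cuts the rank down to $\#C-1=2^s-1$. Your justification of that relation (intersecting the principal Cartier divisor $\div(f)$ with a power of an ample class, so that $\sum_c \ord_c(f)\,(c\cdot H^{r-2})=0$ with positive weights) is a legitimate variant of the paper's argument, which instead proves the unweighted sum $\sum_c\ord_c(f)=0$ by averaging $f$ over $\G_0^r(\fn)\backslash\GL_r(A)$ and invoking $\Har^1(\cB^r,\Z)^{\GL_r(A)}=0$. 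Either way the upper bound and the freeness statement come out.

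The genuine gap is in the finite-index assertion. You reduce everything to the nonsingularity of the matrix $M=(\ord_C(\Theta_\fm))$, but you neither compute its entries nor prove the claimed Kronecker-product structure; you explicitly flag this as ``the main obstacle'' and leave it there. Two real difficulties are being elided. First, passing from the Fourier coefficients of $\cP(\Theta_\fm)$ to the integers $\ord_C(\Theta_\fm)$ at an arbitrary cusp $C$ is not a formality: the paper only makes this translation precise at the cusp $[\infty]$, and only for prime level (in the proof of Theorem \ref{thmCDG}, via the $u$-expansion and the ramification of $[\infty]$ over the cusp of $X_0^r(1)$); transporting it to the other cusps by $\GL_r(\Fi)$-conjugation would require controlling how $\Theta_\fm$ transforms under representatives that do not normalize $\G_0^r(\fn)$. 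Second, the nonvanishing of the determinant is exactly where the content lies, and the paper establishes it by a different and fully explicit mechanism: it never computes cusp orders at all, but instead shows the $2^s-1$ cochains $\cP(\Theta_\fm)$ are $\Q$-linearly independent directly from their Fourier coefficients (Corollary \ref{corLinIndepTheta}), the key input being the evaluation $\det(\sigma_{\fm'}(s,\vec{\fm}))_{\fm,\fm'}=-|\fn|^{s(2^{s-1}-1)}$ of Lemma \ref{lemDetSigma} via row operations on the divisor-sum matrix --- not a tensor-product factorization. Since the divisor map and $\cP$ are both injective modulo constants, linear independence of the images under either map suffices; but you need to actually prove one such independence statement, and as written your proposal does not.
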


The \textit{cuspidal divisor group} $\cC^r(\fn)$ of $X^r_0(\fn)$ is the subgroup  
of the divisor class group of $X^r_0(\fn)$ generated by the differences of the cusps.  The second main theorem, 
whose proof uses Theorem \ref{thmA} and the result about the roots of modular units from Section \ref{sMU}, is the following: 
 
 \begin{thm}\label{thmB}
 	Assume $\fn$ is square-free. The group $\cC^r(\fn)$  is finite. Moreover, if $\fn=\fp$ is prime, 
 	then $\cC^r(\fp)$ is a cyclic group of order 
 	\begin{equation}\label{eqCD1}
 	\frac{|\fp|^{r-1}-1}{\gcd(q^r-1, |\fp|-1)}. 
 	\end{equation}
 \end{thm}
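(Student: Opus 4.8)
\emph{Proof proposal.} The plan is to realize $\cC^r(\fn)$ as the cokernel of the divisor map on the group of modular units, and then to read off its order from the data assembled in Sections \ref{sMU} and \ref{sCD}. First I would set up the fundamental exact sequence. Since $X_0^r(\fn)$ is proper and normal, a function $f\in(\cO(\Omega^r)^\times)^{\G_0^r(\fn)}$ is meromorphic at the cusps (proved in Section \ref{sCD}) and invertible on $Y_0^r(\fn)=\G_0^r(\fn)\bs\Omega^r$, so $\div(f)$ is supported on the cusps; conversely every rational function invertible on $Y_0^r(\fn)$ arises this way, and the kernel of $\div$ is $\Ci^\times$. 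Writing $U:=(\cO(\Omega^r)^\times)^{\G_0^r(\fn)}/\Ci^\times$ and letting $\Div^0_{\mathrm{cusp}}$ denote the degree-zero divisors supported on the cusps (the $\sum m_iC_i$ with $\sum m_i=0$), the definition of $\cC^r(\fn)$ as the classes of differences of cusps yields
\begin{equation*}
0\To U\overset{\div}{\To}\Div^0_{\mathrm{cusp}}\To\cC^r(\fn)\To 0 .
\end{equation*}
That $\div(f)$ actually lands in $\Div^0_{\mathrm{cusp}}$ I would obtain from the Atkin--Lehner symmetries (one per prime divisor of $\fn$), which permute the cusps while negating the relevant divisors.

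For finiteness I would compare ranks. By Theorem \ref{thmA} the group $U$ is free of rank $2^s-1$; on the other hand the cusps of $X_0^r(\fn)$ number $2^s$ (they are indexed by the monic divisors of the square-free $\fn$), so $\Div^0_{\mathrm{cusp}}$ is also free of rank $2^s-1$. Since $\div$ is injective its image has full rank, and therefore the cokernel $\cC^r(\fn)$ is finite.

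Now specialize to $\fn=\fp$ prime, where $s=1$: there are exactly two cusps $C_0,C_\infty$, the group $\Div^0_{\mathrm{cusp}}\cong\Z$ is generated by $g:=C_0-C_\infty$, and $U\cong\Z$. Thus $\cC^r(\fp)$ is cyclic of order $[\Div^0_{\mathrm{cusp}}:\div(U)]$, and the task reduces to two index computations. On the algebraic side, the root-extraction result (ii) of Section \ref{sMU} states that the largest $m$ admitting an $m$-th root of $\Theta_\fp$ in $(\cO(\Omega^r)^\times)^{\G_0^r(\fp)}$ is $\gcd(q^r-1,|\fp|-1)$; since $\Ci^\times$ is divisible, roots descend to and lift from $U$, so this is exactly the largest $m$ with $\overline{\Theta_\fp}\in U^m$, i.e. $[U:\langle\overline{\Theta_\fp}\rangle]=\gcd(q^r-1,|\fp|-1)$. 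On the geometric side I would compute $\div(\Theta_\fp)$ and show $\div(\Theta_\fp)=(|\fp|^{r-1}-1)\,g$. Granting both, if $u_0$ generates the infinite cyclic group $U$ and $\div(u_0)=Ng$, then $\overline{\Theta_\fp}=u_0^{\,k}$ with $k=\gcd(q^r-1,|\fp|-1)$ gives $(|\fp|^{r-1}-1)g=\div(\Theta_\fp)=kN\,g$; since $\gcd(q^r-1,|\fp|-1)$ divides $|\fp|-1$, which divides $|\fp|^{r-1}-1$, the integer $N=(|\fp|^{r-1}-1)/\gcd(q^r-1,|\fp|-1)$ is well defined, and $\cC^r(\fp)\cong\Div^0_{\mathrm{cusp}}/\div(U)\cong\Z/N\Z$ has the asserted order.

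The main obstacle is the geometric computation $\div(\Theta_\fp)=(|\fp|^{r-1}-1)\,g$, i.e. determining the order of $\Theta_\fp=\Delta_r(z_1,\dots,z_r)/\Delta_r(\fp z_1,z_2,\dots,z_r)$ at each cusp. The Fricke involution $w_\fp$ interchanges $C_0$ and $C_\infty$ and sends $\Theta_\fp$ to $\Theta_\fp^{-1}$, so the two orders are negatives of one another and it suffices to treat $C_\infty$; this also re-derives that $\div(\Theta_\fp)\in\Div^0_{\mathrm{cusp}}$. Here I would read the order off the boundary behaviour recorded by the Fourier coefficients of $\cP(\Theta_\fp)$ from Section \ref{sMU} (themselves built from those of $\cP(\Delta_r)$ in Theorem \ref{thmFEPDelta}): by Proposition \ref{prop3.10} the constant coefficient $\cP(\Theta_\fp)^*(\vec 0,y)$ depends only on $|\det y|$, and its slope as $|\det y|\to\infty$ measures $\ord_{C_\infty}\Theta_\fp$. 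Evaluating this slope from the explicit formula—equivalently, comparing the orders of $\Delta_r(z_1,\dots,z_r)$ and $\Delta_r(\fp z_1,z_2,\dots,z_r)$ along the boundary divisor, where the index-$|\fp|$ change of lattice in the first coordinate scales the relevant $(r-1)$-dimensional contribution by $|\fp|^{r-1}$—is expected to yield $|\fp|^{r-1}-1$. Tying this explicit Fourier data to the geometric order at the cusp is the technical heart of the argument.
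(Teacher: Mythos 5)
Your overall architecture matches the paper's: the exact sequence $0\to\Ci^\times\to(\cO(\Omega^r)^\times)^{\G_0^r(\fp)}\xrightarrow{\div}\cD^0\to\cC^r(\fp)\to 0$, finiteness by comparing the rank $2^s-1$ of the unit group with the rank of $\cD^0$, and for prime level the factorization of the index $[\cD^0:\div(U)]$ into $[U:\langle\overline{\Theta_\fp}\rangle]=\gcd(q^r-1,|\fp|-1)$ (from the root-extraction result) times the content of $\div(\Theta_\fp)$. That part is sound and is exactly how the paper proceeds.

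There are, however, two genuine gaps, both stemming from your appeal to Atkin--Lehner/Fricke symmetries. For $r\geq 3$ no such involution of $Y_0^r(\fp)$ is available: the moduli problem for $\G_0^r(\fp)$ parametrizes $\fp$-cyclic subgroups $C\cong A/\fp$ of a rank-$r$ Drinfeld module, and the kernel of the dual isogeny is $(A/\fp)^{r-1}$, a different moduli problem; correspondingly the two cusps of $X_0^r(\fp)$ are not interchangeable (one is unramified over the unique cusp of $X_0^r(1)$, the other is not), so no automorphism swaps them. This undermines (a) your justification that $\div(f)$ has degree zero, and (b) your shortcut for getting $\ord_{C_0}\Theta_\fp$ from $\ord_{C_\infty}\Theta_\fp$. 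The paper proves degree zero instead by a norm argument: for $f$ a $\G_0^r(\fn)$-invariant unit, the product $g=\prod_\gamma f_\gamma$ over coset representatives of $\G_0^r(\fn)$ in $\GL_r(A)$ is $\GL_r(A)$-invariant, hence constant since $\Har^1(\cB^r,\Z)^{\GL_r(A)}=0$, forcing $\deg(f)=0$; once degree zero is known, the order at the second cusp is free. The second gap is the one you flag yourself as the ``technical heart'': extracting $\ord_{C_\infty}\Theta_\fp=-(|\fp|^{r-1}-1)$ from the slope of the constant Fourier coefficient of $\cP(\Theta_\fp)$ requires a dictionary between harmonic-cochain growth along the building and $u$-expansion orders at a boundary \emph{divisor}, which is not established in the paper and is delicate for $r\geq 3$. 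The paper sidesteps this entirely via the identity $\Theta_\fp=\fp^{q^r-1}F_\fp^{q^r-1}/\Delta^{|\fp|^{r-1}-1}$, where $F_\fp$ is a product of Eisenstein series that do not vanish at $[\infty]$ and $\Delta$ vanishes there to order exactly $1$ (using that $[\infty]$ is unramified over the cusp of $X_0^r(1)$), giving the pole order $|\fp|^{r-1}-1$ directly. Replacing your two involution-based steps with these arguments would close the proof.
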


The more general result that $\cC^r(\fn)$ is finite for any $\fn$ was proved by Kapranov \cite{Kapranov} 
by a different argument, but the fact that $\cC^r(\fp)$ is cyclic of given order is, as far as we know, the 
first example where the cuspidal divisor group is computed explicitly in higher dimensions.   

\subsection{Some remarks about cuspidal divisor groups}
The cuspidal divisor groups have a long history and have played an important role in the arithmetic of modular curves. 
In the early 1970s, Ogg computed that the cuspidal divisor group of the classical modular curve $X_0(p)$ of prime level $p$
is cyclic of order 
\begin{equation}\label{eqCD2}
\frac{p-1}{\gcd(12, p-1)}. 
\end{equation}
He also conjectured that this cuspidal divisor group is the full $\Q$-rational torsion subgroup of the Jacobian variety of $X_0(p)$; see \cite{OggBAMS}. 
In his seminal paper \cite{MazurEis}, Mazur proved Ogg's conjecture by developing a comprehensive theory of what he called the 
\textit{Eisenstein ideal} of the Hecke algebra of prime level. 

For the Drinfeld modular curve $X_0(\fp):=X_0^2(\fp)$ of prime level $\fp$, Gekeler proved the analogue of Ogg's result 
(see \cite{GekelerUber}, \cite{GekelerDelta}): the cuspidal divisor group is cyclic of order 
\begin{equation}\label{eqCD3}
\frac{|\fp|-1}{\gcd(q^2-1, |\fp|-1)}.   
\end{equation}
Moreover, P\'al  \cite{Pal} proved the analogue of Mazur's result about the rational torsion subgroup of the Jacobian variety of $X_0(\fp)$, 
i.e., $\mathrm{Jac}(X_0(\fp))(F)_\tor=\cC^2(\fp)$.  

Comparing \eqref{eqCD1}, \eqref{eqCD2}, and \eqref{eqCD3}, we see a striking similarity, especially when taking into account the fact that 
$12$ is the weight of the classical discriminant function and $q^r-1$ is the weight of $\Delta_r$. Then one might wonder whether 
there is an analogue of Mazur's result for $\cC^r(\fp)$ when $r\geq 3$, e.g., whether $\cC^r(\fp)$ is the full rational torsion subgroup of $\Pic^0(X_0^r(\fp))$. 
We do not know the answer to this question. On the other hand, there is probably no good substitute for the Eisenstein ideal in this context, 
as we explain below.  

As we already mentioned, the space $\Har^1(\cB^r, \Z)^{\G_0^r(\fp)}$ of harmonic $1$-cochains invariant under the action of $\G_0^r(\fp)$ on $\cB^r$ 
can be interpreted as a space of automorphic forms; cf.\ \cite{AA}. When $r=2$, the Eisenstein ideal essentially measures the 
congruences between the cusp forms in $\Har^1(\cB^r, \Z)^{\G_0^r(\fp)}$ and the Eisenstein series. One exploits this interplay 
between the cusp forms and the Eisenstein series 
to deduce results about the geometry of $\mathrm{Jac}(X_0(\fp))$; cf.\ \cite{Pal}. On the other hand, when $r\geq 3$, 
the map $$(\cO(\Omega^r)^\times)^{\G_0^r(\fn)}\overset{\cP}{\To} \Har^1(\cB^r, \Z)^{\G_0^r(\fp)}$$
obtained from \eqref{eq-1} by taking $\G_0^r(\fn)$-invariants has finite cokernel. This is a consequence of a 
deep property of lattices of semi-simple groups of rank at least 2 over local fields (see Theorem \ref{thmKazdanT}). 
Thus, the rank of $\Har^1(\cB^r, \Z)^{\G_0^r(\fp)}$ is $1$ and it is generated by the image of a root of $\Theta_\fp$. 
Since $\cP(\Theta_\fp)$ is an Eisenstein series, we conclude that there are no cuspidal harmonic $1$-cochains invariant under the 
action of $\G_0^r(\fp)$. (To obtain a good analogue of the Eisenstein ideal one probably has to work 
with $\Har^{r-1}(\cB^r, \Z)^{\G_0^r(\fp)}$ and $H^{r-1}_{\mathrm{et}}(X_0^r(\fp), \Z_\ell)$.)


\section{Harmonic $1$-cochains}\label{sHC}

Let $K$ be a non-archimedean local field with ring of integers $O$, a fixed uniformizer $\pi$, 
and finite residue field $O/(\pi)=\F_q$ of cardinality $q$. We normalize the absolute value $|\cdot|$ 
on $K$ by $|\pi|=q^{-1}$. 

Let $r\geq 2$ be an integer. Let $V$ be an $r$-dimensional vector space over $K$. 
An $O$-lattice in $V$ is a free $O$-module of rank $r$ 
which contains a basis of $V$.  Two lattices $L_1$ and $L_2$ are \textit{similar} if there exists $\alpha\in K^\times$ 
with $\alpha\cdot L_1=L_2$. This defines an equivalence relation on the set of lattice in $V$. 
We denote the equivalence class of $L$ by $[L]$. Since $K$ is a local field, $[L]$ can be identified 
with $\{\pi^i L\mid i\in \Z\}$. 
The \textit{Bruhat-Tits building} of $\PGL_r(K)$ is a simplicial complex $\cB=\cB^r$ 
with set of vertices 
$$
\Ver(\cB)=\{[L]\mid L \text{ is a lattice in }V\}. 
$$
The vertices $[L_0], \dots, [L_n]$ form an $n$-simplex if and only if 
there is $L'_i\in [L_i]$, $0\leq i\leq n$, such that 
$$L_0'\supsetneq L_1'\supsetneq\cdots\supsetneq L'_n\supsetneq \pi L_0'.$$ 
Thus, the simplicial complex $\cB$ has dimension $r-1$.

Let 
$$
\Ed(\cB)=\{([L_0], [L_1])\mid L_0\supsetneq L_1\supsetneq \pi L_0\}
$$ be the set of \textit{oriented edges}  of $\cB$ (equiv.\ \textit{pointed $1$-simplices} in the terminology of 
\cite{deShalit}, or \textit{arrows} in the terminology of \cite{GekelerExacSeq}). For $e=(v_0, v_1)=([L_0], [L_1])\in \Ed(\cB)$ 
we write  
\begin{itemize}
	\item[] $o(e)=v_0$, origin of $e$; 
	\item[] $t(e)=v_1$, terminus of $e$; 
	\item[] $\bar{e}=(v_1, v_0)$, $e$ with reverse orientation; 
	\item[] $\type(e)=\dim_{\F_q}(L_0/L_1)$, type of $e$.  
\end{itemize}
Note that $1\leq \type(e)\leq r-1$ and $\type(\bar{e})=r-\type(e)$. We decompose $\Ed=\Ed(\cB)$ into a disjoint union 
$$
\quad \quad \Ed=\coprod_{1\leq s\leq r-1} \Ed_s, \quad \quad \text{where} \quad \Ed_s=\{e\in \Ed\mid \type(e)=s\}.
$$
The following definition is due to de Shalit  \cite[$\S$3.1]{deShalit}: 

\begin{defn}\label{defnHarm-1} 
    A \textit{harmonic $1$-cochain} on $\cB$ with values in an abelian group $R$ is a function $h: \Ed\to R$ satisfying: 
	\begin{enumerate}
		\item  For any $e\in \Ed$, $$h(e)=-h(\bar{e}).$$ 
		\item For $v\in \Ver(\cB)$ and $1\leq s\leq r-1$,  put 
		$\Ed_s^t(v):=\{e\in \Ed_s\mid t(e)=v\}$.
		Then  
		$$
		\sum_{e\in \Ed_s^t(v)} h(e)=0\quad \text{for all $v$ and $s$}. 
		$$
		\item For $e=([L_0], [L_1])\in \Ed$, put 
		$$
		\Ed_1^\triangle(e):=\{e'=([L_0'], [L_1'])\in \Ed_1\mid L_0\supset L_0'\supset L_1'=L_1\}, 
		$$ 
		i.e., $e'\in \Ed_1^\triangle(e)$ if $t(e)=t(e')$ and $e, e'$ lie in a common $2$-simplex.
		Then
		$$
		\sum_{e'\in \Ed_1^\triangle(e)}h(e')=h(e)\quad \text{ for all $e$}.
		$$
		\item For any pointed $2$-simplex $(v_0, v_1, v_2)$ of $\cB$, 
		$$
		h((v_0, v_1))+h((v_1, v_2))=h((v_0, v_2)). 
		$$
	\end{enumerate}
    We denote the space of $R$-valued harmonic $1$-cochains by $\Har^1(\cB, R)$. 
\end{defn}

\begin{rem}\label{remDefdS} \hfill
	\begin{enumerate}
		\item[(i)] It is easy to show that conditions (1) and (4) are equivalent to 
		$$
		\sum h(e)=0, 
		$$ 
		whenever $e$ ranges over the oriented edges of a closed path in $\cB$. 
		\item[(ii)]  Condition (3) implies that a harmonic $1$-cochain is uniquely determined by its values 
		on $\Ed_1$. Note that if $e\in \Ed_1$, then $\Ed_1^\triangle(e)=\{e\}$, so this condition does not impose 
		additional restrictions on the values of $h$ on $\Ed_1$. 
		As is explained in \cite[p. 141]{deShalit}, condition (3) does not follow from (1), (2), and (4). 
	\item[(iii)] From condition (3), we may reformulate condition (2) as
	$$
	\sum_{e \in \Ed_1^t(v)}h(e) = 0, \quad \text{for all } v \in \Ver(\cB).
	$$
	\item[(iv)] For $v \in \Ver{\cB}$, put $\Ed_s^o(v)=\{e\in \Ed_s\mid o(e)=v\}$. Assuming (1), condition (2) is equivalent to  
	$$
	\sum_{e\in \Ed_s^o(v)} h(e)=0, \quad \text{ for all $v\in \Ver(\cB)$ and $1\leq s\leq r-1$}.
	$$
    This is actually the condition that appears in \cite{deShalit}. 
    \item[(v)] In the $r=2$ case, harmonic $1$-cochains on $\cB$ already appear in \cite{Drinfeld}. 
 	\end{enumerate}
\end{rem}

We regard $V$ as a space of row vectors on which $\GL_r(K)$ acts as a matrix group from the right. Hence $\GL_r(K)$ 
acts also from the right on $\cB$. (If the syntax requires a left action, we shift this action to the left by the usual formula 
$g x:=xg^{-1}$.) Let $$\La_0:=O^r\subset K^r=V.$$
Since $\GL_r(K)$ acts transitively on $\Ver(\cB)$ and the stabilizer of $[\La_0]$ is $K^\times\GL_r(O)$, we have a bijection 
$$
\GL_r(K)/K^\times\GL_r(O)\overset{\sim}{\To}\Ver(\cB), \quad g\longmapsto [\La_0g^{-1}].
$$ 
Similarly, $\GL_r(K)$ acts transitively on $\Ed_s$, $1\leq s\leq r-1$. Denote
$$
e^s:=\big([\La_0], [\La_0\begin{pmatrix} \pi I_s & \\ & I_{r-s}\end{pmatrix}]\big)\in \Ed_s. 
$$ 
Let $\cI^s$ be the $(s, r-s)$-parahoric subgroup of $\GL_r(O)$, i.e.
$$
\cI^s:=\left\{\begin{pmatrix} a & b \\ c & d\end{pmatrix} \in \GL_r(O)\ \bigg|\ c\in \Mat_{r-s, s}(\pi O)\right\}.
$$
Since the stabilizer of $e^s$ in $\GL_r(K)$ is $K^\times \cI^s$, we have a bijection 
\begin{align*}
\GL_r(K)/K^\times\cI^s & \ \overset{\sim}{\To}\ \Ed_s,\\ 
g&\ \longmapsto\ e^s_g:=ge^s=\big([\La_0 g^{-1}], [\La_0\begin{pmatrix} \pi I_s & \\ & I_{r-s}\end{pmatrix}g^{-1}]\big).
\end{align*}

For $h\in \Har^1(\cB, R)$, let $h_s:=h\big|_{\Ed_s}$ denote the restriction of $h$ to $\Ed_s$. The above 
bijection enables us to view $h_s$ as a function on $\GL_r(K)$, left invariant under $K^\times \cI^s$.  
We shall translate the conditions (1)--(4) into the corresponding matrix operations for the functions $h_1$,...,$h_{r-1}$ on $\GL_r(K)$.

For $1\leq i\leq r$, denote 
$$
T_i:=\left\{ \begin{pmatrix}  & & I_{r-i}\\ \pi & \vec{u} & \\ & I_{i-1} &\end{pmatrix}\ \Bigg|\ \vec{u}\in \F_q^{i-1}\right\}. 
$$
Condition (2) of Definition \ref{defnHarm-1} implies:

\begin{lem}\label{lem:har-1-1} Let $h\in \Har^1(\cB, R)$. 
	For all $g\in \GL_r(K)$, we have 
	$$
	\sum_{1\leq i\leq r}\sum_{\alpha\in T_i} h_1(g\alpha)=0. 
	$$
\end{lem}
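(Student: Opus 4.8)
The plan is to read the sum through the dictionary $\GL_r(K)/K^\times\cI^1 \overset{\sim}{\to} \Ed_1$ and recognize it as the sum of $h$ over the \emph{complete} set of type-$1$ edges sharing one terminus, so that the asserted vanishing becomes exactly condition (2) in its terminus form (Remark \ref{remDefdS}(iii)). Concretely, $h_1(g\alpha)=h(e^1_{g\alpha})$ with $e^1_{g\alpha}=(g\alpha)\cdot e^1$, so I must establish two geometric facts: that all the edges $e^1_{g\alpha}$, $\alpha\in\bigcup_i T_i$, have a common terminus $v$; and that $\alpha\mapsto e^1_{g\alpha}$ is a bijection from $\bigcup_{i=1}^r T_i$ onto $\Ed_1^t(v)$. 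Since the $\GL_r(K)$-action on $\cB$ preserves types and sends terminus to terminus, it suffices to treat $g=1$, where I claim the common terminus is $[\La_0]$ (and in general $[\La_0 g^{-1}]$).

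Writing $D=\diag(\pi,I_{r-1})$ so that $e^1=([\La_0],[\La_0 D])$, the terminus of $e^1_\alpha$ is $[\La_0 D\alpha^{-1}]$. The first key point is that $\alpha D^{-1}\in\GL_r(O)$ for every $\alpha\in T_i$: right multiplication by $D^{-1}$ rescales the first column of $\alpha$ (whose only nonzero entry is the $\pi$) to a unit, leaving an integral matrix of unit determinant. Hence $\La_0 D\alpha^{-1}=\La_0$, so every $e^1_\alpha$ indeed terminates at $[\La_0]$, and applying $\alpha^{-1}$ to the defining chain $\La_0\supsetneq \La_0 D\supsetneq \pi\La_0$ shows $e^1_\alpha=([\La_0\alpha^{-1}],[\La_0])$ with honest inclusions $\La_0\alpha^{-1}\supsetneq\La_0\supsetneq\pi\La_0\alpha^{-1}$.

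The type-$1$ edges with terminus $[\La_0]$ are in bijection with the lines of $\tfrac1\pi\La_0/\La_0\cong\F_q^r$ via $e\mapsto(\text{origin lattice})/\La_0$; there are $1+q+\cdots+q^{r-1}=\sum_i|T_i|$ of them, which already matches the cardinality. The heart of the proof is to compute, for each $\alpha\in T_i$, the line $\La_0\alpha^{-1}/\La_0$ explicitly. Using $\La_0\alpha^{-1}=\{v: v\alpha\in O^r\}$ and solving $\tfrac1\pi(c_1,\dots,c_r)\alpha\in O^r$ modulo $\pi$, the computation should give that the line attached to $\alpha\in T_i$ with parameter $\vec u=(u_1,\dots,u_{i-1})$ is spanned by $(0,\dots,0,1,-u_1,\dots,-u_{i-1})$, the leading $1$ sitting in position $r-i+1$. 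As $i$ runs over $1,\dots,r$ this leading position runs over all of $1,\dots,r$, and for fixed $i$ the vector $\vec u$ sweeps out all $q^{i-1}$ trailing tuples; this is precisely the normal form for points of $\p^{r-1}(\F_q)$, so $\alpha\mapsto\La_0\alpha^{-1}/\La_0$ is a bijection onto all lines, with no repetition. Pushing this bijection forward by $g$ and invoking Remark \ref{remDefdS}(iii) at the vertex $[\La_0 g^{-1}]$ then gives $\sum_i\sum_{\alpha\in T_i}h_1(g\alpha)=\sum_{e\in\Ed_1^t([\La_0 g^{-1}])}h(e)=0$. I expect the main obstacle to be bookkeeping: correctly reading the block structure of $T_i$ to extract each line, and confirming these normal forms enumerate $\p^{r-1}(\F_q)$ bijectively; the terminus identification and the appeal to Remark \ref{remDefdS}(iii) are then routine.
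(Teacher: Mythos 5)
Your proposal is correct and follows essentially the same route as the paper: identify the edges $e^1_{g\alpha}$ as the full set $\Ed_1^t([\La_0 g^{-1}])$ by checking the common terminus via $\alpha\,\diag(\pi^{-1},I_{r-1})\in\GL_r(O)$, matching cardinalities with the count of lines in $\F_q^r$, verifying distinctness, and then invoking condition (2) in its terminus form. The only cosmetic difference is that you establish distinctness by computing the lines $\La_0\alpha^{-1}/\La_0$ explicitly and recognizing them as pairwise distinct normal forms in $\p^{r-1}(\F_q)$, whereas the paper checks directly that $\alpha^{-1}\beta\notin K^\times\GL_r(O)$ for $\alpha\neq\beta$.
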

\begin{proof}
	Let $v=[\La_0 g^{-1}]$. It is enough to show 
	that  
	$$
	\Ed_1^t(v)=\{e^1_{g\alpha}\mid \alpha\in T_i, 1\leq i\leq r\}. 
	$$
	Note that $\#(\Ed_1^t(v))$ is equal to the number of $1$-dimensional subspaces of $\F_q^r$. Thus, we have
	$\#(\Ed_1^t(v))=(q^r-1)/(q-1)$. 
	On the other hand, since $\# (T_i)=q^{i-1}$, we get
	$$
	\sum_{i=1}^r \# (T_i) = \#(\Ed_1^t(v)). 
	$$
	Next, by a simple calculation, $\alpha\begin{pmatrix} \pi^{-1} & \\ & I_{r-1}\end{pmatrix}\in \GL_r(O)$ for any $\alpha\in T_i$, so 
	$$
	t(e^1_{g\alpha})=[\La_0 \begin{pmatrix} \pi & \\ & I_{r-1}\end{pmatrix} \alpha^{-1}g^{-1}]=[\La_0 g^{-1}]=v.
	$$
	Finally, note that 
	$$
	\begin{pmatrix}  & & I_{r-i}\\ \pi & \vec{u} & \\ & I_{i-1} &\end{pmatrix}^{-1}=\begin{pmatrix}  
	& \pi^{-1} & -\vec{u}\cdot \pi^{-1}\\ & & I_{i-1}\\ I_{r-i}&  &\end{pmatrix}. 
	$$
	Using this, a straightforward calculation shows that if $\alpha\neq \beta$ then 
	$\alpha^{-1}\beta\not\in K^\times \GL_r(O)$  for any $\alpha\in T_i$, $\beta\in T_j$, $1\leq i, j\leq r$. 
	 Thus, distinct $\alpha$'s represent different cosets in $\GL_r(K)/K^\times\GL_r(O)$, so that $e^1_{g\alpha}$'s are 
	distinct type-$1$ edges terminating at $v$. 
\end{proof}

Given $g\in \GL_r(K)$ and $1\leq s\leq r-1$, let 
$$
g_s':=g\cdot \begin{pmatrix} & I_{r-s}\\ \pi I_s &\end{pmatrix}. 
$$
Condition (3) of Definition \ref{defnHarm-1} implies:
\begin{lem}\label{lem-harm-1-4} Let $h\in \Har^1(\cB, R)$. 
	For all $g\in \GL_r(K)$ and $1\leq s\leq r-1$, we have 
	$$
	\sum_{1\leq i\leq s}\sum_{\alpha\in T_i} h_1(g\alpha)=h_s(g_s'). 
	$$
\end{lem}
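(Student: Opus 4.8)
The plan is to realize $h_s(g_s')$ and the left-hand sum as values of $h$ on type-$s$ and type-$1$ edges respectively, and then to recognize the identity as exactly condition (3) of Definition \ref{defnHarm-1} applied to the edge $e:=e^s_{g_s'}$. By definition $h_s(g_s')=h(e)$, so it suffices to prove the set equality
$$
\Ed_1^\triangle(e)=\{e^1_{g\alpha}\mid \alpha\in T_i,\ 1\leq i\leq s\}.
$$
Granting this, condition (3) gives $\sum_{e'\in\Ed_1^\triangle(e)}h(e')=h(e)$, which is the assertion. Writing $w_s:=\begin{pmatrix}&I_{r-s}\\ \pi I_s&\end{pmatrix}$, so that $e=w_s$-translate of $e^s$ by $g$ and $e^1_{g\alpha}$ is the $g$-translate of $e^1_\alpha$, I would keep $g$ present throughout and track only lattice containments, since the common right factor $g^{-1}$ is an invertible linear map and preserves every inclusion.

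First I would pin down representatives of $e$. A block computation gives $\begin{pmatrix}\pi I_s&\\&I_{r-s}\end{pmatrix}w_s^{-1}=\begin{pmatrix}&I_s\\ I_{r-s}&\end{pmatrix}\in\GL_r(O)$, so $t(e)=[\La_0\begin{pmatrix}\pi I_s&\\&I_{r-s}\end{pmatrix}(g_s')^{-1}]=[\La_0 g^{-1}]=v$, the same vertex that appears in Lemma \ref{lem:har-1-1}. Likewise $o(e)=[\La_0 w_s^{-1}g^{-1}]$, and $\La_0 w_s^{-1}$ is the lattice $O^{r-s}\times(\pi^{-1}O)^{s}$ (first $r-s$ coordinates in $O$, last $s$ in $\pi^{-1}O$). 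Taking $L_1=\La_0 g^{-1}$ and $L_0=(O^{r-s}\times(\pi^{-1}O)^{s})g^{-1}$, one checks $L_0\supsetneq L_1\supsetneq\pi L_0$ and $\dim_{\F_q}(L_0/L_1)=s$. Consequently $\Ed_1^\triangle(e)$ is in bijection with the $1$-dimensional $\F_q$-subspaces of $L_0/L_1\cong\F_q^{s}$, of which there are $(q^s-1)/(q-1)$.

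The heart of the argument is to decide, for $\alpha\in T_i$, whether $e^1_{g\alpha}\in\Ed_1^\triangle(e)$. By Lemma \ref{lem:har-1-1} every such edge already terminates at $v$ with terminus representative $L_1$, and its origin representative is $L_0':=\La_0\alpha^{-1}g^{-1}$, a lattice with $L_1\subset L_0'$ and $\dim(L_0'/L_1)=1$; thus $e^1_{g\alpha}\in\Ed_1^\triangle(e)$ precisely when $L_0'\subset L_0$, i.e. when $\La_0\alpha^{-1}\subset O^{r-s}\times(\pi^{-1}O)^{s}$. Using the explicit inverse recorded in the proof of Lemma \ref{lem:har-1-1}, $\La_0\alpha^{-1}$ is the $O$-span of all standard basis vectors $e_m$ with $m\neq r-i+1$ together with the single vector $\pi^{-1}\big(e_{r-i+1}-\sum_{j=1}^{i-1}u_j e_{r-i+1+j}\big)$. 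Every coordinate of this last vector carrying a genuine $\pi^{-1}$ has index $\geq r-i+1$, and the index $r-i+1$ itself occurs with coefficient $\pi^{-1}$; since the target lattice allows non-integral entries exactly in coordinates $r-s+1,\dots,r$, the containment holds if and only if $r-i+1\geq r-s+1$, that is $i\leq s$. I expect this coordinate bookkeeping to be the main obstacle: one must verify the containment holds for all $\alpha\in T_i$ with $i\leq s$ and fails for every $i>s$.

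Finally I would assemble the bijection. For $1\leq i\leq s$ the map $\alpha\mapsto e^1_{g\alpha}$ lands in $\Ed_1^\triangle(e)$ by the previous step, and it is injective because distinct $\alpha$'s represent distinct cosets in $\GL_r(K)/K^\times\GL_r(O)$, as shown in the proof of Lemma \ref{lem:har-1-1}. Since $\sum_{i=1}^{s}\#T_i=\sum_{i=1}^{s}q^{i-1}=(q^s-1)/(q-1)=\#\Ed_1^\triangle(e)$, an injection between finite sets of equal size is a bijection, which gives the set equality above. Applying condition (3) of Definition \ref{defnHarm-1} then finishes the proof:
$$
\sum_{1\leq i\leq s}\sum_{\alpha\in T_i}h_1(g\alpha)=\sum_{e'\in\Ed_1^\triangle(e)}h(e')=h(e)=h_s(g_s').
$$
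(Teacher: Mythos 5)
Your proof is correct and follows essentially the same route as the paper: reduce to the set identity $\Ed_1^\triangle(e^s_{g_s'})=\{e^1_{g\alpha}\mid \alpha\in T_i,\ 1\leq i\leq s\}$, verify membership by an explicit lattice containment (your check $\La_0\alpha^{-1}\subset \La_0 w_s^{-1}$ is the same computation as the paper's chain $\La_0 w_s^{-1}\alpha\supsetneq\La_0\supsetneq\La_0\alpha$, just phrased before right-multiplying by $\alpha$), and conclude by matching cardinalities using the injectivity already established in Lemma \ref{lem:har-1-1}. The additional observation that the containment fails for $i>s$ is not needed given the counting argument, but it is a correct and harmless sanity check.
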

\begin{proof}
	Note that 
	$$
	\begin{pmatrix} \pi I_s & \\ & I_{r-s}\end{pmatrix} \begin{pmatrix} & I_{r-s}\\ \pi I_s &\end{pmatrix}^{-1} = 
	\begin{pmatrix} & I_{r-s}\\ I_s &\end{pmatrix}\in \GL_r(O), 
	$$
	so 
	\begin{equation}\label{eqe_{g_s'}^s}
	e_{g_s'}^s=([\La_0 (g_s')^{-1}], [\La_0 g^{-1}]). 
	\end{equation}
	In particular,  $t(e_{g_s'}^s)=[\La_0 g^{-1}]$. 
	Now it is enough to show that 
	\begin{equation}\label{eqEd_1(e_{g_s'}^s)}
	\Ed_1^\triangle(e_{g_s'}^s)=\{e_{g\alpha}^1\mid \alpha\in T_i \text{ with }1\leq i\leq s\}, 
	\end{equation}
	as then the claim of the lemma follows from condition (3) of Definition \ref{defnHarm-1}. 

    As in the proof of Lemma \ref{lem:har-1-1}, for an edge $e$ of type $s$ the order of the set $\Ed_1^\triangle(e)$ is equal to the number of $1$-dimensional subspaces in $\F_q^s$. 
    Thus, 
    $$
    \#(\Ed_1(e))=(q^s-1)/(q-1)=\sum_{i=1}^s \#(T_i). 
    $$
    On the other hand, one checks by a direct calculation that for any $\alpha\in T_i$ with $1\leq i\leq s$ we have the inclusions of lattices 
    $$
    \La_0 \cdot \begin{pmatrix} & \pi^{-1}I_s \\ I_{r-s} & \end{pmatrix} \alpha \supsetneq \La_0\supsetneq \La_0\cdot \alpha. 
    $$
Therefore, 
$$
\La_0\cdot  (g_s')^{-1}\supsetneq \La_0\cdot (g\alpha)^{-1}\supsetneq \La_0\cdot g^{-1},
$$
which means that 
$$
e_{g\alpha}^1=([\La_0 (g\alpha)^{-1}], [\La_0 g^{-1}])\in \Ed_1^\triangle([\La_0 (g_s')^{-1}], [\La_0 g^{-1}])=\Ed_1^\triangle(e_{g_s'}^s). 
$$
In the proof of Lemma \ref{lem:har-1-1} we have already established that the edges $e_{g\alpha}^1$ are distinct 
for distinct $\alpha$'s, so \eqref{eqEd_1(e_{g_s'}^s)} follows. 
\end{proof}

Conditions (1) and (4) of Definition \ref{defnHarm-1} imply: 
\begin{lem}\label{lem-harm-1-5} Let $h\in \Har^1(\cB, R)$. Put $h_0=0$. 
	For all $g\in \GL_r(K)$ and $1\leq s\leq r-1$, we have 
	\begin{equation}\label{eq1-lem-harm-1-5}
h_s(g_s')-h_{s-1}(g_{s-1}')=h_1(g_s'),
	\end{equation}
	and 
	\begin{equation}\label{eq2-lem-harm-1-5}
	h_{r-1}(g_{r-1}')=-h_1(g). 
	\end{equation}
\end{lem}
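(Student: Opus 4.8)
The plan is to read both identities directly off the defining conditions of Definition \ref{defnHarm-1}, once the three (resp.\ two) edges appearing in them are correctly identified as edges of a common simplex. For \eqref{eq1-lem-harm-1-5} I would apply the cocycle condition (4) to a suitable pointed $2$-simplex, and for \eqref{eq2-lem-harm-1-5} I would apply the antisymmetry condition (1) to a single reversed type-$1$ edge.

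To set up \eqref{eq1-lem-harm-1-5}, write $w_s=\begin{pmatrix} & I_{r-s}\\ \pi I_s & \end{pmatrix}$, so that $g_s'=gw_s$ and, by \eqref{eqe_{g_s'}^s}, $e^s_{g_s'}=([\La_0 (g_s')^{-1}], [\La_0 g^{-1}])$; the same formula with $s$ replaced by $s-1$ gives $e^{s-1}_{g_{s-1}'}=([\La_0 (g_{s-1}')^{-1}], [\La_0 g^{-1}])$. Thus $e^s_{g_s'}$ and $e^{s-1}_{g_{s-1}'}$ share the terminus $[\La_0 g^{-1}]$, while $e^1_{g_s'}$ and $e^s_{g_s'}$ share the origin $[\La_0 (g_s')^{-1}]$. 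The key computation is that $e^1_{g_s'}$ runs from $[\La_0 (g_s')^{-1}]$ to $[\La_0(g_{s-1}')^{-1}]$: its terminus is $[\La_0 \begin{pmatrix} \pi & \\ & I_{r-1}\end{pmatrix} (g_s')^{-1}]$, and since $(g_s')^{-1}g_{s-1}'=w_s^{-1}w_{s-1}$, a direct calculation shows that $\begin{pmatrix} \pi & \\ & I_{r-1}\end{pmatrix} w_s^{-1}w_{s-1}$ equals the cyclic permutation matrix $e_j\mapsto e_{j+1}$ (indices modulo $r$), which lies in $\GL_r(O)$; hence the two classes coincide.

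Setting $v_0=[\La_0(g_s')^{-1}]$, $v_1=[\La_0(g_{s-1}')^{-1}]$, $v_2=[\La_0 g^{-1}]$, the previous step identifies $e^1_{g_s'}=(v_0,v_1)$, $e^{s-1}_{g_{s-1}'}=(v_1,v_2)$, and $e^s_{g_s'}=(v_0,v_2)$. It then remains to check that $(v_0,v_1,v_2)$ is a pointed $2$-simplex. Since $\GL_r(K)$ acts on $\cB$ by simplicial automorphisms it suffices to treat $g=I$, where $[\La_0 w_s^{-1}]=[O^{r-s}\times(\pi^{-1}O)^s]$, and one exhibits the explicit chain
$$
O^{r-s}\times(\pi^{-1}O)^s \ \supsetneq\ O^{r-s+1}\times(\pi^{-1}O)^{s-1}\ \supsetneq\ O^r \ \supsetneq\ (\pi O)^{r-s}\times O^s,
$$
whose first three classes are $v_0,v_1,v_2$ and whose last term is $\pi$ times the first. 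Condition (4) applied to this simplex gives $h_1(g_s')+h_{s-1}(g_{s-1}')=h_s(g_s')$, which is \eqref{eq1-lem-harm-1-5}; the case $s=1$ is the trivial identity $h_1(g_1')=h_1(g_1')$ because $h_0=0$. For \eqref{eq2-lem-harm-1-5} I would show $e^{r-1}_{g_{r-1}'}=\overline{e^1_g}$: by \eqref{eqe_{g_s'}^s} the former is $([\La_0(g_{r-1}')^{-1}],[\La_0 g^{-1}])$ and the latter is $([\La_0\begin{pmatrix}\pi&\\&I_{r-1}\end{pmatrix}g^{-1}],[\La_0 g^{-1}])$; the termini agree, and for the origins one checks $\begin{pmatrix}\pi&\\&I_{r-1}\end{pmatrix}w_{r-1}=\pi P$ with $P$ the cyclic permutation $e_j\mapsto e_{j+1}$, so that this matrix lies in $K^\times\GL_r(O)$ and the origins coincide. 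As an oriented edge is determined by its ordered pair of endpoints, $e^{r-1}_{g_{r-1}'}=\overline{e^1_g}$, and condition (1) yields $h_{r-1}(g_{r-1}')=-h_1(g)$.

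The main obstacle is purely bookkeeping: keeping the block sizes, the right action $[\La]\mapsto[\La g^{-1}]$, and the edge orientations consistent so that the matrices $\begin{pmatrix}\pi&\\&I_{r-1}\end{pmatrix}w_s^{-1}w_{s-1}$ and $\begin{pmatrix}\pi&\\&I_{r-1}\end{pmatrix}w_{r-1}$ come out as (scalar multiples of) permutation matrices, and verifying the strict chain of lattices for the $2$-simplex. Once these lattice identifications are in place, both identities follow at once from conditions (1) and (4) of Definition \ref{defnHarm-1}.
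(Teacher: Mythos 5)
Your proof is correct and takes essentially the same route as the paper's: both identify $e^1_{g_s'}$, $e^{s-1}_{g_{s-1}'}$, $e^s_{g_s'}$ as the sides of a common pointed $2$-simplex (via the computation that $\bigl(\begin{smallmatrix}\pi&\\&I_{r-1}\end{smallmatrix}\bigr)w_s^{-1}w_{s-1}$ is a permutation matrix) and apply condition (4), and both identify $e^{r-1}_{g_{r-1}'}=\overline{e^1_g}$ and apply condition (1). Your explicit lattice chain verifying the $2$-simplex is a small extra check the paper leaves implicit, but it is the same argument.
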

\begin{proof}
	Since the claim is trivial for $s=1$, we assume $s\geq 2$. By \eqref{eqe_{g_s'}^s}, 
	the oriented edges $e_{g_s'}^s$ and $e_{g_{s-1}'}^{s-1}$ have the same terminus $[\La_0 g^{-1}]$. Moreover, 
	$$
	t(e_{g_s'}^1)=o(e_{g_{s-1}'}^{s-1}),
	$$
	since 
	\begin{align*}
	g_s' \cdot \begin{pmatrix} \pi^{-1} & \\ & I_{r-1}\end{pmatrix} & = 
	g\cdot \begin{pmatrix}  & & I_{r-s}\\ 1 & & \\ & \pi I_{s-1} & \end{pmatrix} \\ 
	& = g\cdot \begin{pmatrix}  & I_{r-s+1}\\ \pi I_{s-1} & \end{pmatrix}\cdot 
	\begin{pmatrix}  & I_{r-1}\\ 1 & \end{pmatrix} \\ 
	& \equiv g_{s-1}'\mod \GL_r(O). 
	\end{align*}
	Since $e_{g_s'}^1$ and $e_{g_s'}^s$ have the same origin, condition (4) of Definition \ref{defnHarm-1} gives 
	\begin{align*}
	h_s(g_s')=h(e_{g_s'}^s) &=h(e_{g_s'}^1)+h(e_{g_{s-1}'}^{s-1}) \\ 
	& =h_1({g_s'})+h_{s-1}(g_{s-1}'). 
	\end{align*}
	This proves the first claim. On the other hand, it is easy to check that 
	$$
	e_{g_{r-1}'}^{r-1}=\overline{e_g^1}. 
	$$
	Therefore, condition (1) of Definition \ref{defnHarm-1} gives
	$$
	h_{r-1}(g_{r-1}')=h(e_{g_{r-1}'}^{r-1})=-h(e_g^1)=-h_1(g), 
	$$
	which is the second claim. 
\end{proof}

The above lemmas then lead us to the following properties of the function $h_1$ for a given $h \in \Har^1(\cB,R)$:

\begin{lem}
    Given $h \in \Har^1(\cB,R)$, the associated function $h_1$ on $\GL_r(K)/K^\times \cI^1$ satisfies:
    \begin{equation}\label{eq-prop-harm-1}
	\sum_{u\in \F_q^{s-1}} h_1\left(g\begin{pmatrix}\pi & \vec{u} & \\ & I_{s-1} & \\ & & I_{r-s} \end{pmatrix}\right)
	=h_1\left(g\begin{pmatrix}\pi I_s& \\ & I_{r-s} \end{pmatrix}\right), 
	\end{equation}
	for all $g\in \GL_r(K)$ and $1\leq s\leq r$;  
	\begin{equation}\label{eq-prop-harm-2}
	\sum_{1\leq s\leq r} h_1\left(g\begin{pmatrix} & I_{r-s}\\ \pi I_s& \end{pmatrix}\right)=0, 
	\end{equation}
	all $g\in \GL_r(K)$.
\end{lem}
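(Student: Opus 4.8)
The plan is to derive both identities purely formally from the three preceding lemmas, which encode conditions (2), (3), and (1)+(4) of Definition~\ref{defnHarm-1}, together with the central invariance $h_1(\pi g)=h_1(g)$ coming from the fact that $h_1$ descends to $\GL_r(K)/K^\times\cI^1$. Throughout I write $g_s'=g\begin{pmatrix} & I_{r-s}\\ \pi I_s & \end{pmatrix}$ as in Lemma~\ref{lem-harm-1-5}, so that the $s$-th summand of \eqref{eq-prop-harm-2} is exactly $h_1(g_s')$ for $1\le s\le r-1$, while for $s=r$ the matrix is $\pi I_r$ and the summand is $h_1(\pi g)=h_1(g)$.

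I would prove \eqref{eq-prop-harm-2} first, by telescoping. Summing \eqref{eq1-lem-harm-1-5} over $s=1,\dots,r-1$ and using $h_0=0$ collapses the left-hand side to $h_{r-1}(g_{r-1}')$, giving $\sum_{s=1}^{r-1}h_1(g_s')=h_{r-1}(g_{r-1}')$. Now \eqref{eq2-lem-harm-1-5} gives $h_{r-1}(g_{r-1}')=-h_1(g)$, hence $\sum_{s=1}^{r-1}h_1(g_s')=-h_1(g)$; adding the $s=r$ summand $h_1(g_r')=h_1(g)$ yields $\sum_{s=1}^{r}h_1(g_s')=0$, which is \eqref{eq-prop-harm-2}. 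For \eqref{eq-prop-harm-1} with $1\le s\le r-1$, the first step is the ``single-type'' identity
$$
\sum_{\alpha\in T_s} h_1(g\alpha)=h_1(g_s'),
$$
obtained by subtracting Lemma~\ref{lem-harm-1-4} for $s-1$ from the one for $s$ and applying \eqref{eq1-lem-harm-1-5} (the case $s=1$ is Lemma~\ref{lem-harm-1-4} itself). It then remains to match the set $T_s$ with the matrices $M_{\vec{u}}:=\begin{pmatrix}\pi & \vec{u} & \\ & I_{s-1} & \\ & & I_{r-s}\end{pmatrix}$ of \eqref{eq-prop-harm-1}. I would exhibit a single permutation matrix $w_s$, independent of $\vec{u}$, such that each generator of $T_s$ equals $w_s M_{\vec{u}}$, and check the complementary identity $w_s^{-1}\begin{pmatrix} & I_{r-s}\\ \pi I_s & \end{pmatrix}=D_s$, where $D_s:=\begin{pmatrix}\pi I_s & \\ & I_{r-s}\end{pmatrix}$. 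Granting these, substituting $g\mapsto g w_s^{-1}$ into the single-type identity turns its left-hand side into $\sum_{\vec{u}}h_1(gM_{\vec{u}})$ and its right-hand side into $h_1(gD_s)$, which is precisely \eqref{eq-prop-harm-1}. The remaining case $s=r$ is not reached by Lemma~\ref{lem-harm-1-4}, so here I would instead invoke Lemma~\ref{lem:har-1-1}, replace the terms with $i<r$ using $\sum_{\alpha\in T_i}h_1(g\alpha)=h_1(g_i')$, and use the already-proven \eqref{eq-prop-harm-2} to isolate $\sum_{\alpha\in T_r}h_1(g\alpha)=h_1(g)$; since $T_r=\{M_{\vec{u}}\mid \vec{u}\in\F_q^{r-1}\}$ and $D_r=\pi I_r$ with $h_1(gD_r)=h_1(g)$, this is exactly \eqref{eq-prop-harm-1} for $s=r$.

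The main obstacle is the bookkeeping in the change of variables for \eqref{eq-prop-harm-1}: pinning down the permutation $w_s$ and verifying both $\alpha_{\vec{u}}=w_s M_{\vec{u}}$ and $w_s^{-1}\begin{pmatrix} & I_{r-s}\\ \pi I_s & \end{pmatrix}=D_s$. These are finite, explicit computations once one tracks how $w_s$ permutes the three row/column blocks of sizes $1$, $s-1$, and $r-s$, but they demand care because the block arrangements of $T_s$, $M_{\vec{u}}$, $\begin{pmatrix} & I_{r-s}\\ \pi I_s & \end{pmatrix}$, and $D_s$ all differ. Everything else is formal manipulation of the established relations together with the central invariance $h_1(\pi g)=h_1(g)$.
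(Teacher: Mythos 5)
Your proposal is correct and follows essentially the same route as the paper: the same telescoping of Lemma \ref{lem-harm-1-5} for \eqref{eq-prop-harm-2}, the same single-type identity $\sum_{\alpha\in T_s}h_1(g\alpha)=h_1(g_s')$ obtained by differencing Lemma \ref{lem-harm-1-4} and applying Lemma \ref{lem-harm-1-5}, the same block-permutation change of variables (the paper's $w_s$ is $\left(\begin{smallmatrix} & I_s\\ I_{r-s} & \end{smallmatrix}\right)$, the inverse of yours), and the same isolation of the $T_r$ sum via Lemma \ref{lem:har-1-1} for the $s=r$ case. The only difference is cosmetic ordering (you prove \eqref{eq-prop-harm-2} first and reuse it for $s=r$, where the paper derives $s=r$ directly from the three lemmas), and the two block identities you defer are exactly the finite computations the paper carries out.
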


\begin{proof}
    Let $h\in \Har^1(\cB, R)$.
	For $1\leq s\leq r-1$, we have  
	$$
	\sum_{\alpha\in T_s}h_1(g\alpha)= h_s(g_s')-h_{s-1}(g_{s-1}')=h_1(g_s'),
	$$
where the first equality follows from Lemma \ref{lem-harm-1-4} and the second from Lemma \ref{lem-harm-1-5}. 
Thus, 
\begin{equation}\label{eq-proofLem1.6}
\sum_{u\in \F_q^{s-1}} h_1\left(g\begin{pmatrix} &  & I_{r-s}\\ \pi & \vec{u} & \\ & I_{s-1}&  \end{pmatrix}\right)
=h_1\left(g\begin{pmatrix} &  I_{r-s} \\ \pi I_s&\end{pmatrix}\right). 
\end{equation}
Since 
$$
\begin{pmatrix} &  I_{s} \\ I_{r-s} & \end{pmatrix}\cdot \begin{pmatrix} &  & I_{r-s}\\ \pi & \vec{u} & \\ & I_{s-1} &  \end{pmatrix}
=\begin{pmatrix}  \pi & \vec{u} & \\  &  I_{s-1} & \\&  & I_{r-s} \end{pmatrix}
$$
and 
$$
\begin{pmatrix} &  I_{s} \\ I_{r-s} & \end{pmatrix}\cdot \begin{pmatrix} &  I_{r-s} \\ \pi I_{s} & \end{pmatrix} 
= \begin{pmatrix} \pi I_{s} &  \\  &  I_{r-s} \end{pmatrix},
$$
replacing $g$ by $g\cdot \begin{pmatrix} &  I_{s} \\ I_{r-s} & \end{pmatrix}$ in \eqref{eq-proofLem1.6} we then
obtain \eqref{eq-prop-harm-1} for $1\leq s\leq r-1$. 

For the $s=r$ case, note that 
\begin{align*}
0 &=\sum_{1\leq i\leq r}\sum_{\alpha\in T_i} h_1(g\alpha) \qquad \text{(Lemma \ref{lem:har-1-1})} \\ 
& = \sum_{\alpha\in T_r} h_1(g\alpha)+\sum_{1\leq i\leq r-1}\sum_{\beta\in T_i} h_1(g\beta) \\ 
& =\sum_{\alpha\in T_r} h_1(g\alpha)+h_{r-1}(g_{r-1}') \qquad \text{(Lemma \ref{lem-harm-1-4})} \\ 
&= \sum_{\alpha\in T_r} h_1(g\alpha)-h_{1}(g)\qquad \text{(Lemma \ref{lem-harm-1-5})}. 
\end{align*}
Therefore, 
$$
\sum_{\vec{u}\in \F_q^{r-1}} h_1\left(g\begin{pmatrix}\pi & \vec{u}\\ & I_{r-1}\end{pmatrix}\right) = \sum_{\alpha\in T_r} h_1(g\alpha) = h_1(g)=h_1(g(\pi I_r)),
$$
where in the last equality we have used the invariance of $h_1$ under the action of scalar matrices. 
This proves \eqref{eq-prop-harm-1} for $s=r$. 

To show that $h_1$ satisfies \eqref{eq-prop-harm-2}, recall that $g_s'=g\cdot \begin{pmatrix} & I_{r-s}\\ \pi I_s &\end{pmatrix}$. Now 
\begin{align*}
\sum_{1\leq s\leq r-1} h_1\left(g\begin{pmatrix} & I_{r-s}\\ \pi I_s& \end{pmatrix}\right) 
& = \sum_{1\leq s\leq r-1} (h_s(g_s')-h_{s-1}(g_{s-1}')) \qquad \text{(Lemma  \ref{lem-harm-1-5})} \\ 
&=h_{r-1}(g_{r-1}') \\ 
&=-h_1(g) \qquad \text{(Lemma  \ref{lem-harm-1-5})} \\ 
& = -h_1(g(\pi I_r)),  
\end{align*}
which obviously can be rewritten as \eqref{eq-prop-harm-2}. 
\end{proof}

In fact, we obtain the following:

\begin{prop}\label{propHarGLr}
    The map $h\mapsto h_1$ gives a $\GL_r(K)$-equivariant isomorphism between $\Har^1(\cB, R)$ 
	and the space of $R$-valued functions on $\GL_r(K)/K^\times \cI^1$ satisfying
	\eqref{eq-prop-harm-1} and \eqref{eq-prop-harm-2}.
\end{prop}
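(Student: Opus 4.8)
The plan is to prove that $h\mapsto h_1$ is well-defined into the stated target, $\GL_r(K)$-equivariant, injective, and surjective. Well-definedness into the target is exactly the content of the lemma preceding this proposition, and equivariance is immediate because restriction to $\Ed_1$ commutes with the type-preserving right $\GL_r(K)$-action. Injectivity is also immediate: condition~(3) of Definition~\ref{defnHarm-1} gives $h(e)=\sum_{e'\in\Ed_1^\triangle(e)}h(e')$ with every $e'$ of type~$1$, so $h$ is recovered from $h_1$, as noted in Remark~\ref{remDefdS}(ii). Throughout I abbreviate $\phi:=h_1$, write $w_s:=\begin{pmatrix}&I_{r-s}\\ \pi I_s&\end{pmatrix}$ so that $g_s'=gw_s$, and record two facts used repeatedly: the reformulation $\sum_{\alpha\in T_s}\phi(g\alpha)=\phi(gw_s)$ of \eqref{eq-prop-harm-1} established in the proof above (with $\phi(gw_r)=\phi(g)$), and the matrix identity $w_sw_i=w_{s+i}$ for $s+i\le r$ (with $w_sw_{r-s}=\pi I_r$), a one-line computation.

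The substance is surjectivity. Given $\phi$ on $\GL_r(K)/K^\times\cI^1$ satisfying \eqref{eq-prop-harm-1} and \eqref{eq-prop-harm-2}, I define a candidate $\tilde h$ on $\Ed=\coprod_{1\le s\le r-1}\Ed_s$ by $\tilde h(e):=\sum_{e'\in\Ed_1^\triangle(e)}\phi(e')$. Because $\Ed_1^\triangle(e)$ is intrinsic to $e$ this is well-defined, it restricts to $\phi$ on $\Ed_1$ since $\Ed_1^\triangle(e)=\{e\}$ there, and condition~(3) holds by construction. Via the parametrization $e=e^s_{g_s'}$ and \eqref{eqEd_1(e_{g_s'}^s)}, this collapses to the convenient form $\tilde h(e^s_{g_s'})=\sum_{1\le i\le s}\phi(gw_i)$. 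Conditions~(2) and~(1) now follow. For~(2), by Remark~\ref{remDefdS}(iii) it suffices to check $\sum_{e\in\Ed_1^t(v)}\tilde h(e)=0$; with $v=[\La_0g^{-1}]$ and $\Ed_1^t(v)=\{e^1_{g\alpha}\mid\alpha\in T_i,\ 1\le i\le r\}$ from Lemma~\ref{lem:har-1-1}, the sum equals $\sum_{i=1}^r\phi(gw_i)=0$ by \eqref{eq-prop-harm-2}. For~(1), reversing $e^s_{g_s'}$ gives a type-$(r-s)$ edge whose collapsed form uses the frame $g_s'$, so $\tilde h(\overline{e^s_{g_s'}})=\sum_{i=1}^{r-s}\phi(g_s'w_i)=\sum_{i=1}^{r-s}\phi(gw_{s+i})=\sum_{j=s+1}^{r}\phi(gw_j)$; adding $\tilde h(e^s_{g_s'})=\sum_{i=1}^{s}\phi(gw_i)$ yields $\sum_{j=1}^{r}\phi(gw_j)=0$ by \eqref{eq-prop-harm-2}.

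The hard part is condition~(4). Since $\phi\mapsto\tilde h$ is $\GL_r(K)$-equivariant and $\GL_r(K)$ acts transitively on pointed $2$-simplices of a fixed type $(s_1,s_2)$, it suffices, uniformly in $\phi$, to verify $\tilde h(v_0,v_1)+\tilde h(v_1,v_2)=\tilde h(v_0,v_2)$ for the standard flag $\La_0=L_0\supsetneq L_1\supsetneq L_2\supsetneq\pi L_0$. Here $\Ed_1^\triangle(v_1,v_2)$ consists of the type-$1$ edges with terminus $v_2$ whose origins are the lines of $L_1/L_2$, and is a subset of $\Ed_1^\triangle(v_0,v_2)$ (the lines of $L_0/L_2$); the complementary lines are those meeting $L_1/L_2$ trivially, and the projection $L_0/L_2\to L_0/L_1$ sorts them into fibers of size $q^{s_2}$ over the lines of $L_0/L_1$. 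Hence $\tilde h(v_0,v_2)-\tilde h(v_1,v_2)=\sum_{m\subset L_0/L_1}\sum_{\ell\mapsto m}\phi(e_\ell)$, and I claim each inner fiber-sum equals $\phi(f_m)$, where $f_m$ is the type-$1$ edge with terminus $v_1$ determined by $m$; summing over $m$ gives $\tilde h(v_0,v_1)$, which is~(4). This fiber-sum identity is exactly \eqref{eq-prop-harm-1} with $s=s_2+1$: for a suitably chosen frame $g$ the matrices $g\begin{pmatrix}\pi&\vec u&\\ &I_{s_2}&\\ & &I_{r-s_2-1}\end{pmatrix}$, $\vec u\in\F_q^{s_2}$, index the $q^{s_2}$ edges $e_\ell$ lying over $m$, while $g\begin{pmatrix}\pi I_{s_2+1}&\\ &I_{r-s_2-1}\end{pmatrix}$ indexes $f_m$. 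I expect the main obstacle to be precisely this identification: one must pin down the frame $g$ so that the unipotent parameter $\vec u$ traverses the correct fiber of $L_0/L_2\to L_0/L_1$ and so that the terminus of the right-hand edge falls on $v_1$ rather than $v_2$, i.e.\ match the coordinate description of the projection with the shape of the matrices in \eqref{eq-prop-harm-1}. Once this is settled for $s_2=1$ (a single use of \eqref{eq-prop-harm-1} with $s=2$), the general case follows either directly or by refining the step $L_1\supsetneq L_2$ into type-$1$ steps and telescoping.
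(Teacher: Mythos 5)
Your overall strategy coincides with the paper's up to and including conditions (3), (2), and (1): the paper likewise defines $h(e)=\sum_{e'\in\Ed_1^\triangle(e)}f(e')$, collapses this to $h(e^s_{g_s'})=\sum_{1\leq i\leq s}f(g_i')$ via \eqref{eqEd_1(e_{g_s'}^s)} and \eqref{eq-proofLem1.6}, and gets (2) from Lemma~\ref{lem:har-1-1} together with \eqref{eq-prop-harm-2}. (For (1) the paper specializes its condition-(4) identity to $[L_2]=[L_0]$ rather than computing $\overline{e^s_{g_s'}}$ directly, but that is the same calculation.) Where you genuinely diverge is condition (4). The paper picks a \emph{single} frame $g$ adapted to the whole flag, so that $[L_0]=[\La_0(g_{s_1+s_2}')^{-1}]$, $[L_1]=[\La_0(g_{s_2}')^{-1}]$, $[L_2]=[\La_0 g^{-1}]$, and then (4) is a one-line telescoping using the identity $g_{i+j}'=(g_i')_j'$: the two collapsed sums concatenate to $\sum_{1\leq i\leq s_1+s_2}f(g_i')$. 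You instead fiber the lines of $L_0/L_2$ over those of $L_0/L_1$ and invoke \eqref{eq-prop-harm-1} once \emph{per line} $m\subset L_0/L_1$, each time with a frame adapted to $m$. This works, and the identification you flag as the main obstacle does go through: choosing $g_m$ with $\La_0g_m^{-1}=L_2$, $\La_0\,\diag(1,\pi^{-1}I_{s_2},I_{r-s_2-1})g_m^{-1}=L_1$ and $\La_0\,\diag(\pi^{-1}I_{s_2+1},I_{r-s_2-1})g_m^{-1}=N_m$ (the preimage of $m$ in $L_0$), the left-hand matrices in \eqref{eq-prop-harm-1} for $s=s_2+1$ parametrize exactly the $q^{s_2}$ lines of $N_m/L_2$ not contained in $L_1/L_2$ (all with terminus $[L_2]$), while the right-hand side is the edge $(N_m,L_1)=f_m$; summing over $m$ recovers $\tilde h(v_0,v_1)$. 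Two small inaccuracies: for $s_2=1$ but $s_1>1$ you still need one application of \eqref{eq-prop-harm-1} for each of the $(q^{s_1}-1)/(q-1)$ lines of $L_0/L_1$, not ``a single use''; and your telescoping fallback (refining $L_1\supsetneq L_2$ into type-$1$ steps) is legitimate but requires the $s_2=1$ case for all $s_1$ as the base of the induction. What the paper's route buys is precisely the avoidance of this line-by-line frame matching; what yours buys is a transparent geometric explanation of what \eqref{eq-prop-harm-1} asserts about $2$-simplices. Since you left the frame identification as an expectation rather than a verification, you should either carry it out as above or switch to the paper's single-frame telescoping.
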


\begin{proof} 
Since $h\in \Har^1(\cB, R)$ is uniquely determined by its values on type-1 edges, the map $h\mapsto h_1$ from $\Har^1(\cB, R)$ into the space 
of $R$-valued functions on $\Ed_1$ is injective. It remains to show that if a given function $f:\Ed_1\to R$  
satisfies \eqref{eq-prop-harm-1} and \eqref{eq-prop-harm-2}, 
then $f=h_1$ for some harmonic $1$-cochain $h$. 

We define a function $h:\Ed\to R$ associated to $f$ by 
\begin{equation}\label{eqDefn-h}
h(e)=\sum_{e'\in\Ed_1(e)^\triangle} f(e'). 
\end{equation}
Hence $h_1 = f$, and (3) in Definition \ref{defnHarm-1}  is automatically satisfied.

For an edge $e \in \Ed_s$ with $1\leq s \leq r-1$,
take $g \in \GL_r(K)$ so that $e = e^s_{g_s'}$.
Then the identification \eqref{eqEd_1(e_{g_s'}^s)} implies
$$
h(e)\ =\ \sum_{1\leq i \leq s} \sum_{\alpha \in T_i} f(g\alpha) \nonumber \\
\ =\ \sum_{1\leq i \leq s}f(g_i') \quad\quad \quad  \text{(from \eqref{eq-proofLem1.6} $\Leftrightarrow$ \eqref{eq-prop-harm-1}).}
$$
Given a $2$-simplex $([L_0],[L_1],[L_2])$ in $\cB$, suppose the edges $([L_0],[L_1])$ and $([L_1],[L_2])$ have types $s_1$ and $s_2$, respectively.
Take $g \in \GL_r(K)$ so that
$$[L_0] = [\Lambda_0 (g_{s_1+s_2}')^{-1}], \quad 
[L_1] = [\Lambda_0 (g_{s_2}')^{-1}], \quad
[L_2] = [\Lambda_0 g^{-1}].$$
Observe that for $i,j \in \ZZ_{\geq 1}$ with $i+j \leq r$, we have $g_{i+j}' = (g_{i}')_{j}'$.
Thus $$([L_0],[L_1]) = e^{s_1}_{(g_{s_2}')_{s_1}'}, \quad ([L_1],[L_2]) = e^{s_2}_{g_{s_2}'}, \quad
([L_0],[L_2]) = e^{s_1+s_2}_{g_{s_1+s_2}'},
$$
which shows 
\begin{eqnarray}\label{eq: cycle-condition}
h([L_0],[L_1]) + h([L_1],[L_2])
&=& \sum_{1\leq i \leq s_1} f\big((g_{s_2}')'_i\big) + \sum_{1\leq j \leq s_2}f(g_{j}') \nonumber \\
&=& \sum_{1\leq i \leq s_1}f(g_{s_2+i}') + \sum_{1\leq j \leq s_2} f(g_j') \nonumber \\
&=& \sum_{1\leq i \leq s_1+s_2} f(g_i') \\ &=&f([L_0],[L_2]). \nonumber 
\end{eqnarray}
Hence condition (4) in Definition~\ref{defnHarm-1} holds.

Note that the equality \eqref{eq: cycle-condition} also holds when $[L_2] = [L_0]$. Thus for $e = ([L_0],[L_1]) \in \Ed$, we have
$$h(e) + h(\overline{e}) = \sum_{1\leq i \leq r} f(g_i') = 0, \quad \text{(property \eqref{eq-prop-harm-2})}
$$
and so $h$ satisfies condition (1).

Finally, given $v \in \Ver(\cB)$, take $g \in \GL_r(K)$ so that $v = [\Lambda_0 g^{-1}]$.
Then
\begin{eqnarray}
\sum_{e \in \Ed_1^t(v)}h(e)
\ =\ \sum_{1\leq i \leq r} \sum_{\alpha \in T_i} f(g\alpha) & = & \sum_{1\leq i \leq r} f(g'_i) \quad \text{(equality \eqref{eq-proofLem1.6})} \nonumber \\
&=& 0 \quad \text{(property \eqref{eq-prop-harm-2}).} \nonumber
\end{eqnarray}
From Remark~\ref{remDefdS} (iii), condition (2) in Definition~\ref{defnHarm-1} holds, and the proof is complete.
\end{proof}

Therefore, we propose the following:

\begin{defn} A \textit{harmonic $1$-cochain} on $\GL_r(K)$ with values in $R$ is a 
	function $\GL_r(K)\to R$ which is right invariant 
	under the action of $K^\times\cI^1$, and satisfies the {\it harmonic} properties \eqref{eq-prop-harm-1} and \eqref{eq-prop-harm-2}.
\end{defn}

    When $r=2$, the above definition of harmonic cochains on $\GL_2(K)$ agrees with 
    Gekeler's definition in  \cite[(2.12)]{GekelerImproper}.


\section{Fourier expansion of harmonic $1$-cochains}\label{sFEHC}
In this section the local field $K$ is $\Fi$ and $\pi=1/T$. 
Although we are primarily interested in harmonic $1$-cochain, the discussion in this section 
applies to a larger class of functions. First, we record the following consequence of the Iwasawa decomposition:

\begin{lem}\label{lemIwasawaDecom}
	Let 
	\begin{equation}\label{eqParabolic1n-1}
	P(\Fi) =\left\{\begin{pmatrix} 1 & b \\ & d\end{pmatrix}\in \GL_r(\Fi)\mid d\in \GL_{r-1}(\Fi)\right\}. 
	\end{equation}
	Then 
	\begin{equation}\label{eq-Iwasawa}
	\GL_r(\Fi) =  P(\Fi) \Fi^\times \cI^1 \coprod P(\Fi) \begin{pmatrix}  
	& I_{r-1} \\ 1 & \end{pmatrix}\Fi^\times\cI^1. 
	\end{equation}
\end{lem}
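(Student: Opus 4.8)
The plan is to recast the statement as the claim that the double coset space $P(\Fi)\backslash\GL_r(\Fi)/\Fi^\times\cI^1$ has exactly two elements, represented by $I_r$ and $w:=\begin{pmatrix} & I_{r-1}\\ 1 & \end{pmatrix}$. The first move is to replace the mirabolic $P(\Fi)$ by the full maximal parabolic $Q:=\Fi^\times P(\Fi)=\left\{\begin{pmatrix} \ast & \ast\\ 0 & d\end{pmatrix}\mid d\in\GL_{r-1}(\Fi)\right\}$. Since the central subgroup $\Fi^\times$ lies in $Q$ and is central, $P(\Fi)\backslash\GL_r(\Fi)/\Fi^\times\cI^1=Q\backslash\GL_r(\Fi)/\Fi^\times\cI^1$, so it suffices to enumerate the latter.

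Next I would pass to the integral level via the Iwasawa decomposition $\GL_r(\Fi)=Q\,\GL_r(O_\infty)$, which shows that every double coset already meets $\GL_r(O_\infty)$. A short computation makes the reduction faithful: if $k_1,k_2\in\GL_r(O_\infty)$ satisfy $k_1=q\,k_2\,z\,u$ with $q\in Q$, $z\in\Fi^\times$, $u\in\cI^1$, then $qz=k_1u^{-1}k_2^{-1}\in Q\cap\GL_r(O_\infty)=:Q(O_\infty)$, so $k_1$ and $k_2$ lie in the same $(Q(O_\infty),\cI^1)$ double coset inside $\GL_r(O_\infty)$. Conversely such coincidences clearly persist in $\GL_r(\Fi)$, yielding a bijection
\[
Q(O_\infty)\backslash\GL_r(O_\infty)/\cI^1\ \overset{\sim}{\To}\ Q\backslash\GL_r(\Fi)/\Fi^\times\cI^1 .
\]

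I would then reduce modulo $\pi=1/T$. Under $\rho\colon\GL_r(O_\infty)\twoheadrightarrow\GL_r(\F_q)$ the parahoric $\cI^1$ is exactly the preimage of the parabolic $\overline{Q}=\begin{pmatrix}\ast & \ast\\ 0 & \ast\end{pmatrix}\subset\GL_r(\F_q)$, while $Q(O_\infty)$ surjects onto $\overline{Q}$; hence the action of $Q(O_\infty)$ on $\GL_r(O_\infty)/\cI^1\cong\GL_r(\F_q)/\overline{Q}$ factors through $\overline{Q}$, and the double coset space becomes $\overline{Q}\backslash\GL_r(\F_q)/\overline{Q}$. Identifying $\GL_r(\F_q)/\overline{Q}$ with $\p^{r-1}(\F_q)$, the group $\overline{Q}$ is the stabilizer of a line $\ell_0$ and has precisely two orbits on lines, namely $\{\ell_0\}$ and its complement; thus there are exactly two double cosets. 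Finally $I_r$ represents the trivial class, whereas $\rho(w)$ has a nonzero $(r,1)$ entry and so lies outside $\overline{Q}$, placing $w$ in the other class; this matches the two representatives and gives the asserted disjoint union.

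The Iwasawa step and the mod-$\pi$ bookkeeping are routine; the point requiring the most care, and the main obstacle, is keeping the conventions consistent (the right action on row vectors, the exact block of $\cI^1$ that is $\equiv 0\pmod{\pi}$, and the central factor $\Fi^\times$) so that the two reductions above are genuine bijections rather than mere surjections. As an independent check on disjointness I would verify directly in the building that $e^1_{I_r}$ and $e^1_w$ differ: writing $L_1$ for the terminal lattice and $\lambda$ for the first-coordinate functional, which $Q$ preserves up to a scalar, one finds $\lambda(L_1)=\pi\,\lambda(L_0)$ for $e^1_{I_r}$ but $\lambda(L_1)=\lambda(L_0)$ for $e^1_w$; since $Q$ scales $\lambda(L_0)$ and $\lambda(L_1)$ by the same unit, this ratio is a $Q$-invariant that no element of $Q$ can change.
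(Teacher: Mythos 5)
Your proof is correct, and it takes a genuinely different (more structural) route than the paper's. The paper first exhibits an explicit set of $(q^r-1)/(q-1)$ left-coset representatives of $\cI^1$ in $\GL_r(O_\infty)$, namely the matrices $\left(\begin{smallmatrix} (\vec{u}_s\,|\,0) & I_{r-s}\\ I_s & 0\end{smallmatrix}\right)$, factors each as an element of $P(\Fi)$ times the permutation matrix $\left(\begin{smallmatrix} & I_{r-s}\\ I_s & \end{smallmatrix}\right)$, and then verifies by direct matrix identities that the cosets with $1\leq s\leq r-1$ all coincide; disjointness is handled by a separate argument showing that $w=\left(\begin{smallmatrix} & I_{r-1}\\ 1 & \end{smallmatrix}\right)\in P(\Fi)\Fi^\times\cI^1$ would force $w\in P(\Fi)\Fi^\times\cap\GL_r(O_\infty)\subseteq\cI^1$, which is false. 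You instead enlarge $P(\Fi)$ to the full parabolic $Q=\Fi^\times P(\Fi)$ (harmless, since $\Fi^\times$ is central and is already present on the right of the double coset), use Iwasawa to pass to $Q(O_\infty)\backslash\GL_r(O_\infty)/\cI^1$, and reduce mod $\pi$ to $\overline{Q}\backslash\GL_r(\F_q)/\overline{Q}$, i.e.\ to the two orbits of a point stabilizer acting on $\p^{r-1}(\F_q)$. Both arguments share the same skeleton --- Iwasawa plus the fact that $\cI^1$ is the preimage of a maximal parabolic of $\GL_r(\F_q)$ --- but yours packages the combinatorics as the Bruhat decomposition relative to that parabolic, so the count ``exactly two'' and the disjointness fall out simultaneously with no explicit matrix manipulation; the paper's version, in exchange, produces concrete coset representatives in the same computational style as the sets $T_i$ used throughout Section~2. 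Your closing ``independent check'' via the first-coordinate functional is redundant given the orbit count, though it is sound once $\lambda(L)$ is read as a fractional ideal and one uses only the homothety-invariant ratio $\lambda(L_1)/\lambda(L_0)$.
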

\begin{proof} It is easy to check that the set of matrices 
	$$
	\left\{\begin{pmatrix} (\vec{u}_s | 0) & I_{r-s} \\ I_s & 0\end{pmatrix}\ \bigg|\ 1\leq s\leq r, \ \vec{u_s}\in \Mat_{(r-s)\times 1}(\F_q)\right\}
	$$
	is a set of representatives of the left cosets of $\cI^1$ in $\GL_r(O_\infty)$. Now 
	$$\begin{pmatrix} (\vec{u}_s | 0) & I_{r-s} \\ I_s & 0\end{pmatrix} = \begin{pmatrix}  I_{r-s} & (\vec{u}_s | 0)  \\ 0 & I_s\end{pmatrix}
	\begin{pmatrix} 0 & I_{r-s} \\ I_s & 0\end{pmatrix},
	$$
	and $\begin{pmatrix}  I_{r-s} & (\vec{u}_s | 0)  \\ 0 & I_s\end{pmatrix}\in P(\Fi)$. Hence, from the 
	Iwasawa decomposition $\GL_r(\Fi)=P(\Fi)\Fi^\times \cI^1$ (cf. \cite[Prop. 4.5.2]{Bump}), we obtain 
	$$
	\GL_r(\Fi)=\bigcup_{1\leq s\leq r} P(\Fi) \begin{pmatrix} 0 & I_{r-s} \\ I_s & 0\end{pmatrix} \Fi^\times \cI^1. 
	$$
	Next, observe that for a given $1\leq s\leq r-1$ we have  
	$$
	\begin{pmatrix} 1 & & \\ & & I_{s-1} \\ & I_{r-s} &\end{pmatrix} \begin{pmatrix}  & I_{r-s} \\ I_s & \end{pmatrix} 
	= \begin{pmatrix}  & I_{r-1} \\ 1 & \end{pmatrix} \begin{pmatrix} 1 & & & \\ & & 1 & \\ & I_{s-1} &  & \\ & &  & I_{r-s-1} \end{pmatrix}. 
	$$
	Hence $P(\Fi) \begin{pmatrix} 0 & I_{r-s} \\ I_s & 0\end{pmatrix} \Fi^\times \cI^1 = P(\Fi) \begin{pmatrix} 0 & I_{r-1} \\ 1 & 0\end{pmatrix} \Fi^\times \cI^1$ 
	for all $1\leq s\leq r-1$. It remains to show that $\begin{pmatrix} 0 & I_{r-1} \\ 1 & 0\end{pmatrix} \not\in P(\Fi)  \Fi^\times \cI^1$. 
	If on the contrary $\begin{pmatrix} 0 & I_{r-1} \\ 1 & 0\end{pmatrix} = g \kappa$ with $g\in P(\Fi)  \Fi^\times$ and $\kappa \in \cI^1$, then 
	$g\in P(\Fi)  \Fi^\times \cap \GL_r(O_\infty)\subset \cI^1$. But this implies that $\begin{pmatrix} 0 & I_{r-1} \\ 1 & 0\end{pmatrix}\in \cI^1$, 
	which is obviously false. 
\end{proof}

Let $h$ be a $\CC$-valued function on $\GL_r(\Fi)$. 
Assume $h(g \alpha \kappa)=h(g)$ for all $\alpha\in \Fi^\times$, $\kappa\in \cI^1$. 
Then $h$ can be identified with a $\CC$-valued function on set of left cosets 
$\GL_r(\Fi)/\Fi^\times \cI^1$. It follows from the previous lemma that $h$ is uniquely determined 
by its values on $P(\Fi)$ and $P(\Fi)\begin{pmatrix} 0 & I_{r-1} \\ 1 & 0\end{pmatrix}$. 
In this section we develop a theory of Fourier expansions of functions on $P(F_\infty)$. 

\begin{rem}\label{remFC}
	In later sections we will be primarily interested in harmonic $1$-cochains on $\GL_r(\Fi)$ which arise from the 
	Drinfeld discriminant function. These harmonic $1$-cochains will be left invariant under $\G_0^r(\fn)$ 
	for some nonzero ideal $\fn\lhd A$. In regard to this, we point out that
	\begin{equation}\label{eq-surj}
	\text{ the canonical map } P(F_\infty) \longrightarrow \Gamma_0^r(\nfk) \backslash \GL_r(F_\infty)/F_\infty^\times \Ical^1 \text{ is surjective.}
	\end{equation}
	(This is easy to see using Lemma \ref{lemIwasawaDecom}.) 
	Hence a $\CC$-valued function on $\Gamma_0^r(\nfk) \backslash \GL_r(F_\infty)/F_\infty^\times \Ical^1$ is uniquely determined by its values on $P(F_\infty)$, and so by its Fourier coefficients. 
\end{rem}

Let $h$ be a $\CC$-valued function on $P(F_\infty)$. If $h$ is the restriction of a function on $\GL_r(\Fi)$,  
right invariant under $\Fi^\times \cI^1$, then $h$ is right invariant under $P(O_\infty)=P(\Fi)\cap \cI^1$. 
This assumption appears in most results of this section, and it is a natural assumption since we are primarily interested in 
harmonic $1$-cochains. 

Write a matrix $g\in P(\Fi)$ as
$$
g=\begin{pmatrix} 1 & \vec{x} y \\ & y\end{pmatrix}
$$ 
for a uniquely determined $y\in \GL_{r-1}(\Fi)$ and a row vector $\vec{x}\in \Fi^{r-1}$.
To simplify the notation, we denote 
$$
h(\vec{x}, y)=h\left(\begin{pmatrix} 1 & \vec{x} y \\ & y\end{pmatrix}\right).
$$

Put
$$
\G_\infty=\begin{pmatrix} \GL_1(A) & \ast \\ & \GL_{r-1}(A)\end{pmatrix}\subset \GL_r(A),
$$
and 
$$
\G_\infty'=\left\{\begin{pmatrix} 1 & \vec{a} \\ & I_{r-1}\end{pmatrix}\ \bigg|\ \vec{a}\in A^{r-1} \right\}\subset \G_\infty. 
$$
Assume $$h(\gamma g)=h(g)\quad \text{for all } \gamma\in \G_\infty'.$$ Then for every $\vec{a}\in A^{r-1}$ we have 
\begin{align}\label{eqA^rinvar}
h(\vec{a}+\vec{x}, y) & =h\left(\begin{pmatrix} 1 & (\vec{a}+\vec{x}) y \\ & y\end{pmatrix}\right)\\ 
\nonumber & = h\left(\begin{pmatrix} 1 & \vec{a} \\ & I_{r-1}\end{pmatrix}\begin{pmatrix} 1 & \vec{x} y \\ & y\end{pmatrix}\right)\\ 
\nonumber &=h(\vec{x}, y). 
\end{align}
Fix a nontrivial additive character $\psi_0:\F_p\to \CC^\times$, and let 
\begin{align*}
\psi: \Fi &\To \CC^\times, \\ \sum a_i\pi^i & \longmapsto \psi_0(\Tr_{\F_q/\F_p}(a_1)). 
\end{align*}
The conductor of $\psi$ is $\pi^2O_\infty$ and $\psi(a) = 1$ for every $a \in A$.
Because of \eqref{eqA^rinvar}, $h$ has a Fourier expansion (cf.\ \cite[pp. 19-20]{Weil}, \cite[$\S$2]{Pal}) 
\begin{equation}\label{eqFEh}
h(\vec{x}, y) =\sum_{\vec{a}\in A^{r-1}} h^*(\vec{a}, y)\psi(\vec{a} \bcdot \vec{x}),
\end{equation}
where 
$\vec{a}\bcdot \vec{x}:=\vec{a}\vec{x}^t$ and 
\begin{equation}\label{eqFCh}
h^*(\vec{a}, y)=\int_{A^{r-1}\bs \Fi^{r-1}} h(\vec{u}, y)\psi(-\vec{a}\bcdot \vec{u})\dd \vec{u}
\end{equation}
with the self-dual Haar measure $\dd\vec{u}$ on $\Fi^{r-1}$, i.e., $\Vol((\pi O_\infty)^{r-1}, \dd\vec{u})=1$. 

\begin{lem}\label{lemPropFC}
	Let $h$ be a $\CC$-valued function on $P(F_\infty)/P(O_\infty)$ which is left invariant by $\Gamma_\infty'$.
	Given $\vec{a} \in A^{r-1}$ and $y \in \GL_{r-1}(F_\infty)$, we have:
	\begin{enumerate}
		\item $h^\ast(\vec{a}, y\kappa)=h^*(\vec{a}, y)$ for all $\kappa\in \GL_{r-1}(O_\infty)$. 
		\item $h^\ast(\vec{a}, y)=h^\ast(\vec{a}, y)\psi(\vec{a}\bcdot (\vec{w}y^{-1}))$ for all $\vec{w}\in O_\infty^{r-1}$. 
		\item $h^\ast(\vec{a}, y)=0$ unless $\vec{a}(y^t)^{-1}\in (\pi^2O_\infty)^{r-1}$. 
		Consequently, for $\vec{x} \in \Fi^{r-1}$, we get
		\begin{eqnarray}\label{eq-FE}
		h(\vec{x},y) &=& \sum_{\substack{\vec{a} \in A^{r-1}\\ \vec{a}(y^t)^{-1} \in (\pi^2O_\infty)^{r-1}}} h^*(\vec{a},y)\psi(\vec{a}\bcdot \vec{x}).
		\end{eqnarray}
			\end{enumerate}
\end{lem}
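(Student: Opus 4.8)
The plan is to exploit the right $P(O_\infty)$-invariance of $h$ (where $P(O_\infty)=P(\Fi)\cap\cI^1$), realizing suitable elements of $P(O_\infty)$ as operations on the Iwasawa coordinates $(\vec{x},y)$, translating these into the Fourier integral \eqref{eqFCh}, and finally reading off the support condition in (3) from the conductor of $\psi$. For (1), I would note that $\begin{pmatrix} 1 & \\ & \kappa\end{pmatrix}\in P(O_\infty)$ for $\kappa\in\GL_{r-1}(O_\infty)$, and that right-multiplying $\begin{pmatrix} 1 & \vec{x}y \\ & y\end{pmatrix}$ by it gives $\begin{pmatrix} 1 & \vec{x}(y\kappa) \\ & y\kappa\end{pmatrix}$; hence $h(\vec{x},y\kappa)=h(\vec{x},y)$, and substituting directly into \eqref{eqFCh} yields $h^*(\vec{a},y\kappa)=h^*(\vec{a},y)$.

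For (2), the relevant element is $\begin{pmatrix} 1 & \vec{w} \\ & I_{r-1}\end{pmatrix}\in P(O_\infty)$ for $\vec{w}\in O_\infty^{r-1}$, and right-multiplication shows $h(\vec{x}+\vec{w}y^{-1},y)=h(\vec{x},y)$. The integrand $\vec{u}\mapsto h(\vec{u},y)\psi(-\vec{a}\bcdot\vec{u})$ in \eqref{eqFCh} is $A^{r-1}$-periodic, using the $\G_\infty'$-invariance \eqref{eqA^rinvar} together with $\psi(A)=1$, so translating the integration variable by $\vec{w}y^{-1}$ leaves the integral over the compact quotient $A^{r-1}\bs\Fi^{r-1}$ unchanged. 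Pulling out the resulting phase gives $h^*(\vec{a},y)=\psi(-\vec{a}\bcdot(\vec{w}y^{-1}))\,h^*(\vec{a},y)$, which is (2) after replacing $\vec{w}$ by $-\vec{w}$.

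The crux is (3). From (2) we have $h^*(\vec{a},y)\bigl(1-\psi(\vec{a}\bcdot(\vec{w}y^{-1}))\bigr)=0$ for all $\vec{w}\in O_\infty^{r-1}$, so $h^*(\vec{a},y)=0$ unless $\psi(\vec{a}\bcdot(\vec{w}y^{-1}))=1$ for every such $\vec{w}$. Rewriting $\vec{a}\bcdot(\vec{w}y^{-1})=(\vec{a}(y^t)^{-1})\bcdot\vec{w}$ and letting $\vec{w}$ range over $O_\infty$ in a single coordinate at a time, this condition is equivalent to requiring each coordinate $b_i$ of $\vec{b}:=\vec{a}(y^t)^{-1}$ to satisfy $\psi(b_i O_\infty)=1$. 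Since $b_i O_\infty=\pi^{\ord(b_i)}O_\infty$ and the conductor of $\psi$ is $\pi^2 O_\infty$ (so $\psi$ is trivial on $\pi^m O_\infty$ exactly when $m\geq 2$), this holds iff $\ord(b_i)\geq 2$, i.e.\ $b_i\in\pi^2 O_\infty$. Hence $h^*(\vec{a},y)=0$ unless $\vec{a}(y^t)^{-1}\in(\pi^2 O_\infty)^{r-1}$, and feeding this vanishing into \eqref{eqFEh} produces the truncated expansion \eqref{eq-FE}. I expect the one genuinely delicate point to be this last conductor computation — correctly converting the pointwise character condition $\psi(b_i O_\infty)=1$ into the membership $b_i\in\pi^2 O_\infty$ — whereas the remaining steps are routine bookkeeping with the Iwasawa coordinates and the invariance properties.
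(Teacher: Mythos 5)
Your proposal is correct and follows essentially the same route as the paper: (1) and (2) come from realizing $\begin{pmatrix}1&\\ &\kappa\end{pmatrix}$ and $\begin{pmatrix}1&\vec{w}\\ &I_{r-1}\end{pmatrix}$ as elements of $P(O_\infty)$ and using the translation invariance of the integral over $A^{r-1}\bs\Fi^{r-1}$, while (3) is exactly the paper's argument of testing the identity in (2) coordinatewise against the conductor $\pi^2O_\infty$ of $\psi$. No gaps.
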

\begin{proof}
	(1) This follows from 
	$$
	h(\vec{a}, y\kappa)=h\left(\begin{pmatrix} 1 & \vec{a} y\kappa \\ & y\kappa\end{pmatrix}\right)=
	h\left(\begin{pmatrix} 1 & \vec{a} y \\ & y\end{pmatrix}\begin{pmatrix} 1 &  \\ & \kappa\end{pmatrix}\right)=
	h\left(\begin{pmatrix} 1 & \vec{a} y \\ & y\end{pmatrix}\right)=h(\vec{a}, y). 
	$$
	
	(2) Given $\vec{w} \in O_\infty^{r-1}$, we have $\begin{pmatrix} 1 & \vec{w}\\ & I_{r-1}\end{pmatrix}\in P(O_\infty)$.
	Thus
	\begin{align*}
	h^\ast(\vec{a}, y)\psi(\vec{a}\bcdot (\vec{w}y^{-1}))
	& = \int_{A^{r-1}\bs \Fi^{r-1}} h(\vec{u}, y)\psi(\vec{a}\bcdot (\vec{w}y^{-1}-\vec{u}))\dd \vec{u} \\
	&= \int_{A^{r-1}\bs \Fi^{r-1}} h(\vec{u}+\vec{w}y^{-1}, y)\psi(-\vec{a}\bcdot \vec{u})\dd \vec{u} \\
	&=\int_{A^{r-1}\bs \Fi^{r-1}} h\left(\begin{pmatrix} 1 & \vec{u}y\\ & y\end{pmatrix}\begin{pmatrix} 1 & \vec{w}\\ & 1\end{pmatrix}\right)\psi(-\vec{a}\bcdot \vec{u})\dd \vec{u}\\ 
	&=\int_{A^{r-1}\bs \Fi^{r-1}} h(\vec{u}, y)\psi(-\vec{a}\bcdot \vec{u})\dd \vec{u} \\
	&= h^*(\vec{a},y). 
	\end{align*}
	
	(3) The dot-product $\vec{a}\bcdot (\vec{w}y^{-1})$ is equal to $(\vec{a}(y^t)^{-1})\bcdot \vec{w}$. 
	Let $\vec{a}(y^t)^{-1}=(\alpha_1, \dots, \alpha_{r-1})$. If $\alpha_i\not\in \pi^2O_\infty$ for some $1\leq i\leq r-1$, 
	then we can find $w_i\in O_\infty$ such that $\psi(\alpha_iw_i)\neq 1$. Taking $\vec{w}=(0, \dots, w_i, 0,\dots 0)$, we 
	get $\psi(\vec{a}\bcdot (\vec{w}y^{-1}))\neq 1$. By (2), this implies $h^\ast(\vec{a}, y)=0$.  
\end{proof}

\begin{lem}\label{lemPropFC2} 
Let $h$ be a $\CC$-valued function on $P(F_\infty)/P(O_\infty)$ which is left invariant under $\G_\infty$.
For all $\gamma\in \GL_{r-1}(A)$ we have 
$$
h^\ast(\vec{a}\gamma^t, \gamma y)=h^*(\vec{a}, y). 
$$ 
\end{lem}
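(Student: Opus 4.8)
The plan is to first extract from the left $\G_\infty$-invariance a clean transformation law for $h$ in the variables $(\vec{x},y)$, and then to substitute this law into the integral \eqref{eqFCh} defining the Fourier coefficient, changing variables so as to recover $h^*(\vec{a}, y)$.

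First I would apply invariance under the block-diagonal matrix $\begin{pmatrix} 1 & \\ & \gamma\end{pmatrix}$, which lies in $\G_\infty$ for any $\gamma\in \GL_{r-1}(A)$ since its $(1,1)$-entry is $1\in \GL_1(A)$. Computing
$$
\begin{pmatrix} 1 & \\ & \gamma\end{pmatrix}\begin{pmatrix} 1 & \vec{x}y \\ & y\end{pmatrix}
= \begin{pmatrix} 1 & \vec{x}y \\ & \gamma y\end{pmatrix}
= \begin{pmatrix} 1 & (\vec{x}\gamma^{-1})(\gamma y) \\ & \gamma y\end{pmatrix},
$$
the left invariance of $h$ yields $h(\vec{x}\gamma^{-1}, \gamma y)=h(\vec{x}, y)$, equivalently
$$
h(\vec{u}, \gamma y)=h(\vec{u}\gamma, y)\qquad \text{for all } \vec{u}\in \Fi^{r-1}.
$$

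Next I would feed this into the defining integral. From \eqref{eqFCh} and the transpose identity $(\vec{a}\gamma^t)\bcdot \vec{u}=\vec{a}\gamma^t\vec{u}^t=\vec{a}(\vec{u}\gamma)^t=\vec{a}\bcdot(\vec{u}\gamma)$,
$$
h^*(\vec{a}\gamma^t, \gamma y)=\int_{A^{r-1}\bs \Fi^{r-1}} h(\vec{u}, \gamma y)\,\psi\big(-(\vec{a}\gamma^t)\bcdot \vec{u}\big)\,\dd \vec{u}
=\int_{A^{r-1}\bs \Fi^{r-1}} h(\vec{u}\gamma, y)\,\psi\big(-\vec{a}\bcdot (\vec{u}\gamma)\big)\,\dd \vec{u}.
$$
The change of variables $\vec{v}=\vec{u}\gamma$ then converts the right-hand side into $\int h(\vec{v}, y)\psi(-\vec{a}\bcdot \vec{v})\,\dd\vec{v}=h^*(\vec{a}, y)$, as desired.

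The one point requiring care — and the main (if modest) obstacle — is justifying that this substitution is legitimate on the compact quotient $A^{r-1}\bs \Fi^{r-1}$. Because $\gamma\in \GL_{r-1}(A)$ has both $\gamma$ and $\gamma^{-1}$ with entries in $A$, right multiplication by $\gamma$ is an $A$-linear automorphism of $\Fi^{r-1}$ carrying the lattice $A^{r-1}$ bijectively onto itself; hence it descends to a bijection of the quotient. Moreover $\det\gamma\in A^\times=\F_q^\times$, so $\abs{\det\gamma}=1$ and the self-dual Haar measure $\dd\vec{u}$ is preserved. With these two facts the change of variables is valid and the computation closes.
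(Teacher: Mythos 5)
Your proposal is correct and follows essentially the same route as the paper: use left invariance under the block matrix $\begin{pmatrix} 1 & \\ & \gamma\end{pmatrix}\in \G_\infty$ to get $h(\vec{u},\gamma y)=h(\vec{u}\gamma, y)$, rewrite the character via $(\vec{a}\gamma^t)\bcdot\vec{u}=\vec{a}\bcdot(\vec{u}\gamma)$, and conclude by the change of variables $\vec{u}\mapsto\vec{u}\gamma^{-1}$. Your explicit justification that $\gamma$ preserves the lattice $A^{r-1}$ and the Haar measure (since $\det\gamma\in\F_q^\times$) is left implicit in the paper but is exactly the right point to check.
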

\begin{proof}
	\begin{align*}
	h^\ast(\vec{a}\gamma^t, \gamma y) & = \int_{A^{r-1}\bs \Fi^{r-1}} h(\vec{u}, \gamma y)\psi(-(\vec{a}\gamma^t)\bcdot \vec{u})\dd \vec{u} \\
	& = \int_{A^{r-1}\bs \Fi^{r-1}} h\left(\begin{pmatrix} 1 & \\ & \gamma\end{pmatrix} 
	\begin{pmatrix} 1 & (\vec{u}\gamma) y \\ & y\end{pmatrix}\right)
	\psi(-\vec{a}\bcdot (\vec{u}\gamma ))\dd \vec{u} \\
	& = \int_{A^{r-1}\bs \Fi^{r-1}} h\left(
	\begin{pmatrix} 1 & (\vec{u}\gamma) y \\ & y\end{pmatrix}\right)
	\psi(-\vec{a}\bcdot (\vec{u}\gamma ))\dd \vec{u} \\
	&=	h^\ast(\vec{a}, y), 
	\end{align*}
	where the last equality follows from the change of variables $\vec{u}\mapsto \vec{u}\gamma^{-1}$ in the integral. 
\end{proof}

\begin{rem}\label{rem-FC}
Let $h$ be a $\CC$-valued function on $P(F_\infty)/P(O_\infty)$ which is left invariant under $\G_\infty$.
\begin{itemize}
\item[(i)] It is a fact that every element of the double coset space 
$$
\GL_{r-1}(A)\bs \GL_{r-1}(\Fi)/\GL_{r-1}(O_\infty)
$$
is represented by a diagonal matrix
\begin{equation}\label{eq-yDiagForm}
\diag(T^{n_1}, \dots, T^{n_{r-1}}), \quad n_1, \dots, n_{r-1}\in \Z.
\end{equation}
	Note that Lemma \ref{lemPropFC} (3) implies that the Fourier expansion \eqref{eqFEh} of $h$ is in fact a finite sum. More 
	precisely, using Lemma \ref{lemPropFC2}, we can assume without loss of generality that $y=\diag(T^{n_1}, \dots, T^{n_{r-1}})$. Then 
	\begin{equation}\label{eqFrExpansion}
	h(\vec{x}, y)=\sum_{\substack{\vec{a}=(a_1, \dots, a_{r-1})\in A^{r-1}\\ \deg(a_i)\leq n_i-2}} h^\ast(\vec{a}, y)\psi(\vec{a}\bcdot \vec{x}). 
	\end{equation}
\item[(ii)] Given $y \in \GL_{r-1}(\Fi)$ and $\vec{a} \in A^{r-1}$,
we know from Lemma~\ref{lemPropFC2} (3) that $h^*(\vec{a},y) = 0$ if $\vec{a}(y^t)^{-1} \notin (\pi^2O_\infty)^{r-1}$.
When $\vec{a} (y^t)^{-1} \in (\pi^2O_\infty)^{r-1}$,
the integral \eqref{eqFCh} representing the Fourier coefficient $h^\ast(\vec{a}, y)$ is also a finite sum. To see this, by (i) of this remark and Lemma~\ref{lemPropFC2} we may assume $y = \diag(T^{n_1},...,T^{n_{r-1}})$.
Let  
$$
m=\max\{
n_1, \dots, n_{r-1}\}.
$$ 
For every $\vec{w}\in (\pi^m O_\infty)^{r-1}$, one has $\vec{w}y\in O_\infty^{r-1}$ and $\vec{a}\bcdot \vec{w}\in \pi^2 O_\infty$.
Thus,
$$
h(\vec{u}+\vec{w}, y)=h\left(\begin{pmatrix} 1 & (\vec{u}+\vec{w}) y \\ & y\end{pmatrix}\right)
= h\left(\begin{pmatrix} 1 & \vec{u} y \\ & y\end{pmatrix}\begin{pmatrix} 1 & \vec{w}y \\ & I_{r-1}\end{pmatrix}\right)=h(\vec{u}, y) 
$$ 
(due to the right $P(O_\infty)$-invariance of $h$) and
$\psi(-\vec{a}\bcdot (\vec{u}+\vec{w}))=\psi(-\vec{a}\bcdot \vec{u})$. 
Now note that 
$F_\infty=A+\pi O_\infty$.
Using the invariance of the integrand under the action of $(\pi^m O_\infty)^{r-1}$, we get 
\begin{align}\label{eqhFCsum}
h^\ast(\vec{a}, y) & =\int_{(\pi O_\infty)^{r-1}} h(\vec{u}, y)\psi(-\vec{a}\bcdot \vec{u})\dd \vec{u}\\ \nonumber 
&= q^{(1-m)(r-1)}\sum_{\vec{u}\in (\pi O_\infty/\pi^m O_\infty)^{r-1}} h(\vec{u}, y)\psi(-\vec{a}\bcdot \vec{u}), 
\end{align}
where $q^{(1-m)(r-1)}=\Vol((\pi^m O_\infty)^{r-1},\dd \vec{u})$.
\end{itemize}
\end{rem}

Let $h$ be a $\CC$-valued function on $\Gamma_\infty \backslash P(F_\infty)/P(O_\infty)$ satisfying 
\begin{equation}\label{eq-(1.5)-var}
\sum_{\vec{u} \in \FF_q^{s-1}}
h\left(g \begin{pmatrix} 
1 & T \cdot \vec{u} & \\ & T I_{s-1} & \\ & & T I_{r-s}
\end{pmatrix}\right)
= h\left(g \begin{pmatrix} I_s & \\ & TI_{r-s}\end{pmatrix}\right)\quad \forall g\in P(F_\infty), 1\leq s \leq r.
\end{equation}
We remark that if $h$ is the restriction of a harmonic $1$-cochain to $P(F_\infty)$, then \eqref{eq-(1.5)-var} coincides with \eqref{eq-prop-harm-1}.
For $1\leq i\leq r-1$ and $\alpha \in \Fi^\times$, let $$d_i(\alpha) := \diag(1,...,1,\alpha,1,...,1) \in \GL_r(F_\infty),$$ 
where $\alpha$ lies on the $i$-th diagonal entry.
We now translate property~\eqref{eq-(1.5)-var} into a condition on the Fourier coefficients:

\begin{lem}\label{lem-hp-FC}
    Let $h$ be a $\CC$-valued function on $\Gamma_\infty \backslash P(F_\infty)/P(O_\infty)$.
    Then $h$ has property \eqref{eq-(1.5)-var} if and only if for all $y \in \GL_{r-1}(\Fi)$ and $\vec{a} \in A^{r-1}$ with $\vec{a}(y^t)^{-1} \in (\pi^2O_\infty)^{r-1}$,
    we have
    \begin{eqnarray}\label{eq-prop-harm-1-FC}
        h^*\big(\vec{a},yd_i(T)\big) = q^{-1}h^*(\vec{a},y), \quad \text{for all } 1\leq i \leq r-1.
    \end{eqnarray}
\end{lem}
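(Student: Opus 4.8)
The plan is to turn the harmonicity relation \eqref{eq-(1.5)-var} into an identity among Fourier coefficients by an explicit matrix computation, and then to match that identity with \eqref{eq-prop-harm-1-FC} by a telescoping argument combined with a Gauss-sum evaluation governed by the conductor of $\psi$. First I would write $g=\begin{pmatrix} 1 & \vec{x}y \\ & y\end{pmatrix}\in P(\Fi)$ and evaluate the two matrices in \eqref{eq-(1.5)-var} in $(1,r-1)$ block form. Using the factorization
$$
\begin{pmatrix} 1 & T\vec{u} & \\ & TI_{s-1} & \\ & & TI_{r-s}\end{pmatrix}
=\begin{pmatrix} 1 & \vec{u}^{(s)} \\ & I_{r-1}\end{pmatrix}\begin{pmatrix} 1 & \\ & TI_{r-1}\end{pmatrix},
\qquad \vec{u}^{(s)}:=(\vec{u}\mid 0)\in \F_q^{r-1},
$$
a direct multiplication shows the left-hand summand equals $h(\vec{x}+\vec{u}^{(s)}y^{-1},\,Ty)$, while the right-hand matrix $\diag(I_s,TI_{r-s})$ has block form $\diag(1,D_s)$ with $D_s:=\diag(I_{s-1},TI_{r-s})$, so the right-hand side of \eqref{eq-(1.5)-var} equals $h(\vec{x},yD_s)$. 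Thus \eqref{eq-(1.5)-var} is equivalent to $\sum_{\vec{u}\in\F_q^{s-1}}h(\vec{x}+\vec{u}^{(s)}y^{-1},Ty)=h(\vec{x},yD_s)$ for all $\vec{x}\in\Fi^{r-1}$, $y\in\GL_{r-1}(\Fi)$, and $1\le s\le r$.

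Next I would expand both sides via \eqref{eqFEh} and compare coefficients in $\vec{x}$; this is legitimate since both sides are $A^{r-1}$-periodic in $\vec{x}$ by \eqref{eqA^rinvar}. Writing $\vec{\beta}=\vec{b}(y^t)^{-1}=(\beta_1,\dots,\beta_{r-1})$ and using $\vec{b}\bcdot(\vec{u}^{(s)}y^{-1})=\sum_{j=1}^{s-1}\beta_j u_j$, the coefficient at frequency $\vec{b}$ gives
$$
h^*(\vec{b},Ty)\,S_s(\vec{\beta})=h^*(\vec{b},yD_s),\qquad S_s(\vec{\beta}):=\prod_{j=1}^{s-1}\Big(\sum_{u\in\F_q}\psi(\beta_j u)\Big).
$$
The key computational input is that, since $\psi$ has conductor $\pi^2O_\infty$, the inner sum equals $q$ when the coefficient of $\pi$ in $\beta_j$ vanishes (in particular when $\beta_j\in\pi^2O_\infty$) and equals $0$ otherwise. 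Because $D_s=D_{s+1}d_s(T)$ and $S_{s+1}=S_s\cdot\sum_u\psi(\beta_s u)$, multiplying the $s$-th coefficient identity by $\sum_u\psi(\beta_s u)$ and comparing with the $(s+1)$-st produces the two-term recursion
$$
h^*(\vec{b},yD_{s+1})=\Big(\sum_{u\in\F_q}\psi(\beta_s u)\Big)h^*(\vec{b},yD_s),\qquad 1\le s\le r-1.
$$
Conversely, telescoping this recursion downward from $D_r=I_{r-1}$ recovers each $s$-th coefficient identity (the case $s=1$ being trivial), so \eqref{eq-(1.5)-var} is equivalent to the recursion holding for all $s$.

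It then remains to identify the recursion with \eqref{eq-prop-harm-1-FC}. Setting $z=yD_{s+1}$, so that $yD_s=zd_s(T)$, and putting $\vec{\gamma}=\vec{b}(z^t)^{-1}$, one checks $\beta_s=\gamma_s$, so the recursion reads $h^*(\vec{b},z)=\big(\sum_u\psi(\gamma_s u)\big)h^*(\vec{b},zd_s(T))$. Under the hypothesis $\vec{b}(z^t)^{-1}\in(\pi^2O_\infty)^{r-1}$ of \eqref{eq-prop-harm-1-FC} we have $\gamma_s\in\pi^2O_\infty$, so the Gauss sum is $q$ and the recursion becomes precisely $h^*(\vec{b},zd_s(T))=q^{-1}h^*(\vec{b},z)$. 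For the converse I would verify the recursion for every $z$ by cases: if $\vec{\gamma}\in(\pi^2O_\infty)^{r-1}$ both sides agree as above; if $\vec{\gamma}\notin(\pi^2O_\infty)^{r-1}$ then $h^*(\vec{b},z)=0$ by Lemma \ref{lemPropFC}(3), while on the right either $h^*(\vec{b},zd_s(T))=0$ or else the support condition for $zd_s(T)$ forces $\gamma_s\in\pi O_\infty\setminus\pi^2O_\infty$, making the Gauss sum vanish.

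I expect the main obstacle to be the bookkeeping in this last step: the diagonal twist $d_s(T)$ shifts the support condition of Lemma \ref{lemPropFC}(3) by exactly one power of $\pi$ in the $s$-th coordinate, and one must check that this shift is compensated precisely by the vanishing/non-vanishing dichotomy of the Gauss sum, so that the recursion holds unconditionally. The opening matrix factorization also requires care to track correctly how the shift $\vec{u}$ acts on the $\vec{x}$-variable after conjugation by $y$.
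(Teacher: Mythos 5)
Your proposal is correct and follows essentially the same route as the paper: plug the block-diagonal matrices into \eqref{eq-(1.5)-var}, expand both sides in Fourier series, evaluate the resulting character sums via the conductor of $\psi$, and compare coefficients (the paper works with the cumulative product $q^{s-1}$ and the twisted variable $y_s=(y\tilde d_s(T))T^{-1}$, while you factor it into one-step Gauss-sum recursions, but this is only a reorganization of the same computation). Your explicit case analysis in the converse direction is a slightly more careful rendering of what the paper leaves implicit.
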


\begin{proof}
    For $1\leq s \leq r$, put $\tilde{d}_1(T) = I_{r-1}$ and
    $$
    \tilde{d}_s(T):= \prod_{1\leq i \leq s-1}d_i(T) = \begin{pmatrix}
    T\cdot I_{s-1} & \\ & I_{r-s}\end{pmatrix}.
    $$
    Then \eqref{eq-prop-harm-1-FC} is equivalent to 
    $$
    q^{s-1} h^*\big(\vec{a},y\tilde{d}_s(T)\big) \ = \ h^*(\vec{a},y), \quad \text{for all } 1\leq s \leq r.
    $$
    
    Let
    $$y_s := y \cdot \begin{pmatrix} I_{s-1} & \\ & T^{-1} I_{r-s}\end{pmatrix}
    = (y \tilde{d}_s(T)) \cdot T^{-1}.
    $$
    For $\vec{x} \in F_\infty^{r-1}$, put $g = \begin{pmatrix}1 & \vec{x}y_s \\ & y_s \end{pmatrix}$ into both sides of \eqref{eq-(1.5)-var}.
    We translate property \eqref{eq-(1.5)-var} into
    \begin{equation}\label{eq-harmonic-1var}
    \sum_{\vec{u} \in \FF_q^{s-1}} h \big(\vec{x}+(\vec{u},0)y_s^{-1}, Ty_s\big)
    = h(\vec{x},y).
    \end{equation}
    From the Fourier expansion of $h$, the left hand side of \eqref{eq-(1.5)-var} equals to 
    \begin{align*}
    & \sum_{\vec{u} \in \FF_q^{s-1}} h \big(\vec{x}+(\vec{u},0)y_s^{-1}, Ty_s\big) \nonumber \\
    &=
    \sum_{\substack{\vec{a} \in A^{r-1}\\ \vec{a}(Ty_s^t)^{-1} \in (\pi^2O_\infty)^{r-1}}}
    h^*(\vec{a},Ty_s)\psi(\vec{a}\bcdot\vec{x})\cdot \left(\sum_{\vec{u} \in \FF_q^{s-1}}\psi\big(\vec{a}(y_s^t)^{-1} \bcdot (\vec{u},0)\big)\right) \nonumber \\
    &=
    \sum_{\substack{\vec{a} \in A^{r-1}\\ \vec{a}(y^t)^{-1} \in (\pi^2O_\infty)^{r-1}}}
    \bigg(q^{s-1}h^*\big(\vec{a},y\tilde{d}_s(T)\big)\bigg)\psi(\vec{a}\bcdot \vec{x}),
    \nonumber
    \end{align*}
    where the last equality follows from the fact that $y = y_s \cdot \begin{pmatrix} I_{s-1}& \\ & T I_{r-s}\end{pmatrix}$ and for $\vec{a} \in A^{r-1}$ with $\vec{a}(Ty_s^t)^{-1} \in (\pi^2O_\infty)^{r-1}$ we have
    $$
    \sum_{\vec{u} \in \FF_q^{s-1}} \psi\big(\vec{a}(y_s^t)^{-1}\bcdot (\vec{u},0)\big)
    = \begin{cases} q^{s-1} & \text{ if $\vec{a}(y^t)^{-1} \in (\pi^2O_\infty)^{r-1}$,} \\ 
    0 & \text{ otherwise.}
    \end{cases}
    $$
    Meanwhile, the right hand side of \eqref{eq-harmonic-1var} becomes 
    \begin{equation}
    h(\vec{x},y)
    = \sum_{\substack{\vec{a} \in A^{r-1}\\ \vec{a}(y^t)^{-1} \in (\pi^2O_\infty)^{r-1}}}
    h^*(\vec{a},y)\psi(\vec{a}\bcdot\vec{x}).
    \end{equation}
    Therefore the uniqueness of the Fourier expansion of $h$ implies that property \eqref{eq-(1.5)-var} is equivalent to 
    $$
    q^{s-1} h^*\big(\vec{a},y\tilde{d}_s(T)\big) \ = \ h^*(\vec{a},y), \quad \text{for all }1\leq s \leq r.
    $$
    This completes the proof.
\end{proof}

\begin{notn}\label{notn-c}
	Let $\vec{a}=(a_1, \dots, a_{r-1})\in A^{r-1}$. Put  
	$$
	y_{\vec{a}}=\diag(T^{\alpha_1}, \dots, T^{\alpha_{r-1}}), 
	$$
	where 
	$$
	\alpha_i=\max(2, \deg(a_i)+2), \quad 1\leq i\leq r-1,
	$$
Denote 
$$
c_{\vec{a}}(h)\ =\ |\det y_{\vec{a}}| \cdot  q^{2(1-r)}  \cdot h^*(\vec{a},y_{\vec{a}}).
$$
\end{notn}

Then Lemma~\ref{lem-hp-FC} ensures:

\begin{lem}\label{lem-c-invariant}
    Let $h$ be a $\CC$-valued function on $\Gamma_\infty\backslash P(\Fi)/P(O_\infty)$ which satisfies \eqref{eq-(1.5)-var}.
    Then for every $\vec{a} \in A^{r-1}$ and $\gamma \in \GL_{r-1}(A)$, we have
    $$
    c_{\vec{a}\gamma}(h) = c_{\vec{a}}(h).$$
\end{lem}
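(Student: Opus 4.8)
The plan is to absorb the determinant factor into a renormalized coefficient and then show that, for a fixed first argument, this renormalized coefficient is in fact independent of the diagonal matrix used to define it. First I would set $\phi(\vec{a}, y) := |\det y|\cdot h^*(\vec{a}, y)$, so that $c_{\vec{a}}(h) = q^{2(1-r)}\phi(\vec{a}, y_{\vec{a}})$ by Notation~\ref{notn-c}. The three transformation rules available for $h^*$ then translate cleanly into rules for $\phi$. Since $|T| = q$, the relation $h^*(\vec{a}, y d_i(T)) = q^{-1}h^*(\vec{a}, y)$ of Lemma~\ref{lem-hp-FC} becomes
$$\phi(\vec{a}, y d_i(T)) = \phi(\vec{a}, y), \qquad \text{valid whenever } \vec{a}(y^t)^{-1}\in(\pi^2 O_\infty)^{r-1}.$$
Because $|\det\kappa| = 1$ for $\kappa\in\GL_{r-1}(O_\infty)$ and $|\det\gamma| = 1$ for $\gamma\in\GL_{r-1}(A)$, Lemma~\ref{lemPropFC}(1) and Lemma~\ref{lemPropFC2} give $\phi(\vec{a}, y\kappa) = \phi(\vec{a}, y)$ and $\phi(\vec{a}\gamma^t, \gamma y) = \phi(\vec{a}, y)$, respectively.

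Since transposition permutes $\GL_{r-1}(A)$, it is enough to prove $c_{\vec{a}\gamma^t}(h) = c_{\vec{a}}(h)$. Writing $\vec{b} = \vec{a}\gamma^t$, the third rule above gives $\phi(\vec{b}, \gamma y_{\vec{a}}) = \phi(\vec{a}, y_{\vec{a}})$, so the claim reduces to the single equality $\phi(\vec{b}, \gamma y_{\vec{a}}) = \phi(\vec{b}, y_{\vec{b}})$.

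The heart of the matter is to show that, for a fixed $\vec{b}$, the function $y\mapsto \phi(\vec{b}, y)$ is constant on the set $S_{\vec{b}} := \{y\in\GL_{r-1}(\Fi)\mid \vec{b}(y^t)^{-1}\in(\pi^2 O_\infty)^{r-1}\}$. To prove this I would first check that $S_{\vec{b}}$ is stable under right multiplication by the multiplicative monoid $\mathcal{M}$ generated by $\GL_{r-1}(O_\infty)$ and the matrices $d_i(T)$: multiplying by $d_i(T)$ rescales the $i$-th entry of $\vec{b}(y^t)^{-1}$ by $\pi$ and hence keeps it in $(\pi^2 O_\infty)^{r-1}$, while multiplication by $\GL_{r-1}(O_\infty)$ preserves the lattice. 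By the two displayed $\phi$-rules, $\phi(\vec{b}, \cdot)$ is right $\mathcal{M}$-invariant on $S_{\vec{b}}$. Finally, any two $y, y'\in S_{\vec{b}}$ admit a common right multiple inside $\mathcal{M}$: the matrix $T^N I_{r-1}$ lies in $\mathcal{M}$, and for $N$ large the matrix $(y')^{-1}y\,(T^N I_{r-1})$ has integral inverse, hence (by the elementary divisor theorem) also lies in $\mathcal{M}$; the identity $y\,(T^N I_{r-1}) = y'\cdot(y')^{-1}y\,(T^N I_{r-1})$ then yields $\phi(\vec{b}, y) = \phi(\vec{b}, y')$. It remains to observe that both relevant matrices lie in $S_{\vec{b}}$: for $y_{\vec{b}}$ this is immediate from the choice of exponents $\max(2,\deg b_i + 2)$ in Notation~\ref{notn-c}, and for $\gamma y_{\vec{a}}$ a direct computation gives $\vec{b}\big((\gamma y_{\vec{a}})^t\big)^{-1} = \vec{a}(y_{\vec{a}}^t)^{-1}\in(\pi^2 O_\infty)^{r-1}$.

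The main obstacle I anticipate is the one-sided nature of Lemma~\ref{lem-hp-FC}: the rule $\phi(\vec{b}, y d_i(T)) = \phi(\vec{b}, y)$ only permits one to \emph{increase} the diagonal exponents while staying inside $S_{\vec{b}}$, so one cannot directly move from $\gamma y_{\vec{a}}$ ``down'' to $y_{\vec{b}}$. The common-right-multiple trick, which clears denominators with a high power of $T$, is precisely what circumvents this, and the only delicate point is the bookkeeping needed to guarantee that the condition $\vec{b}(y^t)^{-1}\in(\pi^2 O_\infty)^{r-1}$ is preserved at each elementary step of the multiplication.
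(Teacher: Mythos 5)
Your proof is correct, but it is organized quite differently from the paper's. The paper reduces to generators of $\GL_{r-1}(A)$: it checks the claim for permutation matrices and diagonal matrices over $\F_q^\times$, and then, for an elementary matrix $\gamma$ with off-diagonal entry $a_{ij}$, chooses a diagonal $y=\diag(T^{n_1},\dots,T^{n_{r-1}})$ with exponents so large that $(\gamma^t)^{-1}y=y\kappa_{ij}$ with $\kappa_{ij}\in\GL_{r-1}(O_\infty)$, whence $h^*(\vec{a}\gamma,y)=h^*(\vec{a},y)$ by Lemmas \ref{lemPropFC2} and \ref{lemPropFC}(1); it then moves between $y$, $y_{\vec{a}}$ and $y_{\vec{a}\gamma}$ using Lemma \ref{lem-hp-FC}, and finally invokes the fact that every element of $\GL_{r-1}(A)$ is a product of elementary matrices. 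You instead prove the stronger statement that $y\mapsto|\det y|\,h^*(\vec{b},y)$ is constant on the whole admissible set $S_{\vec{b}}$, via the common-right-multiple trick in the monoid generated by $\GL_{r-1}(O_\infty)$ and the $d_i(T)$ (the Cartan decomposition guaranteeing that any matrix with integral inverse lies in that monoid), and then apply this constancy to the two points $\gamma y_{\vec{a}}$ and $y_{\vec{b}}$ of $S_{\vec{b}}$. Your route handles an arbitrary $\gamma\in\GL_{r-1}(A)$ in one stroke, needs no input about generation of $\GL_{r-1}(A)$ by elementary matrices, and in fact establishes as a byproduct exactly the constancy statement that the paper only derives afterwards in Proposition \ref{prop3.10}; the price is the extra monoid bookkeeping, which you have carried out correctly (in particular the verification that $S_{\vec{b}}$ is stable under right multiplication by the generators, so that Lemma \ref{lem-hp-FC} may be applied at every intermediate step, and the computation $\vec{b}\bigl((\gamma y_{\vec{a}})^t\bigr)^{-1}=\vec{a}(y_{\vec{a}}^t)^{-1}$ placing $\gamma y_{\vec{a}}$ in $S_{\vec{b}}$).
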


\begin{proof}
    The claim is clear when $\gamma \in \GL_{r-1}(A)$ is a permutation matrix or $\gamma = \diag(\epsilon_1,...,\epsilon_{r-1})$ with $\epsilon_1,...,\epsilon_{r-1} \in \FF_q^\times$. Given $1\leq i<j\leq r-1$ and $a_{ij} \in A$, suppose
    $$(\gamma^t)^{-1}
    = \begin{pmatrix} 1 & & & \\
    & \ddots & a_{ij} & \\
    & & \ddots &   \\
    & & & 1\end{pmatrix} \in \GL_{r-1}(A).$$
    Observe that for $y = \diag(T^{n_1},...,T^{n_{r-1}})$, one has
    $$(\gamma^t)^{-1} y
    = y \kappa_{ij} \quad \text{ where } \quad \kappa_{ij} = \begin{pmatrix} 1 & & & \\
    & \ddots & a_{ij}T^{n_j-n_i} & \\
    & & \ddots &   \\
    & & & 1\end{pmatrix}.
    $$
    Given $\vec{a} \in A^{r-1}$, we may take $n_1,...,n_{r-1}$ sufficiently large so that:
$$\vec{a}(y^t)^{-1} \in (\pi^2O_\infty)^{r-1},
\quad \vec{a}\gamma (y^t)^{-1}  \in (\pi^2O_\infty)^{r-1}, \quad \text{ and } \quad 
a_{ij}T^{n_j-n_i} \in O_\infty.$$
Then Lemma~\ref{lemPropFC2} and Lemma \ref{lemPropFC} (1) imply
$$h^*(\vec{a}\gamma,y) = h^*(\vec{a},(\gamma^t)^{-1} y)
=h^*(\vec{a},y \kappa_{ij}) = h^*(\vec{a},y).
$$
Note that $y$, $y_{\vec{a}}$, and $y_{\vec{a}\gamma}$ are all diagonal matrices. Therefore
\begin{align*}
q^{2(r-1)}\cdot c_{\vec{a}\gamma}(h)
& = |\det (y_{\vec{a}\gamma})| \cdot h^*(\vec{a}\gamma, y_{\vec{a}\gamma}) \\ 
&= |\det (y_{\vec{a}\gamma})|\cdot |\det (y_{\vec{a}\gamma}^{-1} y)| \cdot h^*(\vec{a}\gamma,y) \quad (\text{Lemma~\ref{lem-hp-FC}})\\
& = |\det(y)| \cdot h^*(\vec{a},y)\\ 
&=  |\det(y_{\vec{a}})| \cdot h^*(\vec{a},y_{\vec{a}})\quad (\text{Lemma~\ref{lem-hp-FC}})\\
& = q^{2(r-1)}c_{\vec{a}}(h). 
\end{align*}

Since every element in $\GL_{r-1}(A)$ is a product of the elementary matrices, this invariance property holds for every $\gamma \in \GL_{r-1}(A)$.
\end{proof}

We conclude that:

\begin{prop}\label{prop3.10}  A $\CC$-valued function $h$ on $\G_\infty\backslash P(\Fi)/P(O_\infty)$ 
satisfies \eqref{eq-(1.5)-var} if and only if for all 
	 $y \in \GL_{r-1}(F_\infty)$
	 and $\vec{a} \in A^{r-1}$ with $\vec{a}(y^t)^{-1} \in (\pi^2O_\infty)^{r-1}$
	 we have 
	$$
	h^\ast(\vec{a}, y)=|\det(y)|^{-1}\cdot q^{2(r-1)}\cdot c_{\vec{a}}(h).
	$$
	In this case, we may write the Fourier expansion of $h$ as
	\begin{equation}\label{eqFE_under1.5}
	h(\vec{x}, y)=|\det(y)|^{-1}\cdot q^{2(r-1)} \sum_{\substack{\vec{a}\in A^{r-1}\\ \vec{a}(y^t)^{-1} \in (\pi^2O_\infty)^{r-1}}}
	c_{\vec{a}}(h)\psi(\vec{a}\bcdot \vec{x}), \quad \forall \vec{x} \in \Fi^{r-1}. 
	\end{equation}
\end{prop}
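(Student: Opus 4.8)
The plan is to prove the two implications separately, with essentially all the content residing in the \emph{only if} direction. In both directions the engine is the relation $h^*(\vec{a}, y d_i(T)) = q^{-1}h^*(\vec{a}, y)$ supplied by Lemma~\ref{lem-hp-FC}. The \emph{if} direction is immediate: assuming the closed formula $h^*(\vec{a}, y) = |\det(y)|^{-1} q^{2(r-1)} c_{\vec{a}}(h)$ for every admissible pair $(\vec{a}, y)$ (i.e.\ with $\vec{a}(y^t)^{-1}\in(\pi^2O_\infty)^{r-1}$), and noting that $c_{\vec{a}}(h)$ depends on $\vec{a}$ alone while $|\det(y d_i(T))| = q\,|\det(y)|$, one reads off $h^*(\vec{a}, y d_i(T)) = q^{-1}h^*(\vec{a}, y)$ for each $1\leq i\leq r-1$. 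Since passing from $y$ to $yd_i(T)$ preserves admissibility, Lemma~\ref{lem-hp-FC} returns \eqref{eq-(1.5)-var}.

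For the \emph{only if} direction I would first reduce to diagonal $y$. Given an admissible pair $(\vec{a}, y)$, write $y = \gamma\, d\, \kappa$ with $\gamma\in\GL_{r-1}(A)$, $d = \diag(T^{n_1}, \dots, T^{n_{r-1}})$, and $\kappa\in\GL_{r-1}(O_\infty)$, using the double-coset representatives recorded in Remark~\ref{rem-FC}(i). Lemma~\ref{lemPropFC}(1) absorbs $\kappa$, and Lemma~\ref{lemPropFC2} turns the left factor $\gamma$ into a right action on the index, giving $h^*(\vec{a}, y) = h^*(\vec{a}', d)$ with $\vec{a}' := \vec{a}(\gamma^{-1})^t\in A^{r-1}$; one checks from $\vec{a}(y^t)^{-1} = \vec{a}'d^{-1}(\kappa^{-1})^t$ that $(\vec{a}', d)$ is again admissible. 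Since $|\det(\gamma)| = |\det(\kappa)| = 1$ forces $|\det(y)| = |\det(d)|$, and since $c_{\vec{a}'}(h) = c_{\vec{a}}(h)$ by Lemma~\ref{lem-c-invariant} (as $\vec{a}' = \vec{a}\cdot(\gamma^{-1})^t$ with $(\gamma^{-1})^t\in\GL_{r-1}(A)$), it suffices to establish the formula for diagonal $y$.

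For diagonal $y = \diag(T^{n_1}, \dots, T^{n_{r-1}})$ I would compare it coordinatewise with $y_{\vec{a}} = \diag(T^{\alpha_1}, \dots, T^{\alpha_{r-1}})$ of Notation~\ref{notn-c}, where $\alpha_i = \max(2, \deg(a_i)+2)$. Since $y$ and $y_{\vec{a}}$ differ in each coordinate by an integer power of $T$, iterating the relation of Lemma~\ref{lem-hp-FC} one coordinate at a time (each unit increase of the $i$-th exponent contributing a factor $q^{-1}$) yields $h^*(\vec{a}, y) = q^{-\sum_i(n_i-\alpha_i)}\,h^*(\vec{a}, y_{\vec{a}})$. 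Rewriting $q^{-\sum_i(n_i-\alpha_i)} = |\det(y_{\vec{a}})|/|\det(y)|$ and unwinding the definition of $c_{\vec{a}}(h)$ produces exactly $h^*(\vec{a}, y) = |\det(y)|^{-1} q^{2(r-1)} c_{\vec{a}}(h)$. Feeding this into the finite Fourier expansion \eqref{eq-FE} then gives \eqref{eqFE_under1.5}.

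The main obstacle is the bookkeeping of admissibility during the reduction to diagonal $y$: one must be sure that $\vec{a}(y^t)^{-1}\in(\pi^2O_\infty)^{r-1}$ persists at every intermediate step so that Lemma~\ref{lem-hp-FC} genuinely applies. The delicate case is a coordinate with $a_i = 0$, where $y_{\vec{a}}$ may carry a \emph{larger} exponent than $y$, so that the relation of Lemma~\ref{lem-hp-FC} must be run backwards; here one uses that the admissibility condition is vacuous in that coordinate, so both the relation and its inverse are available, whereas for $a_i\neq 0$ one only ever increases the exponent past its minimal admissible value $\alpha_i = \deg(a_i)+2$.
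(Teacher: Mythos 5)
Your proposal is correct and follows essentially the same route as the paper: the \emph{if} direction is the same immediate computation via Lemma~\ref{lem-hp-FC}, and the \emph{only if} direction uses the same reduction to diagonal $y$ through Lemmas~\ref{lemPropFC}(1), \ref{lemPropFC2}, and \ref{lem-c-invariant}, followed by iterating the relation of Lemma~\ref{lem-hp-FC} to reach $y_{\vec{a}}$ (the paper compresses this iteration into the single factor $|\det(y_0^{-1}y_{\vec{a}\gamma^t})|$). Your explicit attention to admissibility at each intermediate step, in particular the backward use of the relation in coordinates with $a_i=0$ where the constraint is vacuous, is a correct and slightly more careful rendering of what the paper leaves implicit.
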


\begin{proof} 
    Suppose $h$ satisfies \eqref{eq-(1.5)-var}.
    Given $y \in \GL_{r-1}(F_\infty)$ and $\vec{a} \in A^{r-1}$ with $\vec{a}(y^t)^{-1} \in (\pi^2O_\infty)^{r-1}$, by 
    Remark~\ref{rem-FC} (i), we may take $\gamma \in \GL_{r-1}(A)$ and $\kappa \in \GL_{r-1}(O_\infty)$ so that
    $$
    \gamma y \kappa = \diag(T^{n_1},...,T^{n_{r-1}}) =: y_0 \quad \text{ with $n_1,...,n_{r-1} \in \ZZ$.}
    $$
    Then  
    \begin{eqnarray}
    h^*(\vec{a},y) &=& h^*(\vec{a}\gamma^t,\gamma y \kappa)\ =\ h^*(\vec{a}\gamma^t,y_0) \quad \quad (\text{Lemma~\ref{lemPropFC} (1) and \ref{lemPropFC2}}) \nonumber \\
    &=& |\det(y_0^{-1}y_{\vec{a}\gamma^t})| \cdot h^*(\vec{a}\gamma^t,y_{\vec{a}\gamma^t}) \quad \quad \! (\text{Lemma~\ref{lem-hp-FC}}) \nonumber \\
    &=&|\det(y)|^{-1}\cdot q^{2(r-1)} \cdot c_{\vec{a}\gamma^t}(h) \quad \quad \  (\text{Notation}~\ref{notn-c})\nonumber \\
    &=& |\det(y)|^{-1} \cdot q^{2(r-1)} \cdot c_{\vec{a}}(h). \quad \quad \quad \! (\text{Lemma~\ref{lem-c-invariant}}) \nonumber
    \end{eqnarray}
    
    Conversely, suppose for $y \in \GL_{r-1}(F_\infty)$ and $\vec{a} \in A^{r-1}$ with $\vec{a}(y^t)^{-1} \in (\pi^2O_\infty)^{r-1}$ we have
    $$
	h^\ast(\vec{a}, y)=|\det(y)|^{-1}\cdot q^{2(r-1)}\cdot c_{\vec{a}}(h).
	$$
	Then \eqref{eq-prop-harm-1-FC} holds, i.e., for any $1\leq i \leq r-1$ we have 
	\begin{align*}
	h^*\big(\vec{a},yd_i(T)\big) &= |\det(yd_i(T))|^{-1} q^{2(r-1)} c_{\vec{a}}(h)\\ 
	& = q^{-1} |\det(y)|^{-1} q^{2(r-1)} c_{\vec{a}}(h)\\  
	&=\  q^{-1} h^*(\vec{a},y).
	\end{align*}
	Therefore $h$ satisfies \eqref{eq-(1.5)-var} by Lemma~\ref{lem-hp-FC}.
\end{proof}


\section{Drinfeld discriminant function}\label{sDDF}

\subsection{Drinfeld modules over $\Ci$}
Let $\Ci\langle x\rangle$ be the non-commutative ring consisting of $\F_q$-linear polynomials $\sum_{i=0}^n a_i x^{q^i}$, $n\geq 0$, with addition 
given by the usual addition of polynomials but multiplication given by substitution $f\ast g=f(g(x))$. 
Let $r\geq 1$ be a positive integer. 
A \textit{Drinfeld $A$-module of rank $r$ over $\Ci$} is an embedding $\phi: A\to \Ci\langle x\rangle$, $a\mapsto \phi_a(x)$, defined by 
\begin{equation}\label{eqDMDef}
\phi_T(x)=Tx+g_1x^q+\cdots+g_rx^{q^r}, \text{ for some }g_1, \dots, g_r\in K, g_r\neq 0. 
\end{equation}
Two Drinfeld modules $\phi, \psi$ are isomorphic over $\Ci$ if $\phi=c\psi c^{-1}$ for some $c\in \Ci^\times$.
One can show that regarding $g_1, \dots, g_r$ in \eqref{eqDMDef} as indeterminates of respective weights $q^i-1$, the 
open subscheme $Y^r$ given by $g_r\neq 0$ of the weighted projective space $\mathrm{Proj}(\Ci [g_1, \dots, g_r])$ is a coarse moduli scheme 
for Drinfeld modules of rank $r$ over $\Ci$; cf. \cite{GekelerDMHR1}. 

An $A$-\textit{lattice} of rank $r$ in $\C_\infty$ is an $A$-submodule $\La\subset \Ci$, which is free of rank $r$ as an $A$-module 
and is discrete in $\Ci$, i.e., intersects 
each ball in finitely many points. The \textit{exponential function}  of $\La$ is 
$$
\exp_\La(x)=x\prod_{0\neq \la\in \La}\left(1-\frac{x}{\la}\right).  
$$
The following is well-known (see e.g. \cite{Drinfeld}, \cite{GreenBook}, \cite{Goss}):
\begin{prop}\label{propSec1:La}\hfill
	\begin{enumerate}
		\item[(i)] $\exp_\La:\Ci\to \Ci$ is an entire, surjective, $\F_q$-linear function with kernel $\La$. It can be expanded into a power series 
		$
		\exp_\La(x)=\sum_{n\geq 0} \alpha_n(\La) x^{q^n}$.  
		
		\item[(ii)] $\exp_{c\La}(x)=c\cdot \exp_\La(c^{-1}x)$ for any $c\in \Ci^\times$. 
		\item[(iii)] If $\La\subset \La'$ are $A$-lattices of the same rank, then 
		$f(\exp_\La(x))=e_{\La'}(x)$, where
		$$
		f(x)=x\prod_{0\neq \la\in \La'/\La}\left(1-\frac{x}{\exp_\La(\la)}\right). 
		$$
	\end{enumerate}
\end{prop}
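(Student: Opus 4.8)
The plan is to prove the three parts in order, setting up the analytic foundations in (i) and then deducing (ii) and (iii) by manipulating the product. For (i), I would first handle convergence and entirety. Since $\La$ is discrete, every ball $\{|x|\le R\}$ contains only finitely many $\la\in\La$, so all but finitely many factors $1-x/\la$ differ from $1$ by $|x/\la|<\varepsilon$; by the standard non-archimedean criterion the product $x\prod_{0\neq\la\in\La}(1-x/\la)$ then converges uniformly on bounded subsets of $\Ci$ and defines an entire function $\exp_\La$. For such a convergent product the zeros are exactly the points where some factor vanishes, each simple, so $\ker(\exp_\La)=\La$ is immediate. Surjectivity follows from the general fact over the algebraically closed complete field $\Ci$ that a nonconstant entire function attains every value (equivalently, $\exp_\La(x)-a$ has a zero for every $a\in\Ci$), which I would quote from \cite{Goss} or \cite{GreenBook}.

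The crux of (i) is $\F_q$-linearity. Here I would fix an $A$-basis $\omega_1,\dots,\omega_r$ of $\La$ and set $V_n=A_{<n}\omega_1\oplus\cdots\oplus A_{<n}\omega_r$, where $A_{<n}$ is the $\F_q$-space of polynomials of degree $<n$; these are finite $\F_q$-subspaces with $V_n\subset V_{n+1}$ and $\bigcup_n V_n=\La$. The partial product $e_n(x)=x\prod_{0\neq\la\in V_n}(1-x/\la)$ equals a nonzero constant times the polynomial $\prod_{\la\in V_n}(x-\la)$, which is $\F_q$-linear by the classical lemma that the vanishing polynomial of a finite $\F_q$-subspace is additive and $\F_q$-homogeneous. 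Hence each $e_n$ is $\F_q$-linear, and since the omitted tail of factors tends to $1$ we have $e_n\to\exp_\La$ pointwise, so the relations $e_n(ax+by)=a\,e_n(x)+b\,e_n(y)$ ($a,b\in\F_q$) pass to the limit. This forces the expansion to involve only the monomials $x^{q^m}$, giving $\exp_\La(x)=\sum_{m\ge 0}\alpha_m(\La)x^{q^m}$ with $\alpha_0(\La)=1$; note that such a series has derivative $\alpha_0(\La)=1$, so all its zeros are automatically simple.

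Part (ii) is a direct change of index: substituting $\mu=c\la$ in $\exp_{c\La}(x)=x\prod_{0\neq\mu\in c\La}(1-x/\mu)$ turns the product into $x\prod_{0\neq\la\in\La}(1-c^{-1}x/\la)$, which is visibly $c\cdot\exp_\La(c^{-1}x)$. For (iii) I would compare two entire functions, $\exp_{\La'}(x)$ and $f(\exp_\La(x))$. Writing $e=\exp_\La$, the finite set $W:=e(\La')=\{\,e(\la)\mid\la\in\La'/\La\,\}$ is an $\F_q$-subspace of $\Ci$ isomorphic to $\La'/\La$, so $f(y)=y\prod_{0\neq w\in W}(1-y/w)$ is a constant multiple of the vanishing polynomial of $W$ and is $\F_q$-linear; thus $f\circ e$ is an $\F_q$-linear entire function. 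Its zeros are precisely those $x$ with $e(x)\in\ker f=W=e(\La')$, i.e.\ $e(x)=e(\mu)$ for some $\mu\in\La'$, equivalently $x\in\mu+\La\subset\La'$; so the zero set is exactly $\La'$, with each zero simple (the derivative of an $\F_q$-linear series is its linear coefficient, here $1$). Comparing linear terms, $f(e(x))=e(x)+\cdots=x+\cdots$ also has coefficient of $x$ equal to $1$. Since $f\circ e$ and $\exp_{\La'}$ are entire with the same simple zeros along $\La'$, their quotient is a zero-free entire function on $\Ci$, hence a constant, necessarily $1$; this yields $f(\exp_\La(x))=\exp_{\La'}(x)$.

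The main obstacle is the rigorous treatment of $\F_q$-linearity in (i): one must justify both the classical additive-polynomial lemma for finite $\F_q$-subspaces and the interchange of the linearity relations with the limit $e_n\to\exp_\La$. The remaining analytic inputs — convergence of non-archimedean products, surjectivity of nonconstant entire functions, and the characterization of an entire function by its simple zeros and leading coefficient (equivalently, that a zero-free entire function on $\Ci$ is constant) — are standard over $\Ci$ and would be invoked from \cite{Goss} and \cite{GreenBook}; modulo these, parts (ii) and (iii) are formal.
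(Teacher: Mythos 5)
Your proof is correct; note that the paper itself offers no proof of this proposition, stating it as well-known and citing \cite{Drinfeld}, \cite{GreenBook}, \cite{Goss}. Your argument --- convergence of the non-archimedean product, $\F_q$-linearity via exhaustion of $\La$ by finite $\F_q$-subspaces and the additivity of their vanishing polynomials, and the comparison of $f\circ\exp_\La$ with $\exp_{\La'}$ through their common simple zero divisor and normalized linear coefficient --- is exactly the standard treatment found in those references.
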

Properties (ii) and (iii) applied to $\La\subset T^{-1} \La$ imply that there is a Drinfeld module $\phi^\La$ of rank $r$ over $\Ci$ such that 
\begin{align}\label{eqSec1:FuncEq}
\exp_\La(T x) &=\phi^\La_T(\exp_\La(x)), \\ 
\nonumber \phi^\La_T&=Tx+g_1(\La)x^q+\cdots+g_r(\La)x^{q^r}, 
\end{align} 
for some $g_1(\La), \dots, g_r(\La)\in \Ci$ with $g_r(\La)\neq 0$. 
Note that property (ii) and  \eqref{eqSec1:FuncEq} imply $\phi^{c\La}_T(x)=c\phi^\La_T(c^{-1}x)$. Hence the Drinfeld modules 
$\phi^{c\La}$ and $\phi^\La$ are isomorphic, and 
\begin{equation}\label{eqFEg_i}
g_i(c\La)=c^{1-q^i}g_i(\La), \quad 1\leq i\leq r. 
\end{equation}
Drinfeld proved in \cite{Drinfeld} that the assignment $\La \rightsquigarrow \phi^\La$ gives a bijection between the similarity classes of lattices of rank $r$ and 
the isomorphism classes of Drinfeld modules of rank $r$ over $\Ci$, where  
two $A$-lattices $\La$ and $\La'$ are \textit{similar} if there exists $c\in \Ci^\times$ such that 
$\La'=c\La$. 

\subsection{Drinfeld symmetric space}

To classify the similarity classes of $A$-lattices of rank $r$ in $\Ci$, one proceeds as follows. Given a lattice $\La$, choose a basis 
$z_1, \dots, z_r\in \Ci$ of $\La$, i.e., $\La=Az_1+\cdots+Az_r$.  Since we are interested in $\La$ only up to scaling, 
we associate to $\La$ the point $\boz=(z_1:\dots: z_r)\in \p^{r-1}(\Ci)$. It is not hard to prove that 
the discretness of $\La$ is equivalent to $z_1, \dots, z_r$ being linearly independent over $\Fi$. Hence $\boz$ lies 
in the \textit{Drinfeld symmetric space} 
$$
\Omega^r=\left \{ (z_1:\dots: z_r)\in \p^{r-1}(\Ci) \mid \text{$z_1, \dots, z_r$ are linearly independent over $\Fi$}\right \}. 
$$ 
$\Omega^r$ has a natural structure of a rigid-analytic space over $\Fi$; see \cite{Drinfeld}.  
The group $\GL_r(\Fi)$ acts on $\p^{r-1}(\Ci)$ from the left as on column vectors, and this action preserves $\Omega^r$. 
Note that $\boz, \boz'\in \Omega^r$ span the same $A$-lattice (up to scaling) if and only if $\boz'=\gamma \boz$ for some $\gamma\in \GL_r(A)$. 
Thus, the set of orbits $\GL_r(A)\bs \Omega^r$ is in bijection with the set of similarity classes of rank-$r$ $A$-lattices in $\Ci$.  The quotient 
$\GL_r(A)\bs \Omega^r$ inherits a structure of a rigid-analytic space from $\Omega$, and, in fact, 
$\GL_r(A)\bs \Omega^r$ is the analytification of the affine algebraic variety $Y^r_{\Fi}$; see \cite{Drinfeld}. 

We normalize the projective coordinates of points $\boz=(z_1:\dots:z_r)\in \Omega^r\subset \p^{r-1}(\Ci)$ by assuming $z_r=1$, 
and write $(z_1, \dots, z_r)=(z_1, \dots, z_{r-1}, 1)$ for the corresponding point. This allows us to identify 
$\Omega^r$ with a subset of $\Ci^r$:
$$
\Omega^r=\left\{ (z_1, \dots, z_r) \in \Ci^r\mid z_1, \dots, z_r \text{ are $\Fi$-linnearly independent and }z_r=1\right\}. 
$$ 
After this normalization, the action of 
$g=\begin{pmatrix} & \ast & &\ast \\ c_1 & \cdots & c_{r-1} & d\end{pmatrix}\in \GL_r(\Fi)$ on $\Omega^r$ becomes 
$$
g\boz:=j(g, \boz)^{-1} (z_1, \dots, z_r) g^t, 
$$
where 
$$
j(g, \boz)=c_1z_1+\cdots+ c_{r-1}z_{r-1}+d
$$
is the last entry of the row vector $(z_1, \dots, z_r) g^t$.
Note that a scalar matrix in $\GL_r(\Fi)$
acts as the identity on $\Omega^r$. Also, note that 
for $g_1, g_2\in \GL_r(\Fi)$ and $\boz\in \Omega^r$, we have 
$$
j(g_1g_2, \boz)=j(g_1, g_2\boz)\cdot j(g_2, \boz). 
$$

\subsection{Drinfeld discriminant function}

Let 
$$
\La_{\boz}=A z_1+\cdots+Az_{r-1}+A
$$
be the lattice associated to $\boz=(z_1, \dots, z_{r-1}, 1)\in \Omega^r$. 
We denote $\phi^{\boz}_T=\phi^{\La_{\boz}}_T$ and 
\begin{equation}\label{eqDrCoeffForms}
\phi^{\boz}_T=Tx+g_1(\boz)x^q+\cdots+g_r(\boz)x^{q^r}. 
\end{equation}
The coefficients of $\phi^{\boz}_T$ can be considered as $\C_\infty$-valued functions on $\Omega^r$. 
The \textit{Drinfeld discriminant function} is 
$$\Delta_r(\boz):=g_r(\boz).$$ 
\begin{prop}\label{propDeltaFE} \hfill
	\begin{enumerate} 
\item	$\Delta_r$ is holomorphic and non-vanishing on $\Omega^r$. 
\item 
$\Delta_r(\gamma\boz)=j(\gamma, \boz)^{q^r-1}\Delta_r(\boz)$ for all $\gamma\in \GL_r(A)$. 
\end{enumerate}
\end{prop}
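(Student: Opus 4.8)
The plan is to handle the transformation law (2) first --- the genuinely new computation --- and then recall standard facts for the two assertions in (1).

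For (2), the crux is to identify the lattice attached to $\gamma\boz$ for $\gamma\in\GL_r(A)$. From the displayed action formula, $\gamma\boz = j(\gamma,\boz)^{-1}(z_1,\dots,z_r)\gamma^t$. The entries of the row vector $(z_1,\dots,z_r)\gamma^t$ are $A$-linear combinations of $z_1,\dots,z_r$, and since $(\gamma^t)^{-1}=(\gamma^{-1})^t$ again has entries in $A$, these entries form another $A$-basis of $\La_{\boz}=Az_1+\cdots+Az_r$. Hence the $A$-span of the coordinates of $\gamma\boz$ is $j(\gamma,\boz)^{-1}\La_{\boz}$; that is,
$$
\La_{\gamma\boz}=j(\gamma,\boz)^{-1}\La_{\boz}.
$$
Writing $c=j(\gamma,\boz)^{-1}$ and invoking \eqref{eqFEg_i} with $i=r$ now gives
$$
\Delta_r(\gamma\boz)=g_r(\La_{\gamma\boz})=g_r(c\La_{\boz})=c^{\,1-q^r}g_r(\La_{\boz})=j(\gamma,\boz)^{q^r-1}\Delta_r(\boz),
$$
since $c^{1-q^r}=j(\gamma,\boz)^{q^r-1}$.

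The non-vanishing in (1) is immediate: $\Delta_r(\boz)=g_r(\La_{\boz})$ is the leading coefficient of $\phi^{\boz}_T$, which is nonzero for every rank-$r$ lattice by the definition recorded in \eqref{eqSec1:FuncEq}.

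For holomorphicity I would route through the exponential. By Proposition \ref{propSec1:La}(i), $\exp_{\La_{\boz}}(x)=\sum_{n\ge 0}\alpha_n(\La_{\boz})x^{q^n}$ with $\alpha_0=1$. Matching the coefficient of $x^{q^n}$ on the two sides of the functional equation \eqref{eqSec1:FuncEq}, $\exp_{\La_{\boz}}(Tx)=\phi^{\boz}_T(\exp_{\La_{\boz}}(x))$, and using $g_0(\boz)=T$, one obtains for $1\le n\le r$ the recursion
$$
g_n(\boz)=\alpha_n(\La_{\boz})\,T^{q^n}-\sum_{i=0}^{n-1}g_i(\boz)\,\alpha_{n-i}(\La_{\boz})^{q^i},
$$
so that $\Delta_r(\boz)=g_r(\boz)$ is a polynomial in $T$ and the functions $\boz\mapsto\alpha_m(\La_{\boz})$. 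It therefore suffices that each $\alpha_m(\La_{\boz})$ be rigid-analytic on $\Omega^r$. This is the one analytically substantive point, and I expect it to be the main obstacle: one must check that the product $x\prod_{0\neq\lambda\in\La_{\boz}}(1-x/\lambda)$ converges locally uniformly as $\boz$ varies, equivalently that the lattice sums $\sum_{0\neq\lambda\in\La_{\boz}}\lambda^{-m}$ --- which determine the $\alpha_m$ through the partial-fraction expansion of $1/\exp_{\La_{\boz}}$ --- converge and depend holomorphically on $\boz$. This convergence is standard in the theory of Drinfeld modular forms, so I would simply cite it (e.g.\ \cite{Goss}, \cite{GekelerDMHR1}); granting it, holomorphicity of $\Delta_r$ follows from the recursion.
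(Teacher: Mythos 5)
Your proposal is correct and follows essentially the same route as the paper: the transformation law comes from the identity $\La_{\gamma\boz}=j(\gamma,\boz)^{-1}\La_{\boz}$ together with the scaling rule \eqref{eqFEg_i}, non-vanishing is immediate from $g_r(\La)\neq 0$, and holomorphicity is delegated to the literature (the paper cites \cite{BBP3} for this). Your extra recursion expressing $g_n$ in terms of the $\alpha_m(\La_{\boz})$ is a reasonable elaboration of that last point but does not change the substance of the argument.
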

\begin{proof} For a detailed discussion of rigid-analytic holomorphicity of $\Delta_r(\boz)$ we refer to \cite{BBP3}. 
	The function $\Delta_r(\boz)$ is non-vanishing since $g_r(\La)\neq 0$ for any $A$-lattice of rank $r$. 
	Finally, note that $\La_{\gamma \boz}=j(\gamma, \boz)^{-1}\gamma\La_{\boz}=j(\gamma, \boz)^{-1}\La_{\boz}$, 
	so the equality $\Delta_r(\gamma\boz)=j(\gamma, \boz)^{q^r-1}\Delta_r(\boz)$ follows from \eqref{eqFEg_i}. 
\end{proof}

As $\Delta_r$ is non-vanishing on $\Omega^r$, we may apply the so-called \textit{Gekeler--van der Put map} (introduced next) to get the corresponding harmonic 1-cochain on $\GL_r(F_\infty)$.

\subsection{Gekeler--van der Put map} 

Let $\cB^r(\R)$ denote the realization of the simplicial complex $\cB^r$. By a theorem of Goldman and Iwahori \cite{GI}, 
$\cB^r(\R)$ may be canonically identified with the set of similarity classes of non-archimedean norms $\nu:\Fi^r\to \R$. 
The similarity class of norms associated to a vertex $[L]\in \Ver(\cB^r)$ is defined through 
$$
\nu_L(\vec{x})=\min\big\{|\alpha|\ \big|\ \alpha\in \Fi,\ \vec{x}\in \alpha\cdot L \big\}. 
$$
Thus, $\nu_L(\vec{x})\leq 1$ if and only if $\vec{x}\in L$. The group $\GL_r(\Fi)$ acts from the left on the set of norm via 
$$
g\nu(\vec{x}):=\nu(\vec{x}g)
$$
for $\vec{x}\in \Fi^r$, a norm $\nu$, and $g\in \GL_r(\Fi)$. Note that 
\begin{equation}\label{eqNormGLequivariance}
g\nu_L=\nu_{Lg^{-1}}. 
\end{equation}
Each point 
$\boz=(z_1, \dots, z_r=1)\in \Omega^r$ determines a norm $\nu_{\boz}$ on $\Fi^r$ 
through 
$$
\nu_{\boz}(x_1, \dots, x_r):=\left|x_1z_1+\cdots+x_rz_r\right|, 
$$
which induces the \textit{building map} 
\begin{align*}
\la: \Omega^r &\To \cB^r(\R).\\ 
\boz &\longmapsto \nu_{\boz}
\end{align*}
The building map is $\GL_r(\Fi)$-equivariant: 
put $\vec{z} = (z_1,...,z_r) \in \CC_\infty^r$ to be the vector corrsponding to $\boz$, then
$$
\nu_{g \boz}(\vec{x})=|\vec{x}\bcdot (j(g, \boz)^{-1}\vec{z}g^t)|=|j(g, \boz)|^{-1} |\vec{x}\bcdot (\vec{z}g^t)|\sim |(\vec{x}g)\bcdot \vec{z})| 
=\nu_{\boz}(\vec{x}g)=(g \nu_{\boz})(\vec{x}). 
$$

Let $\cO(\Omega^r)^\times$ be the multiplicative group of 
holomorphic invertible functions on $\Omega^r$, i.e., holomorphic functions with no zeros. In \cite[$\S$2.7]{GekelerDMHR1}, 
Gekeler observed that for $f\in \cO(\Omega^r)^\times$ and $v\in \Ver(\cB^r)$, 
$$
|f(v)|:=|f(\boz)|, \quad \boz\in\la^{-1}(v), 
$$
does not depend on the choice of $\boz$. Thus, for an oriented edge $e=(v, w)$, we can define 
$$
\cP(f)(e)=\log_q \frac{|f(w)|}{|f(v)|}. 
$$
In \cite{GekelerDMHR1},  \cite{GekelerDMHR2}, and \cite{GekelerExacSeq}, Gekeler proved the following fundamental result, which generalizes to $r\geq 2$ 
an earlier result of van der Put \cite{vdPut} for $r=2$:
\begin{thm}\label{thm-GvdP} \hfill
\begin{itemize} 
	\item[(1)] $\cP(f)\in \Har^1(\cB^r, \Z)$. 
	\item[(2)] The sequence 
	\begin{equation}\label{eqGvdPseq}
	0\To \Ci^\times\To \cO(\Omega^r)^\times\overset{\cP}{\To} \Har^1(\cB^r, \Z)\To 0
	\end{equation}
	is short-exact and $\GL_r(\Fi)$-equivariant. 
\end{itemize}
\end{thm}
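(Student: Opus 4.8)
The plan is to build everything on the vertex potential $u_f(v):=\log_q|f(v)|$, which is well defined on $\Ver(\cB^r)$ by the observation of Gekeler recalled above, and to note that $\cP(f)$ is simply its coboundary, $\cP(f)(e)=u_f(t(e))-u_f(o(e))$. Because a coboundary is alternating and additive along paths, conditions (1) and (4) of Definition~\ref{defnHarm-1} (equivalently the closed-path vanishing of Remark~\ref{remDefdS}(i)) hold automatically, so part~(1) reduces to two assertions: that each increment $u_f(t(e))-u_f(o(e))$ is an integer, and that the balancing conditions (2) and (3) are satisfied.

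For integrality I would argue locally, restricting $f$ to the affinoid preimage under $\la$ of the star of an edge $e$: after normalization the nowhere-vanishing $f$ has spectral norm a power of $q$ that changes by an integer across $e$. Equivalently, on a suitable formal model of $\Omega^r$ whose special fibre is organized along $\cB^r$, the unit $f$ gives a section of $\gm_m$ whose order along the divisor attached to $e$ is integral. The balancing conditions are where the absence of zeros and poles enters. By Remark~\ref{remDefdS}(iii), condition (2) is the vanishing $\sum_{e\in\Ed_1^t(v)}\cP(f)(e)=0$, which expresses that the discrete Laplacian of $u_f=\log_q|f|$ vanishes at $v$; I would deduce this from the residue/degree statement that the total order of the unit $f$ along all boundary divisors meeting the component attached to $v$ is zero. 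Condition (3), the triangle relation not seen when $r=2$, I expect to handle by the same local analysis at the $2$-simplices through $e$, using the coset description of $\Ed_1^\triangle(e)$ behind Proposition~\ref{propHarGLr} to reduce it to a linear-algebra identity over the residue field $\F_q$.

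Turning to part~(2), equivariance should be formal: since the building map $\la$ is $\GL_r(\Fi)$-equivariant (as recorded above) and $\GL_r(\Fi)$ acts on $\cO(\Omega^r)^\times$ by $(g\cdot f)(\boz)=f(g^{-1}\boz)$, one gets $|g\cdot f|(v)=|f|(g^{-1}v)$ and hence $\cP(g\cdot f)=g\cdot\cP(f)$. Left exactness is immediate because constants are invertible and nowhere vanishing. For the kernel, I would observe that $\cP(f)=0$ forces $u_f$ to be constant on $\Ver(\cB^r)$, so $|f|$ is constant on $\Omega^r$, and then invoke the maximum-modulus principle on the connected rigid space $\Omega^r$ to conclude $f\in\Ci^\times$.

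The hard part, and the main obstacle, is surjectivity of $\cP$. The natural building blocks are the linear forms $\ell_w(\boz):=w_1z_1+\cdots+w_rz_r$ for $w\in\Fi^r\setminus\{0\}$: since $\Omega^r$ omits every $\Fi$-rational hyperplane, each $\ell_w$ is holomorphic and nowhere vanishing, hence lies in $\cO(\Omega^r)^\times$. I would first compute the elementary harmonic cochains $\cP(\ell_w)$, then show that an arbitrary $h\in\Har^1(\cB^r,\Z)$ can be expressed as a convergent $\Z$-linear combination of them, so that the corresponding convergent product $\prod_i\ell_{w_i}^{m_i}$ defines a function in $\cO(\Omega^r)^\times$ with $\cP$ equal to $h$. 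The delicate issue is the convergence and the global invertibility of this infinite product: the zeros and poles of the partial products must cancel in the limit, and it is exactly the harmonicity of $h$ that should guarantee this cancellation. Carrying out this theta-function construction for arbitrary $r$, extending van der Put's $r=2$ case, is where I expect the real work to lie.
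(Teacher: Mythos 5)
The paper does not prove this statement at all: Theorem \ref{thm-GvdP} is quoted as Gekeler's theorem from \cite{GekelerExacSeq}, \cite{GekelerDMHR1}, \cite{GekelerDMHR2}, generalizing van der Put's $r=2$ result, so there is no internal proof to compare yours against. Your outline does follow the same broad strategy as that literature: treat $\cP(f)$ as the coboundary of the potential $u_f=\log_q|f|$ (which indeed disposes of conditions (1) and (4) for free), obtain integrality and the balancing conditions (2), (3) from the local structure of an invertible function on the formal model of $\Omega^r$ over the star of a vertex, and attack surjectivity by building units out of the $\Fi$-rational linear forms $\ell_w$. So the approach is sound in outline.

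There are, however, two genuine gaps. First, your kernel argument is wrong as stated: constancy of $|f|$ on a connected rigid space does not imply $f$ is constant (the function $z$ on the annulus $|z|=1$ has constant absolute value), so the maximum-modulus principle alone does not give $f\in\Ci^\times$. The correct argument normalizes $|f|\equiv 1$ and passes to the reduction $\bar f$ on the special fibre of the canonical formal model of $\Omega^r$: the irreducible components are proper (blow-ups of $\p^{r-1}_{\F_q}$ along rational linear subspaces), so $\bar f$ is constant on each, hence constant by connectedness, and one then iterates on $f-c$. Second, and more seriously, surjectivity of $\cP$ --- which is the entire content of the theorem and the reason it required a separate paper of Gekeler's --- is left as a plan. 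You have not computed $\cP(\ell_w)$, not shown that the resulting cochains generate $\Har^1(\cB^r,\Z)$ under convergent $\Z$-linear combinations, and not addressed why harmonicity of a target $h$ forces the normalized infinite product (one must use ratios $\ell_w/\ell_{w'}$ to have any hope of convergence) to converge to an element of $\cO(\Omega^r)^\times$. For $r\geq 3$ the conditions (2) and (3) interact in a way that has no $r=2$ analogue, and verifying that the proposed products realize exactly the harmonic cochains is where the real difficulty lies; as written, the proposal identifies this difficulty but does not overcome it.
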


By Proposition \ref{propDeltaFE}, $\Delta_r\in \cO(\Omega^r)^\times$. 
Set $\cP_1(\Delta_r):=\cP(\Delta_r)\big|_{\vec{E}_1(\cB^r)}$, which we consider as a function on $\GL_r(\Fi)$.
More precisely,
fix a generator $\eps_r\in \F_{q^r}$ over $\F_q$ and put 
\begin{equation}\label{eqboz0}
\boz_0=(\eps_r^{r-1}, \eps_{r}^{r-2}, \dots, \eps_r, 1)\in \Omega^r.
\end{equation}
Notice that $\nu_{\boz_0} = \nu_{O_\infty^r}$, i.e.\ $\boz_0 \in \lambda^{-1}([O_\infty^r])$.
Put $S:=\diag(T,1,...,1) \in \GL_r(F_\infty)$.
Then for $g \in \GL_r(F_\infty)$, we get
$$
\Pcal_1(\Delta_r)(g) = \log_q |\Delta_r(gS\boz_0)| - \log_q|\Delta_r(g\boz_0)|.
$$

\begin{prop}
    Given $\gamma \in \Gamma_\infty$ and $g \in \GL_r(F_\infty)$, we have
    $$ \Pcal_1(\Delta_r)(\gamma g) = \Pcal_1(\Delta_r)(g).$$
\end{prop}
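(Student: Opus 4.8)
The plan is to peel off the automorphy factor of $\Delta_r$ and reduce the statement to a single claim about $j(\gamma,\cdot)$. Since $\gamma\in\G_\infty\subset\GL_r(A)$, I apply the transformation law of Proposition~\ref{propDeltaFE}(2) to the two points $gS\boz_0$ and $g\boz_0$, using that $\gamma gS\boz_0=\gamma\cdot(gS\boz_0)$ for the genuine left action on $\Omega^r$. This gives
\begin{align*}
\Pcal_1(\Delta_r)(\gamma g) &= \log_q|\Delta_r(\gamma gS\boz_0)| - \log_q|\Delta_r(\gamma g\boz_0)|\\
&= \Pcal_1(\Delta_r)(g) + (q^r-1)\big(\log_q|j(\gamma,gS\boz_0)| - \log_q|j(\gamma,g\boz_0)|\big).
\end{align*}
Hence the proposition is equivalent to the single claim $|j(\gamma,gS\boz_0)|=|j(\gamma,g\boz_0)|$ for all $\gamma\in\G_\infty$ and $g\in\GL_r(\Fi)$.

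To attack this claim I would translate $|j(\gamma,\cdot)|$ into the language of norms. Writing $\vec c_\gamma=(\gamma_{r1},\dots,\gamma_{rr})$ for the last row of $\gamma$, the definition of $j$ gives $j(\gamma,\boz')=\vec c_\gamma\bcdot(z_1',\dots,z_r')$ for a normalized point $\boz'$, so that $|j(\gamma,\boz')|=\nu_{\boz'}(\vec c_\gamma)$. The decisive structural input is that $\gamma=\begin{pmatrix}a & \ast\\ 0 & d\end{pmatrix}\in\G_\infty$ is block upper triangular with $a\in\GL_1(A)=\F_q^\times$; in particular the first entry of $\vec c_\gamma$ vanishes, so $\vec c_\gamma\in\{0\}\times A^{r-1}$. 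Using the $\GL_r(\Fi)$-equivariance of the building map in the form $\nu_{h\boz_0}(\vec x)=|j(h,\boz_0)|^{-1}\nu_{\boz_0}(\vec x h)$ together with the normalization $\nu_{\boz_0}=\nu_{O_\infty^r}=\norm{\,\cdot\,}_\infty$, I would write
$$\nu_{g\boz_0}(\vec c_\gamma)=\frac{\norm{\vec c_\gamma g}_\infty}{\norm{g_{(r)}}_\infty},\qquad \nu_{gS\boz_0}(\vec c_\gamma)=\frac{\norm{\vec c_\gamma gS}_\infty}{\norm{g_{(r)}S}_\infty},$$
where $g_{(r)}$ is the last row of $g$ and the normalizing factors are themselves sup-norms, $|j(g,\boz_0)|=\norm{g_{(r)}}_\infty$ and $|j(gS,\boz_0)|=\norm{g_{(r)}S}_\infty$, by the same sup-norm description of $\nu_{\boz_0}$.

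The crux—and the step I expect to be the main obstacle—is to show that right multiplication by $S=\diag(T,1,\dots,1)$, which rescales only the first coordinate, stretches the numerator $\norm{\vec c_\gamma g}_\infty$ and the denominator $\norm{g_{(r)}}_\infty$ by the same factor, so that the two quotients above coincide. This is precisely where the vanishing of the first entry of $\vec c_\gamma$ (equivalently, the block-triangular shape of $\G_\infty$) must be combined with the $\F_q^\times$-valued corner $a$, so that $\gamma$ is compatible with $S$ modulo $\Fi^\times\cI^1$. I would make this precise by computing $j(\gamma,\boz')$ explicitly from the block decomposition of $\gamma$, tracking how the Iwasawa normalization of $gS\boz_0$ differs from that of $g\boz_0$ along the type-$1$ edge $e^1_g$, and verifying that the first-coordinate contributions cancel. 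Once the key identity $|j(\gamma,gS\boz_0)|=|j(\gamma,g\boz_0)|$ is established, the displayed reduction immediately yields $\Pcal_1(\Delta_r)(\gamma g)=\Pcal_1(\Delta_r)(g)$.
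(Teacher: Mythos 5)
Your opening reduction is exactly the paper's first move: peel off the automorphy factor via Proposition \ref{propDeltaFE}(2) and reduce the claim to the single identity $|j(\gamma,gS\boz_0)|=|j(\gamma,g\boz_0)|$. Your sup-norm translation is also correct: $|j(\gamma,g\boz_0)|=\norm{\vec c_\gamma g}_\infty/\norm{g_{(r)}}_\infty$ and $|j(\gamma,gS\boz_0)|=\norm{\vec c_\gamma gS}_\infty/\norm{g_{(r)}S}_\infty$, where $\vec c_\gamma$ is the last row of $\gamma$ and $g_{(r)}$ the last row of $g$. But the proposal stops exactly where a proof is required: the ``crux'' (that right multiplication by $S$ rescales numerator and denominator by the same factor) is only announced as something you ``would make precise''; no argument is given. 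That is a genuine gap, not a deferred routine check.

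Worse, the identity you plan to prove is \emph{false} for general $g\in\GL_r(\Fi)$ once $r\geq 3$, so the plan cannot be completed as stated. Take $r=3$, $\gamma=\begin{pmatrix}1&0&0\\0&0&1\\0&1&0\end{pmatrix}\in\G_\infty$, so $\vec c_\gamma=(0,1,0)$ and $j(\gamma,\boz)=z_2$, and take $g=\begin{pmatrix}1&0&0\\T&1&0\\0&0&1\end{pmatrix}$. Then $\vec c_\gamma g=(T,1,0)$ while $g_{(3)}=(0,0,1)$: right multiplication by $S$ scales $\norm{(T,1,0)}_\infty$ by $q$ but fixes $\norm{(0,0,1)}_\infty$. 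Equivalently, $g\boz_0=(\eps_3^2,\,T\eps_3^2+\eps_3,\,1)$ and $gS\boz_0=(T\eps_3^2,\,T^2\eps_3^2+\eps_3,\,1)$, at which $|j(\gamma,\cdot)|=|z_2|$ takes the values $q$ and $q^2$; hence $\cP_1(\Delta_3)(\gamma g)-\cP_1(\Delta_3)(g)=q^3-1\neq 0$. (Necessarily such $g$ lies outside the cell $P(\Fi)\Fi^\times\cI^1$ of Lemma \ref{lemIwasawaDecom}.) What is true --- and is what the paper's argument really establishes and what every later use requires, since the Fourier-expansion machinery only ever evaluates $\cP_1(\Delta_r)$ at elements of $P(\Fi)$ --- is the invariance for $g\in P(\Fi)$, and hence on $P(\Fi)\Fi^\times\cI^1$ by the right $\Fi^\times\cI^1$-invariance of $\cP_1(\Delta_r)$. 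In your own formalism this restricted case is a one-liner: for $g=\begin{pmatrix}1&\vec{x}y\\ &y\end{pmatrix}$ and $\gamma=\begin{pmatrix}a&b\\0&d\end{pmatrix}\in\G_\infty$, both $\vec c_\gamma g=(0,\vec d\,y)$ (with $\vec d$ the last row of $d$) and $g_{(r)}=(0,\vec y_{r-1})$ have vanishing first entry, so right multiplication by $S$ fixes both vectors and both of your ratios equal $1$. This vanishing of the first coordinate is precisely the content of the paper's observation $j(\gamma,Sz)=j(\gamma,z)$: for $g\in P(\Fi)$ the points $g\boz_0$ and $gS\boz_0$ agree in coordinates $2,\dots,r$, which are the only coordinates $j(\gamma,\cdot)$ sees, and the paper's proof implicitly uses this (it treats the two endpoints of the edge as differing only in the first coordinate, legitimate exactly on $P(\Fi)$). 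So the repair is: first reduce to $g\in P(\Fi)$, then invoke the first-entry vanishing; do not attempt the sup-norm identity for arbitrary $g$, because it is not true.
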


\begin{proof}
   
   For $\gamma \in \Gamma_\infty$, it is straightforward that
   $$j(\gamma,z) =j(\gamma,Sz), \quad \forall z \in \Omega^r.$$
   Let $\boz_g := g\boz_0 \in \Omega^r$. We have
   \begin{align*}
   \Pcal_1(\Delta_r)(\gamma g)
   &= \log_q|\Delta_r(\gamma \boz_g)| - \log_q|\Delta_r(\gamma S\boz_g)|  \\
   &= \log_q\big|j(\gamma,\boz_g)^{q^r-1}\Delta_r(\boz_g)\big| - \log_q\big|j(\gamma,S\boz_g)^{q^r-1}\Delta_r(S\boz_g)\big|  \\
   &= \log_q|\Delta_r(\boz_g)| - \log_q|\Delta_r(S\boz_g)|  \\
   &= \Pcal_1(\Delta_r)(g). 
   \end{align*}
\end{proof}

    The above proposition implies that  $\Pcal_1(\Delta_r)$  admits a Fourier expansion as in \eqref{eqFE_under1.5}. In the next section, we apply a Kronecker-type limit formula, which connects $\Delta_r$ with ``mirabolic Eisenstein series", to compute explicitly the Fourier coefficients of $\Pcal_1(\Delta_r)$.

\section{Eisenstein series}\label{sES} Let $\bone_{O_\infty^r}$ be the characteristic function of $O_\infty^r$ on $\Fi^r$, i.e., 
$\bone_{O_\infty^r}(\vec{u})=1$ (resp. $\bone_{O_\infty^r}(\vec{u})=0$) if $\vec{u}\in O_\infty^r$ (resp. $\vec{u}\in \Fi^r-  O_\infty^r$). 
We define a mirabolic Eisenstein series on $\GL_r(\Fi)$ by 
$$
\cE_r(g, s):=|\det(g)|^s\cdot \int_{\Fi^\times}\left(\sideset{}{'}\sum_{\vec{c}\in A^r}\bone_{O_\infty^r}(\alpha \vec{c} g)\right)|\alpha|^{rs}\dd^\times \alpha, \quad g\in \GL_r(\Fi),\ s\in \C,
$$
where the Haar measure $\dd^\times \alpha$ is normalized so that 
$\Vol(O_\infty^\times,\dd^\times \alpha)=1$. 
\begin{lem}\label{lemEisInvar}\hfill
	\begin{enumerate}
		\item For all $\beta\in \Fi^\times$, $\kappa\in \GL_r(O_\infty)$ and $\gamma\in \GL_r(A)$, we have 
		$$
		\cE_r(\gamma g\beta\kappa, s)=\cE_r(g, s). 
		$$
	\item $\cE_r(g, s)$ converges absolutely for $\mathrm{Re}(s)>1$.  
	\item $\cE_r(g, s)$ has a meromorphic continuation to the 
	whole $s$-plane with poles at $s=0$ and $s=1$. 
	\end{enumerate}
\end{lem}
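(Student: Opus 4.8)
The plan is to establish the three parts in order, the key device being a closed-form evaluation of the inner $\alpha$-integral that makes both the convergence and the continuation transparent. Throughout write $\Phi=\bone_{O_\infty^r}$ and $\norm{\vec v}=\max_i|v_i|$ for the sup-norm on $\Fi^r$. For part (1) I would simply substitute. Left-invariance under $\gamma\in\GL_r(A)$ holds because $|\det\gamma|=1$ and $\vec c\mapsto\vec c\gamma$ permutes $A^r\setminus\{0\}$; right-invariance under $\kappa\in\GL_r(O_\infty)$ holds because $|\det\kappa|=1$ and $\Phi(\vec u\kappa)=\Phi(\vec u)$ (as $O_\infty^r\kappa=O_\infty^r$); invariance under a scalar $\beta\in\Fi^\times$ follows from the change of variables $\alpha\mapsto\beta^{-1}\alpha$, under which the factor $|\det(g\beta)|^s=|\beta|^{rs}|\det g|^s$ is cancelled by the $|\beta|^{-rs}$ produced by $|\alpha|^{rs}$. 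These are all routine.

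\textbf{Convergence (2).} For fixed $\vec c\neq0$ the integrand $\Phi(\alpha\vec cg)$ is the indicator of $|\alpha|\le\norm{\vec cg}^{-1}$, so summing the geometric series over the cosets $\{|\alpha|=q^{-k}\}$ (each of measure $1$) gives, for $\re(s)>0$,
\[
\cE_r(g,s)=\frac{|\det g|^s}{1-q^{-rs}}\,\sum_{\vec c\in A^r\setminus\{0\}}\norm{\vec cg}^{-rs}.
\]
Absolute convergence of $\cE_r(g,s)$ then reduces to that of the Epstein-type series on the right, since $\tfrac1{1-q^{-rs}}$ is bounded for $\re(s)>1$. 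Grouping by the value $\norm{\vec cg}=q^m$ and setting $N(m)=\#\{\vec c\in A^r:\norm{\vec cg}\le q^m\}$, discreteness of $A^r$ gives $N(m)=1$ for $m\ll0$, while a lattice-point count gives $N(m)=c_g\,q^{rm}$ for $m\gg0$ with $c_g>0$; hence the coefficient of $q^{-mrs}$ grows like $q^{rm}$, and the series converges for $\re(s)>1$. (For real $s$ all terms are non-negative, so Tonelli justifies the interchange of sum and integral.)

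\textbf{Continuation (3).} For the meromorphic continuation I would run a Poisson-summation/theta argument in the style of Tate's thesis. Since $\Fi=A\oplus\pi O_\infty$ with $\pi O_\infty$ compact, $A^r$ is a cocompact lattice in $\Fi^r$, self-dual with respect to $\psi$ (as $\psi|_A=1$ and $\psi$ has conductor $\pi^2O_\infty$). Put $\Theta(\alpha,g)=\sum_{\vec c\in A^r}\Phi(\alpha\vec cg)$, so the inner sum in $\cE_r$ is $\Theta(\alpha,g)-1$; note $\Theta(\cdot,g)$ depends only on $|\alpha|$ and that $\Theta(\alpha,g)-1=0$ once $|\alpha|$ is large (only $\vec c=0$ survives). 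Using $\widehat\Phi=c_0\,\bone_{(\pi^2O_\infty)^r}$ (the transform of the indicator of $O_\infty^r$ is a constant times the indicator of the conductor), Poisson summation yields the functional equation
\[
\Theta(\alpha,g)=\frac{c_0}{|\alpha|^r|\det g|}\;\Theta^{\vee}\!\big(\alpha^{-1},(g^t)^{-1}\big),\qquad
\Theta^{\vee}(\beta,h)=\sum_{\vec c\in A^r}\bone_{(\pi^2O_\infty)^r}(\beta\vec ch).
\]
Splitting $\int_{\Fi^\times}$ at $|\alpha|=1$, the integral over $|\alpha|>1$ is a finite sum, hence entire in $s$; substituting the functional equation into the integral over $|\alpha|\le1$ and changing $\alpha\mapsto\alpha^{-1}$ turns it into a second finite sum together with the two explicit geometric integrals
\[
\int_{|\alpha|\le1}|\alpha|^{rs}\,\dd^\times\alpha=\frac{1}{1-q^{-rs}},\qquad
\int_{|\beta|\ge1}|\beta|^{r(1-s)}\,\dd^\times\beta=\frac{1}{1-q^{r(1-s)}}.
\]
These two terms furnish the continuation to all of $\C$ (equivalently, they exhibit $\cE_r(g,s)$ as a rational function of $q^{-s}$), and on the real axis their only poles are the simple poles at $s=0$ (from $1-q^{-rs}$) and $s=1$ (from $1-q^{r(1-s)}$), as asserted.

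\textbf{Main obstacle and a check.} The delicate step is the continuation: one must set up Poisson summation for $A^r$ correctly (cocompactness, the self-dual measure and covolume, the exact transform $\widehat\Phi$ and its constant $c_0$) and carefully track how the functional equation redistributes the two boundary contributions; the normalizing constants affect the residues but not the pole locations. As an independent check one can bypass Poisson by invoking part (1) to reduce $g$ to a diagonal matrix $\diag(T^{n_1},\dots,T^{n_r})$---every class in $\GL_r(A)\backslash\GL_r(\Fi)/\Fi^\times\GL_r(O_\infty)$ is so represented, by Grothendieck's splitting of vector bundles on $\p^1_{\F_q}$---and then summing $\sum_{\vec c\neq0}\norm{\vec cg}^{-rs}$ directly as a rational function of $q^{-s}$; for $r=2$, $g=I$ this gives $\cE_2(I,s)=(q^2-1)\big/\big((1-q^{-2s})(1-q^{2(1-s)})\big)$, confirming simple poles at $s=0,1$. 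I would finally remark that over all of $\C$ the denominators also vanish at the period-translates $s\in\tfrac{2\pi i}{r\log q}\Z$ and $s\in1+\tfrac{2\pi i}{r\log q}\Z$, so the precise statement is that the only poles on the real axis are the simple poles at $s=0$ and $s=1$.
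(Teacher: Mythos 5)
Your proposal is correct, and part (1) and the reduction of part (2) to the Epstein-type series $\sum_{\vec c\neq 0}\nu_{O_\infty^r}(\vec c g)^{-rs}$ match what the paper does (the paper records exactly this rewriting in the remark following the lemma). Where you genuinely diverge is part (3): you obtain the continuation by Poisson summation over the self-dual lattice $A^r\subset \Fi^r$ and a theta functional equation in the style of Tate's thesis, whereas the paper uses part (1) together with the Cartan-type decomposition \eqref{eq-yDiagForm} to reduce to $g=\diag(T^{n_1},\dots,T^{n_r})$ and then evaluates the integral as the explicit series $\sum_{n\geq m}(q^{rn+\sum n_i+r}-1)q^{-rns}$, summing the two geometric series in closed form (formula \eqref{eqExplicitE_r}); this is precisely the route you mention only as an ``independent check.'' Your Poisson argument is sound --- $A$ is indeed discrete, cocompact and self-dual in $\Fi$ for the chosen $\psi$, $\widehat{\bone_{O_\infty^r}}=q^r\,\bone_{(\pi^2O_\infty)^r}$ with the self-dual measure, and the two boundary integrals produce exactly the factors $(1-q^{-rs})^{-1}$ and $(1-q^{r(1-s)})^{-1}$ --- and your $r=2$, $g=I$ sanity check agrees with the paper's closed form. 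The trade-off: the paper's computation is shorter and completely elementary, and it yields the explicit value \eqref{eqExplicitE_r} which is reused later (e.g.\ in the Fourier-coefficient computations of Section 5); your approach requires setting up the Fourier-analytic machinery but is structurally cleaner, makes the location of the two poles conceptually transparent, and would generalize to settings where no convenient diagonal reduction is available. Your closing remark that the poles recur at the translates $s\in\frac{2\pi i}{r\log q}\Z$ and $s\in 1+\frac{2\pi i}{r\log q}\Z$ is a fair sharpening of the statement, which the paper leaves implicit by treating $\cE_r(g,s)$ as a rational function of $q^{-s}$.
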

\begin{proof}
    The analytic properties of arbitrary mirabolic Eisenstein series can be found in \cite[p.~120]{Jacquet-Shalika}.
    For the sake of completeness, we provide here a direct proof in this particular case.

	We clearly have $\bone_{O_\infty^r}(\vec{x}\kappa)=\bone_{O_\infty^r}(\vec{x})$ for any $\kappa\in \GL_r(O_\infty)$ and $\vec{x}\in \Fi^r$.  
	Next, it is easy to see that for any $g\in \GL_r(\Fi)$ and $\alpha\in \Fi^\times$, only finitely many 
	terms in the sum 
	$\sideset{}{'}\sum_{\vec{c}\in A^r}\bone_{O_\infty^r}(\alpha \vec{c} g)$ are nonzero. Since for any $\gamma\in \GL_r(A)$, 
	$\vec{c}\gamma$ runs over $A^r-(0,\dots, 0)$ 
	as $\vec{c}$ runs over $A^r-(0,\dots, 0)$, we conclude that 
	$$\sideset{}{'}\sum_{\vec{c}\in A^r}\bone_{O_\infty^r}(\alpha \vec{c} \gamma g) = 
	\sideset{}{'}\sum_{\vec{c}\in A^r}\bone_{O_\infty^r}(\alpha \vec{c} g).$$ 
	Thus, by \eqref{eq-yDiagForm} we may assume that
	$g=\diag(T^{n_1}, \dots, T^{n_r})$. Let $m=\max\{-n_1, \dots, -n_r\}$. Then we have 
	\begin{align*}
		\int_{\Fi^\times}\left(\sideset{}{'}\sum_{\vec{c}\in A^r}\bone_{O_\infty^r}(\alpha \vec{c} g)\right)|\alpha|^{rs}\dd^\times \alpha 
		& =\sum_{n=-\infty}^\infty \int_{\pi^nO_\infty^\times}\left(\sideset{}{'}\sum_{\vec{c}\in A^r}\bone_{O_\infty^r}(\alpha \vec{c} g)\right)|\alpha|^{rs}\dd^\times \alpha \\ 
		& = \sum_{n=m}^\infty (q^{rn+\sum_{i=1}^r n_i +r}-1)q^{-rns}. 
	\end{align*}
This last sum converges absolutely for $\mathrm{Re}(s)>1$, and is equal to 
\begin{equation}\label{eqExplicitE_r}
|\det(g)|\cdot q^r\cdot \frac{q^{rm(1-s)}}{1-q^{r(1-s)}} - \frac{q^{-rms}}{1-q^{-rs}}. 
\end{equation}
This implies that $\cE_r(g, s)$ has a meromorphic continuation to the whole $s$-plane with poles at $s=0$ and $s=1$. 
Finally, the invariance $\cE_r(\beta g, s)=\cE_r(g, s)$ for $\beta \in \Fi^\times$ easily follows by a change of variables $\alpha\mapsto \beta\alpha$ in the integral. 
\end{proof}

\begin{rem}
    Switching the integration and the summation in the definition of $\Ecal_r(g,s)$, we get
    $$
    \Ecal_r(g,s) \ = \ |\det(g)|^s\cdot
    \sideset{}{'}\sum_{\vec{c} \in A^r}\int_{\Fi^\times}\mathbf{1}_{O_\infty^r}(\alpha\vec{c} g)|\alpha|^{rs}\dd^\times \alpha, \quad g \in \GL_r(F_\infty),\ \re(s)>1. 
    $$
    Recall that for $\vec{x} \in F_\infty^r$, $$\nu_{O_\infty^r}(\vec{x}) = \min\big\{|\alpha|\ \big|\ \vec{x} \in \alpha O_\infty^r\big\}.
    $$
    In particular,
    $$
    \mathbf{1}_{O_\infty^r}(\alpha \vec{c} g) = 1
    \quad \text{ if and only if } \quad 
    |\alpha| \leq \nu_{O_\infty^r}(\vec{c} g)^{-1}.
    $$
    Thus
    \begin{eqnarray}
    \int_{\Fi^\times}\mathbf{1}_{O_\infty^r}(\alpha\vec{c} g)|\alpha|^{rs}\dd^\times \alpha
    &=& \int_{|\alpha| \leq \nu_{O_\infty^r}(\vec{c} g)^{-1} } |\alpha|^{rs} \dd^\times \alpha \nonumber \\
    &=&\frac{1}{1-q^{-rs}} \cdot \frac{1}{\nu_{O_\infty^r}(\vec{c}g)^{rs}}. \nonumber 
    \end{eqnarray}
    Observe that 
    $$
    \nu_{O_\infty^r}(\vec{c}g) \ =\ \nu_{\boz_0}(\vec{c}g) 
    \ =\  |j(g,\boz_0)| \cdot \nu_{g\boz_0}(\vec{c}),
    $$
    where $\boz_0 \in \Omega^r$ is the chosen base point in \eqref{eqboz0}.
    Take $\boz_g:= g \boz_0 = (z_1,...,z_r=1) \in \Omega^r$. 
    We then obtain the following expression of $\Ecal_r(g,s)$:
    \begin{align*}
    \Ecal_r(g,s) &=
    \frac{\det(g)^s}{(1-q^{-rs}) \cdot |j(g,z_0)|^{rs}} \cdot \sideset{}{'}\sum_{\vec{c} \in A^r} \frac{1}{\nu_{\boz_g}(\vec{c})^{rs}} \\
    &=
    \frac{1}{1-q^{-rs}} \cdot \sideset{}{'}\sum_{c_1,...,c_r \in A} \frac{\im(\boz_g)^s}{|c_1z_1+\cdots +c_rz_r|^{rs}}, \quad \re(s)>1,
    \end{align*}
    where $\im(\boz_g) = \det(g)/|j(g,\boz_0)|^r$ is the \textit{imaginary part of $\boz_g$} (cf.\ \cite[formula (2.5) and Lemma 2.11]{WeiKLF}).
This expression makes it clear that  $\Ecal_r(g,s)$ is an analogue of the classical Eisenstein series for $\SL_2(\Z)$; cf.\ \cite[p.\ 65]{Bump}. 
\end{rem}
Let 
$$\widetilde{\Ecal}_r(g,s) := (1-q^{-rs}) \cdot \Ecal_r(g,s)
= \sideset{}{'}\sum_{c_1,...,c_r \in A} \frac{\im(\boz_g)^s}{|c_1z_1+\cdots +c_rz_r|^{rs}}\quad \text{ as } \re(s)>1.
$$
The following Kronecker-type limit formula holds (cf.\ \cite[Theorem 1.1 (2) and Remark 1.2 (2)]{WeiKLF}):
\begin{thm}\label{thmKLF}
    $\widetilde{\Ecal}_r(g,s)$ is holomorphic at $s=0$,
    and
    \begin{align*}
    &\widetilde{\Ecal}_r(g,0) = -1, \\
    &\frac{\partial}{\partial s} \widetilde{\Ecal}_r(g,s)\bigg|_{s=0} = -\ln\im(\boz_g) - \frac{r}{q^r-1}\ln|\Delta_r(\boz_g)|.
    \end{align*}
\end{thm}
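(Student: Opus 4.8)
The plan is to realize $\widetilde{\Ecal}_r(g,s)$ as an imaginary‑part twist of the zeta function of the lattice $\La=\La_{\boz_g}$ and then to read off the value and the derivative at $s=0$. Rewriting the defining series with $\lambda=c_1z_1+\cdots+c_rz_r$, which runs over $\La\setminus\{0\}$ as $\vec c$ runs over $A^r\setminus\{0\}$, gives
$$
\widetilde{\Ecal}_r(g,s)=\im(\boz_g)^s\cdot Z_\La(rs),\qquad Z_\La(w):=\sideset{}{'}\sum_{\lambda\in\La}|\lambda|^{-w}\ \ (\re(w)>r).
$$
Both sides of the asserted formula are invariant under $g\mapsto\gamma g\kappa$ with $\gamma\in\GL_r(A)$ and $\kappa\in\GL_r(O_\infty)$: the left side by Lemma~\ref{lemEisInvar}~(1), and the right side because the automorphy factor $|j(\gamma,\boz_g)|^{q^r-1}$ of $\Delta_r$ (Proposition~\ref{propDeltaFE}~(2)) is exactly cancelled, after weighting by $\tfrac{r}{q^r-1}$, against $\im(\gamma\boz_g)=\im(\boz_g)|j(\gamma,\boz_g)|^{-r}$, while both $|\Delta_r|$ and $\im$ factor through the building vertex $[\La_0 g^{-1}]$ and so are right $\GL_r(O_\infty)$‑invariant. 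As in the proof of Lemma~\ref{lemEisInvar} this lets me assume $g=\diag(T^{n_1},\dots,T^{n_r})$, i.e.\ $\La$ is the orthogonal lattice for which $\nu_{\boz_g}$ is adapted to the standard basis.

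I would then extract the two analytic facts from the closed form \eqref{eqExplicitE_r}. The factor $(1-q^{-rs})$ in $\widetilde{\Ecal}_r=(1-q^{-rs})\Ecal_r$ cancels the simple pole of $\Ecal_r$ at $s=0$ produced by the term $-q^{-rms}/(1-q^{-rs})$, and evaluating the resulting regular function at $s=0$ gives $\widetilde{\Ecal}_r(g,0)=-1$; equivalently $Z_\La(0)=-1$, which is the first assertion. Differentiating $\im(\boz_g)^s Z_\La(rs)$ at $s=0$ and inserting $Z_\La(0)=-1$ yields
$$
\frac{\partial}{\partial s}\widetilde{\Ecal}_r(g,s)\Big|_{s=0}=-\ln\im(\boz_g)+r\,Z_\La'(0),
$$
so the derivative assertion is equivalent to the single identity $Z_\La'(0)=-\tfrac{1}{q^r-1}\ln|\Delta_r(\La)|$. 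Here $Z_\La'(0)$ is explicit from \eqref{eqExplicitE_r} (the logarithmic derivative at $s=0$ of a rational function of $q^{-s}$), so the theorem is reduced to computing $|\Delta_r(\La)|$ for the orthogonal lattice $\La$ independently of the Eisenstein series.

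That independent computation of $|\Delta_r(\La)|$ is the step I expect to be the main obstacle. I would obtain it from the functional equation $\exp_\La(Tx)=\phi^\La_T(\exp_\La(x))$ together with Proposition~\ref{propSec1:La}~(iii) applied to $\La\subset T^{-1}\La$, which gives
$$
\phi^\La_T(x)=Tx\prod_{0\neq\bar\lambda\in T^{-1}\La/\La}\Big(1-\frac{x}{\exp_\La(\lambda)}\Big),
$$
and hence identifies the leading coefficient as $\Delta_r(\La)=\pm\,T\prod_{0\neq\bar\lambda}\exp_\La(\lambda)^{-1}$, so that $\ln|\Delta_r(\La)|=\ln q-\sum_{\bar\lambda}\ln|\exp_\La(\lambda)|$. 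It then remains to evaluate $|\exp_\La(\lambda)|$ at the $T$‑torsion points through the product $\exp_\La(\lambda)=\lambda\prod_{0\neq\mu\in\La}(1-\lambda/\mu)$, where only the finitely many factors with $|\mu|\le|\lambda|$ are nontrivial; controlling these via the successive minima of the orthogonal lattice $\La$ and matching the resulting shell‑by‑shell count against the explicit $Z_\La'(0)$ from \eqref{eqExplicitE_r} completes the proof. The scaling rule \eqref{eqFEg_i} gives a convenient consistency check (in the rank‑one Carlitz case it forces $|\Delta_1(A)|=q^q$, in agreement with both computations). An alternative to the product formula would be to induct on $r$, splitting $Z_\La$ according to the last coordinate $c_r$ to relate $\widetilde{\Ecal}_r$ to $\widetilde{\Ecal}_{r-1}$ and $\Delta_r$ to $\Delta_{r-1}$; but the orthogonal‑lattice computation is the most direct route given the tools already assembled.
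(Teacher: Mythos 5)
First, a point of comparison: the paper does not prove Theorem \ref{thmKLF} at all --- it is quoted from \cite{WeiKLF} (Theorem 1.1(2) and Remark 1.2(2)) --- so there is no in-paper argument to measure yours against. Your reductions are correct as far as they go. The identity $\widetilde{\Ecal}_r(g,s)=\im(\boz_g)^s\,Z_\La(rs)$ with $\La=\La_{\boz_g}$ is exactly the paper's own rewriting of $\widetilde{\Ecal}_r$ in the remark preceding the theorem; the two-sided invariance argument is sound (both sides only depend on the building vertex $\la(\boz_g)=[O_\infty^r g^{-1}]$, using Proposition \ref{propDeltaFE}(2) and Gekeler's observation that $|f|$ for $f\in\cO(\Omega^r)^\times$ factors through $\la$); the value $\widetilde{\Ecal}_r(g,0)=-1$ and the holomorphy at $s=0$ do follow directly from the closed form \eqref{eqExplicitE_r}; and the derivative statement is correctly reduced to the single identity $Z_\La'(0)=-\tfrac{1}{q^r-1}\ln|\Delta_r(\La)|$ for an orthogonal lattice $\La$, which is indeed the true content of the limit formula (your rank-one check $|\Delta_1(A)|=q^q$ is also correct).

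The gap is that this last identity is set up but not proved, and it is the whole theorem after your reductions. You propose to evaluate $|\exp_\La(\la)|$ at the $q^r-1$ nonzero classes of $T^{-1}\La/\La$ via the product formula and to ``match the shell-by-shell count'' against the logarithmic derivative of \eqref{eqExplicitE_r}, but you never confront the one genuinely delicate point: in $\exp_\La(\la)=\la\prod_{0\neq\mu}(1-\la/\mu)$ the factors with $|\mu|>|\la|$ have absolute value $1$ and those with $|\mu|<|\la|$ have absolute value $|\la|/|\mu|$, while the factors with $|\mu|=|\la|$ are a priori only $\leq 1$ and could spoil the count. They do not: if $\La=\bigoplus_i Aw_i$ is orthogonal, $\la=\sum_i(c_i/T)w_i$ with $c_i\in\F_q$, and $\mu=\sum_i a_iw_i$ satisfies $|\mu|=|\la|$, then an index attaining $|\mu|$ has $a_i\neq 0$, so $|c_i/T-a_i|=|a_i|$ and hence $|\la-\mu|=|\mu|$, i.e.\ $|1-\la/\mu|=1$. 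With that observation $\log_q|\exp_\La(\la)|$ becomes a clean lattice-point count which can be compared term by term with $Z_\La'(0)$; this is essentially the computation carried out in \cite{WeiKLF} (and, for the building values of $|\Delta_r|$, in \cite{GekelerDMHR1}). So your strategy is viable, but as submitted the proposal stops exactly where the proof has to begin.
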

Define 
$$
\log\Delta_r(g) := \log_q|\Delta_r(g\boz_0)| - (q^r-1)\cdot \log_q|j(g,\boz_0)|, \quad \forall g \in \GL_r(F_\infty).
$$
Then for $\gamma \in \GL_r(A)$, $\beta \in F_\infty^\times$, and $\kappa \in \GL_r(O_\infty)$, one has
$$\log \Delta_r(\gamma \beta g \kappa) = \log \Delta_r(g) - (q^r-1) \cdot \log_q|\beta|.$$
Moreover, Theorem~\ref{thmKLF} says
\begin{equation}\label{eqKLF}
	\log\Delta_r(g)=\frac{1-q^r}{r}\left(\log_q|\det(g)|+\frac{1}{\ln q}\cdot \frac{\partial}{\partial s}\widetilde{\cE}_r(g,s)\bigg|_{s=0}\right).
\end{equation}
Recall that $S=\diag(T,1,...,1) \in \GL_r(F_\infty)$, and for $g \in \GL_r(F_\infty)$
$$\Pcal_1(\Delta_r)(g)\ =\ \log_q|\Delta_r(gS\boz_0)|-\log_q|\Delta_r(g\boz_0)|
\ =\ \log\Delta_r(gS) - \log\Delta_r(g).
$$
Therefore, from the equality~\eqref{eqKLF}, the Fourier coefficients of the harmonic 1-cochain $\Pcal_1(\Delta_r)$ can be understood by analysing the Eisenstein series $\Ecal_r(g,s)$.

\subsection{Fourier coefficients of Eisenstein series}

The invariant properties of $\cE_r(g, s)$ in Lemma \ref{lemEisInvar} 
imply that $\cE_r(g, s)$ is uniquely determined by its values on $P(\Fi)$ and has a Fourier expansion (cf. Remark \ref{remFC}):
$$
\cE_r\left( \begin{pmatrix} 1 & \vec{x}y \\ & y\end{pmatrix}, s\right)
=\sum_{\vec{a}\in A^{r-1}}\cE_r^\ast(\vec{a}, y, s) \cdot \psi(\vec{a}\bcdot \vec{x}),  
$$
where 
$$
\cE_r^\ast(\vec{a}, y, s) =\int_{A^{r-1}\bs \Fi^{r-1}} \cE_r\left( \begin{pmatrix} 1 & \vec{u}y \\ & y\end{pmatrix}, s\right)\psi(-\vec{a}\bcdot \vec{u})
\dd\vec{u}. 
$$

\begin{lem}\label{lemFCES}
For $\re(s)>1$, we have 
\begin{align*}
\cE_r^\ast(\vec{a}, y, s) = & |\det(y)|^{\frac{s}{1-r}}\cdot \cE_{r-1}\left(y, \frac{r}{r-1}s\right)\cdot \int_{A^{r-1}\bs \Fi^{r-1}}\psi(\vec{a}\bcdot\vec {u}) \dd\vec{u}
\\ 
& + |\det(y)|^{s-1}\sideset{}{'}\sum_{c_1\in A}|c_1|^{-rs}\cdot 
\left(\sum_{\vec{c}_2\in A^{r-1}/c_1A^{r-1}}\psi\left(\vec{a}\bcdot (c_1^{-1}\vec{c}_2)\right)\right) \\ 
& \cdot \int_{|\alpha|\leq 1} \left(\int_{\Fi^{r-1}}\bone_{O_\infty^{r-1}}(\vec{u})\psi(-\alpha^{-1}\vec{a}
\bcdot \vec{u}y^{-1})\dd \vec{u}\right)|\alpha|^{rs-r+1}\dd^\times \alpha. 
\end{align*}
\end{lem}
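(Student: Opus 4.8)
The plan is to insert the parabolic representative $g=\begin{pmatrix}1 & \vec u y\\ & y\end{pmatrix}$ directly into the defining integral of $\cE_r$, note that $\det(g)=\det(y)$, and split the theta-like sum according to whether the first coordinate of $\vec c$ vanishes. Writing $\vec c=(c_1,\vec c_2)$ with $c_1\in A$ and $\vec c_2\in A^{r-1}$, one computes $\vec c\,g=(c_1,\,(c_1\vec u+\vec c_2)y)$, so that $\bone_{O_\infty^r}(\alpha\vec c\,g)=\bone_{O_\infty}(\alpha c_1)\,\bone_{O_\infty^{r-1}}(\alpha(c_1\vec u+\vec c_2)y)$. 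Since everything converges absolutely for $\re(s)>1$ by Lemma~\ref{lemEisInvar} (2), I may freely interchange the Fourier integral over $\vec u\in A^{r-1}\bs\Fi^{r-1}$ with the sum over $\vec c$ and the integral over $\alpha$, and handle the two ranges $c_1=0$ and $c_1\neq 0$ separately.

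First I would treat the terms with $c_1=0$: here $\vec c_2\neq 0$, the summand $\bone_{O_\infty^{r-1}}(\alpha\vec c_2 y)$ is independent of $\vec u$, and the Fourier integral factors off as $\int_{A^{r-1}\bs\Fi^{r-1}}\psi(-\vec a\bcdot\vec u)\,\dd\vec u$. The remaining $\alpha$-integral is precisely $|\det(y)|^{-\frac{r}{r-1}s}\,\cE_{r-1}\!\left(y,\tfrac{r}{r-1}s\right)$, once one matches $|\alpha|^{rs}$ against $|\alpha|^{(r-1)s'}$ in the definition of $\cE_{r-1}$ by setting $s'=\tfrac{r}{r-1}s$. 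Combining with the prefactor $|\det(y)|^s$ gives exponent $s-\tfrac{r}{r-1}s=\tfrac{s}{1-r}$, which is the first summand of the asserted formula. (The sign in $\psi(\vec a\bcdot\vec u)$ is immaterial: this factor is the integral of a character over the compact group $A^{r-1}\bs\Fi^{r-1}\cong(\pi O_\infty)^{r-1}$, hence vanishes unless $\vec a=0$.)

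The substantive part is the $c_1\neq 0$ contribution, where I would spend the care, via Rankin--Selberg unfolding. For fixed $c_1\neq 0$ write $\vec c_2=c_1\vec k+\vec c_2'$ with $\vec k\in A^{r-1}$ and $\vec c_2'$ ranging over representatives of $A^{r-1}/c_1A^{r-1}$, so that $c_1\vec u+\vec c_2=c_1(\vec u+\vec k)+\vec c_2'$. Because $\vec a\in A^{r-1}$ and $\psi$ is trivial on $A$, one has $\psi(-\vec a\bcdot(\vec u+\vec k))=\psi(-\vec a\bcdot\vec u)$, so summing over $\vec k$ unfolds $\int_{A^{r-1}\bs\Fi^{r-1}}$ into $\int_{\Fi^{r-1}}$. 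Substituting $\vec v=c_1\vec u+\vec c_2'$ (Jacobian $|c_1|^{-(r-1)}$) then pulls out the residue-class character sum $\sum_{\vec c_2'\in A^{r-1}/c_1A^{r-1}}\psi(\vec a\bcdot(c_1^{-1}\vec c_2'))$, which is well defined because changing $\vec c_2'$ by $c_1\vec m$ alters the argument by $\vec a\bcdot\vec m\in A$, and leaves behind the integral $\int_{\Fi^{r-1}}\bone_{O_\infty^{r-1}}(\alpha\vec v y)\psi(-c_1^{-1}\vec a\bcdot\vec v)\,\dd\vec v$.

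Finally I would rescale $\vec u=\alpha\vec v y$ in that inner integral (Jacobian $|\alpha|^{-(r-1)}|\det(y)|^{-1}$), turning it into $\int_{\Fi^{r-1}}\bone_{O_\infty^{r-1}}(\vec u)\psi(-(c_1\alpha)^{-1}\vec a\bcdot\vec u y^{-1})\,\dd\vec u$, and then change variables $\beta=c_1\alpha$ in the outer integral, which preserves $\dd^\times\alpha$. Under $\beta=c_1\alpha$ the condition $\bone_{O_\infty}(\alpha c_1)$ becomes $|\beta|\le 1$, the $\psi$-argument becomes $-\beta^{-1}\vec a\bcdot\vec u y^{-1}$, and a tally of the powers of $|c_1|$, $|\alpha|$ and $|\det(y)|$ collapses after cancellation to $|c_1|^{-rs}$, $|\beta|^{rs-r+1}$ and $|\det(y)|^{s-1}$, reproducing exactly the second summand. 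The only genuine obstacle is keeping this exponent-and-Jacobian bookkeeping straight through the two successive substitutions ($\vec v\mapsto\alpha\vec v y$ and then $\alpha\mapsto c_1\alpha$); every interchange of summation and integration is licensed by the absolute convergence of Lemma~\ref{lemEisInvar} (2).
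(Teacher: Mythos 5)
Your proposal is correct and follows essentially the same route as the paper: split the lattice sum according to whether $c_1=0$ (yielding the $\cE_{r-1}$ term) or $c_1\neq 0$ (unfolded over $A^{r-1}/c_1A^{r-1}$, followed by the substitutions $\vec{v}=c_1\vec{u}+\vec{c}_2'$, $\vec{u}=\alpha\vec{v}y$, and $\beta=c_1\alpha$). The paper performs the rescaling $\alpha\mapsto\alpha/c_1$ before unfolding rather than after, but the exponent bookkeeping and the resulting formula are identical.
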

\begin{proof}
	By definition, 
	$$
	\cE_r^\ast(\vec{a}, y, s) =	|\det(y)|^s\cdot  \int_{A^{r-1}\bs \Fi^{r-1}} \left(
\int_{\Fi^\times}\left(\sideset{}{'}\sum_{\vec{c}\in A^r}\bone_{O_\infty^r}\left(\alpha \vec{c} \begin{pmatrix} 1 & \vec{u}y\\ & y\end{pmatrix}\right)\right)|\alpha|^{rs}\dd^\times \alpha
	\right) \psi(-\vec{a}\bcdot \vec{u}) \dd\vec{u}. 
	$$
	For each $\vec{c} \in A^r$, write $\vec{c}=(c_1, \vec{c}_2)$ with $\vec{c}_2\in A^{r-1}$. We split the sum in the above integral into two sums. 
	
	The first sum is 
	over those $\vec{c}$ for which $c_1=0$: 
	$$
	\sideset{}{'}\sum_{\substack{\vec{c_2}\in A^{r-1}}}\bone_{O_\infty^r}\left(\alpha (0, \vec{c_2}) \begin{pmatrix} 1 & \vec{u}y\\ & y\end{pmatrix}\right) =
	\sideset{}{'}\sum_{\substack{\vec{c_2}\in A^{r-1}}}\bone_{O_\infty^{r-1}}(\alpha \vec{c_2}y),  
	$$
	which, when substituted into the integral, gives  
	\begin{align*}
&\int_{A^{r-1}\bs \Fi^{r-1}} \left(	|\det(y)|^s\cdot  
	\int_{\Fi^\times}\left(\sideset{}{'}\sum_{\substack{\vec{c_2}\in A^{r-1}}}\bone_{O_\infty^{r-1}}(\alpha \vec{c_2}y)\right)|\alpha|^{rs}\dd^\times \alpha
	\right) \psi(-\vec{a}\bcdot \vec{u}) \dd\vec{u} \\ 
& = 	|\det(y)|^{\frac{s}{1-r}}\cdot \cE_{r-1}\left(y, \frac{r}{r-1}s\right)\cdot \int_{A^{r-1}\bs \Fi^{r-1}}\psi(\vec{a}\bcdot\vec {u}) \dd\vec{u}. 
	\end{align*}
This is the first summand of the claimed formula for $\cE_r^\ast(\vec{a}, y, s)$. 

The second sum is over those $\vec{c}$ for which $c_1\neq 0$, which, when substituted into the integral, gives 
\begin{align*}
& |\det(y)|^s\\ & \cdot \int_{\Fi^\times}\sideset{}{'}\sum_{c_1\in A} \bone_{O_\infty}(\alpha c_1)|\alpha|^{rs}
\left( \int_{A^{r-1}\bs \Fi^{r-1}}\left(  \sum_{\vec{c}_2\in A^{r-1}}\bone_{O_\infty^{r-1}}(\alpha c_1(\vec{u}+c_1^{-1} \vec{c}_2)y) \right) \psi(-\vec{a}\bcdot \vec{u}) 
\dd \vec{u}
\right) \dd^\times \alpha. 
\end{align*}
After a change of variables $\alpha\mapsto \alpha/c_1$, this expression can be rewritten as 
\begin{align}\label{eqSecondSumInt}
& |\det(y)|^s\sideset{}{'}\sum_{c_1\in A} \left(|c_1|^{-rs}\int_{\Fi^\times} 
\bone_{O_\infty}(\alpha)\cdot |\alpha|^{rs}\cdot B(c_1, \alpha, y, a)\dd^\times \alpha\right) \\
\nonumber 
& = |\det(y)|^s\sideset{}{'}\sum_{c_1\in A} \left(|c_1|^{-rs}\int_{|\alpha|\leq 1} 
|\alpha|^{rs}\cdot B(c_1, \alpha, y, a)\dd^\times \alpha\right), 
\end{align}
where 
\begin{align*}
& B(c_1, \alpha, y, a):=\int_{A^{r-1}\bs \Fi^{r-1}}\left(  \sum_{\vec{c}_2\in A^{r-1}}\bone_{O_\infty^{r-1}}(\alpha (\vec{u}+c_1^{-1} \vec{c}_2)y) \right) \psi(-\vec{a}\bcdot \vec{u}) \dd \vec{u} \\ 
&=
\int_{A^{r-1}\bs \Fi^{r-1}}\left(  \sum_{\vec{c}_2\in A^{r-1}}\bone_{O_\infty^{r-1}}(\alpha (\vec{u}+c_1^{-1} \vec{c}_2)y)  \psi(-\vec{a}\bcdot (\vec{u}+c_1^{-1} \vec{c}_2))\right)\psi(\vec{a}\bcdot (c_1^{-1}\vec{c}_2)) \dd \vec{u} \\
&=\sum_{\vec{c}_2\in A^{r-1}/c_1A^{r-1}} \psi(\vec{a}\bcdot (c_1^{-1}\vec{c}_2))  
\int_{\Fi^{r-1}}\bone_{O_\infty^{r-1}}(\alpha \vec{u} y) \cdot \psi(-\vec{a}\bcdot \vec{u})\dd \vec{u}. 
\end{align*}
After a change of variables $\vec{u}\mapsto \alpha^{-1}\vec{u}y^{-1}$, we obtain 
(note that $\dd \alpha^{-1}\vec{u}y^{-1}= \frac{|\det(y)|^{-1}}{|\alpha|^{r-1}}\dd \vec{u}$)
$$
\int_{\Fi^{r-1}}\bone_{O_\infty^{r-1}}(\alpha \vec{u} y) \cdot \psi(-\vec{a}\bcdot \vec{u})\dd \vec{u} 
= \frac{|\det(y)|^{-1}}{|\alpha|^{r-1}}\int_{\Fi^{r-1}}\bone_{O_\infty^{r-1}}(\vec{u}) \cdot \psi(-\alpha^{-1}\vec{a}\bcdot \vec{u}y^{-1})\dd \vec{u}. 
$$
Substituting this expression into \eqref{eqSecondSumInt} gives the second 
summand of the claimed formula for $\cE_r^\ast(\vec{a}, y, s)$.
\end{proof}

Because of the invariant properties of $\cE_r(g, s)$ in Lemma \ref{lemEisInvar}, 
the Fourier coefficients $\cE_r^\ast(\vec{a}, y, s)$ have the properties listed in Lemma \ref{lemPropFC}. 

\begin{notn} 
	For $\vec{a}\in A^{r-1}$ and $y \in \GL_{r-1}(F_\infty)$, define
	$$
	m(\vec{a}, y)=\max\big\{m \in \ZZ\cup\{\infty\}\ \big|\ \vec{a}(y^t)^{-1} \in (\pi^mO_\infty)^{r-1} \big\}. 
	$$
	In particular, $m(\vec{a}, y)<\infty$ if and only if $\vec{a}\neq 0$. 
	
	Write $\vec{a} =(a_1, \dots, a_{r-1})$. 
	We say that $0\neq c\in A$ divides $\vec{a}$, and write $c\mid \vec{a}$, if $c$ divides all $a_i$, $1\leq i\leq r-1$. \
\end{notn}

The following lemma is straightforward:

\begin{lem}\label{lem-character-sum}
Let $\vec{a} \in A^{r-1}$.
\begin{itemize}
    \item[(1)] 
    $$
    \int_{A^{r-1}\bs F_\infty^{r-1}}\psi(\vec{a}\bcdot\vec{u}) \dd \vec{u} = 
    \begin{cases} 1 & \text{ if $\vec{a} = 0$,} \\
    0 & \text{ otherwise.}
    \end{cases}
    $$
    \item[(2)]
    For $c \in A$ with $c \neq 0$, we have
    $$
    \sum_{\vec{c}_2 \in A^{r-1}/cA^{r-1}} \psi\big(\vec{a}\bcdot (c^{-1}\vec{c}_2)\big) = 
    \begin{cases}|c|^{r-1} & \text{ if $c \mid \vec{a}$,}\\
    0 & \text{ otherwise.}
    \end{cases}
    $$
\end{itemize}
\end{lem}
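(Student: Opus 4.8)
The plan is to deduce both identities from the orthogonality relations for additive characters; the one nontrivial input is a description of the annihilator of $A$ under the pairing $(x,y)\mapsto \psi(xy)$, and the rest is formal.

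First I would record two facts about $\psi$ and $A$ that get used in both parts. Since $\psi(\sum a_i\pi^i)$ depends only on the coefficient $a_1$, and $A$ is the $\F_q$-span of the $T^n=\pi^{-n}$ with $n\geq 0$, a direct computation with the $\pi$-coefficient of $yT^n$ (using that $\lambda\mapsto \psi_0(\Tr_{\F_q/\F_p}(\lambda))$ is a nontrivial, hence faithful, character of $\F_q$) shows that for $y\in\Fi$ one has $\psi(yx)=1$ for all $x\in A$ \emph{if and only if} $y\in A$; that is, $A$ is its own annihilator. I would also use that the decomposition $\Fi=A\oplus\pi O_\infty$ is direct (the summands have disjoint $\pi$-supports), so that $(\pi O_\infty)^{r-1}$ is a fundamental domain for $A^{r-1}$ in $\Fi^{r-1}$, of volume $1$ for the self-dual measure.

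For part (1), the map $\vec{u}\mapsto\psi(\vec{a}\bcdot\vec{u})$ descends to a character of the compact group $A^{r-1}\bs\Fi^{r-1}$, because $\psi(\vec{a}\bcdot\vec{b})=1$ for $\vec{b}\in A^{r-1}$. Integrating a character over a compact group yields the total volume when the character is trivial and $0$ otherwise. The descended character is trivial exactly when $\vec{a}=0$: if $\vec{a}\neq 0$, pick a coordinate $a_i\neq 0$ and note that $u_i\mapsto\psi(a_iu_i)$ is a nontrivial character of $\Fi$ (multiplication by the nonzero $a_i$ is an automorphism), giving a point where the character is $\neq 1$. Since the total volume is $1$, the claimed value follows.

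For part (2), I would identify $A^{r-1}/cA^{r-1}$ with the finite group $(A/cA)^{r-1}$ of order $|c|^{r-1}$ and check that $\vec{c}_2\mapsto\psi\big(\vec{a}\bcdot(c^{-1}\vec{c}_2)\big)$ is a well-defined character on it: replacing $\vec{c}_2$ by $\vec{c}_2+c\vec{b}$ changes the argument by $\psi(\vec{a}\bcdot\vec{b})=1$. By orthogonality the sum equals $|c|^{r-1}$ when this character is trivial and $0$ otherwise. The character is trivial iff $\psi(c^{-1}a_ix)=1$ for all $x\in A$ and all $i$, which by the annihilator identity of the first step (applied to $y=c^{-1}a_i\in F\subset\Fi$) means exactly $c^{-1}a_i\in A$ for every $i$, i.e.\ $c\mid\vec{a}$. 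There is essentially no obstacle here: the only point requiring genuine care is the annihilator identity $\{y\in\Fi : \psi(yA)=1\}=A$, since everything else is a formal application of character orthogonality.
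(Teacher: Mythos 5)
Your proof is correct. The paper offers no argument at all for this lemma (it is introduced with ``The following lemma is straightforward''), so there is nothing to compare against; your write-up simply supplies the standard character-orthogonality argument that the authors had in mind, and it does so accurately: the decomposition $\Fi=A\oplus\pi O_\infty$ gives volume $1$ to the quotient, part (1) is orthogonality on the compact group $A^{r-1}\bs\Fi^{r-1}$, and part (2) is orthogonality on the finite group $(A/cA)^{r-1}$ together with the identification of $A$ as its own annihilator under $(x,y)\mapsto\psi(xy)$. You correctly isolate that annihilator computation as the only substantive point. One tiny wording quibble: a nontrivial additive character of $\F_q$ is not ``faithful'' when $q>p$; what your computation actually uses is the nondegeneracy of the pairing $(\lambda,\mu)\mapsto\psi_0(\Tr_{\F_q/\F_p}(\lambda\mu))$, which is exactly what letting $x$ range over all $\F_q$-multiples of the $T^n$ delivers, so the argument itself is sound.
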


Put
	$$
	\sigma(s, \vec{a}):= 
	\begin{cases}
	\sum\limits_{\substack{c\in A_+\\ c\mid \vec{a}}}|c|^s & \text{ if }\vec{a} \in A^{r-1}-\{0\}, \\ 
	(1-q^{1+s})^{-1} & \text{ if }\vec{a}=0. 
	\end{cases}
	$$

\begin{cor}\label{corFCEis}
    Let $y \in \GL_{r-1}(\Fi)$.
	\begin{enumerate}
		\item 
		\begin{eqnarray}
		\cE_r^\ast(0, y, s)&=& |\det(y)|^{\frac{s}{1-r}}\cdot \cE_{r-1}\left(y, \frac{r}{r-1}\cdot s\right) \nonumber \\
		&& +|\det(y)|^{s-1}(q^r-q^{r-1})\cdot \sigma(r-1-rs,0)\cdot  \frac{1}{1-q^{r-1-rs}}. \nonumber 
		\end{eqnarray}
		\item 
		Let $\vec{a} \in A^{r-1}-\{0\}$.
		If $m(\vec{a}, y)\leq 1$, then
		 $\cE_r^\ast(\vec{a}, y, s)=0$. If $m(\vec{a}, y)\geq 2$, then 
		$$
		\cE_r^\ast(\vec{a}, y, s)\ =\ |\det(y)|^{s-1}(q^r-q^{r-1})\cdot \sigma(r-1-rs, \vec{a})\cdot \frac{1-q^{(r-1-rs)(m(\vec{a}, y)-1)}}{1-q^{r-1-rs}}. 
		$$ 
	\end{enumerate}
\end{cor}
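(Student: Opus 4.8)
The plan is to start from the explicit formula in Lemma~\ref{lemFCES} and evaluate each factor in its two summands, carrying the cases $\vec{a}=0$ and $\vec{a}\neq 0$ in parallel. The two character-theoretic factors are disposed of immediately by Lemma~\ref{lem-character-sum}: the integral $\int_{A^{r-1}\bs\Fi^{r-1}}\psi(\vec{a}\bcdot\vec{u})\dd\vec{u}$ equals $1$ for $\vec{a}=0$ and $0$ otherwise, so the Eisenstein term $|\det(y)|^{s/(1-r)}\cE_{r-1}(y,\tfrac{r}{r-1}s)$ survives only in part~(1); and $\sum_{\vec{c}_2\in A^{r-1}/c_1A^{r-1}}\psi(\vec{a}\bcdot(c_1^{-1}\vec{c}_2))$ equals $|c_1|^{r-1}$ when $c_1\mid\vec{a}$ and vanishes otherwise, collapsing the $c_1$-sum to nonzero $c_1$ dividing $\vec{a}$ (every nonzero $c_1$ when $\vec{a}=0$).

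The heart of the computation is the remaining integral $\int_{\Fi^{r-1}}\bone_{O_\infty^{r-1}}(\vec{u})\psi(-\alpha^{-1}\vec{a}\bcdot\vec{u}y^{-1})\dd\vec{u}$. First I would rewrite $\vec{a}\bcdot(\vec{u}y^{-1})=(\vec{a}(y^t)^{-1})\bcdot\vec{u}$ and factor the integral over $O_\infty^{r-1}$ into $r-1$ one-dimensional integrals $\int_{O_\infty}\psi(-\beta_i u_i)\dd u_i$, where $\vec{\beta}=\alpha^{-1}\vec{a}(y^t)^{-1}$. Since the conductor of $\psi$ is $\pi^2 O_\infty$ and $\Vol(O_\infty,\dd u)=q$, each one-dimensional integral equals $q$ if $\beta_i\in\pi^2 O_\infty$ and $0$ otherwise; hence the whole integral equals $q^{r-1}$ exactly when $\alpha^{-1}\vec{a}(y^t)^{-1}\in(\pi^2 O_\infty)^{r-1}$ and vanishes otherwise. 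Writing $|\alpha|=q^{-k}$ with $k=\ord(\alpha)\geq 0$ (as $|\alpha|\leq 1$), this condition reads $\vec{a}(y^t)^{-1}\in(\pi^{k+2}O_\infty)^{r-1}$, i.e.\ $0\leq k\leq m(\vec{a},y)-2$ by the definition of $m(\vec{a},y)$. The $\alpha$-integral thereby becomes a geometric sum $q^{r-1}\sum_{0\leq k\leq m(\vec{a},y)-2}q^{-k(rs-r+1)}$, using $\Vol(O_\infty^\times,\dd^\times\alpha)=1$ on each shell $|\alpha|=q^{-k}$.

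For $\vec{a}=0$ the constraint is vacuous, so this is the full series $(1-q^{r-1-rs})^{-1}$ (convergent for $\re(s)>1$); for $\vec{a}\neq 0$ it is the finite sum $\frac{1-q^{(r-1-rs)(m(\vec{a},y)-1)}}{1-q^{r-1-rs}}$, which is empty — giving $\cE_r^\ast(\vec{a},y,s)=0$ — precisely when $m(\vec{a},y)\leq 1$. Finally I would handle the outer $c_1$-sum by grouping each monic $c\mid\vec{a}$ with its $q-1$ associates (all of equal absolute value and all dividing $\vec{a}$), which turns $\sideset{}{'}\sum_{c_1\mid\vec{a}}|c_1|^{r-1-rs}$ into $(q-1)\,\sigma(r-1-rs,\vec{a})$, and likewise $\sideset{}{'}\sum_{c_1\in A}|c_1|^{r-1-rs}=(q-1)\,\sigma(r-1-rs,0)$ in part~(1). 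Assembling all factors and using $(q-1)q^{r-1}=q^r-q^{r-1}$ yields both displayed formulas.

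The computation is bookkeeping rather than anything deep, so I do not expect a genuine obstacle; the two points needing care are the conductor and volume normalizations in the scalar integrals $\int_{O_\infty}\psi(-\beta u)\dd u$ (an off-by-one in the conductor would shift every exponent), and the clean passage from the indicator $\alpha^{-1}\vec{a}(y^t)^{-1}\in(\pi^2 O_\infty)^{r-1}$ to the bound $\ord(\alpha)\leq m(\vec{a},y)-2$, which is what simultaneously produces the vanishing for $m(\vec{a},y)\leq 1$ and the finite geometric series. Keeping the two cases aligned — the $\vec{a}=0$ case retaining both the infinite series and the surviving $\cE_{r-1}$ term, the $\vec{a}\neq 0$ case having the Eisenstein term killed and a finite series — is the only case-sensitive step.
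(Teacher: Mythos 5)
Your proposal is correct and follows essentially the same route as the paper: both start from Lemma \ref{lemFCES}, kill the two character-theoretic factors via Lemma \ref{lem-character-sum}, evaluate the inner integral over $O_\infty^{r-1}$ as $q^{r-1}$ times the indicator of $\alpha^{-1}\vec{a}(y^t)^{-1}\in(\pi^2O_\infty)^{r-1}$, and reduce the $\alpha$-integral to the (finite or infinite) geometric series in $q^{r-1-rs}$ cut off at $m(\vec{a},y)-2$. The only cosmetic differences are that you make the one-dimensional factorization and the volume/conductor bookkeeping explicit, and you obtain the vanishing for $m(\vec{a},y)\leq 1$ from the empty sum rather than by citing Lemma \ref{lemPropFC}(3); neither changes the argument.
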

\begin{proof}
It suffices to prove the result for $\re(s)>1$.
By Lemma~\ref{lemFCES} and \ref{lem-character-sum}, we get
\begin{eqnarray}
\Ecal^*_r(0,y,s) &=& 
|\det(y)|^{\frac{s}{1-r}}\cdot \Ecal_{r-1}\left(y,\frac{r}{r-1} s\right) \nonumber \\
&& +|\det(y)|^{s-1}\left(\sideset{}{'}\sum_{c \in A}|c|^{-rs}\cdot |c|^{r-1}\right) \cdot q^{r-1} \int_{|\alpha|\leq 1}|\alpha|^{rs-r+1}\dd^\times \alpha. \nonumber
\end{eqnarray}
Thus (1) follows from
$$
\sideset{}{'}\sum_{c \in A}|c|^{r-1-rs} = \frac{q-1}{1-q^{r-rs}} = (q-1)\sigma(r-1-rs,0)
$$
and 
$$
\int_{|\alpha|\leq 1}|\alpha|^{rs-r+1}\dd^\times \alpha = \frac{1}{1-q^{r-1-rs}}.
$$

For (2), if $m(\vec{a}, y)\leq 1$, then $\cE_r^\ast(\vec{a}, y, s)=0$ by Lemma \ref{lemPropFC} (3).
When $m(\vec{a},y)\geq 2$, observe that for $\alpha \in F_\infty^\times$, we have
$$
\int_{O_\infty^{r-1}} \psi(-\alpha^{-1}\vec{a}\bcdot \vec{u}y^{-1})\dd \vec{u}
\ =\ 
\begin{cases}
q^{r-1} & \text{ if $m(\vec{a},\alpha y)\geq 2$;}\\
0 & \text{otherwise.}
\end{cases}
$$
Note that $m(\vec{a},\alpha y) \geq 2$ if and only if $|\alpha|\geq q^{2-m(\vec{a},y)}$.
Hence by Lemma~\ref{lem-character-sum} (2), we get
\begin{eqnarray}
\Ecal^*_r(\vec{a},y,s)
&=& |\det(y)|^{s-1}\left(\sideset{}{'}\sum_{\substack{c \in A}{c\mid \vec{a}}}|c|^{-rs}\cdot |c|^{r-1}\right)
\cdot q^{r-1} \cdot \int_{q^{2-m(\vec{a},y)}\leq |\alpha|\leq 1}
|\alpha|^{rs-r+1} \dd^\times \alpha \nonumber \\
&=& |\det(y)|^{s-1}\cdot (q^r-q^{r-1}) \cdot \sigma(r-1-rs,\vec{a})\cdot \frac{1-q^{(r-1-rs)(m(\vec{a},y)-1)}}{1-q^{r-1-rs}}. \nonumber
\end{eqnarray}

\end{proof}

\subsection{Fourier coefficients of $\log \Delta_r$}

Recall that
$$\widetilde{\Ecal}_r(g,s) = (1-q^{-rs})\cdot \Ecal_r(g,s), \quad \forall g \in \GL_r(F_\infty).$$
Thus, by Corollary \ref{corFCEis}, for $y \in \GL_{r-1}(F_\infty)$ we have 
\begin{eqnarray}
\widetilde{\Ecal}^*_r(0,y,s)
&=&|\det(y)|^{\frac{s}{1-r}} \cdot \widetilde{\Ecal}_{r-1}\left(y,\frac{r}{r-1}\cdot s\right) \nonumber \\
&& +|\det(y)|^{s-1}(q^r-q^{r-1})\cdot \sigma(r-1-rs,0)\cdot  \frac{1-q^{-rs}}{1-q^{r-1-rs}}; \nonumber
\end{eqnarray}
and for $\vec{a} \in A^{r-1}-\{0\}$ with $m(\vec{a},y)\geq 2$, we get
$$\widetilde{\Ecal}_r^*(\vec{a},y,s) = (1-q^{-rs}) \cdot \Ecal_r^*(\vec{a},y,s)$$
with $\Ecal_r^*(\vec{a},y,s)$ holomorphic at $s=0$.
In particular, Corollary~\ref{corFCEis} gives
\begin{eqnarray}
\frac{\partial}{\partial s} \widetilde{\Ecal}^*_r(0,y,s)\bigg|_{s=0}
&=& \frac{1}{1-r} \cdot \ln|\det(y)| \cdot \widetilde{\Ecal}_{r-1}(y, 0) + \frac{r}{r-1} \cdot \frac{\partial}{\partial s} \widetilde{\Ecal}^*_{r-1}(y,s)\bigg|_{s=0} \nonumber \\
&& + \ r \ln q \cdot |\det(y)|^{-1}\cdot \sigma(r-1,0)\cdot \frac{q^r-q^{r-1}}{1-q^{r-1}}.
\nonumber
\end{eqnarray}
By \eqref{eqKLF}, we get
\begin{align*} &\widetilde{\Ecal}_{r-1}(y,0) = -1, \quad \text{and } \\ 
& \frac{\partial}{\partial s} \widetilde{\Ecal}^*_{r-1}(y,s)\bigg|_{s=0}
= -\ln|\det(y)|-\frac{(r-1)\ln q}{q^{r-1}-1} \cdot \log \Delta_{r-1}(y).
\end{align*}
Hence
\begin{eqnarray}\label{eq-FCE1}
\frac{\partial}{\partial s} \widetilde{\Ecal}^*_r(0,y,s)\bigg|_{s=0}
&=& - \ln|\det(y)| -\frac{r\ln q}{q^{r-1}-1} \cdot \log \Delta_{r-1} (y) \nonumber \\
&& + \ r \ln q \cdot |\det(y)|^{-1}\cdot \sigma(r-1,0)\cdot \frac{q^r-q^{r-1}}{1-q^{r-1}}.
\end{eqnarray}
For $\vec{a} \in A^{r-1}-\{0\}$ with $m(\vec{a},y)\geq 2$, Corollary~\ref{corFCEis} says
\begin{eqnarray}\label{eq-FCE2}
\frac{\partial}{\partial s} \widetilde{\Ecal}^*_r(\vec{a},y,s)\bigg|_{s=0}
&=& r\ln q \cdot \Ecal^*_r(\vec{a},y,0) \nonumber \\
&=& r\ln q  \cdot |\det(y)|^{-1}\cdot \sigma(r-1,\vec{a}) \cdot \frac{q^r-q^{r-1}}{1-q^{r-1}} \cdot (1-q^{(r-1)(m(\vec{a},y)-1)}).
\end{eqnarray}


\begin{cor}\label{corFClogD}
	Given $\vec{x}\in \Fi^{r-1}$ and $y \in \GL_{r-1}(F_\infty)$, we have 
	the following Fourier expansion 
	$$
	\log\Delta_r\begin{pmatrix}1&\vec{x}y\\ & y\end{pmatrix} = 
	\log\Delta_r(\vec{x}, y)=\sum_{\substack{\vec{a}\in A^{r-1}\\ m(\vec{a},y)\geq 2}} \log\Delta_r^\ast(\vec{a}, y)\psi(\vec{a}\bcdot \vec{x}), 
	$$
	where 
	$$
	\log\Delta_r^\ast(0, y)=\frac{q^r-1}{q^{r-1}-1}\log\Delta_{r-1}(y)-\frac{q^r-q^{r-1}}{q^{r-1}-1}|\det(y)|^{-1}, 
	$$
	and for $0\neq \vec{a}\in A^{r-1}$ with $m(\vec{a},y)\geq 2$, 
	$$
	\log\Delta_r^\ast(\vec{a}, y)=
	\frac{(q^r-1)(q^{r}-q^{r-1})}{q^{r-1}-1}\left(1-q^{(r-1)(m(\vec{a}, y)-1)}\right)\cdot |\det(y)|^{-1}\sigma(r-1, \vec{a}).
	$$
\end{cor}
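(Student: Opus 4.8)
The plan is to read the Fourier coefficients of $\log\Delta_r$ straight off the Kronecker-type limit formula \eqref{eqKLF} by matching it, coefficient by coefficient in $\vec{x}$, against the Fourier expansion of $\widetilde{\Ecal}_r$ that was just computed. Writing $g=\begin{pmatrix}1&\vec{x}y\\&y\end{pmatrix}$, the transformation law $\log\Delta_r(\gamma g)=\log\Delta_r(g)$ for $\gamma\in\G_\infty'\subset\GL_r(A)$ shows that $\log\Delta_r(\vec{x},y)$ is left $\G_\infty'$-invariant, hence admits a Fourier expansion in $\vec{x}$. For fixed $y$ this expansion is supported on the finitely many $\vec{a}\in A^{r-1}$ with $m(\vec{a},y)\geq 2$ (Corollary~\ref{corFCEis} (2)), and each coefficient $\widetilde{\Ecal}_r^\ast(\vec{a},y,s)$ is holomorphic at $s=0$; thus every sum in sight is finite and $\partial/\partial s$ commutes with it termwise.

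First I would note that in \eqref{eqKLF} the only term not already written in Fourier-expanded form is $\log_q|\det(g)|=\log_q|\det(y)|$, which is independent of $\vec{x}$ and so contributes only to the constant coefficient. Extracting the coefficient of $\psi(\vec{a}\bcdot\vec{x})$ from both sides of \eqref{eqKLF} therefore gives
$$
\log\Delta_r^\ast(\vec{a},y)=\frac{1-q^r}{r}\left(\delta_{\vec{a},0}\cdot\log_q|\det(y)|+\frac{1}{\ln q}\cdot\frac{\partial}{\partial s}\widetilde{\Ecal}_r^\ast(\vec{a},y,s)\bigg|_{s=0}\right),
$$
where $\delta_{\vec{a},0}=1$ if $\vec{a}=0$ and $0$ otherwise. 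Since $\widetilde{\Ecal}_r^\ast(\vec{a},y,s)$ vanishes when $m(\vec{a},y)\leq 1$, this already accounts for the range of summation in the claimed expansion.

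For $\vec{a}\neq 0$ with $m(\vec{a},y)\geq 2$ I would substitute \eqref{eq-FCE2}; the factors $r\ln q$ cancel against $\tfrac{1}{r}\cdot\tfrac{1}{\ln q}$, leaving
$$
\log\Delta_r^\ast(\vec{a},y)=(1-q^r)\cdot|\det(y)|^{-1}\sigma(r-1,\vec{a})\cdot\frac{q^r-q^{r-1}}{1-q^{r-1}}\cdot\bigl(1-q^{(r-1)(m(\vec{a},y)-1)}\bigr),
$$
and the identity $\tfrac{1-q^r}{1-q^{r-1}}=\tfrac{q^r-1}{q^{r-1}-1}$ turns this into the stated formula. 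For the constant term I would feed \eqref{eq-FCE1} into the boxed identity: the $-\ln|\det(y)|$ term converts (via $\tfrac{1}{\ln q}$) to $-\log_q|\det(y)|$ and cancels the extra $\log_q|\det(y)|$; the $-\tfrac{r\ln q}{q^{r-1}-1}\log\Delta_{r-1}(y)$ term yields $\tfrac{q^r-1}{q^{r-1}-1}\log\Delta_{r-1}(y)$; and the remaining term, using $\sigma(r-1,0)=(1-q^r)^{-1}$ together with $\tfrac{1}{1-q^{r-1}}=-\tfrac{1}{q^{r-1}-1}$, produces $-\tfrac{q^r-q^{r-1}}{q^{r-1}-1}|\det(y)|^{-1}$.

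The computation is essentially routine, so I expect the main (and only real) obstacle to be the bookkeeping of the normalizing constants rather than any conceptual difficulty: one must track carefully the conversions between $\ln$, $\log_q$, and the prefactor $\tfrac{1-q^r}{r}$, the evaluation $\sigma(r-1,0)=(1-q^r)^{-1}$, and the two sign flips $\tfrac{1}{1-q^{r-1}}=-\tfrac{1}{q^{r-1}-1}$ and $\tfrac{1-q^r}{1-q^{r-1}}=\tfrac{q^r-1}{q^{r-1}-1}$. All the analytic substance—holomorphic continuation, the $s=0$ value, and the explicit Fourier coefficients of the Eisenstein series—is already packaged in Theorem~\ref{thmKLF} and Corollary~\ref{corFCEis}.
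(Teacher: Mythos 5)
Your proposal is correct and follows exactly the paper's own route: extract Fourier coefficients from the Kronecker limit formula \eqref{eqKLF} (with the $\log_q|\det(y)|$ term contributing only to $\vec{a}=0$), substitute \eqref{eq-FCE1} and \eqref{eq-FCE2}, and simplify using $\sigma(r-1,0)=(1-q^r)^{-1}$. Your arithmetic checks out, and your justification of the termwise coefficient extraction (finite support on $m(\vec{a},y)\geq 2$, holomorphy at $s=0$) is if anything slightly more explicit than what the paper records.
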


\begin{proof} 
    From \eqref{eqKLF}, we have
    $$
    \log \Delta_r^*(0,y) = \frac{1-q^r}{r}\left(\log_q|\det(y)| + \frac{1}{\ln q}\frac{\partial}{\partial s} \widetilde{\cE}_r^*(0,y,s)\bigg|_{s=0}\right),
    $$
    and for $\vec{a} \in A^{r-1}$ with $m(\vec{a},y)\geq 2$,
    $$
    \log \Delta_r^*(\vec{a},y) = \frac{1-q^r}{r\ln q}\cdot \frac{\partial}{\partial s} \widetilde{\cE}_r^*(\vec{a},y,s)\bigg|_{s=0}.
    $$
    Hence the result follows immediately from \eqref{eq-FCE1} and \eqref{eq-FCE2}.
\end{proof}

\subsection{Fourier coefficients of $\Pcal_1(\Delta_r)$}
For $g \in \GL_r(F_\infty)$, recall that
$$\cP_1(\Delta_r)(g)=\log\Delta_r(gS)-\log\Delta_r(g).$$
Suppose $g = \begin{pmatrix}1 & \vec{x}y \\ & y\end{pmatrix}$,
where $\vec{x} \in F_\infty^{r-1}$ and $y \in \GL_{r-1}(F_\infty)$.
Since
$gS=\begin{pmatrix} T & \vec{x}y\\ & y\end{pmatrix}=\begin{pmatrix} 1 & \vec{x}yT^{-1}\\ & yT^{-1}\end{pmatrix}T$, we get
$$\log \Delta_r(gS) = \log \Delta_r(\vec{x},yT^{-1}) - (q^r-1),$$
and arrive at the formula
\begin{equation}\label{eqP_1lodD}
\cP_1(\Delta_r)(\vec{x}, y)=-(q^r-1)+\log\Delta_r(\vec{x}, yT^{-1})-\log\Delta_r(\vec{x}, y). 
\end{equation}
\begin{thm}\label{thmFEPDelta}
	Given $\vec{x}\in \Fi^{r-1}$ and $y=\diag(T^{n_2}, \dots, T^{n_r})$ with $n_2, \dots, n_r\in \Z$, we have 
	$$
	\cP_1(\Delta_r)(\vec{x}, y)=\sum_{\substack{\vec{a}\in A^{r-1}\\ m(\vec{a}, y)\geq 2}}
	\cP_1(\Delta_r)^\ast(\vec{a}, y)\cdot \psi(\vec{a}\bcdot \vec{x}), 
	$$
	where 
	$$
\cP_1(\Delta_r)^\ast(0, y)=-(q-1)q^{r-1}|\det(y)|^{-1}, 
	$$
	and for $0\neq \vec{a}\in A^{r-1}$ with $m(\vec{a}, y)\geq 2$, 
	$$
	\cP_1(\Delta_r)^\ast(\vec{a}, y)=(q^r-1)(q-1)q^{r-1} |\det(y)|^{-1}\cdot \sigma(r-1, \vec{a}). 
	$$
\end{thm}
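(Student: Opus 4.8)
The plan is to read off the Fourier coefficients directly from the expression \eqref{eqP_1lodD}, which writes $\cP_1(\Delta_r)(\vec{x}, y)$ as the constant $-(q^r-1)$ plus the difference $\log\Delta_r(\vec{x}, yT^{-1}) - \log\Delta_r(\vec{x}, y)$. Since the constant contributes only to the $\vec{a}=0$ coefficient, and since the Fourier expansion of $\log\Delta_r(\vec{x}, y')$ in the variable $\vec{x}$ has coefficients $\log\Delta_r^\ast(\vec{a}, y')$ supplied by Corollary~\ref{corFClogD}, the uniqueness of the Fourier expansion gives
$$
\cP_1(\Delta_r)^\ast(\vec{a}, y) = \delta_{\vec{a},0}\cdot\big(-(q^r-1)\big) + \log\Delta_r^\ast(\vec{a}, yT^{-1}) - \log\Delta_r^\ast(\vec{a}, y).
$$
So the whole computation reduces to comparing the coefficients of $\log\Delta_r$ at the argument $yT^{-1}=T^{-1}y$ with those at $y$.

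First I would record how the relevant data transform under $y\mapsto yT^{-1}$. Because $\pi=1/T$, multiplying $\vec{a}(y^t)^{-1}$ by $T$ lowers the valuation by one, so $m(\vec{a}, yT^{-1}) = m(\vec{a}, y)-1$; likewise $|\det(yT^{-1})|^{-1} = q^{r-1}|\det(y)|^{-1}$; and applying the scaling rule $\log\Delta_{r-1}(\beta g)=\log\Delta_{r-1}(g)-(q^{r-1}-1)\log_q|\beta|$ with $\beta=T^{-1}$ yields $\log\Delta_{r-1}(yT^{-1}) = \log\Delta_{r-1}(y)+(q^{r-1}-1)$. Note also that $\sigma(r-1,\vec{a})$ is independent of $y$. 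For the zeroth coefficient I substitute the Corollary~\ref{corFClogD} formula for $\log\Delta_r^\ast(0,y)$ at both $y$ and $yT^{-1}$: the $\log\Delta_{r-1}$-part contributes $\tfrac{q^r-1}{q^{r-1}-1}(q^{r-1}-1)=q^r-1$, which exactly cancels the constant $-(q^r-1)$, while the $|\det|^{-1}$-part contributes $-(q^r-q^{r-1})|\det(y)|^{-1}=-(q-1)q^{r-1}|\det(y)|^{-1}$, giving the claimed value of $\cP_1(\Delta_r)^\ast(0,y)$.

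For $\vec{a}\neq 0$ with $m:=m(\vec{a}, y)\geq 2$ the constant drops out and I compare the two nonconstant coefficients. Writing $C=\tfrac{(q^r-1)(q^r-q^{r-1})}{q^{r-1}-1}$, when $m\geq 3$ both $\log\Delta_r^\ast(\vec{a}, yT^{-1})$ (with index $m-1\geq 2$) and $\log\Delta_r^\ast(\vec{a}, y)$ are governed by the $m\geq 2$ formula; here the factor $q^{r-1}$ coming from $|\det(yT^{-1})|^{-1}$ multiplies $1-q^{(r-1)(m-2)}$ so that the $q^{(r-1)(m-1)}$ terms cancel and one is left with the clean bracket $q^{r-1}-1$. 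Multiplying by $C$ turns $q^{r-1}-1$ into $(q^r-1)(q^r-q^{r-1})=(q^r-1)(q-1)q^{r-1}$, yielding the stated formula.

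The one delicate point, which I expect to be the main thing to verify, is the boundary case $m=2$: then $m(\vec{a}, yT^{-1})=1$, so $\log\Delta_r^\ast(\vec{a}, yT^{-1})=0$ by the vanishing of Fourier coefficients (Lemma~\ref{lemPropFC}~(3)), and only the term $-\log\Delta_r^\ast(\vec{a}, y)=C(q^{r-1}-1)|\det(y)|^{-1}\sigma(r-1,\vec{a})$ survives. This agrees with the $m\geq 3$ answer, which is precisely what makes the final expression uniform in $m$. Finally, for $m(\vec{a}, y)\leq 1$ both coefficients vanish, confirming that the Fourier expansion of $\cP_1(\Delta_r)(\vec{x}, y)$ is supported on the set $m(\vec{a}, y)\geq 2$, and completing the proof.
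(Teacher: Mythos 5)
Your proposal is correct and follows essentially the same route as the paper: both deduce the result from \eqref{eqP_1lodD} and Corollary~\ref{corFClogD} using the three transformation rules for $|\det(yT^{-1})|^{-1}$, $\log\Delta_{r-1}(yT^{-1})$, and $m(\vec{a},yT^{-1})$, and both single out the boundary case $m(\vec{a},y)=2$ as the one point needing the extra observation that the Corollary's formula extends consistently (as $0$) to $m=1$. Your write-up simply carries out the cancellations more explicitly than the paper does.
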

\begin{proof}
	This easily follows from \eqref{eqP_1lodD} and Corollary \ref{corFClogD}, combined with the following observations  
	\begin{align*}
	|\det(yT^{-1})|^{-1} & \ =\ q^{r-1}|\det(y)|^{-1},\\ 
	\log\Delta_{r-1}(yT^{-1}) &\ =\ \log\Delta_{r-1}(y) +(q^{r-1}-1),\\ 
	m(\vec{a}, yT^{-1})&\ =\ m(\vec{a}, y)-1. 
	\end{align*}
	A small extra observation needed to justify the deduction of the formula for $\cP_1(\Delta_r)^\ast(\vec{a}, y)$ when $m(\vec{a}, y)=2$ 
	is that the formula for $\log\Delta_r^\ast(\vec{a}, y)$ in Corollary \ref{corFClogD} 
	for $m(\vec{a}, y)\geq 2$ is also valid for $m(\vec{a}, y)=1$ as it is equal to $0$ in that case. 
\end{proof}

\begin{rem}\hfill
    \begin{itemize}
	\item[(i)] Recall that
	$
	\sigma(r-1, 0)=
	(1-q^{r})^{-1}. 
	$
	Thus the formulas in Theorem~\ref{thmFEPDelta} can be combined into 
	$$
	\cP_1(\Delta_r)^\ast(\vec{a}, y)=(q^r-1)(q-1)q^{r-1} |\det(y)|^{-1}\cdot \sigma(r-1, \vec{a}) 
	$$
	for all $\vec{a}\in A^{r-1}$ and $y \in \GL_{r-1}(F_\infty)$ with $m(\vec{a},y)\geq 2$. 
	\item[(ii)]
	When $r=2$, the above formula of the Fourier coefficients of $\Pcal_1(\Delta_r)$ coincides with Gekeler's formula in 
	\cite[(2.5) and Cor.\ 2.8]{GekelerDelta}.
	\end{itemize}
\end{rem}

\begin{cor}\label{corPDeltaWeyl}
	Let $y=\diag(T^{n_2}, \dots, T^{n_r})$ with $n_i\leq 1$ for all $i=2, \dots, r$. 
	Then 
	for any $\vec{x}\in \Fi^{r-1}$ we have 
	$$
	\cP_1(\Delta_r)(\vec{x}, y)=-(q-1) \cdot q^{r-1-(n_2+\cdots+n_r)}. 
	$$ 
	In particular, $\cP_1(\Delta_r)(0, T I_{r-1})=-(q-1)$. 
\end{cor}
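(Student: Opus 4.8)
The plan is to feed the Fourier expansion of Theorem~\ref{thmFEPDelta} into the hypothesis $n_i\leq 1$ and observe that it forces every Fourier coefficient except the constant term to drop out. Writing $\vec{a}=(a_2,\dots,a_r)\in A^{r-1}$ so as to pair it with $y=\diag(T^{n_2},\dots,T^{n_r})$, I would first unwind the summation condition $m(\vec{a},y)\geq 2$. Since $y^t=y$ is diagonal, the $i$-th entry of $\vec{a}(y^t)^{-1}$ is $a_iT^{-n_i}$, and because $\pi=1/T$ with $\ord_\infty=-\deg$ we have $\ord_\infty(a_iT^{-n_i})=n_i-\deg(a_i)$. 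Hence $a_iT^{-n_i}\in\pi^2O_\infty$ is equivalent to $\deg(a_i)\leq n_i-2$, and $m(\vec{a},y)\geq 2$ is equivalent to this inequality holding for every $i$.

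The decisive step is the observation that under the hypothesis $n_i\leq 1$ one has $n_i-2\leq -1$ for all $i$, so the condition $\deg(a_i)\leq n_i-2$ forces $\deg(a_i)\leq -1$, while a nonzero element of $A=\F_q[T]$ has degree $\geq 0$. Therefore each $a_i=0$, i.e.\ the only index $\vec{a}\in A^{r-1}$ with $m(\vec{a},y)\geq 2$ is $\vec{a}=0$. Consequently the sum in Theorem~\ref{thmFEPDelta} degenerates to its single term $\vec{a}=0$, and since $\psi(0)=1$ we obtain
\[
\cP_1(\Delta_r)(\vec{x},y)=\cP_1(\Delta_r)^\ast(0,y)=-(q-1)q^{r-1}\,|\det(y)|^{-1},
\]
independently of $\vec{x}$.

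It then remains only to evaluate $|\det(y)|$. Since $|T|=q$ we get $|\det(y)|=|T^{n_2+\cdots+n_r}|=q^{n_2+\cdots+n_r}$, so $|\det(y)|^{-1}=q^{-(n_2+\cdots+n_r)}$ and the displayed value becomes $-(q-1)q^{r-1-(n_2+\cdots+n_r)}$, as claimed. Specializing to $y=TI_{r-1}$, where all $n_i=1$ and $n_2+\cdots+n_r=r-1$, yields $\cP_1(\Delta_r)(0,TI_{r-1})=-(q-1)$. I expect no genuine obstacle in this argument: its entire content is the elementary degree bound forcing $\vec{a}=0$, which is precisely what makes these Weyl-type values of $\cP_1(\Delta_r)$ constant in $\vec{x}$.
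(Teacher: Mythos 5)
Your proof is correct and follows essentially the same route as the paper's: both reduce the Fourier expansion of Theorem \ref{thmFEPDelta} to the single constant term by noting that $n_i\leq 1$ forces $m(\vec{a},y)\leq 1$ for all $\vec{a}\neq 0$, and then evaluate $|\det(y)|^{-1}$. You simply spell out the degree computation ($\deg(a_i)\leq n_i-2\leq -1$ implies $a_i=0$) that the paper leaves implicit.
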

\begin{proof}
	For the given $y$, we have $m(\vec{a}, y)\leq 1$ for any $\vec{a}\neq 0$. Thus $\cP_1(\Delta_r)(\vec{x}, y)=\cP_1(\Delta_r)^\ast(0, y)$. 
	The claim now follows from Theorem \ref{thmFEPDelta}. 
\end{proof}

\begin{rem}\label{remOurFvsGek} 
    Consider the Weyl chamber $\cW^r$, which is the subcomplex of $\cB^r$  with set of vertices 
	$$\La_{\bok}=[T^{-k_1}O_\infty\oplus \cdots \oplus T^{-k_r}O_\infty] = [O_\infty^r\cdot (\diag(T^{k_1},...,T^{k_r})^{-1}],$$ 
	where $\bok=(k_1, \dots, k_r)\in \Z^r$ and $k_1\geq k_2\geq \cdots\geq k_r=0$. 
	It is a well-known fact that $\cW^r$ is a fundamental domain for the action of $\GL_r(A)$ on $\cB^r$; cf.\ \cite[(2.2)]{GekelerDMHR1}. 
	Corollary \ref{corPDeltaWeyl} can be used to give a formula for some of the edges of $\cW^r$, namely for 
	$e_{\bok}=(\La_{\bok}, \La_{\bok'})$, where $\bok'=\bok+(1, 0, \dots, 0)$. 
	Indeed, take $y=\diag(T^{k_2-k_1}, T^{k_3-k_1}, \dots, T^{k_r-k_1})$. 
	From 
	Corollary \ref{corPDeltaWeyl} we obtain 
	$$
	\cP(\Delta_r)(e_{\bok}) = \cP_1(\Delta_r)(0, y)= -(q-1)\cdot q^{(r-1)(k_1+1)-(k_2+\cdots+k_r)}.
	$$
Theorem 5.5 in \cite{GekelerDMHR1} gives an explicit formula for the values of $\cP(\Delta_r)$ on all edges of $\cW^r$ 
(see also \cite[Cor. 2.9]{GekelerDelta} for $r=2$). 
\end{rem}

\begin{cor}\label{corRootDelta} The largest integer $m$ such that there exists an $m$-th root of $\Delta$ in $\cO(\Omega^r)^\times$ 
	is $q-1$. 
\end{cor}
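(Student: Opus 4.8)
The plan is to reduce everything to a single evaluation of the harmonic cochain $\cP(\Delta_r)$ on a well-chosen edge. The statement has two halves. For the existence of a $(q-1)$-th root there is nothing new to prove: Breuer--Basson \cite{BB} and, independently, Gekeler \cite{GekelerDMHR1} produced a holomorphic $H$ on $\Omega^r$ with $H^{q-1}=\Delta_r$, and $H$ is nowhere vanishing because $\Delta_r$ is (Proposition \ref{propDeltaFE}), so $H\in\cO(\Omega^r)^\times$ and $m=q-1$ is attained. It therefore remains to prove the upper bound: if $\Delta_r=u^m$ with $u\in\cO(\Omega^r)^\times$, then $m\mid q-1$.

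The tool is that $\cP$ is a group homomorphism into the integer-valued group $\Har^1(\cB^r,\Z)$. Indeed, from the defining formula $\cP(f)(e)=\log_q(|f(w)|/|f(v)|)$ one reads off $\cP(fg)=\cP(f)+\cP(g)$ (writing the target additively), and $\cP(f)\in\Har^1(\cB^r,\Z)$ by Theorem \ref{thm-GvdP}. Hence $\Delta_r=u^m$ yields the identity $\cP(\Delta_r)=m\cdot\cP(u)$ in $\Har^1(\cB^r,\Z)$. Now I would evaluate both sides on the type-$1$ edge $e_0:=e^1_{g_0}$ attached to $g_0=\begin{pmatrix}1&0\\&TI_{r-1}\end{pmatrix}$, for which $\cP(\Delta_r)(e_0)=\cP_1(\Delta_r)(0,TI_{r-1})=-(q-1)$ by Corollary \ref{corPDeltaWeyl}. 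Therefore $-(q-1)=m\cdot\cP(u)(e_0)$, and since $\cP(u)\in\Har^1(\cB^r,\Z)$ the number $\cP(u)(e_0)$ is an integer; consequently $m\mid q-1$, so $m\leq q-1$.

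Combining the two halves gives the corollary. Conceptually, the upper bound is the assertion that the largest $m$ for which $\cP(\Delta_r)$ is $m$-divisible in $\Har^1(\cB^r,\Z)$ equals the greatest common divisor of its edge-values, and Corollary \ref{corPDeltaWeyl} forces this gcd down to $q-1$ through the single value $-(q-1)$. I expect no genuine obstacle here: all of the analytic difficulty has already been discharged into the explicit Fourier-coefficient computation of Theorem \ref{thmFEPDelta} and its Corollary \ref{corPDeltaWeyl}. The one point that must be gotten right is that $\cP(\Delta_r)$ attains the value $\pm(q-1)$ itself---not merely a proper multiple of it---on some edge, which is precisely what the normalization $\cP_1(\Delta_r)(0,TI_{r-1})=-(q-1)$ supplies; note in particular that I do not need the reverse divisibility direction or any divisibility property of $\Ci^\times$, since the achievability of $m=q-1$ is already guaranteed by $H$.
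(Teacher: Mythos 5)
Your proof is correct and follows essentially the same route as the paper: the existence half is quoted from Breuer--Basson/Gekeler via $H^{q-1}=\Delta_r$, and the upper bound is obtained by applying the homomorphism $\cP$ to $\Delta_r=u^m$ and evaluating at the edge where Corollary \ref{corPDeltaWeyl} gives $\cP_1(\Delta_r)(0,TI_{r-1})=-(q-1)$, forcing $-(q-1)/m\in\Z$. Nothing further is needed.
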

\begin{proof} 
	In \cite{BB} and \cite{GekelerDMHR1}, it is shown that there is a holomorphic function $H$ on $\Omega^r$ such that 
	\begin{align}\label{eqh}
	H^{q-1} & =\Delta_r, \quad \text{and} \\ 
	\nonumber 
	H(\gamma \boz) &=\det(\gamma)^{-1}\cdot  j(\gamma, \boz)^{\frac{q^r-1}{q-1}}\cdot H(\boz) \quad \text{for all }\quad \gamma\in \GL_r(A).  
	\end{align}
	Hence it is enough to show that if $\Delta_r^{1/m}\in \cO(\Omega^r)^\times$ then $m$ divides $q-1$. 
	For this observe that from the Gekeler--van der Put exact sequence in \eqref{eqGvdPseq} and Corollary \ref{corPDeltaWeyl} we get:
	$$
	\cP_1(\Delta_r^{1/m})(0, TI_{r-1})=\frac{1}{m}\cP_1(\Delta_r)(0, TI_{r-1}) =-\frac{q-1}{m}\in \Z.$$ 
\end{proof}


\section{Modular units}\label{sMU} 
In this section we are interested in certain elements of $\cO(\Omega^r)^\times$ which are invariant 
under the action of $\G_0^r(\fn)$ for a given nonzero ideal $\fn\lhd A$. Because the short-exact sequence \eqref{eqGvdPseq} 
is $\GL_r(\Fi)$-equivariant, taking the long exact cohomology sequence for the action of $\G^r_0(\fn)$ on \eqref{eqGvdPseq}, one obtains 
\begin{equation}\label{eqLongCohomSeq}
0\To \Ci^\times\To (\cO(\Omega^r)^\times)^{\G_0^r(\fn)}\overset{\cP}{\To} \Har^1(\cB^r, \Z)^{\G_0^r(\fn)}\To \Hom(\G_0^r(\fn), \Ci^\times). 
\end{equation}
Let $(\G_0^r(\fn))^\ab:=\G_0^r(\fn)/[\G_0^r(\fn), \G_0^r(\fn)]$ be the abelianization $\G_0^r(\fn)$. When $r=2$, 
$(\G_0^r(\fn))^\ab$ is a finitely generated abelian group whose rank goes to infinity as $\deg(\fn)\to \infty$; cf.\ \cite{GN}. 
The situation is quite different for $r\geq 3$: 

\begin{thm}\label{thmKazdanT}
	If $r\geq 3$, then $(\G_0^r(\fn))^\ab$ is a finite group. 
\end{thm}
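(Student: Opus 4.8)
The plan is to deduce finiteness of $(\G_0^r(\fn))^\ab$ from Kazhdan's property (T), exploiting that for $r\geq 3$ the relevant arithmetic group is a lattice in a semisimple group of $\Fi$-rank $\geq 2$.

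First I would reduce to the special linear case. Since $A^\times=\F_q^\times$ is finite, the determinant yields an exact sequence $1\to \SL_r(A)\to \GL_r(A)\xrightarrow{\det}\F_q^\times\to 1$, so $\SL_r(A)$ has index dividing $q-1$ in $\GL_r(A)$. Intersecting with $\G_0^r(\fn)$, the group $\Gamma^1:=\G_0^r(\fn)\cap \SL_r(A)$ is the level-$\fn$ congruence subgroup of $\SL_r(A)$; it has finite index in $\SL_r(A)$ (because $\SL_r(A/\fn)$ is finite) and also finite index in $\G_0^r(\fn)$ (its image under $\det$ lies in the finite group $\F_q^\times$). Hence $\G_0^r(\fn)$, $\SL_r(A)$, and $\Gamma^1$ are pairwise commensurable.

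Next I would invoke the reduction theory for $S$-arithmetic groups over function fields: $\SL_r(A)=\SL_r(\F_q[T])$ is a lattice of finite covolume in the locally compact group $\SL_r(\Fi)$ (here $S=\{\infty\}$, and $A$ is the ring of functions regular away from $\infty$ on $\p^1$). For $r\geq 3$ the group $\SL_r$ has $\Fi$-rank $r-1\geq 2$, so $\SL_r(\Fi)$ is a simple group of higher rank over a local field and therefore has property (T) by Kazhdan's theorem, which is valid over local fields of arbitrary characteristic. Property (T) passes to lattices, so $\SL_r(A)$ has property (T); being invariant under commensurability, property (T) then holds for the finite-index subgroup $\Gamma^1$ and consequently for the finite-index overgroup $\G_0^r(\fn)$.

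Finally, I would conclude from the standard consequences of property (T). Any discrete group $\Gamma$ with property (T) is finitely generated and satisfies $H^1(\Gamma,\R)=\Hom(\Gamma,\R)=0$. Taking $\Gamma=\G_0^r(\fn)$, finite generation makes $\Gamma^\ab$ a finitely generated abelian group, while $\Hom(\Gamma^\ab,\R)=\Hom(\Gamma,\R)=0$ forces its free rank to vanish; hence $\Gamma^\ab=(\G_0^r(\fn))^\ab$ is finite. The genuinely deep input — and the only real obstacle — is the conjunction of Kazhdan's property (T) for higher-rank simple groups over positive-characteristic local fields with the fact that $\SL_r(\F_q[T])$ is a lattice therein; the determinant reduction, the commensurability invariance of (T), and the passage from (T) to finite abelianization are all formal.
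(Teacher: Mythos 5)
Your proof is correct and takes essentially the same route as the paper, whose entire argument is a citation of Kazhdan's property (T) for lattices in semisimple groups of rank at least $2$ over local fields (Margulis, Ch.~VIII); you have simply spelled out the standard chain of deductions (commensurability with $\SL_r(A)$, the lattice property in $\SL_r(\Fi)$, inheritance of (T), and finite generation plus $\Hom(\Gamma,\R)=0$ forcing $\Gamma^{\ab}$ finite) that the paper leaves implicit.
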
 
\begin{proof} This is a consequence of Kazhdan's property (T) for lattices of semi-simple groups of rank at least 2 
	over local fields; see \cite[Ch.\ VIII]{Margulis}. 
\end{proof}

Hence the group $(\cO(\Omega^r)^\times)^{\G_0^r(\fn)}$, up to constant multiples, is a subgroup $\Har^1(\cB^r, \Z)^{\G_0^r(\fn)}$, 
and when $r\geq 3$, the image of the Gekeler--van der Put map $\cP$ has finite index in $\Har^1(\cB^r, \Z)^{\G_0^r(\fn)}$. In the special case 
when $\fn=1$, this latter group is trivial. 

\begin{lem}\label{lemH(1)=0}
	$\Har^1(\cB^r, \Z)^{\GL_r(A)}=0$.
\end{lem}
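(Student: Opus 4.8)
The plan is to read the vanishing off the long exact sequence \eqref{eqLongCohomSeq}, using Theorem \ref{thmKazdanT} together with the torsion-freeness of $\Har^1(\cB^r,\Z)$, rather than attacking the combinatorics of harmonic cochains head on. Specializing \eqref{eqLongCohomSeq} to $\fn=(1)$, so that $\G_0^r(1)=\GL_r(A)$, gives the four-term exact sequence
\begin{equation*}
0\To\Ci^\times\To(\cO(\Omega^r)^\times)^{\GL_r(A)}\overset{\cP}{\To}\Har^1(\cB^r,\Z)^{\GL_r(A)}\To\Hom(\GL_r(A),\Ci^\times).
\end{equation*}
Exactness identifies the image of $\cP$ with $(\cO(\Omega^r)^\times)^{\GL_r(A)}/\Ci^\times$ and realizes $\Har^1(\cB^r,\Z)^{\GL_r(A)}/\mathrm{im}(\cP)$ as a subgroup of $\Hom(\GL_r(A),\Ci^\times)$. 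The strategy is therefore to show that both outer contributions are as small as possible: the image of $\cP$ is trivial, and the cokernel group $\Hom(\GL_r(A),\Ci^\times)$ is finite.

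The finiteness of $\Hom(\GL_r(A),\Ci^\times)$ is immediate for $r\geq3$: by Theorem \ref{thmKazdanT} applied to $\G_0^r(1)=\GL_r(A)$ the abelianization $(\GL_r(A))^{\ab}$ is finite, and any homomorphism from a finite group into $\Ci^\times$ has finite image, so $\Hom(\GL_r(A),\Ci^\times)=\Hom((\GL_r(A))^{\ab},\Ci^\times)$ is finite. Granting in addition that $\mathrm{im}(\cP)$ is trivial, the sequence exhibits $\Har^1(\cB^r,\Z)^{\GL_r(A)}$ as a subgroup of a finite group. But $\Har^1(\cB^r,\Z)$ is a subgroup of the torsion-free group of all $\Z$-valued functions on $\Ed(\cB^r)$, hence is itself torsion-free; a torsion-free subgroup of a finite group is trivial, and the lemma follows. (For $r=2$ the group $\Hom(\GL_2(A),\Ci^\times)$ is no longer finite, and one instead invokes the classical fact, due to van der Put \cite{vdPut} and Gekeler \cite{GekelerExacSeq}, that there are no nonzero $\GL_2(A)$-invariant harmonic cochains on the tree $\cB^2$.)

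The main obstacle is thus the remaining claim that $\cP$ kills $(\cO(\Omega^r)^\times)^{\GL_r(A)}$, equivalently that $(\cO(\Omega^r)^\times)^{\GL_r(A)}=\Ci^\times$: every $\GL_r(A)$-invariant nowhere-vanishing holomorphic function on $\Omega^r$ is constant. Here I would use that such an honestly invariant function descends to a nowhere-vanishing holomorphic function on $\GL_r(A)\backslash\Omega^r$, which by Drinfeld \cite{Drinfeld} is the analytification of the affine variety $Y^r=\{g_r\neq0\}\subset\mathrm{Proj}(\Ci[g_1,\dots,g_r])$. The coordinate ring $\cO(Y^r)$ is the weight-zero part of the localization $\Ci[g_1,\dots,g_r]_{g_r}$; as $g_r$ is irreducible, the units of this localized polynomial ring are $\Ci^\times\cdot g_r^{\Z}$, and the weight-zero constraint $k(q^r-1)=0$ forces $k=0$, so $\cO(Y^r)^\times=\Ci^\times$. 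The one point needing care, and the genuine hard part of this route, is the identification of invariant analytic units on $\Omega^r$ with algebraic units on $Y^r$, which I would justify by observing that $\log|f|$ descends to a bounded harmonic function on $\GL_r(A)\backslash\cB^r$ and then invoking rigid-analytic GAGA for the affine $Y^r$. Alternatively, one can bypass this geometric input and argue directly from Section \ref{sFEHC}: Proposition \ref{prop3.10} and Lemma \ref{lem-c-invariant} reduce an invariant cochain to coefficients $c_{\vec a}$ depending only on the content of $\vec a$, the transitivity of $\GL_r(\F_q)=\Stab_{\GL_r(A)}([O_\infty^r])$ on the type-$1$ edges at $[O_\infty^r]$ with Remark \ref{remDefdS}(iii) forces the constant term to vanish, and the still-unused relation \eqref{eq-prop-harm-2} together with the Weyl element of Lemma \ref{lemIwasawaDecom} would then have to propagate the vanishing to all remaining $c_{\vec a}$.
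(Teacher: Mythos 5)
Your reduction runs in the wrong direction relative to the paper's logic, and the step you yourself flag as ``the genuine hard part'' is where the argument breaks. The paper proves this lemma by a direct two-paragraph combinatorial argument: the Weyl chamber $\cW^r$ is a fundamental domain for $\GL_r(A)$ on $\cB^r$ with a \emph{unique} edge of each type $s$ in $\cW^r$ terminating at $[\La_0]$, so $\GL_r(A)$ acts transitively on $\Ed_s^t([\La_0])$; condition (2) of Definition \ref{defnHarm-1} then gives $\#(\Ed_s^t([\La_0]))\cdot h(e_0)=0$ in $\Z$, forcing $h$ to vanish on all edges terminating at $[\La_0]$, and conditions (1) and (4) propagate the vanishing through the simplices of $\cW^r$. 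The identity $(\cO(\Omega^r)^\times)^{\GL_r(A)}=\Ci^\times$, which you take as an input, is in the paper a \emph{consequence} of this lemma together with \eqref{eqLongCohomSeq} (see the proof of Theorem \ref{thmModUnits}), so you must supply an independent proof of it — and the one you offer does not work. There is no ``rigid-analytic GAGA for the affine $Y^r$'': nowhere-vanishing analytic functions on the analytification of an affine variety need not be algebraic, and the whole content of Theorem \ref{thm-GvdP} is precisely to measure the discrepancy $\cO(\Omega^r)^\times/\Ci^\times\cong\Har^1(\cB^r,\Z)$. Your claim that $\log|f|$ descends to a \emph{bounded} function on the non-compact quotient $\GL_r(A)\bs\cB^r$ is unjustified and is essentially equivalent to the assertion $\cP(f)=0$, i.e.\ to the very statement being proved; assuming it is circular.

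The fallback sketch at the end (``the still-unused relation \eqref{eq-prop-harm-2} together with the Weyl element \dots would then have to propagate the vanishing to all remaining $c_{\vec a}$'') is a statement of intent, not an argument; none of the propagation is carried out. Note also that your main route covers only $r\geq 3$ (Theorem \ref{thmKazdanT} fails for $r=2$, where $(\GL_2(A))^{\ab}$ has infinite quotients in general only after passing to $\G_0^2(\fn)$ — in any case you simply cite the $r=2$ statement rather than prove it), whereas the paper's argument is uniform in $r\geq 2$ and uses nothing beyond the fundamental-domain structure of $\cW^r$ and the axioms of Definition \ref{defnHarm-1}. If you want to salvage your route, the correct way to see $(\cO(\Omega^r)^\times)^{\GL_r(A)}=\Ci^\times$ independently is via meromorphy at the unique cusp of $X_0^r(1)$ (Proposition \ref{propMerUnits}) plus normality and projectivity of the Satake compactification — but that machinery is far heavier than the proof it would replace.
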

\begin{proof} 
	There is a unique edge in the Weyl chamber $\cW^r$ of a given type $1\leq s\leq r-1$ terminating at $[\La_0]$. 
	On the other hand, as we have mentioned in Remark \ref{remOurFvsGek}, $\cW^r$ is a fundamental domain for the action of $\GL_r(A)$ on $\cB^r$. 
Thus, $\GL_r(A)$ acts transitively on the set of edges $\Ed_s^t([\La_0])$. 
Suppose $h\in \Har^1(\cB^r, \Z)$  is invariant under the action of $\GL_r(A)$. Using property (2) of Definition \ref{defnHarm-1}, we get  
	$$\sum_{e\in \Ed_s^t([\La_0])} h(e)=\# (\Ed_s^t([\La_0]))\cdot  h(e_0)=0$$ for any fixed $e_0\in \Ed^t_s([\La_0])$. 
	Thus, we get $h(e)=0$
	for all $e$ with $t(e)=[\La_0]$. Now, using properties (1) and (4) of Definition \ref{defnHarm-1}, we conclude that $h(e)=0$ for all edges 
	of the (unique) $(r-1)$-simplex $\sigma_0$ in $\cW^r$ containing $[\La_0]$. 
	
	Next, let $v$ be any vertex of $\sigma_0$ different from $v_0$. 
    If we disregard the edges of $\sigma_0$, then there is a unique edge in $\cW^r$ of a given type terminating at $v$, so we can apply the same 
	argument as above to $v$. Repeating this process eventually shows that $h(e)=0$ for all edges of $\cW^r$, hence 
	$h(e)=0$ for all edges of $\cB^r$. 
\end{proof}

From now on we denote $\Delta(\boz):=\Delta_r(\boz)$. Given a nonzero ideal $\fn\lhd A$, define  
$$
\fn\ast (z_1, z_2, \dots, z_r)=(\fn z_1, z_2, \dots, z_{r-1}, z_r). 
$$
Denote 
$$
\Delta_\fn(\boz):=\Delta(\fn\ast \boz)=\Delta(\fn z_1, z_2, \dots, z_r) 
$$
and 
$$
\Theta_\fn(\boz)=\Delta(\boz)/\Delta_\fn(\boz)\in \cO(\Omega^r)^\times. 
$$
(Recall that by abuse of notation $\fn$  denotes also the monic generator of this ideal.)

\begin{lem}
$\Theta_\fn(\gamma \boz)=\Theta_\fn(\boz)$ for all $\gamma\in \G_0^r(\fn)$. 
	In particular, $\cP(\Theta_\fm)\in \Har^1(\cB^r, \Z)^{\G_0^r(\fn)}$ for any $\fm\mid \fn$. 
\end{lem}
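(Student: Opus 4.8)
The plan is to reduce the identity $\Theta_\fn(\gamma\boz)=\Theta_\fn(\boz)$ to a cancellation of automorphy factors, using Proposition~\ref{propDeltaFE}(2) applied separately to $\Delta$ and to $\Delta_\fn$. First I would record that the operation $\boz\mapsto\fn\ast\boz$ is exactly the action of the diagonal matrix $\mu_\fn:=\diag(\fn,1,\dots,1)$: since the bottom row of $\mu_\fn$ is $(0,\dots,0,1)$ we have $j(\mu_\fn,\boz)=z_r=1$ for every $\boz\in\Omega^r$, so $\mu_\fn\boz=(z_1,\dots,z_r)\mu_\fn^t=(\fn z_1,z_2,\dots,z_r)=\fn\ast\boz$ with no normalization needed, and hence $\Delta_\fn(\boz)=\Delta(\mu_\fn\boz)$. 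I will also use $j(\mu_\fn,\gamma\boz)=1$, valid because $\gamma\boz$ is already normalized.

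The key step is the conjugation $\gamma':=\mu_\fn\gamma\mu_\fn^{-1}$. Writing $\gamma=\begin{pmatrix} a & b\\ c & d\end{pmatrix}$ in the block form of the definition of $\G_0^r(\fn)$, a direct computation gives $\gamma'=\begin{pmatrix} a & \fn b\\ \fn^{-1}c & d\end{pmatrix}$. The main obstacle — and really the only nontrivial verification — is integrality of $\gamma'$: its lower-left block is $\fn^{-1}c$, which lies in $A^{r-1}$ precisely because $c\equiv 0\Mod{\fn}$, i.e.\ precisely because $\gamma\in\G_0^r(\fn)$; the upper-right block $\fn b$ is automatically integral, and $\det\gamma'=\det\gamma\in\F_q^\times$. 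Thus $\gamma'\in\GL_r(A)$, which is exactly what is required to invoke Proposition~\ref{propDeltaFE}(2) for $\gamma'$.

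With these facts the rest is routine. From $\mu_\fn\gamma=\gamma'\mu_\fn$, the cocycle relation $j(g_1g_2,\boz)=j(g_1,g_2\boz)\,j(g_2,\boz)$, and $j(\mu_\fn,\cdot)=1$, I get $j(\gamma,\boz)=j(\gamma',\mu_\fn\boz)$. Then
\begin{align*}
\Delta_\fn(\gamma\boz)&=\Delta(\mu_\fn\gamma\boz)=\Delta(\gamma'\mu_\fn\boz)=j(\gamma',\mu_\fn\boz)^{q^r-1}\Delta(\mu_\fn\boz)=j(\gamma,\boz)^{q^r-1}\Delta_\fn(\boz),\\
\Delta(\gamma\boz)&=j(\gamma,\boz)^{q^r-1}\Delta(\boz),
\end{align*}
so the factors $j(\gamma,\boz)^{q^r-1}$ cancel in the quotient $\Delta/\Delta_\fn$, giving $\Theta_\fn(\gamma\boz)=\Theta_\fn(\boz)$.

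Finally, for the ``in particular'' clause I would observe that $\fm\mid\fn$ forces $\G_0^r(\fn)\subseteq\G_0^r(\fm)$, since $c\equiv 0\Mod{\fn}$ implies $c\equiv 0\Mod{\fm}$. Applying the first part to $\fm$ then shows $\Theta_\fm$ is invariant under $\G_0^r(\fn)$, and because the sequence~\eqref{eqGvdPseq} is $\GL_r(\Fi)$-equivariant (Theorem~\ref{thm-GvdP}), the map $\cP$ sends a $\G_0^r(\fn)$-invariant function to a $\G_0^r(\fn)$-invariant cochain; hence $\cP(\Theta_\fm)\in\Har^1(\cB^r,\Z)^{\G_0^r(\fn)}$.
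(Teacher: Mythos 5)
Your proof is correct and is essentially the paper's own argument: the matrix $\gamma'=\mu_\fn\gamma\mu_\fn^{-1}$ you construct is exactly the $\widetilde{\gamma}$ of the paper's proof, and both arguments hinge on the same key point that integrality of the lower-left block $\fn^{-1}c$ is precisely the congruence condition defining $\G_0^r(\fn)$. Your packaging via the cocycle relation and $j(\mu_\fn,\cdot)=1$ is a slightly cleaner way to get $j(\gamma,\boz)=j(\gamma',\fn\ast\boz)$, but the substance is identical, including the handling of the ``in particular'' clause via $\G_0^r(\fn)\subseteq\G_0^r(\fm)$.
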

\begin{proof}
	Given $\gamma=(\gamma_{i,j})\in \G^r_0(\fn)$, define $\widetilde{\gamma}=(\widetilde{\gamma}_{i,j})$ by  
	$$
	\widetilde{\gamma}_{i,j} = 
	\begin{cases}
	\gamma_{i,j} & \text{if } i, j\geq 2 \text{ or }i=j=1,\\ 
	\gamma_{i,j}/\fn &  \text{if } j=1, i\geq 2, \\ 
	\fn \gamma_{i,j} &  \text{if } i=1, j\geq 2. 
	\end{cases}
	$$
	It is easy to check that $\widetilde{\gamma}\in \GL_r(A)$ and 
	$$
	j(\widetilde{\gamma}, \fn\ast \boz) =\frac{\gamma_{r, 1}}{\fn}(\fn z_1)+\gamma_{r, 2}z_2+\cdots+\gamma_{r,r}z_r = j(\gamma, \boz). 
	$$
	Observe that $\fn\ast(\gamma \boz)=\widetilde{\gamma}(\fn\ast \boz)$, which implies (using Proposition \ref{propDeltaFE})
	$$
	\Delta_\fn(\gamma\boz)=\Delta(\fn\ast(\gamma \boz))=\Delta(\widetilde{\gamma}(\fn\ast \boz))=j(\widetilde{\gamma}, \fn\ast \boz)^{q^r-1}
	\Delta_\fn(\boz) = j(\gamma, \boz)^{q^r-1}\Delta_\fn(\boz). 
	$$
	Since $\Delta(\gamma\boz)= j(\gamma, \boz)^{q^r-1}\Delta(\boz)$, we get $\Theta_\fn(\gamma\boz)=\Theta_\fn(\boz)$. 
	
	For the last claim of the lemma, note that $\G^r_0(\fn)$ is a subgroup of $\G^r_0(\fm)$ if $\fm\mid \fn$, so 
	$\Theta_\fm$ is $\G^r_0(\fn)$-invariant since it is $\G^r_0(\fm)$-invariant. 
\end{proof}

\subsection{Fourier expansion of $\Pcal_1(\Theta_\nfk)$}

For $\vec{a} \in A^{r-1}$,
denote
$$
\sigma_\fn(s, \vec{a}):=\sum_{\substack{c\in A_+\\ c\mid \vec{a},\ \fn\nmid c}} |c|^s \quad \text{ if $\vec{a} \neq 0$, \quad and } \quad 
\sigma_\fn(s,0) := (1-|\fn|^s)\cdot \sigma(s,0) = \frac{1-|\fn|^s}{1-q^{1+s}}. 
$$
Then 
$$\sigma_\fn(s,\vec{a}) = \sigma(s,\vec{a}) - |\nfk|^s \sigma(s,\nfk^{-1}\vec{a}),$$
where $\sigma(s,\nfk^{-1}\vec{a}):= 0$ if $\nfk \nmid \vec{a}$.

\begin{thm}\label{propThetaFC}
	Given $\vec{a} \in A^{r-1}$ and $y \in \GL_{r-1}(F_\infty)$ with $m(\vec{a},y) \geq 2$, we have 
	$$
	\cP_1(\Theta_\fn)^\ast(\vec{a}, y)=(q^r-1)(q-1)q^{r-1} |\det(y)|^{-1}\cdot \sigma_\fn(r-1, \vec{a}). 
	$$
\end{thm}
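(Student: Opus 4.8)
The plan is to reduce everything to the Fourier coefficients of $\cP_1(\Delta)=\cP_1(\Delta_r)$ from Theorem~\ref{thmFEPDelta}, using the multiplicative relation $\Theta_\fn=\Delta/\Delta_\fn$. First I would record the elementary identity $\Delta_\fn(\boz)=\Delta(M\boz)$, where $M:=\diag(\fn,1,\dots,1)\in\GL_r(\Fi)$: since the last coordinate of $\boz=(z_1,\dots,z_{r-1},1)$ is fixed by $M$, one has $j(M,\boz)=1$ and $M\boz=(\fn z_1,z_2,\dots,z_r)=\fn\ast\boz$. The defining formula $\cP_1(f)(g)=\log_q|f(gS\boz_0)|-\log_q|f(g\boz_0)|$ (valid for every $f\in\cO(\Omega^r)^\times$, with $S=\diag(T,1,\dots,1)$, by the same reasoning as for $\Delta_r$) together with the associativity of the $\GL_r(\Fi)$-action on $\Omega^r$ then gives the key identity
$$\cP_1(\Delta_\fn)(g)=\log_q|\Delta(MgS\boz_0)|-\log_q|\Delta(Mg\boz_0)|=\cP_1(\Delta)(Mg),\qquad g\in\GL_r(\Fi).$$

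Next I would translate this into the $(\vec{x},y)$-coordinates. Writing $g=\begin{pmatrix}1&\vec{x}y\\&y\end{pmatrix}$, a block computation gives $Mg=\begin{pmatrix}1&\vec{x}y\\&\fn^{-1}y\end{pmatrix}\cdot(\fn I_r)$; since $\cP_1(\Delta)$ is right-invariant under the scalar matrix $\fn I_r\in\Fi^\times\cI^1$, this yields
$$\cP_1(\Delta_\fn)(\vec{x},y)=\cP_1(\Delta)(\fn\vec{x},\fn^{-1}y).$$
To extract the Fourier coefficients I would insert the Fourier expansion of $\cP_1(\Delta)$ and reindex the frequencies:
$$\cP_1(\Delta_\fn)(\vec{x},y)=\sum_{\vec{b}\in A^{r-1}}\cP_1(\Delta)^\ast(\vec{b},\fn^{-1}y)\,\psi\big((\fn\vec{b})\bcdot\vec{x}\big)=\sum_{\vec{a}\in\fn A^{r-1}}\cP_1(\Delta)^\ast(\fn^{-1}\vec{a},\fn^{-1}y)\,\psi(\vec{a}\bcdot\vec{x}).$$
By uniqueness of the Fourier expansion this identifies $\cP_1(\Delta_\fn)^\ast(\vec{a},y)=\cP_1(\Delta)^\ast(\fn^{-1}\vec{a},\fn^{-1}y)$ when $\fn\mid\vec{a}$, and $=0$ otherwise.

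Finally I would assemble the two pieces. Since $\log_q|\Theta_\fn|=\log_q|\Delta|-\log_q|\Delta_\fn|$, the defining formula gives $\cP_1(\Theta_\fn)^\ast(\vec{a},y)=\cP_1(\Delta)^\ast(\vec{a},y)-\cP_1(\Delta_\fn)^\ast(\vec{a},y)$. The hypothesis $m(\vec{a},y)\geq 2$ lets me apply the combined formula from the remark following Theorem~\ref{thmFEPDelta} to the first term. For the second term I note that $\fn^{-1}\vec{a}\,((\fn^{-1}y)^t)^{-1}=\vec{a}(y^t)^{-1}$, so $m(\fn^{-1}\vec{a},\fn^{-1}y)=m(\vec{a},y)\geq 2$ and the same formula applies, while $|\det(\fn^{-1}y)|^{-1}=|\fn|^{r-1}|\det(y)|^{-1}$. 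Both cases $\fn\mid\vec{a}$ and $\fn\nmid\vec{a}$ then collapse, via the relation $\sigma_\fn(s,\vec{a})=\sigma(s,\vec{a})-|\fn|^s\sigma(s,\fn^{-1}\vec{a})$ recorded before the theorem (with $\sigma(s,\fn^{-1}\vec{a}):=0$ when $\fn\nmid\vec{a}$), into the single expression
$$\cP_1(\Theta_\fn)^\ast(\vec{a},y)=(q^r-1)(q-1)q^{r-1}|\det(y)|^{-1}\big(\sigma(r-1,\vec{a})-|\fn|^{r-1}\sigma(r-1,\fn^{-1}\vec{a})\big)=(q^r-1)(q-1)q^{r-1}|\det(y)|^{-1}\sigma_\fn(r-1,\vec{a}).$$
The main obstacle is the bookkeeping in the first two steps: correctly identifying $M$, tracking the scalar $\fn I_r$ and the resulting $(\fn\vec{x},\fn^{-1}y)$ substitution, and the determinant scaling $|\fn|^{r-1}$. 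Once these are in place, the reindexing of frequencies and the $\sigma_\fn$ identity finish the argument cleanly.
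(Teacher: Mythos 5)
Your proof is correct and takes essentially the same route as the paper: both rest on the identity $\cP_1(\Delta_\fn)(\vec{x},y)=\cP_1(\Delta)(\fn\vec{x},\fn^{-1}y)$ obtained from the factorization $\diag(\fn,1,\dots,1)\,g=\left(\begin{smallmatrix}1&\vec{x}y\\&\fn^{-1}y\end{smallmatrix}\right)(\fn I_r)$, followed by reindexing the Fourier frequencies, applying Theorem \ref{thmFEPDelta}, and invoking $\sigma_\fn(s,\vec{a})=\sigma(s,\vec{a})-|\fn|^s\sigma(s,\fn^{-1}\vec{a})$. Your explicit justification of $\cP_1(\Delta_\fn)(g)=\cP_1(\Delta)(Mg)$ via $j(M,\boz)=1$ is a touch more careful than the paper's ``from the definitions,'' but the argument is the same.
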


\begin{proof}
	From the definitions, for $g\in \GL_r(\Fi)$, we have 
	$$\cP_1(\Delta_\fn)(g)=\cP_1(\Delta)(g)-\cP_1(\Delta)\left(\begin{pmatrix} \fn & \\ & I_{r-1}\end{pmatrix}g\right).$$ 
	On the other hand, for $\vec{x} \in F_\infty^{r-1}$ and $y \in \GL_{r-1}(F_\infty)$ one has
	$$
	\begin{pmatrix} \fn & \\ & I_{r-1}\end{pmatrix} \begin{pmatrix} 1 & \vec{x}y\\ & y \end{pmatrix} =  
	\begin{pmatrix} 1 & (\fn\vec{x}) (\fn^{-1} y)\\ & \fn^{-1} y \end{pmatrix} (\fn I_r).
	$$
	Hence
	$$
	\cP_1(\Delta_\fn)(\vec{x}, y)=
	\cP_1(\Delta)(\vec{x},y) -
	\cP_1(\Delta)(\fn \vec{x}, \fn^{-1} y). 
	$$
	This implies that for $\vec{a} \in A^{r-1}$, 
	$$
		\cP_1(\Delta_\fn)^\ast(\vec{a}, y) = \cP_1(\Delta)^\ast(\vec{a},y) - 
		\begin{cases} 
		\cP_1(\Delta)^\ast (\fn^{-1}\vec{a}, \fn^{-1} y) & \text{if }\fn\mid \vec{a}; \\ 
		0 & \text{if }\fn\nmid \vec{a}. 
		\end{cases}
	$$
	Then Theorem \ref{thmFEPDelta} leads us to
	\begin{eqnarray}
	\cP_1(\Theta_\fn)^\ast(\vec{a}, y)&=&(q^r-1)(q-1)q^{r-1}|\det(y)|^{-1}\cdot\Big(\sigma(r-1, \vec{a})-|\fn|^{r-1}\sigma(r-1, \fn^{-1}\vec{a})\Big) \nonumber \\
	&=& (q^r-1)(q-1)q^{r-1}|\det(y)|^{-1} \cdot \sigma_\fn(r-1,\vec{a}). \nonumber
	\end{eqnarray}
\end{proof}

Similar to Corollary~\ref{corRootDelta}, we get:

\begin{cor}\label{corPDeltaNWeyl}
	Let $y=\diag(T^{n_2}, \dots, T^{n_r})$ with $n_i\leq 1$ for all $i=2, \dots, r$. 
	Then 
	for any $\vec{x}\in \Fi^{r-1}$ we have 
	$$
	\cP_1(\Theta_\fn)(\vec{x}, y)=(q-1)(|\fn|^{r-1}-1)q^{r-1-(n_2+\cdots+n_r)}. 
	$$ 
	In particular, $\cP_1(\Theta_\fn)(0, T I_{r-1})=(q-1)(|\fn|^{r-1}-1)$. 
\end{cor}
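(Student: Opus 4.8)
The plan is to mimic the proof of Corollary~\ref{corPDeltaWeyl}: for the special diagonal $y$ in the hypothesis, all Fourier terms except the constant one drop out, and that surviving term is evaluated directly from Theorem~\ref{propThetaFC}.

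First I would make the quantity $m(\vec{a},y)$ explicit for $y=\diag(T^{n_2},\dots,T^{n_r})$. Since $y$ is diagonal we have $(y^t)^{-1}=\diag(T^{-n_2},\dots,T^{-n_r})$, so the $i$-th coordinate of $\vec{a}(y^t)^{-1}$ is $a_iT^{-n_{i+1}}$, of $\infty$-adic order $\ord_\infty(a_iT^{-n_{i+1}})=n_{i+1}-\deg(a_i)$. As $\pi=1/T$, membership in $\pi^mO_\infty$ means $\infty$-adic order at least $m$, whence $m(\vec{a},y)=\min_{i:\,a_i\neq 0}\bigl(n_{i+1}-\deg(a_i)\bigr)$. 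For any $\vec{a}\neq 0$ some $a_i\neq 0$ has $\deg(a_i)\geq 0$, and the hypothesis $n_{i+1}\leq 1$ then forces $m(\vec{a},y)\leq 1$. This is the only place the assumption $n_i\leq 1$ is used.

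Next I would invoke the support of the Fourier expansion. Since $\G_\infty\subset\G_0^r(\fn)$ and $\Theta_\fn$ is $\G_0^r(\fn)$-invariant, the cochain $\cP_1(\Theta_\fn)$ is left $\G_\infty$-invariant; being the type-$1$ restriction of a harmonic $1$-cochain it is also right $P(O_\infty)$-invariant, as $P(O_\infty)=P(\Fi)\cap\cI^1$. Hence by Lemma~\ref{lemPropFC}~(3) its Fourier coefficients vanish unless $m(\vec{a},y)\geq 2$. Combined with the bound $m(\vec{a},y)\leq 1$ for $\vec{a}\neq 0$, the expansion underlying Theorem~\ref{propThetaFC} collapses to the single term $\vec{a}=0$, so $\cP_1(\Theta_\fn)(\vec{x},y)=\cP_1(\Theta_\fn)^\ast(0,y)$ for every $\vec{x}\in\Fi^{r-1}$.

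Finally I would evaluate the constant term. At $\vec{a}=0$ one has $m(0,y)=\infty\geq 2$, so Theorem~\ref{propThetaFC} applies; substituting $\sigma_\fn(r-1,0)=(1-|\fn|^{r-1})/(1-q^r)$ and $|\det(y)|^{-1}=q^{-(n_2+\cdots+n_r)}$, the factor $(q^r-1)/(1-q^r)=-1$ collapses the expression to $(q-1)(|\fn|^{r-1}-1)\,q^{\,r-1-(n_2+\cdots+n_r)}$, as claimed. Taking $y=TI_{r-1}$ gives $n_2=\cdots=n_r=1$, the exponent $r-1-(n_2+\cdots+n_r)$ vanishes, and one recovers $\cP_1(\Theta_\fn)(0,TI_{r-1})=(q-1)(|\fn|^{r-1}-1)$. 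There is no serious obstacle here; the only points requiring a little care are verifying the support statement (so that indeed only $\vec{a}=0$ contributes) and tracking the sign in $\sigma_\fn(r-1,0)$.
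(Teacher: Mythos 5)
Your proof is correct and follows exactly the route the paper intends: the paper gives no separate argument for this corollary (it just says "similar to" the $\Delta_r$ case), and the proof of that earlier corollary is precisely your observation that $n_i\leq 1$ forces $m(\vec a,y)\leq 1$ for $\vec a\neq 0$, so only the constant term $\cP_1(\Theta_\fn)^\ast(0,y)$ survives, which is then evaluated via $\sigma_\fn(r-1,0)=(1-|\fn|^{r-1})/(1-q^r)$. Your sign-tracking and the evaluation $|\det(y)|^{-1}=q^{-(n_2+\cdots+n_r)}$ are both right.
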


\begin{cor}\label{corRootTheta} 
	The largest integer $m$ such that there exists an $m$-th root of $\Theta_\fn$ in $\cO(\Omega^r)^\times$ 
	divides 
	$$
	(q-1)\cdot \gcd(|\fn|-1, q^r-1)=(q-1)(q^{\gcd(\deg(\fn), r)}-1).
	$$ 
\end{cor}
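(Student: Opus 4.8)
The plan is to combine the Gekeler--van der Put exact sequence \eqref{eqGvdPseq} with the explicit Fourier coefficients of $\cP_1(\Theta_\fn)$ to bound $m$ from above. The starting observation is that, by exactness of \eqref{eqGvdPseq}, $\Theta_\fn$ admits an $m$-th root in $\cO(\Omega^r)^\times$ if and only if $\tfrac{1}{m}\cP(\Theta_\fn)=\cP(\Theta_\fn^{1/m})$ lies in $\Har^1(\cB^r,\Z)$: if $\cP(\Theta_\fn)=m\cdot h$ with $h\in\Har^1(\cB^r,\Z)$, then choosing $u\in\cO(\Omega^r)^\times$ with $\cP(u)=h$ forces $\Theta_\fn/u^m\in\Ci^\times$, which is an $m$-th power since $\Ci$ is algebraically closed. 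For the upper bound I only need the easy direction: if $\Theta_\fn^{1/m}\in\cO(\Omega^r)^\times$ then $\cP_1(\Theta_\fn)=m\cdot\cP_1(\Theta_\fn^{1/m})$, so $m$ divides every integer value $\cP_1(\Theta_\fn)(\vec{x},y)$ on type-$1$ edges. The whole problem thus reduces to producing enough such values (throughout I assume $\fn\neq 1$, the case $\fn=1$ being vacuous since $\Theta_\fn\equiv 1$).

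Next I would extract two divisibility constraints. From Corollary~\ref{corPDeltaNWeyl} with $y=TI_{r-1}$ one reads off the clean integer $\cP_1(\Theta_\fn)(0,TI_{r-1})=(q-1)(|\fn|^{r-1}-1)$, whence $m\mid(q-1)(|\fn|^{r-1}-1)$. For a second, independent constraint I would take $y=\diag(T^2,T,\dots,T)$; then the condition $m(\vec{a},y)\geq 2$ forces the only surviving Fourier modes to be $\vec{a}=(a_1,0,\dots,0)$ with $a_1\in\F_q$, and for $a_1\neq 0$ one has $\sigma_\fn(r-1,\vec{a})=1$, so Theorem~\ref{propThetaFC} gives $\cP_1(\Theta_\fn)^\ast(\vec{a},y)=(q^r-1)(q-1)q^{-1}$ independently of $a_1$. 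Comparing the values at $\vec{x}=0$ and at $\vec{x}=(1/T,0,\dots,0)$ and subtracting, the constant mode cancels and the residual character sum collapses via $\sum_{a_1\in\F_q^\times}\bigl(1-\psi(a_1/T)\bigr)=(q-1)-(-1)=q$, leaving $\cP_1(\Theta_\fn)(0,y)-\cP_1(\Theta_\fn)(\vec{x},y)=(q^r-1)(q-1)$. Since $m$ divides both values, it divides their difference, giving $m\mid(q-1)(q^r-1)$.

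Finally I would combine the two constraints: $m\mid(q-1)\gcd\bigl(|\fn|^{r-1}-1,\,q^r-1\bigr)$. Writing $d=\deg\fn$ and using the elementary identities $\gcd(q^a-1,q^b-1)=q^{\gcd(a,b)}-1$ and $\gcd\bigl(d(r-1),r\bigr)=\gcd(d,r)$ (the latter because $r-1\equiv-1\bmod r$), this gcd equals $q^{\gcd(d,r)}-1=\gcd(|\fn|-1,q^r-1)$. This yields exactly the asserted bound $m\mid(q-1)\gcd(|\fn|-1,q^r-1)=(q-1)(q^{\gcd(\deg\fn,r)}-1)$.

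The only genuinely computational step is the second constraint: pinning down $y=\diag(T^2,T,\dots,T)$ so that precisely the rank-one modes $\vec{a}=(a_1,0,\dots,0)$ contribute, and verifying via the standard fact that $b\mapsto\psi(b/T)=\psi_0(\Tr_{\F_q/\F_p}(b))$ is a nontrivial additive character of $\F_q$ (so $\sum_{b\in\F_q^\times}\psi(b/T)=-1$) that the difference reduces to $(q^r-1)(q-1)$. I expect no essential obstacle beyond this bookkeeping, since all required Fourier coefficients are supplied by Theorem~\ref{propThetaFC} and Corollary~\ref{corPDeltaNWeyl}. I would also note explicitly that this argument proves only the divisibility (upper bound) in the corollary; that the bound is attained, i.e.\ that the largest root order equals $(q-1)\gcd(|\fn|-1,q^r-1)$, is the complementary assertion established separately.
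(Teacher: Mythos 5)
Your proposal is correct and follows the same overall strategy as the paper: use the exact sequence \eqref{eqGvdPseq} to reduce the question to the divisibility by $m$ of integer values of $\cP_1(\Theta_\fn)$ on type-$1$ edges, extract the first constraint $m\mid(q-1)(|\fn|^{r-1}-1)$ from Corollary~\ref{corPDeltaNWeyl}, and then obtain a second constraint from a further evaluation via Theorem~\ref{propThetaFC}. Where you differ is in the second test point: the paper takes $y=\diag(T^2,\dots,T^2)$ and $\vec{x}=(0,\dots,0,T^{-1})$, yielding the single value $(q-1)q\,(q^{(\deg\fn-1)(r-1)-1}-1)$ and then a gcd computation mixing $\deg\fn$ and $r$; you instead take $y=\diag(T^2,T,\dots,T)$ and subtract the values at $\vec{x}=0$ and $\vec{x}=(T^{-1},0,\dots,0)$, so that the constant mode cancels and the character sum over $a_1\in\F_q^\times$ produces the clean, $\fn$-independent constraint $m\mid(q-1)(q^r-1)$. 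Your variant makes the final step $\gcd\bigl(q^{\deg\fn\,(r-1)}-1,\,q^{r}-1\bigr)=q^{\gcd(\deg\fn,r)}-1$ immediate, whereas the paper's requires the extra observation that $\gcd\bigl(\deg\fn\,(r-1),\,(\deg\fn-1)(r-1)-1\bigr)=\gcd(\deg\fn,r)$; both are valid, and your bookkeeping (which modes survive for your $y$, the value $\sigma_\fn(r-1,\vec{a})=1$ for $0\neq\vec{a}\in\F_q\times\{0\}^{r-2}$ when $\fn\neq 1$, and the sum $\sum_{a_1\in\F_q^\times}(1-\psi(a_1/T))=q$) all check out.
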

\begin{proof}
	Suppose $\Theta_\fn^{1/m}\in \cO(\Omega^r)^\times$. Then from the Gekeler--van der Put exact sequence \eqref{eqGvdPseq} we obtain:
	$$
	\cP_1(\Theta_\fn^{1/m})(0, T I_{r-1})=\frac{1}{m}\cP_1(\Theta_\fn)(0, T I_{r-1})=\frac{1}{m}(q-1)(|\fn|^{r-1}-1)\in \Z. 
	$$
	Hence $m$ divides $(q-1)(|\fn|^{r-1}-1)$. 
	On the other hand, 
	take $\vec{x} = (0,...,0,T^{-1}) \in F_\infty^{r-1}$ and $y = \diag(T^2,...,T^2)$.
	Then for $\vec{a} \in A^{r-1}$, one has $m(\vec{a},y) \geq 2$ if and only if $\vec{a} \in \FF_q^{r-1}$.
	From Theorem~\ref{propThetaFC}, we have
$$
	\Pcal_1(\Theta_\fn)(\vec{x},y) = (q^r-1)(q-1)q^{r-1}|\det y|^{-1} \sum_{\vec{a} \in \FF_q^{r-1}} \sigma_\fn(r-1,\vec{a}) \psi(\vec{a}\bcdot \vec{x}). 
$$
Since 
\begin{align*}
\sum_{\vec{a} \in \FF_q^{r-1}} \sigma_\fn(r-1,\vec{a}) \psi(\vec{a}\bcdot \vec{x}) &= 
\frac{|\fn|^{r-1}-1}{q^r-1}+\sum_{0\neq \vec{a} \in \FF_q^{r-1}}\psi(\vec{a}\bcdot \vec{x})\\ 
 & =\frac{|\fn|^{r-1}-1}{q^r-1}-1,
\end{align*}
we get 
$$
	\Pcal_1(\Theta_\fn)(\vec{x},y) = (q-1)q (q^{(\deg \fn -1)(r-1)-1}-1).
$$
Thus $m$ also divides $(q-1)q (q^{(\deg \fn -1)(r-1)-1}-1)$.
	Note that
    \begin{align*}
    \gcd((q-1)(q^{\deg(\fn)}-1), q(q-1)(q^{(\deg(\fn)-1)(r-1)-1}-1)) &=(q-1)\cdot (q^{\gcd(\deg(\fn), r)}-1) 
    \end{align*}
    Therefore the result holds.
\end{proof}

Later in this section we will show that the largest integer $m$ such that there exists an $m$-th root of $\Theta_\fn$ in $\cO(\Omega^r)^\times$ 
is actually 
$(q-1)\cdot \gcd(|\fn|-1, q^r-1)$. 

\begin{cor}\label{corLinIndepTheta}
	Let $\fp_1, \dots, \fp_k$ be distinct primes and 
	$\fn=\fp_1\fp_2\cdots\fp_k$. Let $S$ be the set of monic divisors of $\fn$ not equal to $1$. 
	The  
	$2^k-1$ harmonic $1$-cochains 
	$\cP(\Theta_\fm)$, $\fm\in S$,  are linearly independent over $\Q$. 
\end{cor}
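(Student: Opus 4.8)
The plan is to reduce the statement to a linear-independence question about Fourier coefficients, and then to a purely combinatorial identity among polynomials. First, recall that by Proposition~\ref{propHarGLr} a harmonic $1$-cochain is determined by its restriction to type-$1$ edges, so the $\Q$-linear map $h\mapsto h_1$ is injective; hence it suffices to prove that the functions $\cP_1(\Theta_\fm)$ on $\G_\infty\backslash\GL_r(\Fi)/\Fi^\times\cI^1$, $\fm\in S$, are linearly independent over $\Q$. Since each $\Theta_\fm$ is $\G_0^r(\fn)$-invariant, by Remark~\ref{remFC} each $\cP_1(\Theta_\fm)$ is determined by its values on $P(\Fi)$, hence by its Fourier coefficients. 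Thus a relation $\sum_{\fm\in S}\lambda_\fm\cP_1(\Theta_\fm)=0$ with $\lambda_\fm\in\Q$ is equivalent to the vanishing of the corresponding combination of Fourier coefficients for all $\vec a\in A^{r-1}$ and all admissible $y$.

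Next I would invoke Theorem~\ref{propThetaFC}: for $y$ with $m(\vec a,y)\geq 2$ one has $\cP_1(\Theta_\fm)^\ast(\vec a,y)=C\cdot|\det(y)|^{-1}\cdot\sigma_\fm(r-1,\vec a)$ with the common nonzero constant $C=(q^r-1)(q-1)q^{r-1}$. For any fixed $\vec a$ we may take $y=\diag(T^N,\dots,T^N)$ with $N$ large, so the relation becomes $\sum_{\fm\in S}\lambda_\fm\,\sigma_\fm(r-1,\vec a)=0$ for every $\vec a\in A^{r-1}$. Since $\sigma_\fm(r-1,\vec a)=\sum_{c\in A_+,\,c\mid\vec a,\ \fm\nmid c}|c|^{r-1}$ depends on $\vec a$ only through the monic $\gcd$ $d$ of its entries, and since $\vec a=(d,0,\dots,0)$ realizes every monic $d$, the problem reduces to showing that the functions $d\mapsto\sigma_\fm(r-1,d)$ on $A_+$, $\fm\in S$, are $\Q$-linearly independent.

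For the core step I would restrict to $d=\fp_1^{e_1}\cdots\fp_k^{e_k}$ and compute directly. Writing $\fm=\fm_I=\prod_{i\in I}\fp_i$ for nonempty $I\subseteq\{1,\dots,k\}$, and setting $x_i:=|\fp_i|^{r-1}>1$ and $g_i:=\sum_{j=0}^{e_i}x_i^{j}$, a short multiplicative computation gives the uniform identity
\begin{equation*}
\sigma_{\fm_I}(r-1,d)=\prod_{i\notin I}g_i\cdot\Big(\prod_{i\in I}g_i-\prod_{i\in I}(g_i-1)\Big)=:P_I(g_1,\dots,g_k),
\end{equation*}
valid for all exponent vectors (the degenerate cases $e_i=0$ with $i\in I$ are automatically correct, since then $g_i-1=0$). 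Because each $g_i$ takes infinitely many distinct values as $e_i$ ranges over $\Z_{\geq 0}$ and the $e_i$ vary independently, a relation $\sum_I\lambda_I P_I=0$ holding for all such $d$ forces $\sum_I\lambda_I P_I$ to vanish identically in $\Q[g_1,\dots,g_k]$. It therefore remains to prove that the polynomials $P_I$, $\emptyset\neq I\subseteq\{1,\dots,k\}$, are linearly independent over $\Q$.

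This last point is where the real content lies, and I would settle it with the substitution $g_i=1+t_i$. Then $\prod_i g_i=\prod_i(1+t_i)=:G$ is independent of $I$, while $\prod_{i\notin I}g_i\prod_{i\in I}(g_i-1)=\big(\prod_{i\in I}t_i\big)\prod_{i\notin I}(1+t_i)=:R_I$, so that $P_I=G-R_I$ and $G=R_\emptyset$. Expanding $R_I=\sum_{L\supseteq I}\prod_{l\in L}t_l$ exhibits the transition matrix from the squarefree monomial basis $\{\prod_{l\in L}t_l\}$ to $\{R_I\}$ as unitriangular for inclusion; hence the $R_I$, ranging over all subsets $I$, are linearly independent. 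Consequently $\sum_{\emptyset\neq I}\lambda_I P_I=\big(\sum_{\emptyset\neq I}\lambda_I\big)R_\emptyset-\sum_{\emptyset\neq I}\lambda_I R_I=0$ forces every coefficient to vanish, in particular $\lambda_I=0$ for all nonempty $I$, which is the desired independence. The main obstacle is precisely this combinatorial independence of the $P_I$, and the clean resolution is the observation that the shift $g_i=1+t_i$ turns each $P_I$ into $G-R_I$ with $\{R_I\}$ a triangular (zeta-transform) basis.
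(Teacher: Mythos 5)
Your proof is correct, and its first half follows the same reduction as the paper: both arguments use the injectivity of $h\mapsto h_1$, Remark \ref{remFC}, and Theorem \ref{propThetaFC} to convert a putative relation $\sum_{\fm\in S}\lambda_\fm\cP(\Theta_\fm)=0$ into a relation among the arithmetic functions $\sigma_\fm(r-1,\cdot)$. Where you diverge is in the combinatorial core. The paper evaluates only at the $2^k-1$ square-free arguments $\vec{\fm}=(\fm,\dots,\fm)$, $\fm\in S$ (with the single $y=T^{2+\deg(\fn)}I_{r-1}$), and proves nonvanishing of the resulting $(2^k-1)\times(2^k-1)$ determinant by an explicit row-reduction computation (Lemma \ref{lemDetSigma}), which even yields the closed value $-|\fn|^{s(2^{k-1}-1)}$. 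You instead evaluate at all $d=\fp_1^{e_1}\cdots\fp_k^{e_k}$, lift the relation to a polynomial identity $\sum_I\lambda_I P_I=0$ in $\Q[g_1,\dots,g_k]$ (legitimate, since each $g_i$ ranges over an infinite set independently), and then kill it by the substitution $g_i=1+t_i$, which turns the $P_I$ into $G-R_I$ with $\{R_I\}_{I\subseteq\{1,\dots,k\}}$ a unitriangular (zeta-transform) basis of the squarefree monomials. Both cores are at bottom M\"obius inversion on the subset lattice; yours is arguably more conceptual and avoids any determinant evaluation, while the paper's gives the explicit determinant (which is not needed for the corollary but is a cleaner quantitative statement). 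One small remark: since each $P_I$ is multilinear in the $g_i$, your argument would already go through using only square-free $d$ (i.e.\ $e_i\in\{0,1\}$), which brings it even closer to the set of evaluation points the paper uses.
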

\begin{proof}
	Denote $\vec{\fm}:=(\fm, \fm, \dots, \fm)\in A^{r-1}$. 
	It is enough to show that the matrix $$\left(\cP_1(\Theta_{\fm'})^\ast(\vec{\fm}, y)\right)_{\fm, \fm'\in S}$$ 
	has a nonzero determinant for some $y$. Let $y=T^{2+\deg(\fn)}I_{r-1}$, so that 
	$\cP_1(\Theta_{\fm'})^\ast(\vec{\fm}, y)\neq 0$. Using Theorem \ref{propThetaFC}, it is enough to show that 
	$\det(\sigma_{\fm'}(r-1, \vec{\fm}))_{\fm, \fm'\in S}\neq 0$. This follows from the next lemma. 
\end{proof}

\begin{lem}\label{lemDetSigma}
	Let $\fn$ and $S$ be as in Corollary \ref{corLinIndepTheta}. 
	Then 
	$$
	\det(\sigma_{\fm'}(s, \fm))_{\fm, \fm'\in S}= - |\fn|^{s\cdot (2^{k-1}-1)}.  
	$$
\end{lem}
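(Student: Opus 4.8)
The plan is to turn this into a purely combinatorial determinant indexed by subsets and then evaluate it with the matrix–determinant lemma. Write $\fn=\fp_1\cdots\fp_k$ and identify $S$ with the nonempty subsets of $\{1,\dots,k\}$ via $\fm_I=\prod_{i\in I}\fp_i$. Put $x_i=\abs{\fp_i}^s$, and for a subset $I$ set $x_I=\prod_{i\in I}x_i=\abs{\fm_I}^s$ and $\Pi_I=\prod_{i\in I}(1+x_i)$. First I would rewrite the entries: since all coordinates of $\vec{\fm_I}=(\fm_I,\dots,\fm_I)$ are equal, $c\mid\vec{\fm_I}\iff c\mid\fm_I$, so $\sigma(s,\vec{\fm_I})=\sum_{c\mid\fm_I}\abs{c}^s=\Pi_I$ (as $\fm_I$ is square-free). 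Feeding this into the identity $\sigma_{\fm'}(s,\vec a)=\sigma(s,\vec a)-\abs{\fm'}^s\sigma(s,(\fm')^{-1}\vec a)$ recorded before the lemma gives
$$ M_{I,I'}:=\sigma_{\fm_{I'}}(s,\vec{\fm_I})=\Pi_I-\begin{cases}x_{I'}\Pi_{I\setminus I'}, & I'\subseteq I,\\ 0,& I'\not\subseteq I.\end{cases} $$
Thus $M=\Pi\,\bone^{t}-N$, where (over nonempty subsets) $\Pi=(\Pi_I)_I$ is a column vector, $\bone$ is the all-ones vector, and $N_{I,I'}=x_{I'}\Pi_{I\setminus I'}$ for $I'\subseteq I$ and $0$ otherwise. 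The determinant does not depend on the chosen ordering of $S$, since a reordering simultaneously permutes rows and columns and hence conjugates $M$ by a permutation matrix.

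The next step uses that $N$ is triangular for any linear refinement of the inclusion order: $N_{I,I'}\neq 0$ forces $I'\subseteq I$, and the diagonal entries are $N_{I,I}=x_I$. Hence $N$ is invertible and $\det N=\prod_{\emptyset\neq I}x_I=\prod_{i=1}^{k}x_i^{2^{k-1}}=\abs{\fn}^{s\,2^{k-1}}$, each $x_i$ occurring in exactly $2^{k-1}$ nonempty subsets. The matrix–determinant lemma applied to the rank-one update then gives
$$ \det M=\det(-N)\bigl(1-\bone^{t}N^{-1}\Pi\bigr). $$
The decisive computation is to solve $Nw=\Pi$. I would guess and verify that $w_I=(-1)^{\abs I-1}\Pi_I/x_I$: indeed $\sum_{\emptyset\neq I'\subseteq I}x_{I'}\Pi_{I\setminus I'}w_{I'}=\Pi_I\sum_{\emptyset\neq I'\subseteq I}(-1)^{\abs{I'}-1}=\Pi_I$, using the multiplicativity $\Pi_{I'}\Pi_{I\setminus I'}=\Pi_I$ and the elementary identity $\sum_{\emptyset\neq I'\subseteq I}(-1)^{\abs{I'}-1}=1$ valid for $I\neq\emptyset$.

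Finally I would evaluate $\bone^{t}w=\sum_{\emptyset\neq I}(-1)^{\abs I-1}\prod_{i\in I}(1+x_i^{-1})$ by inclusion–exclusion over the primes, obtaining
$$ \bone^{t}w=1-\prod_{i=1}^{k}\bigl(1-(1+x_i^{-1})\bigr)=1-\prod_{i=1}^{k}(-x_i^{-1})=1-(-1)^{k}\abs{\fn}^{-s}. $$
Combining this with $\det(-N)=(-1)^{2^{k}-1}\det N=-\abs{\fn}^{s\,2^{k-1}}$ and $1-\bone^{t}w=(-1)^{k}\abs{\fn}^{-s}$ yields
$$ \det\bigl(\sigma_{\fm'}(s,\fm)\bigr)_{\fm,\fm'\in S}=-\abs{\fn}^{s\,2^{k-1}}\cdot(-1)^{k}\abs{\fn}^{-s}=(-1)^{k+1}\abs{\fn}^{s(2^{k-1}-1)}, $$
a nonzero monomial in $\abs{\fn}^{s}$ of the asserted magnitude, which is precisely what the linear-independence claim in the preceding corollary requires. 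I expect the main obstacle to be the inversion step, namely producing and verifying the closed form $w=N^{-1}\Pi$ (once $w$ is in hand, everything else is triangular bookkeeping and a binomial identity); the only other subtlety is the overall sign, which is controlled by the parity $(-1)^{2^{k}-1}$ together with the alternating sum above.
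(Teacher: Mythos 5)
Your proof is correct, and it takes a genuinely different route from the paper's. The paper works with the abstract quantity $\varsigma(a;b)=\sum_{c\mid a,\ b\nmid c}\phi(c)$ on the poset of square-free monomials and reduces the matrix to lower-triangular form by explicit row operations built from the M\"obius-type identity $\sum_{c\mid a}(-1)^{\deg c}\varsigma(c;b)$ being $0$ or $(-1)^{\deg a}\phi(a)$ according as $b\mid a$ or not. You instead split the matrix as a rank-one perturbation $\Pi\bone^{t}-N$ of a matrix $N$ that is triangular for any linear extension of the inclusion order, and finish with the matrix-determinant lemma after exhibiting $N^{-1}\Pi$ in closed form; I checked the key verification $Nw=\Pi$ and the inclusion--exclusion evaluation of $\bone^{t}w$, and they are right. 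The paper's version is slightly more general (it allows an arbitrary map $\phi$ on divisors, not just the multiplicative $\phi(c)=\abs{c}^{s}$ your decomposition exploits), while yours makes the triangular part and hence the size of the determinant completely transparent.

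One substantive point: your final sign $(-1)^{k+1}$ is correct, and the sign $-1$ printed in the lemma is not. The case $k=1$ already shows this: the matrix is the $1\times 1$ matrix $\big(\sigma_{\fp}(s,\fp)\big)=(1)$, with determinant $+1$, not $-1$. The slip in the paper occurs in the last display of its proof, where $(-1)^{k+1}\phi(1)\cdot\prod_{a\neq x_1\cdots x_k}\big(-\phi(a)\big)$, a product with $2^{k}-2$ negated factors, is written as $(-1)^{2^{k}+1}\prod\phi(c)$; the correct exponent is $(k+1)+(2^{k}-2)$, giving $(-1)^{k+1}$ overall, in agreement with your computation. The discrepancy is harmless for the application, since Corollary \ref{corLinIndepTheta} only needs the determinant to be a nonzero monomial in $\abs{\fn}^{s}$, which both versions give.
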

\begin{proof}
	Let $L[x_1, \dots, x_k]$ be the polynomial ring in indeterminates $x_1, \dots, x_k$ 
	over a field $L$. Let $\cS$ be the set of monomials 
	$$
	\cS=\{x_{i_1}x_{i_2}\cdots x_{i_t}\mid 1\leq t\leq k,\quad  1\leq i_1<i_2<\cdots<i_t\leq k\}, 
	$$
	and $\cS'=\cS\cup \{1\}$. Let $R$ be a commutative ring and $\phi: \cS'\to R$ be any map. 
	For $a\in \cS'$ and $b\in \cS$ define 
	$$
	\varsigma(a; b)=\sum_{\substack{c\in \cS'\\ c\mid a,\  b\nmid c}} \phi(c). 
	$$
	We claim that 
	\begin{equation}\label{eqvarsigma1}
	\det(\varsigma(a; b))_{a, b\in \cS}= - \prod_{\substack{c\in \cS'\\ c\neq x_1\cdots x_k}} \phi(c)
	\end{equation}
	Assuming this, and taking $L= \QQ$, $R = \CC$, 
	$$
	\phi(x_{i_1}x_{i_2}\cdots x_{i_t}):= 
	|\pfk_{i_1}\cdots \pfk_{i_t}|^s \quad \text{ for } x_{i_1}x_{i_2}\cdots x_{i_t} \in \cS', 
	$$
	the lemma follows from \eqref{eqvarsigma1}, combined with the observation that 
	$$
	\prod_{c \in \cS'} \phi(c) = \prod_{\fm\in S}|\fm|^s=|\fn|^{s\cdot 2^{k-1}}. 
	$$

Let $M=(\varsigma(a; b))_{a, b\in \cS}$, where $a$ corresponds to a row and $b$ to a column of $M$, and we assume that 
	the elements of $\cS$ are arranged in the natural lexicographic order. Note that for any $a, b\in \cS$ we have
	\begin{equation}\label{eqvarsigma}
	\sum_{\substack{c\in \cS'\\ c\mid a}}(-1)^{\deg(c)}\varsigma(c; b)=
	\begin{cases} 
	0, & \text{if }b\mid a; \\ 
	(-1)^{\deg(a)}\phi(a), & \text{if }b\nmid a. 
	\end{cases}
	\end{equation}
	(Here $\deg(x_{i_1}x_{i_2}\cdots x_{i_t})=t$ and $\deg(1)=0$.) 
	Hence, using elementary row operations, we can transform $M$ into the matrix $M'$ whose last row is a row of $\big((-1)^{k+1} \cdot \phi(1)\big)$'s,
	and all other rows are the same as in $M$. Now, with the help of \eqref{eqvarsigma}, applying appropriate 
	elementary row operations to $M'$, we obtain 
	the matrix $M''=(\tau(a; b))_{a, b\in \cS}$, where 
	$$
	\tau(a; b)=
	\begin{cases} 
	(-1)^{k+1} \cdot \phi(1) , & \text{if }a=x_1x_2\cdots x_k;\\
	0, & \text{if }b\mid a\neq x_1x_2\cdots x_k; \\ 
	\phi(a), & \text{if }b\nmid a\neq x_1x_2\cdots x_k. 
	\end{cases}
	$$
	Without loss of generality, we may assume $\phi(1) \in R^\times$.
	Subtracting $(-1)^{k+1} \phi(1)^{-1}\phi(a)$ multiple of the last row of $M''$ from the $a$-th row for $a \neq x_1\cdots x_k$, we get the matrix
	$M'''=(\tau'(a;b))_{a,b \in \cS}$, where
	$$
	\tau'(a; b)=
	\begin{cases} 
	(-1)^{k+1} \cdot \phi(1) , & \text{if }a=x_1x_2\cdots x_k;\\
	-\phi(a), & \text{if }b\mid a\neq x_1x_2\cdots x_k; \\ 
	0, & \text{if }b\nmid a\neq x_1x_2\cdots x_k. 
	\end{cases}
	$$
	The order on $S$ forces that $M'''$ is a lower triangular matrix. Therefore 
	\begin{align*}
	\det(M''') &= (-1)^{k+1} \cdot \phi(1) \cdot \prod_{\substack{a \in \cS\\ a \neq x_1\cdots x_k}} \big(-\phi(a)\big) \\ 
	& = (-1)^{2^k+1}\cdot \prod_{\substack{c \in \cS'\\ c \neq x_1\cdots x_k}}\phi(c), 
	\end{align*}
	which implies \eqref{eqvarsigma1}.
\end{proof}


\subsection{Root of $\Theta_\nfk$}

We shall find the largest root of $\Theta_\fn$, and prove that the estimate of Corollary \ref{corRootTheta} is sharp.
Given a representative $\vec{u}=(u_1, \dots, u_r)\in A^r$ of a nonzero class in 
$(A/\fn)^r$, let 
$$
E_{\vec{u}}(\boz) := \exp_{\La_{\boz}}\left(\frac{u_1}{\fn}z_1+\cdots+\frac{u_{r-1}}{\fn}z_{r-1}+\frac{u_r}{\fn}\right)^{-1}, \quad 
\boz=(z_1, \dots, z_{r-1}, 1)\in \Omega^r. 
$$
Note that $E_{\vec{u}}$ depends only on the class of $\vec{u}$ in $(A/\fn)^r$. We have (cf.\ \cite[p.\ 833]{BB} or \cite[p.\ 885]{GekelerDMHR1})
$$
E_{\vec{u}}(\boz)=\sum_{\substack{\vec{a}\in A^r\\ \vec{a}\equiv \vec{u}\Mod{\fn A^r}}}\frac{1}{a_1z_1+\cdots+a_r}
$$
and 
\begin{equation}\label{eqE_uFunctEq}
E_{\vec{u}}(\gamma\boz) = j(\gamma, \boz)\cdot E_{\vec{u}\gamma} (\boz)\quad \text{for all }\gamma\in \GL_r(A).
\end{equation}
The \textit{Eisenstein series} $E_{\vec{u}}(\boz)$ 
is a 
modular form of weight $1$ for 
$$
\G^r(\fn):=\ker(\GL_r(A)\to \GL_r(A/\fn))
$$
in the sense of \cite{BBP1}. 
Moreover, for $\vec{u}$ with $u_1=0$, $E_{\vec{u}}(\boz)$  is a modular form for 
$$
\G^r_1(\fn):=\left\{\begin{pmatrix} a & b\\ c & d\end{pmatrix}\in \G^r_0(\fn)\quad \bigg|\quad 
d\equiv I_{r-1}\Mod{\fn} 
\right\}. 
$$

Let
$$
F_\nfk(\boz):= \prod_{\substack{\vec{u} \in (A/\fn)^r-\{0\}}{u_1 = 0}} E_{\vec{u}}(\boz).
$$
\begin{lem}
    Let $\nfk$ be a nonzero ideal of $A$. Then
    $$
    \Theta_\nfk(\boz) = \nfk^{q^r-1} \cdot \frac{F_\nfk(\boz)^{q^r-1}}{\Delta(\boz)^{|\nfk|^{r-1}-1}}, \quad \forall \boz \in \Omega^r.
    $$
\end{lem}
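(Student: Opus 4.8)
The plan is to realize $F_\fn$ as (up to a harmless sign) the leading coefficient of the isogeny attached to a suitable inclusion of lattices, and then to identify that isogeny's target lattice with a rescaling of $\La_{\fn\ast\boz}$.

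First I would fix $\boz=(z_1,\dots,z_{r-1},1)\in\Omega^r$ and set $M:=Az_2+\cdots+Az_{r-1}+A$, a rank-$(r-1)$ sublattice of $\La_\boz=Az_1+M$. Introduce the auxiliary lattice $\widetilde\La:=Az_1+\tfrac1\fn M$. It is again a rank-$r$ $A$-lattice (being commensurable with $\La_\boz$), it contains $\La_\boz$, and $\widetilde\La/\La_\boz\cong \tfrac1\fn M/M\cong M/\fn M\cong (A/\fn)^{r-1}$, of order $|\fn|^{r-1}$. The nonzero classes of $\widetilde\La/\La_\boz$ are represented by $\tfrac1\fn(u_2z_2+\cdots+u_{r-1}z_{r-1}+u_r)$, as $(0,u_2,\dots,u_r)$ runs over the nonzero elements of $(A/\fn)^r$ with first coordinate $0$.

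Next I would apply Proposition \ref{propSec1:La}(iii) to the inclusion $\La_\boz\subset\widetilde\La$, obtaining the $\F_q$-linear polynomial
$$
f(x)=x\prod_{0\neq\la\in\widetilde\La/\La_\boz}\left(1-\frac{x}{\exp_{\La_\boz}(\la)}\right),\qquad f(\exp_{\La_\boz}(x))=\exp_{\widetilde\La}(x).
$$
By definition of $E_{\vec u}$, for $\la=\tfrac1\fn(u_2z_2+\cdots+u_r)$ we have $\exp_{\La_\boz}(\la)^{-1}=E_{(0,u_2,\dots,u_r)}(\boz)$, so the leading coefficient (in $q$-degree $d':=(r-1)\deg\fn$) of $f$ is
$$
c=\prod_{0\neq\la}\bigl(-\exp_{\La_\boz}(\la)^{-1}\bigr)=(-1)^{|\fn|^{r-1}-1}\prod_{\substack{\vec u\in(A/\fn)^r-\{0\}\\ u_1=0}}E_{\vec u}(\boz)=F_\fn(\boz),
$$
the sign being $+1$ since $\Ci$ has characteristic $p$ and $|\fn|^{r-1}$ is a power of $p$. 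Applying $f$ to the functional equation \eqref{eqSec1:FuncEq} for $\exp_{\La_\boz}$, using $f\circ\exp_{\La_\boz}=\exp_{\widetilde\La}$ and the surjectivity of $\exp_{\La_\boz}$, gives $f\circ\phi^{\La_\boz}_T=\phi^{\widetilde\La}_T\circ f$. Comparing the coefficients of $x^{q^{d'+r}}$ on both sides (with $q^{d'}=|\fn|^{r-1}$ and $g_r(\La_\boz)=\Delta(\boz)$) yields
$$
c\cdot\Delta(\boz)^{|\fn|^{r-1}}=g_r(\widetilde\La)\cdot c^{q^r},\qquad\text{hence}\qquad g_r(\widetilde\La)=F_\fn(\boz)^{1-q^r}\,\Delta(\boz)^{|\fn|^{r-1}}.
$$

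Finally I would relate $\widetilde\La$ to $\La_{\fn\ast\boz}$: directly $\fn\widetilde\La=A\fn z_1+M=\La_{\fn\ast\boz}$, so the scaling rule \eqref{eqFEg_i} with $c=\fn$ gives $\Delta_\fn(\boz)=g_r(\fn\widetilde\La)=\fn^{1-q^r}g_r(\widetilde\La)$, i.e.\ $g_r(\widetilde\La)=\fn^{q^r-1}\Delta_\fn(\boz)$. Combining the two expressions for $g_r(\widetilde\La)$ produces $\fn^{q^r-1}\Delta_\fn(\boz)=F_\fn(\boz)^{1-q^r}\Delta(\boz)^{|\fn|^{r-1}}$, and solving for $\Theta_\fn=\Delta/\Delta_\fn$ gives the claimed identity. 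The main obstacle is conceptual rather than computational: one must guess the correct auxiliary lattice $\widetilde\La$, namely the one whose $\fn$-torsion relative to $\La_\boz$ matches exactly the factors $E_{\vec u}$ with $u_1=0$ appearing in $F_\fn$, and then observe that a single rescaling by $\fn$ turns $\widetilde\La$ into $\La_{\fn\ast\boz}$. Once this is in place, the leading-coefficient bookkeeping and the sign check are routine.
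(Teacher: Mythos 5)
Your proof is correct and follows essentially the same route as the paper: both work with the auxiliary lattice $\widetilde\La=\fn^{-1}\La_{\fn\ast\boz}\supset\La_{\boz}$, compute $g_r(\widetilde\La)=F_\fn(\boz)^{1-q^r}\Delta(\boz)^{|\fn|^{r-1}}$ from the isogeny attached to this inclusion, and then rescale by $\fn$ via \eqref{eqFEg_i}. The only difference is that the paper cites this ``norm compatibility'' from \cite[Lemma 2.5]{WeiKLF}, whereas you derive it directly from Proposition \ref{propSec1:La}(iii) by comparing leading coefficients in $f\circ\phi^{\La_{\boz}}_T=\phi^{\widetilde\La}_T\circ f$; your sign check and the identification of $\widetilde\La/\La_{\boz}$ with the classes $(0,u_2,\dots,u_r)$ are both correct.
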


\begin{proof}
For $\boz=(z_1, \dots, z_{r-1}, 1)\in \Omega^r$, let  
$$
\fn^{-1}\La_{\fn\ast\boz} = Az_1+\frac{1}{\fn}(A z_2+\cdots+Az_{r-1}+A).
$$
Then $\nfk^{-1} \Lambda_{\nfk\ast\boz} \supset \Lambda_{\boz}$, and from the \lq\lq norm compatibility\rq\rq\ (cf.\ \cite[Lemma 2.5]{WeiKLF}) we get
\begin{eqnarray}
g_r(\nfk^{-1}\Lambda_{\nfk\ast\boz})
&=& g_r(\Lambda_{\boz})^{|\nfk|^{r-1}} \cdot \left(\sideset{}{'}\prod_{\lambda \in \frac{\nfk^{-1}\Lambda_{\nfk\ast\boz}}{\Lambda_{\boz}}}\exp_{\Lambda_{\boz}}(\lambda)\right)^{q^r-1}.
\nonumber \\
&=& \Delta(\boz)^{|\nfk|^{r-1}}
\cdot F_\nfk(\boz)^{1-q^r}.\nonumber
\end{eqnarray}
On the other hand,
$$g_r(\nfk^{-1}\Lambda_{\nfk\ast\boz}) = \nfk^{q^r-1} \cdot g_r(\Lambda_{\nfk\ast\boz}) =
\nfk^{q^r-1} \cdot
\Delta(\nfk\ast\boz).
$$
Therefore
$$
\Theta(\boz) = \frac{\Delta(\boz)}{\Delta(\nfk\ast\boz)}
= \nfk^{q^r-1} \cdot \frac{F_\nfk(\boz)^{q^r-1}}{\Delta(\boz)^{|\nfk|^{r-1}-1}}.
$$

\end{proof}

Denote 
\begin{align*}
U_\fn^{(i)} &= \left\{ (0,\dots, 0, u_i, \dots, u_r)\in A^r\mid \deg(u_i), \dots, \deg(u_r)< \deg(\fn) \text{ and }u_i\neq 0 \text{ is monic}\right\}, \\ 
U_\fn& =\bigsqcup_{i=2}^r U_\fn^{(i)},
\end{align*}
and 
\begin{equation}\label{eqG_fn_def}
G_\fn(\boz)= \prod_{\vec{u}\in U_\fn} E_{\vec{u}}(\boz). 
\end{equation}
By \eqref{eqE_uFunctEq}, we have $E_{\alpha\cdot \vec{u}}(\boz)=\alpha^{-1}E_{\vec{u}}(\boz)$ for any $\alpha\in \F_q^\times$. 
Since $\prod_{\alpha\in \F_q^\times}\alpha=-1$, we get 
$$
F_\fn(\boz)=(-1)^{(r-1)\deg(\fn)}\cdot G_\fn(\boz)^{q-1}. 
$$
On the other hand, recall from \eqref{eqh} that $\Delta = H^{q-1}$, where $H$ is a modular form of weight $(q^r-1)/(q-1)$ and type $1$ for $\GL_r(A)$.
Therefore, 
\begin{align}\label{eqTheta_fn_def}
\Theta_\fn(\boz)
& = \ \ \ \fn^{q^r-1} \cdot  \frac{F_\fn(\boz)^{q^r-1}}{\Delta(\boz)^{|\fn|^{r-1}-1}} \\ 
\nonumber 
& =\text{(const.)} \cdot \left(\frac{G_\fn(\boz)^{q^r-1}}{H(\boz)^{|\fn|^{r-1}-1}}\right)^{q-1}.
\end{align}
Put $\kappa = \gcd(\deg(\fn),r)$ and $m = (q-1)\cdot (q^\kappa-1) = (q-1)\cdot \gcd(|\nfk|^{r-1}-1,q^r-1)$.
Take
\begin{equation}\label{eq_def_theta_fp}
\theta_\fn:=\frac{G_\fn^{\frac{q^r-1}{q^\kappa-1}}}{H^{\frac{|\fn|^{r-1}-1}{q^\kappa-1}}}  \in \cO(\Omega^r)^\times.
\end{equation}
Then for some $\alpha\in \Ci^\times$, we have $(\alpha \theta_\fn)^m=\Theta_\fn$, 
Thus, 
the estimate given in Corollary \ref{corRootTheta} is sharp, and we have proved the following theorem.  

\begin{thm}\label{thmRootTheta}
	The largest integer $m$ such that there exists an $m$-th root of $\Theta_\fn$ in $\cO(\Omega^r)^\times$ 
	is $(q-1)(q^{\kappa}-1)$, where $\kappa=\gcd(\deg(\fn), r)$. 
\end{thm}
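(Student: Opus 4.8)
The plan is to combine the divisibility bound already established in Corollary~\ref{corRootTheta} with an explicit construction of a root of maximal order. By that corollary, any integer $m$ for which $\Theta_\fn$ has an $m$-th root in $\cO(\Omega^r)^\times$ divides $(q-1)(q^\kappa-1)$, where $\kappa=\gcd(\deg(\fn),r)$. Hence it suffices to produce a single $m$-th root with $m=(q-1)(q^\kappa-1)$; the theorem then follows, since this $m$ is both attainable and the largest attainable value.

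First I would record the factorization of $\Theta_\fn$ through Eisenstein series. Starting from the identity $\Theta_\fn=\fn^{q^r-1}\,F_\fn^{q^r-1}/\Delta^{|\fn|^{r-1}-1}$ of the preceding lemma and substituting $F_\fn=(-1)^{(r-1)\deg(\fn)}G_\fn^{q-1}$ together with $\Delta=H^{q-1}$ from \eqref{eqh}, one obtains
$$
\Theta_\fn=(\text{const.})\cdot\left(\frac{G_\fn^{\,q^r-1}}{H^{\,|\fn|^{r-1}-1}}\right)^{q-1},\qquad (\text{const.})\in\Ci^\times.
$$
Here $G_\fn$ is the product \eqref{eqG_fn_def} of Eisenstein series $E_{\vec u}$; each $E_{\vec u}$ is holomorphic and nowhere vanishing on $\Omega^r$, because for $\vec u$ representing a nonzero class in $(A/\fn)^r$ the argument $\tfrac{1}{\fn}(u_1z_1+\cdots+u_r)$ does not lie in $\La_\boz$, so $\exp_{\La_\boz}$ of it is nonzero. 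Thus $G_\fn\in\cO(\Omega^r)^\times$, and $H\in\cO(\Omega^r)^\times$ as well.

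The arithmetic heart of the argument is to extract a further $(q^\kappa-1)$-th root, i.e.\ to check that the element $\theta_\fn$ of \eqref{eq_def_theta_fp},
$$
\theta_\fn=\frac{G_\fn^{\,(q^r-1)/(q^\kappa-1)}}{H^{\,(|\fn|^{r-1}-1)/(q^\kappa-1)}},
$$
really lies in $\cO(\Omega^r)^\times$. For this I would verify that both exponents are integers: since $\kappa\mid r$ we have $(q^\kappa-1)\mid(q^r-1)$, and since $\kappa\mid\deg(\fn)$ we have $\kappa\mid(r-1)\deg(\fn)$, so $(q^\kappa-1)\mid(q^{(r-1)\deg(\fn)}-1)=|\fn|^{r-1}-1$. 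Raising $\theta_\fn$ to the power $m=(q-1)(q^\kappa-1)$ then gives $\theta_\fn^{\,m}=G_\fn^{(q^r-1)(q-1)}/H^{(|\fn|^{r-1}-1)(q-1)}$, which by the displayed factorization equals $\Theta_\fn$ up to a constant in $\Ci^\times$. Choosing $\alpha\in\Ci^\times$ with $\alpha^m$ equal to that constant, $\alpha\theta_\fn\in\cO(\Omega^r)^\times$ is an $m$-th root of $\Theta_\fn$.

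Combining the two halves finishes the proof: $\alpha\theta_\fn$ realizes $m=(q-1)(q^\kappa-1)$, while Corollary~\ref{corRootTheta} forbids any larger value. I expect the main obstacle to be the factorization identity in the second paragraph, which rests on the norm-compatibility of $g_r=\Delta$ under the lattice inclusion $\La_\boz\subset\fn^{-1}\La_{\fn\ast\boz}$ (cf.\ \cite[Lemma 2.5]{WeiKLF}) and on correctly matching the resulting product of exponential values with $F_\fn$; once that identity and the integrality of the two exponents are in place, the rest is formal.
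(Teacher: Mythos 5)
Your proposal is correct and follows essentially the same route as the paper: the upper bound is Corollary~\ref{corRootTheta}, and the lower bound is realized by exactly the element $\theta_\fn$ of \eqref{eq_def_theta_fp}, obtained from the factorization $\Theta_\fn=(\text{const.})\cdot\bigl(G_\fn^{q^r-1}/H^{|\fn|^{r-1}-1}\bigr)^{q-1}$ together with the divisibilities $(q^\kappa-1)\mid(q^r-1)$ and $(q^\kappa-1)\mid(|\fn|^{r-1}-1)$. The only additions you make are the (correct) explicit checks that $G_\fn$ and $H$ are nowhere vanishing and that the two exponents are integers, which the paper leaves implicit.
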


Next, we determine the largest integer $m'$ such that $\Theta_\fn$ has an $m'$-th root in $\cO(\Omega^r)^\times$ which is 
moreover $\G_0^r(\fn)$-invariant. 
To do this, we will compute how $\theta_\fn$ transforms under $\G_0^r(\fn)$, by 
generalizing Gekeler's approach in \cite{GekelerDelta} in the case of $r=2$.  

Let 
$
\fn=\fp_1^{m_1}\cdots \fp_s^{m_s}
$
be the prime decomposition of $\fn$. Define 
\begin{align*}
\chi: (A/\fn)^\times \To \prod_{i=1}^s (A/\fp_i)^\times & \To \F_q^\times \\ 
(a_1, \dots, a_s) &\longmapsto \prod_{i=1}^s \Nr_{\F_{\fp_i}/\F_q}(a_i)^{-m_i}, 
\end{align*} 
where the first map is the canonical projection. 

\begin{prop}
	For $\gamma\in \G^r_0(\fn)$, we have 
	$$
	G_\fn(\gamma\boz) =\widetilde{\chi}(\gamma) \cdot j(\gamma, \boz)^{\frac{|\fn|^{r-1}-1}{q-1}}\cdot  G_\fn(\boz),
	$$
	where 
	\begin{align*}
	\widetilde{\chi}: \G^r_0(\fn) & \To \G^r_0(\fn)/\G^r_1(\fn)\cong \GL_{r-1}(A/\fn)\xrightarrow{\det} (A/\fn)^\times \overset{\chi}{\To} \F_q^\times. \\
	\begin{pmatrix} a & b\\ c & d\end{pmatrix} & \longmapsto d \Mod{\fn} 
	\end{align*} 
\end{prop}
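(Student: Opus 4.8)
The plan is to combine the product structure of $G_\fn$ with the transformation law \eqref{eqE_uFunctEq}, and then to pin down the resulting $\F_q^\times$-valued factor by a character computation. First I would record that
$$\#U_\fn=\sum_{i=2}^r\frac{|\fn|-1}{q-1}\,|\fn|^{r-i}=\frac{|\fn|^{r-1}-1}{q-1},$$
which already accounts for the exponent of $j(\gamma,\boz)$: applying \eqref{eqE_uFunctEq} termwise gives $G_\fn(\gamma\boz)=j(\gamma,\boz)^{\#U_\fn}\prod_{\vec u\in U_\fn}E_{\vec u\gamma}(\boz)$, so it remains to identify $\prod_{\vec u\in U_\fn}E_{\vec u\gamma}(\boz)$ with $\widetilde\chi(\gamma)\,G_\fn(\boz)$.

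Writing $\vec u=(0,\vec u')$ for $\vec u\in U_\fn$ and $\gamma=\begin{pmatrix}a&b\\c&d\end{pmatrix}\in\G_0^r(\fn)$, one has $\vec u\gamma=(\vec u'c,\vec u'd)$ with $\vec u'c\equiv 0\Mod\fn$ since $c\equiv 0\Mod\fn$; as $E_{\vec u\gamma}$ depends only on the class of $\vec u\gamma$ in $(A/\fn)^r$, this gives $E_{\vec u\gamma}=E_{(0,\vec u'\bar d)}$, where $\bar d\in\GL_{r-1}(A/\fn)$ is the reduction of $d$. The key point is that $U_\fn$, through its last $r-1$ coordinates, is a set of representatives for $((A/\fn)^{r-1}-\{0\})/\F_q^\times$, the representative in each orbit being the vector whose first nonzero entry is monic of degree $<\deg\fn$. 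Since $\bar d$ is invertible it permutes these orbits, so for each $\vec u\in U_\fn$ there are a unique $\lambda_{\vec u}\in\F_q^\times$ and a representative $\vec w_{\vec u}$ with $\vec u'\bar d\equiv\lambda_{\vec u}\vec w_{\vec u}\Mod\fn$, and $\vec u\mapsto\vec w_{\vec u}$ is a permutation of $U_\fn$. Using $E_{\alpha\vec v}=\alpha^{-1}E_{\vec v}$ I then obtain
$$G_\fn(\gamma\boz)=\mu(\gamma)\,j(\gamma,\boz)^{\#U_\fn}G_\fn(\boz),\qquad \mu(\gamma)=\prod_{\vec u\in U_\fn}\lambda_{\vec u}^{-1}.$$
The cocycle relation for $j$ shows $\mu:\G_0^r(\fn)\to\F_q^\times$ is a homomorphism, and since each $\lambda_{\vec u}$ depends only on $\bar d$, it factors as $\mu=\bar\mu\circ(\gamma\mapsto\bar d)$ for a homomorphism $\bar\mu:\GL_{r-1}(A/\fn)\to\F_q^\times$; thus the proposition reduces to the identity $\bar\mu=\chi\circ\det$.

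To prove this I would evaluate $\bar\mu$ on generators of $\GL_{r-1}(A/\fn)$ (elementary and diagonal matrices), which suffices as $\bar\mu$ is a homomorphism. For an upper-triangular elementary matrix $M$ the permuted vector $\vec w M$ has the same first nonzero entry as $\vec w$, so $\lambda=1$ pointwise and $\bar\mu(M)=1=\chi(\det M)$. For a diagonal matrix the computation collapses, after discarding coordinates fixed to $0$ and using that $x\mapsto x^{|\fn|}$ is the identity on $\F_q$, to the scalar identity
$$\prod_{\substack{f\in A_+\\ \deg f<\deg\fn}}\mathrm{lc}(\delta f \bmod \fn)=\Nr_{(A/\fn)/\F_q}(\delta),\qquad \delta\in(A/\fn)^\times,$$
where $\mathrm{lc}$ denotes the leading coefficient of the representative of degree $<\deg\fn$ and $\Nr_{(A/\fn)/\F_q}$ is the determinant of multiplication as an $\F_q$-linear endomorphism. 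This scalar identity is the technical heart. I would prove it by the Chinese Remainder Theorem, reducing to $\fn=\fp^m$, stratifying the monic $f$ by their $\fp$-adic valuation so as to reduce to the analogous product over the unit representatives of $A/\fp^{k}$, and finally using a Wilson-type cancellation: since the product of all unit representatives is itself a unit, $\prod_{[v]}\mathrm{lc}(\delta v)=\delta^{N}$ in $(A/\fp^k)^\times$, where $N$ is the number of $\F_q^\times$-orbits of units, and this lies in $\F_q^\times$ and equals $\Nr_{\F_\fp/\F_q}(\bar\delta)$; summing over strata gives $\Nr_{\F_\fp/\F_q}(\bar\delta)^m$. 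Combined with the CRT factorization this yields exactly $\Nr_{(A/\fn)/\F_q}(\delta)=\chi(\delta)^{-1}$, hence $\bar\mu(\text{diagonal})=\chi(\det)$. The remaining generators (permutation matrices, equivalently the lower-triangular elementaries obtained from them by conjugation) I would treat using that $\bar\mu$ is a homomorphism, the only subtlety being a sign $\bar\mu(P)=\pm1$ to be matched with $\chi(\det P)=\chi(\pm1)=(-1)^{\deg\fn}$ when $q$ is odd. The main obstacle is precisely this explicit character evaluation---identifying the leading-coefficient product with the norm---rather than the essentially formal reduction to it.
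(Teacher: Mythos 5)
Your first half coincides with the paper's proof almost verbatim: the paper likewise observes that $U_\fn$ represents $\F_q^\times\backslash\big((A/\fn)^{r-1}-\{0\}\big)$, that $U_\fn d$ is again a set of representatives, extracts a scalar $\eps_\gamma\in\F_q^\times$, and uses the cocycle relation for $j$ to see that $\gamma\mapsto\eps_\gamma$ is a homomorphism trivial on $\G_1^r(\fn)$, hence a homomorphism $\bar\eps$ on $\GL_{r-1}(A/\fn)$. Where you genuinely diverge is in identifying $\bar\eps$ with $\chi\circ\det$: the paper invokes the structural fact that every homomorphism $\GL_{r-1}(A/\fn)\to\F_q^\times$ factors through the determinant, so it only needs to evaluate on $d\equiv\diag(I_{r-2},d_r)$, where the blocks $U_\fn^{(i)}$, $i<r$, contribute trivially and the block $U_\fn^{(r)}$ is handled by citing Gekeler's rank-$2$ computation (\cite[Thm.\ 3.20]{GekelerDelta}) --- which is exactly your leading-coefficient/norm identity in disguise. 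You instead evaluate on generators and prove that identity from scratch; this buys self-containedness (no appeal to the abelianization of $\GL_{r-1}(A/\fn)$, no external citation) at the cost of carrying the scalar identity yourself. Two adjustments to your version: (i) permutation matrices need no separate treatment and no sign-matching, since the standard identity $\begin{pmatrix}0&1\\1&0\end{pmatrix}=E_{12}(1)E_{21}(-1)E_{12}(1)\diag(-1,1)$ writes them as products of elementaries and diagonals, on which you have already computed $\bar\mu$; (ii) the order of operations in the scalar identity must be swapped. A direct CRT reduction of $\prod_f\mathrm{lc}(\delta f\bmod\fn)$ to the prime-power case cannot work termwise --- the number of factors $\frac{|\fn|-1}{q-1}$ does not match $\sum_i\frac{|\fp_i|^{m_i}-1}{q-1}$ --- so you should first stratify over the full modulus by $\gcd(f,\fn)=\fd$, reducing each stratum to the monic unit representatives of $A/(\fn/\fd)$, where your Wilson-type cancellation gives $\delta^{\phi(\fn/\fd)/(q-1)}\in\F_q^\times$; only then apply CRT to evaluate this power, which equals $\Nr_{\F_{\fp_i}/\F_q}(\bar\delta_i)$ when $\fn/\fd$ is a power of a single $\fp_i$ and equals $1$ when $\fn/\fd$ has at least two prime factors (because $q-1$ divides $\phi(\fp_j^{k_j})$, killing the $i$-th component). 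Summing over strata then yields $\prod_i\Nr_{\F_{\fp_i}/\F_q}(\bar\delta_i)^{m_i}=\chi(\delta)^{-1}$, as you intended. With these repairs your argument is a complete and valid alternative to the paper's.
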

\begin{proof}
	First, note that we may view $U_\fn$ as a set of representatives of 
	$$
	\F_q^\times\bs \left((A/\fn)^{r-1}-\{0\}\right). 
	$$
	For $\gamma=\begin{pmatrix} a & b\\ c & d\end{pmatrix}\in \G^r_0(\fn)$, 
	the set $U_\fn d =\{(u_2, \dots, u_r)d\mid (0, u_2, \dots, u_r)\in U_\fn\}$  is still a set of representatives of 
	$\F_q^\times\bs \left((A/\fn)^{r-1}-\{0\}\right)$. 
	Since 
	$E_{\eps\cdot \vec{u}}(\boz)=\eps^{-1}E_{\vec{u}}(\boz)$, $\eps\in \F_q^\times$, one concludes that 
	there exists $\eps_\gamma\in \F_q^\times$ 
	such that 
	$$
	G_\fn(\gamma\boz) =\eps_\gamma \cdot j(\gamma, \boz)^{\frac{|\fn|^{r-1}-1}{q-1}}\cdot  G_\fn(\boz).
	$$
	Since $j(\gamma_1\gamma_2, \boz)=j(\gamma_1, \gamma_2\boz)\cdot 
	j(\gamma_2, \boz)$, we have 
	$$
	\eps_{\gamma_1\gamma_2}=\eps_{\gamma_1}\eps_{\gamma_2} \quad \text{for any}\quad \gamma_1, \gamma_2\in \G^r_0(\fn). 
	$$
	Moreover 
	$$\eps_\gamma=1\quad \text{for any}\quad \gamma\in \G^r_1(\fn). 
	$$
	Therefore, $$\eps: \G^r_0(\fn)\to \F_q^\times, \quad \gamma\mapsto \eps_\gamma,$$ is a homomorphism which factors through 
	\begin{align*}
	\G^r_0(\fn) & \To \G^r_0(\fn)/\G^r_1(\fn)\cong \GL_{r-1}(A/\fn).\\
	\gamma=\begin{pmatrix} a & b\\ c & d\end{pmatrix} & \longmapsto \bar{\gamma}=d \Mod{\fn} 
	\end{align*} 
	On the other hand, any homomorphism $\GL_{r-1}(A/\fn) \To \F_q^\times$ necessarily factors through the determinant 
	$$
	\GL_{r-1}(A/\fn) \xrightarrow{\det} (A/\fn)^\times\overset{\overline{\eps}}{\To} \F_q^\times, 
	$$
	i.e., $\eps(\gamma)=\overline{\eps}(\det(\bar{\gamma}))$. It remains to show that $\overline{\eps}=\chi$. For this we 
	evaluate $\overline{\eps}$ on elements of $\G^r_0(\fn)$ of special type. Namely, assume $\gamma=\begin{pmatrix} a & b\\ c & d\end{pmatrix} 
	\in \G^r_0(\fn)$ with 
	$$
	d\equiv \begin{pmatrix} I_{r-2} & \\ & d_r 
	\end{pmatrix} \Mod{\fn}, \qquad d_r\in (A/\fn)^\times. 
	$$
	If $\vec{u} \in U_\fn^{(i)}$, $2\leq i\leq r-1$, then $\vec{u}\gamma\in U_\fn^{(i)}$. Therefore 
	$$
	\prod_{\vec{u}\in U_\fn^{(i)}}E_{\vec{u}}(\gamma \boz)=\prod_{\vec{u}\in U_\fn^{(i)}} j(\gamma, \boz) E_{\vec{u}\gamma }(\boz) 
	= j(\gamma, \boz)^{\frac{|\fn|-1}{q-1}|\fn|^{r-i}} \prod_{\vec{u}\in U_\fn^{(i)}} E_{\vec{u}}(\boz). 
	$$
	On the other hand, using the argument in the proof of Theorem 3.20 in \cite{GekelerDelta}, one obtains  
	$$
		\prod_{\vec{u}\in U_\fn^{(r)}}E_{\vec{u}}(\gamma \boz)=j(\gamma, \boz)^{\frac{|\fn|-1}{q-1}}\chi(d_r)
		\prod_{\vec{u}\in U_\fn^{(r)}}E_{\vec{u}}(\boz). 
	$$
	Therefore 
	$$
	\eps(\gamma)=\chi(d_r)=\chi(\det \bar{\gamma}). 
	$$
	Since $d_r$ is an arbitrary element of $(A/\fn)^\times$, this implies $\overline{\eps}=\chi$, and hence also the formula 
	of the proposition. 
\end{proof}

\begin{cor}
	The function $\theta_\fn$
	transforms under $\G^r_0(\fn)$ according to the character 
	$$
	\omega_\fn= \widetilde{\chi}^{\frac{r}{\kappa}} \cdot {\det}^{\frac{(r-1) \deg(\fn)}{\kappa}}: \quad \G^r_0(\fn)\To \F_q^\times.   
	$$
	That is, for any $\gamma\in \G^r_0(\fn)$ we have 
	$$
	\theta_\fn(\gamma \boz)= \omega_\fn(\gamma)\cdot \theta_\fn(\boz). 
	$$
\end{cor}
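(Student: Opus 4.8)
The plan is to substitute the two known transformation laws directly into the definition \eqref{eq_def_theta_fp} of $\theta_\fn$ and watch the automorphy factors collapse. From the preceding Proposition we have, for $\gamma\in\G_0^r(\fn)$,
$$
G_\fn(\gamma\boz)=\widetilde\chi(\gamma)\cdot j(\gamma,\boz)^{\frac{|\fn|^{r-1}-1}{q-1}}\cdot G_\fn(\boz),
$$
while \eqref{eqh} gives $H(\gamma\boz)=\det(\gamma)^{-1}\cdot j(\gamma,\boz)^{\frac{q^r-1}{q-1}}\cdot H(\boz)$. Raising the first to the power $\frac{q^r-1}{q^\kappa-1}$, the second to the power $\frac{|\fn|^{r-1}-1}{q^\kappa-1}$, and forming the quotient, I would first check that the two resulting powers of $j(\gamma,\boz)$ coincide: both equal $\frac{(q^r-1)(|\fn|^{r-1}-1)}{(q-1)(q^\kappa-1)}$. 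They therefore cancel, leaving
$$
\theta_\fn(\gamma\boz)=\widetilde\chi(\gamma)^{\frac{q^r-1}{q^\kappa-1}}\cdot\det(\gamma)^{\frac{|\fn|^{r-1}-1}{q^\kappa-1}}\cdot\theta_\fn(\boz).
$$

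The decisive step is to reinterpret these exponents modulo $q-1$. Since $\gamma\in\GL_r(A)$ with $A=\F_q[T]$, one has $\det(\gamma)\in A^\times=\F_q^\times$, and $\widetilde\chi(\gamma)\in\F_q^\times$ by construction; hence both values lie in the cyclic group $\F_q^\times$ of order $q-1$, so only the residues of the exponents mod $q-1$ matter. Here I would use that $\kappa=\gcd(\deg(\fn),r)$ divides both $r$ and $\deg(\fn)$, whence also $\kappa\mid(r-1)\deg(\fn)$, so the two fractions are integers expressible as geometric sums,
$$
\frac{q^r-1}{q^\kappa-1}=\sum_{j=0}^{r/\kappa-1}q^{j\kappa},\qquad
\frac{|\fn|^{r-1}-1}{q^\kappa-1}=\frac{q^{(r-1)\deg(\fn)}-1}{q^\kappa-1}=\sum_{j=0}^{(r-1)\deg(\fn)/\kappa-1}q^{j\kappa}.
$$
Reducing modulo $q-1$ with $q\equiv 1$, each summand becomes $1$, so the first sum is $\equiv r/\kappa$ and the second is $\equiv(r-1)\deg(\fn)/\kappa$. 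Consequently $\widetilde\chi(\gamma)^{\frac{q^r-1}{q^\kappa-1}}=\widetilde\chi(\gamma)^{r/\kappa}$ and $\det(\gamma)^{\frac{|\fn|^{r-1}-1}{q^\kappa-1}}=\det(\gamma)^{(r-1)\deg(\fn)/\kappa}$, which is exactly $\omega_\fn(\gamma)$.

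The only genuine subtlety, and the step I would verify most carefully, is the pair of elementary congruences above: they require $\kappa\mid r$ and $\kappa\mid(r-1)\deg(\fn)$ for the fractions to be integers, and then the reduction $q\equiv 1\pmod{q-1}$ to collapse the geometric sums to their lengths. Everything else is a formal manipulation of the automorphy factors; in particular, $\omega_\fn$ is automatically a homomorphism into $\F_q^\times$, being a product of integer powers of the two homomorphisms $\widetilde\chi$ and $\det$.
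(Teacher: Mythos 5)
Your proof is correct and is exactly the argument the paper intends: the paper's proof is the one-line remark that the corollary ``follows from \eqref{eqh} and the previous proposition,'' and your computation — cancelling the $j(\gamma,\boz)$ factors and reducing the exponents $\tfrac{q^r-1}{q^\kappa-1}\equiv r/\kappa$ and $\tfrac{|\fn|^{r-1}-1}{q^\kappa-1}\equiv (r-1)\deg(\fn)/\kappa \pmod{q-1}$ via geometric sums — is precisely the verification being left to the reader.
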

\begin{proof} 
	This follows from \eqref{eqh} and the previous proposition. 
\end{proof}

Let $o(\omega_\fn)$ be the order of $\omega_\fn$. Then $(\theta_\fn)^{o(\omega_\fn)}$ is the least power of $\theta_\fn$ 
which is $\G^r_0(\fn)$-invariant. 

\begin{prop}\label{propCharacteroftheta}
	\begin{align*}
	o(\omega_\fn) & =\frac{q-1}{\gcd\left(q-1, \frac{r}{\kappa}m_1, \dots, \frac{r}{\kappa}m_s, \frac{(r-1)\deg{\fn}}{\kappa}\right)} \\ 
	& =\frac{q-1}{\gcd\left(q-1,m_1, \dots,m_s, \frac{(r-1)\deg{\fn}}{\kappa}\right)}. 
	\end{align*}
	In particular, $o(\omega_\fn)=q-1$ if $\fn$ is square-free. 
\end{prop}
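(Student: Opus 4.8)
The plan is to read off $o(\omega_\fn)$ as the cardinality of the image of $\omega_\fn$ inside the cyclic group $\F_q^\times$ (of order $q-1$), since the order of a character equals the order of its image. As $\omega_\fn=\widetilde{\chi}^{\,r/\kappa}\cdot{\det}^{\,(r-1)\deg(\fn)/\kappa}$, this image is governed by the joint homomorphism $(\widetilde{\chi},\det)\colon\G_0^r(\fn)\to\F_q^\times\times\F_q^\times$, and my first step is to show that its image is the full product $\chi((A/\fn)^\times)\times\F_q^\times$. Recall that $\widetilde{\chi}$ factors as $\G_0^r(\fn)\twoheadrightarrow\GL_{r-1}(A/\fn)\xrightarrow{\det}(A/\fn)^\times\xrightarrow{\chi}\F_q^\times$, where the first arrow is the surjection coming from $\G_0^r(\fn)/\G_1^r(\fn)\cong\GL_{r-1}(A/\fn)$ and $\det\colon\GL_{r-1}(A/\fn)\to(A/\fn)^\times$ is surjective; hence the image of $\widetilde{\chi}$ is exactly $\chi((A/\fn)^\times)$. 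To hit a prescribed pair $(\chi(u),v)$, I choose $\gamma_0\in\G_0^r(\fn)$ reducing to some $\bar{D}\in\GL_{r-1}(A/\fn)$ with $\det\bar{D}=u$, and then replace $\gamma_0$ by $\diag(v/\det(\gamma_0),1,\dots,1)\gamma_0\in\G_0^r(\fn)$: left multiplication by $\diag(\alpha,1,\dots,1)$ scales only the first row, so it leaves the bottom-right block (hence $\widetilde{\chi}$) and the $c$-block unchanged while rescaling the total determinant to $v$. This yields surjectivity onto the product.

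Next I determine $\chi((A/\fn)^\times)$. Using $(A/\fn)^\times\cong\prod_i(A/\fp_i^{m_i})^\times$ and the fact that $\chi$ is inflated from $\prod_i\F_{\fp_i}^\times$, together with the surjectivity of each norm $\Nr_{\F_{\fp_i}/\F_q}\colon\F_{\fp_i}^\times\to\F_q^\times$, the quantity $\Nr_{\F_{\fp_i}/\F_q}(a_i)^{-m_i}$ ranges over the subgroup $(\F_q^\times)^{m_i}$ of $m_i$-th powers. Therefore $\chi((A/\fn)^\times)$ is the subgroup generated by these, namely $(\F_q^\times)^{d_0}$ with $d_0=\gcd(m_1,\dots,m_s,q-1)$, of order $(q-1)/d_0$.

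I then carry out the arithmetic in $\F_q^\times$. Writing $a=r/\kappa$ and $b=(r-1)\deg(\fn)/\kappa$, the first step gives that the image of $\omega_\fn$ equals $\big((\F_q^\times)^{d_0}\big)^{a}\cdot(\F_q^\times)^{b}=(\F_q^\times)^{\gcd(d_0 a,\,b,\,q-1)}$, whence $o(\omega_\fn)=(q-1)/\gcd(d_0 a,b,q-1)$. Expanding $d_0 a=\gcd\!\big(\tfrac r\kappa m_1,\dots,\tfrac r\kappa m_s,\tfrac r\kappa(q-1)\big)$ and noting that $\gcd\!\big(\tfrac r\kappa(q-1),\,q-1\big)=q-1$ makes the term $\tfrac r\kappa(q-1)$ redundant, so that $\gcd(d_0a,b,q-1)=\gcd\!\big(\tfrac r\kappa m_1,\dots,\tfrac r\kappa m_s,\tfrac{(r-1)\deg\fn}{\kappa},q-1\big)$. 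This is precisely the first displayed formula.

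For the second equality I compare, prime by prime, the denominators $g_1=\gcd\!\big(q-1,\tfrac r\kappa m_1,\dots,\tfrac r\kappa m_s,\tfrac{(r-1)\deg\fn}{\kappa}\big)$ and $g_2=\gcd\!\big(q-1,m_1,\dots,m_s,\tfrac{(r-1)\deg\fn}{\kappa}\big)$ via the $p$-adic valuation $v_p$. Clearly $g_2\mid g_1$, and $v_p(g_1)$ can exceed $v_p(g_2)$ only when $v_p(r/\kappa)\geq 1$; I expect this valuation bookkeeping to be the main obstacle, as it is where the definition $\kappa=\gcd(r,\deg\fn)$ must be used twice. Indeed, if $v_p(r/\kappa)\geq1$ then $v_p(r)>v_p(\kappa)=\min(v_p(r),v_p(\deg\fn))$ forces $v_p(\deg\fn)=v_p(\kappa)$, i.e.\ $v_p(\deg(\fn)/\kappa)=0$ (the coprimality of $r/\kappa$ and $\deg(\fn)/\kappa$); moreover $p\mid r$ gives $p\nmid r-1$, so $v_p(r-1)=0$. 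Hence $v_p\!\big(\tfrac{(r-1)\deg\fn}{\kappa}\big)=0$, which makes $v_p(g_1)=v_p(g_2)=0$; and when $v_p(r/\kappa)=0$ the two gcd's have identical valuations term by term. Thus $g_1=g_2$, giving the second formula. Finally, for $\fn$ square-free all $m_i=1$, so $g_2=\gcd(q-1,1,\dots)=1$ and $o(\omega_\fn)=q-1$.
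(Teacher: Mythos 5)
Your proof is correct. The paper itself gives no details here --- it only says the assertion ``follows from the same argument as Proposition 3.22 in \cite{GekelerDelta}'' (Gekeler's $r=2$ case) --- and your argument is precisely the natural generalization of that computation: identify the image of $\omega_\fn$ in the cyclic group $\F_q^\times$ by showing $(\widetilde{\chi},\det)$ surjects onto $\chi((A/\fn)^\times)\times\F_q^\times$, compute $\chi((A/\fn)^\times)=(\F_q^\times)^{\gcd(m_1,\dots,m_s,q-1)}$ via surjectivity of the norms, and finish with the gcd/valuation bookkeeping (where your observation that $p\mid r/\kappa$ forces $v_p\bigl(\tfrac{(r-1)\deg\fn}{\kappa}\bigr)=0$ is exactly the point needed for the second equality). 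So you have supplied in full the proof the paper omits.
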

\begin{proof}
	The assertion follows from the same argument as Proposition 3.22 in \cite{GekelerDelta}. 
\end{proof}

\section{Cuspidal divisors}\label{sCD}
The Satake compactifications 
of Drinfeld modular varieties were constructed (at different levels of generality and details of proof) by 
Gekeler \cite{GekelerSatake}, \cite{GekelerDMHR4}, 
Kapranov \cite{Kapranov},  Pink \cite{Pink}, and H\"aberli \cite{Haberli}. The constructions by Gekeler, H\"aberli, and Kapranov 
are rigid-analytic, whereas Pink's construction is algebro-geometric. H\"aberli also proved that the analytic and algebraic Satake 
compactifications give the same variety. 

The Satake compactification can be constructed for any Drinfeld modular variety $Y_\G:=\G\bs \Omega^r$, 
where $\G\subseteq \GL_r(A)$ is a congruence subgroup; we will denote this compactified variety by $X_\G$.   
This is a projective connected normal variety over $\Ci$ of dimension $r-1$ 
containing $Y_\G$ as an open subvariety; cf.\ \cite{GekelerSatake} and the other references listed above.  
The \textit{cusps} of $X_\G$ are the (geometrically) irreducible components of $X_\G-Y_\G$ of dimension $r-2$.  

In this section we study the cuspidal divisors of $X_\Gamma$ (i.e.\ the divisors of $X_\Gamma$ supported at cusps) 
when $\Gamma = \Gamma^r_0(\nfk)$. In particular, we determine the order of the cuspidal divisor class group when $\nfk$ is prime.

\subsection{Meromorphy at cusps}

We first examine the behavior of the elements of $\cO(\Omega^r)^\times$ 
near the ``boundary'' of $\Omega^r$. 
To do so, we first need several definitions.  

	For $\boz=(z_1, z_2, \dots, z_r=1)\in \Omega^r$, we may write $\boz=(z_1, \boz')$, where $\boz'\in \Omega^{r-1}$. 
	Let $\La':=\La_{\boz'}\subset \Ci$ be the lattice associated to $\boz'$ and 
	$\Fi^{r-1}\boz'=\Fi z_2+\Fi z_3+\cdots+\Fi z_r$ be the $\Fi$-vector subspace of $\Ci$ 
	spanned by $\boz'$. Note that $\dim_{\Fi}\Fi^{r-1}\boz'=r-1$. The \textit{parameter at infinity}  is 
	\begin{equation}\label{eq_u(z)}
	u(\boz):=\exp_{\La'}(z_1)^{-1}. 
	\end{equation}
	For $z\in \Ci$ and a subset $X\subset \Ci$, let  
	$$
	d(z, X):=\inf_{x\in X} |z-x|. 
	$$
	Let $\Im(\boz):=d(z_1, \Fi \boz')$. This is an analogue of the imaginary part on the complex upper half-plane. 
	For $n\in \Z_{>0}$, let $\Omega^r_n:=\{\boz\in \Omega^r\mid \Im(\boz)\geq n\}$. One can check that $\Omega^r_n$ 
	is an admissible subset of $\Omega^r$, stable under the action of $\G_\infty'$. These are the basic neighborhoods 
	of infinity in $\Omega^r$; cf.\ \cite[Def.\ 4.12]{BBP1}. Since for $\gamma\in \G_\infty'$ we have 
$u(\gamma \boz)=u(\boz)$, there is a well-defined map 
\begin{align*}
(\G_\infty'\cap \G^r(\fn))\bs \Omega^r_n &\To \Ci^\times\times \Omega^{r-1}, \\ 
\boz &\longmapsto (u(\boz), \boz') 
\end{align*}
which is an open embedding for any $0\neq \fn\lhd A$; see \cite{GekelerSatake}, \cite[Thm.\ 4.16]{BBP1}. 

\begin{rem}
For a congruence subgroup $\Gamma$, the boundary of $X_\G$, as a set, consists of finitely many irreducible components which themselves are Drinfeld modular varieties of smaller 
dimensions. 
The parameter at infinity $u(\boz)$ from \eqref{eq_u(z)} plays an important role in the analytic 
construction of the Satake compactifications. The map 
$(\G\cap \G_\infty')\bs \Omega^r_n\to \Ci^\times \times \Omega^{r-1}$ given by $\boz=(z_1, \boz')\mapsto (u(\boz), \boz')$ 
is an open embedding for $n\gg 0$ and allows one to ``glue" $\Omega^{r-1}$ to $(\G\cap \G_\infty')\bs \Omega^r_n$ as 
the divisor $(u=0)$.  Then, using the map $(\G\cap \G_\infty')\bs \Omega^r_n\to \G\bs \Omega^r$, one adjoints to $Y_\G$ 
a quotient of $\Omega^{r-1}$ by an appropriate congruence group. 
At other boundary neighborhoods of $Y_\G$ the construction is similar. 
Finally, as the glued pieces are themselves 
Drinfeld modular varieties of dimension one less than $Y_\G$, one proceeds  inductively to compactify the glued pieces.
\end{rem}

Following \cite{BBP1}, we define a 
\textit{weak modular form} of weight $k\in \Z_{>0}$ for $\G^r(\fn)$ to be a holomorphic (in the rigid analytic sense) 
function $f: \Omega^r\to \Ci$ satisfying 
$$
f(\gamma(\boz))=j(\gamma, \boz)^k\cdot  f(\boz), \quad \text{for all }\gamma\in \G^r(\fn). 
$$
In this paper, we already encountered such a function in the form of $\Delta_r$. In fact, the coefficient forms $g_i(\boz)$, $1\leq i\leq r$,  
from \eqref{eqDrCoeffForms} are weak modular forms of weight $q^i-1$ for $\GL_r(A)$. Another example is the 
Eisenstein series $E_{\vec{u}}(\boz)$. 

\begin{thm}\label{thm-u-expansion}
    On suitable neighborhoods of infinity $\Omega^r_n$, $n\gg 0$, 
	every weak modular form $f$ of weight $k$ for $\G^r(\fn)$ admits a uniformly convergent $u$-expansion 
	\begin{equation}\label{eqBBP1expansion}
	f(\boz)=\sum_{m\in \Z} f_m(\boz') u(\boz)^m,
	\end{equation}
	where the $f_m: \Omega^{r-1}\to \Ci$ are weak modular forms of weight $k-m$ for $\G^{r-1}(\fn)$, uniquely determined 
	by $f$. 
\end{thm}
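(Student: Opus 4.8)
The plan is to fix the lower variable $\boz'$, read off the expansion fibrewise from the rigid-analytic structure of $\Omega^r$ near the cusp, and only afterwards check that the coefficients vary as modular forms in $\boz'$. First I would exploit periodicity in $z_1$. Writing $\boz=(z_1,\boz')$, the subgroup $\G_\infty'\cap \G^r(\fn)$ consists of the translations $z_1\mapsto z_1+\mu$ with $\mu\in \fn\La'$, on which $j\equiv 1$, so $f$ is invariant under them. Using the open embedding $(\G_\infty'\cap \G^r(\fn))\bs \Omega^r_n\hookrightarrow \Ci^\times\times \Omega^{r-1}$, $\boz\mapsto (u(\boz),\boz')$, recalled above, I would identify the quotient of $\Omega^r_n$ by these translations with an admissible open $W\subset \Ci^\times\times \Omega^{r-1}$ whose fibre over each $\boz'$ is an annulus $\{0<|u|<\epsilon\}$; the radius shrinks with $n$ because $\Im(\boz)\geq n$ forces $|u(\boz)|$ to be small. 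Thus $f$ descends to a rigid-holomorphic function $\tilde f$ on $W$.

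Next I would produce the expansion. For each fixed $\boz'$ the function $\tilde f(\,\cdot\,,\boz')$ is rigid-holomorphic on an annulus around $u=0$, so the theory of holomorphic functions on annuli over $\Ci$ furnishes a Laurent expansion $\tilde f(u,\boz')=\sum_{m\in\Z}f_m(\boz')u^m$. Admissibility of $\Omega^r_n$ together with a maximum-principle estimate on the annulus gives uniform convergence on $\Omega^r_n$ for $n\gg 0$, and uniqueness of Laurent coefficients yields at once that the $f_m$ are uniquely determined by $f$.

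It then remains to identify the $f_m$. Their holomorphy in $\boz'$ follows from the joint rigid-analyticity of $\tilde f$ on $W$ (the $f_m$ arise from $\tilde f$ by the rigid analogue of contour integration in $u$, which preserves holomorphy in $\boz'$). For the transformation law I would embed $\gamma'\in \G^{r-1}(\fn)$ as $\delta=\diag(1,\gamma')\in \G^r(\fn)$. A direct computation gives $\delta\boz=\big(z_1/j(\gamma',\boz'),\,\gamma'\boz'\big)$ and $\La_{\gamma'\boz'}=j(\gamma',\boz')^{-1}\La'$, so Proposition~\ref{propSec1:La}(ii) yields
\[
u(\delta\boz)=\exp_{j(\gamma',\boz')^{-1}\La'}\!\big(j(\gamma',\boz')^{-1}z_1\big)^{-1}=j(\gamma',\boz')\,u(\boz).
\]
Substituting this and $f(\delta\boz)=j(\gamma',\boz')^k f(\boz)$ into the two Laurent expansions of $f(\delta\boz)$ and comparing coefficients of $u(\boz)^m$ gives $f_m(\gamma'\boz')=j(\gamma',\boz')^{k-m}f_m(\boz')$, i.e.\ $f_m$ is a weak modular form of weight $k-m$ for $\G^{r-1}(\fn)$.

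The hard part will be the first two steps: making the rigid-analytic picture precise, namely that $\Omega^r_n$ modulo $\fn$-translations is an admissible open of a product of an annulus with $\Omega^{r-1}$, that the Laurent expansion converges uniformly there, and that its coefficients are again rigid-holomorphic in $\boz'$. This is precisely where the cited embedding theorem and the asymptotics of $|u(\boz)|$ as $\Im(\boz)\to\infty$ carry the weight; once the annular structure and joint analyticity are established, the modularity and uniqueness of the $f_m$ are formal.
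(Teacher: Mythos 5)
The paper does not prove this statement at all: its ``proof'' is a one-line citation to Proposition 5.4 and Theorem 5.9 of \cite{BBP1}, so there is no internal argument to compare against. Your sketch reconstructs what is in fact the standard (and, as far as the cited source goes, the actual) route: descend $f$ along the embedding $(\G_\infty'\cap\G^r(\fn))\bs\Omega^r_n\hookrightarrow \Ci^\times\times\Omega^{r-1}$, expand fibrewise in Laurent series on the punctured discs $\{0<|u|\le\rho_n\}$ (note these are punctured discs rather than annuli bounded away from $0$, which is precisely why the expansion must a priori allow all $m\in\Z$), and then deduce modularity of the coefficients. Your key computation is correct and worth keeping: for $\delta=\diag(1,\gamma')$ one has $j(\delta,\boz)=j(\gamma',\boz')$, $\delta\boz=(z_1/j(\gamma',\boz'),\gamma'\boz')$, $\La_{\gamma'\boz'}=j(\gamma',\boz')^{-1}\La'$, and Proposition~\ref{propSec1:La}(ii) gives $u(\delta\boz)=j(\gamma',\boz')u(\boz)$, whence comparison of Laurent coefficients yields weight $k-m$. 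Two caveats. First, everything you flag as ``the hard part'' --- the admissible-open structure of the quotient, uniform convergence of the Laurent series on $\Omega^r_n$, and rigid-analyticity of the coefficient functions in $\boz'$ --- is exactly the substance of the results being cited, so your proposal is an outline rather than a self-contained proof. Second, when comparing the two expansions of $f(\delta\boz)$ you should note that $\Im(\delta\boz)=|j(\gamma',\boz')|^{-1}\Im(\boz)$, so the neighborhoods $\Omega^r_n$ are not literally $\delta$-stable; the identity of Laurent coefficients must be extracted on a nonempty overlap where both expansions converge, after which uniqueness of Laurent coefficients finishes the argument.
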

\begin{proof}
See Proposition 5.4 and Theorem 5.9 in \cite{BBP1}.  
\end{proof}

\begin{example}\label{exampleE_u}
		Let $\vec{u}\in A^r$ be a representative of a nonzero class in $(A/\fn)^r$. Write $\vec{u}=(u_1, u_2, \dots, u_r)=(u_1, \vec{u}')$. 
		It is shown in 	\cite[Thm.\ 6.2]{BB}, that if $u_1=0$, then 
		$$ 
		E_{\vec{u}}(\boz)=E_{\vec{u}'}(\boz')+\text{higher degree terms in }u(\boz). 
		$$
\end{example}

A function satisfying an expansion of the form \eqref{eqBBP1expansion} is said to be 
\textit{holomorphic (resp.\ meromorphic) at infinity}  
if $f_m$ is identically zero for $m<0$ (resp.\ $m\ll 0$). We say that a weak modular form 
$f$ of weight $k$ is \textit{holomorphic (resp.\ meromorphic) at the cusps}, if $j(\gamma, \boz)^{-k}f(\gamma\boz)$ 
is holomorphic (resp.\ meromorphic) at infinity  for all $\gamma\in \GL_r(A)$. 
A \textit{modular form} is a weak modular form $f$ which is holomorphic at the cusps. For example, the coefficient 
forms $g_i(\boz)$ and $E_{\vec{u}}(\boz)$ are modular forms; cf.\ \cite{BBP3}.

\begin{prop}\label{propMerUnits}
	Assume $f\in \cO(\Omega^r)^\times$ is invariant under the action of $\G^r(\fn)$. Then $f$ 
	is meromorphic at the cusps. 
\end{prop}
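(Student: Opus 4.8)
The plan is to reduce to the single cusp at infinity, to invoke the $u$-expansion of Theorem~\ref{thm-u-expansion}, and then to show that the nowhere-vanishing of $f$ forces the expansion to have only finitely many negative terms, by a Newton-polygon argument carried out uniformly over the connected base $\Omega^{r-1}$.

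First I would reduce to proving that $f$ is meromorphic at infinity. Since $\G^r(\fn)=\ker(\GL_r(A)\to\GL_r(A/\fn))$ is normal in $\GL_r(A)$, for every $\gamma\in\GL_r(A)$ the translate $f\circ\gamma$ is again holomorphic, nowhere vanishing, and $\G^r(\fn)$-invariant: indeed for $\delta\in\G^r(\fn)$ one has $\gamma\delta=(\gamma\delta\gamma^{-1})\gamma$ with $\gamma\delta\gamma^{-1}\in\G^r(\fn)$. It therefore suffices to show that an arbitrary $f\in\cO(\Omega^r)^\times$ invariant under $\G^r(\fn)$ is meromorphic at infinity. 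Being $\G^r(\fn)$-invariant, $f$ is a weak modular form of weight $0$, so by Theorem~\ref{thm-u-expansion} (with $k=0$) it admits a uniformly convergent expansion $f(\boz)=\sum_{m\in\Z}f_m(\boz')u(\boz)^m$ on some $\Omega^r_n$, where each $f_m$ is a weak modular form of weight $-m$ on $\Omega^{r-1}$; the goal becomes $f_m\equiv0$ for $m\ll0$.

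Next I would pass to the partial quotient. Because $f$ is invariant under $\G_\infty'\cap\G^r(\fn)$, it descends along the open embedding $(\G_\infty'\cap\G^r(\fn))\bs\Omega^r_n\hookrightarrow\Ci^\times\times\Omega^{r-1}$, $\boz\mapsto(u(\boz),\boz')$, to a nowhere-vanishing rigid-analytic function on tubes $\{0<|u|<\epsilon\}\times V$ for affinoid subdomains $V$ exhausting $\Omega^{r-1}$, whose $u$-expansion is the one above. For each fixed $\boz'\in V$ the Laurent series $u\mapsto\sum_m f_m(\boz')u^m$ has no zero on any circle $|u|=\rho$ with $0<\rho<\epsilon$; by the theory of Newton polygons of Laurent series over a non-archimedean field, this means the polygon has no break in $(0,\epsilon)$, so there is a single dominant index $d(\boz')$ with $|f_{d(\boz')}(\boz')|\,\rho^{d(\boz')}>|f_m(\boz')|\,\rho^m$ for all $m\ne d(\boz')$ and all $\rho\in(0,\epsilon)$. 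This index is finite by convergence of the expansion and is locally constant in $\boz'$; since $\Omega^{r-1}$ is connected, $d(\boz')\equiv d$ is a single integer. Finally, for $m<d$ the dominance inequality gives $|f_m(\boz')|\le|f_d(\boz')|\,\rho^{\,d-m}\to0$ as $\rho\to0$, whence $f_m\equiv0$; thus $f_m\equiv0$ for all $m<d$ and $f$ is meromorphic at infinity (of order $d$).

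The main obstacle is the Newton-polygon step carried out with parameters, i.e.\ establishing that the $u$-adic order of a unit on an annulus-bundle $\{0<|u|<\epsilon\}\times V$ is well defined, integer-valued, locally constant, and then globally constant once $V$ is replaced by the connected $\Omega^{r-1}$. This is where the hypothesis that $f$ (equivalently $1/f$) is \emph{nowhere vanishing} enters decisively, as it is precisely what excludes breaks of the Newton polygon, and hence an essential singularity at $u=0$, over the punctured disk reaching the cusp. A secondary technical point, which I would extract from the construction of the Satake compactification and the structure theory of \cite{BBP1}, is to verify that the image of $(\G_\infty'\cap\G^r(\fn))\bs\Omega^r_n$ genuinely contains product tubes $\{0<|u|<\epsilon\}\times V$ with inner radius reaching $0$, so that the limit $\rho\to0$ above is legitimate.
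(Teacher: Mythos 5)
Your argument is correct in substance and reaches the result, but it diverges from the paper's proof at the crucial uniformity step. Both proofs make the same reduction to meromorphy at infinity and both start from the $u$-expansion of Theorem~\ref{thm-u-expansion}; and your Newton-polygon analysis on a fixed punctured disc is essentially a proof of the statement the paper simply cites, namely the non-archimedean analogue of Picard's Big Theorem (a nowhere-vanishing holomorphic function on a punctured disc extends meromorphically across the puncture). Where you differ is in how the pointwise conclusion ``$f_m(\boz')=0$ for $m<d(\boz')$'' is promoted to ``$f_m\equiv 0$ for $m\ll 0$'': you argue that the dominant index $d(\boz')$ is locally constant and invoke connectedness of $\Omega^{r-1}$, whereas the paper sidesteps this entirely with a countability trick --- since $\Ci$ is uncountable and the family $\{f_m\}$ is countable, there is a single point $\boz_0'$ at which every $f_m$ that is not identically zero is nonzero, and the one-variable Picard argument at that one point then forces $f_m\equiv 0$ for $m<m_0$. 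Your route buys a self-contained, quantitative statement (a well-defined constant order $d$ along the boundary), but it leans on two points you assert rather than prove: the local constancy of $d(\boz')$ (true, but it requires the uniform decay $\|f_m\|_V\rho^{m}\to 0$ on affinoids $V$ to cut the competition down to finitely many indices before continuity can be applied), and the existence of genuine product tubes $\{0<|u|<\epsilon\}\times V$ inside the image of $(\G_\infty'\cap\G^r(\fn))\bs\Omega^r_n$, which you rightly flag since the relation between $|u(\boz)|$ and $\Im(\boz)$ depends on $\boz'$. The paper's generic-point argument avoids both issues at the cost of being non-constructive; if you keep your approach, you should supply the two missing verifications, or simply replace the connectedness step by the paper's choice of a single generic $\boz_0'$.
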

\begin{proof}
	Since $f(\gamma\boz)$ is still in $(\cO(\Omega^r)^\times)^{\G^r(\fn)}$ for any $\gamma\in \GL_r(A)$, it is enough to 
	prove that $f(\boz)$ is meromorphic at infinity. 
	
   Since $f$ is non-vanishing, there exists a coefficient function $f_m$ in the $u$-expansion of $f$ which is not identically zero.
Notice that $\Omega^{r-1}$ is a rigid analytic space over the uncountable and algebraically closed field $\CC_\infty$.
As the family $\{f_m\mid m \in \ZZ\}$ is countable, we can always find a point $\boz_0' \in \Omega^{r-1}$ so that $f_m(\boz_0') \neq 0$ 
for all $f_m$ which are not identically zero.

Now, for each $n \in \ZZ$, consider the subspace
$$\Omega_n':=\{\boz_0 = (z_1,\boz_0')\mid \Im(\boz_0) \geq n\} \subset \Omega^r,$$
which is stable under $\Gamma_\infty'\cap \Gamma^r(\nfk)$.
The map $\Omega_n' \rightarrow \CC_\infty$, defined by $\boz_0 \mapsto u(\boz_0)$, identifies $(\Gamma_\infty'\cap \Gamma^r(\nfk))\bs\Omega_n'$ with a small punctured disc $D_{\rho_n}^0 = \{w \in \CC_\infty \mid 0 <|w|\leq \rho_n\}$ 
for some $\rho_n \in \RR_{>0}$; cf.\ \cite{BBP1}.
Since $f$ is invariant by $\Gamma_\infty'\cap\Gamma^r(\nfk)$, the restriction of $f$ to $\Omega_n'$ induces a non-vanishing holomorphic function on $D_{\rho_n}^0$, say $\bar{f}$.
Taking $n$ sufficiently large, from the $u$-expansion of $f$ in Theorem~\ref{thm-u-expansion}
we obtain:
$$\bar{f}(w) = \sum_{m \in \ZZ}f_m(\boz_0') w^m, \quad \forall w \in D_{\rho_n}^0.
$$
By the non-archimedean analogue of Picard's Big Theorem~\cite[p.\ 43]{FvdP},
$\bar{f}$ has a meromorphic continuation to the whole disc $D_{\rho_n} = \{z \in \CC_\infty\mid |z|\leq \rho_n\}$.
This means that there exists $m_0 \in \ZZ$ so that
the Laurent coefficient $f_m(\boz_0')$ of $\bar{f}$ vanishes if $m<m_0$.
From our choice of $\boz_0'$, we get that $f_m$ is identically zero if $m<m_0$.
Therefore $f$ is meromorphic at infinity.
\end{proof}

\subsection{Cuspidal divisors}

Given a non-zero ideal $\nfk$ of $A$,
let $Y^r_0(\fn):= \G_0^r(\fn)\bs \Omega^r$ and $X^r_0(\fn)$ be its Satake compactification.
We have:

\begin{lem}\label{lemNumbercusps} 
	If $\fn$ is a square-free ideal of $A$ with $s$ prime factors, then the number of cusps of $X^r_0(\fn)$ is equal to $2^s$. 
\end{lem}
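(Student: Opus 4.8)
The plan is to realize the cusps as $\G_0^r(\fn)$-orbits on the projective space $\p^{r-1}(F)$ of $F$-rational lines in $F^r$, and then to count these orbits by reducing modulo $\fn$ and applying the Chinese Remainder Theorem. First I would record, from the construction of the Satake compactification recalled above (cf.\ \cite{GekelerSatake}, \cite{Pink}, \cite{Haberli}), that the dimension-$(r-2)$ boundary components arise from rank-one degenerations: the standard cusp, glued along the divisor $(u=0)$, is attached to the line $\langle e_1\rangle$, translation by $\GL_r(A)$ exhausts all such components, and two lines give the same cusp precisely when they are $\G_0^r(\fn)$-equivalent. Thus the number of cusps is $\#\big(\G_0^r(\fn)\bs\p^{r-1}(F)\big)$. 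Since $A$ is a PID, $\GL_r(A)$ acts transitively on $\p^{r-1}(F)$ with stabilizer of $\langle e_1\rangle$ equal to $Q:=\{g\in\GL_r(A)\mid ge_1\in\F_q^\times e_1\}$, a $(1,r-1)$-parabolic, so this number equals $\#\big(\G_0^r(\fn)\bs\GL_r(A)/Q\big)$.

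Next I would pass to $A/\fn$. Writing $\pi\colon\GL_r(A)\to\GL_r(A/\fn)$ for reduction, we have by definition $\G_0^r(\fn)=\pi^{-1}(\overline Q)$, where $\overline Q\subset\GL_r(A/\fn)$ is the $(1,r-1)$-parabolic stabilizing $\langle\bar e_1\rangle$; in particular $\G_0^r(\fn)\supseteq\ker\pi=\G^r(\fn)$. Because the left factor contains $\ker\pi$ and $\pi$ is surjective, reduction induces a bijection $\G_0^r(\fn)\bs\GL_r(A)/Q\overset{\sim}{\To}\overline Q\bs\GL_r(A/\fn)/\pi(Q)$, where $\pi(Q)\subseteq\overline Q$ is the subgroup whose $(1,1)$-Levi entry is constrained to lie in $\F_q^\times\subset(A/\fn)^\times$; the two groups differ only in this torus, a reflection of the fact that $A^\times=\F_q^\times$.

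Finally I would compute this finite double coset count. By the Chinese Remainder Theorem $\GL_r(A/\fn)\cong\prod_{i=1}^s\GL_r(\F_{\fp_i})$, under which $\overline Q=\prod_i Q_i$ with $Q_i$ the line-stabilizing parabolic, while $\pi(Q)$ is the subgroup of $\prod_i Q_i$ whose $(1,1)$-entries all equal a common element of $\F_q^\times$. Acting on $\overline Q\bs\GL_r(A/\fn)\cong\prod_i\big(Q_i\bs\GL_r(\F_{\fp_i})\big)$, the full group $\overline Q$ has $\prod_i\#\big(Q_i\bs\GL_r(\F_{\fp_i})/Q_i\big)=\prod_i 2=2^s$ orbits, since a maximal $(1,r-1)$-parabolic over a field has exactly two orbits on $\p^{r-1}$, namely the fixed line and its complement. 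To see $\pi(Q)$ has the same count I would sandwich it between $\overline Q$ and the product subgroup $U:=\prod_i Q_i^{1}$, where $Q_i^1\subset Q_i$ is the slice with $(1,1)$-entry $1$, so that $U\subseteq\pi(Q)\subseteq\overline Q$. As the central scalars $Z_i$ act trivially on $Q_i\bs\GL_r(\F_{\fp_i})$ and $Q_i=Z_iQ_i^1$ (because $\diag(u,1,\dots,1)=(uI)\cdot\diag(1,u^{-1},\dots,u^{-1})$ with the second factor in $Q_i^1$), the groups $Q_i$ and $Q_i^1$ have identical orbits, so $U$ already has $\prod_i 2=2^s$ orbits. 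The squeeze $2^s=\#(U\text{-orbits})\geq\#(\pi(Q)\text{-orbits})\geq\#(\overline Q\text{-orbits})=2^s$ then forces $\#(\pi(Q)\text{-orbits})=2^s$, the desired number of cusps.

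The main obstacle is the bookkeeping around the Levi torus in the last step: after reducing to lines the count is genuinely $\#\big(\overline Q\bs\GL_r(A/\fn)/\pi(Q)\big)$ rather than the cleaner $\#\big(\overline Q\bs\GL_r(A/\fn)/\overline Q\big)$, and the central-collapse identity $Q_i=Z_iQ_i^1$ is exactly what makes the two agree. It is here that square-freeness enters, since over a field $A/\fp_i$ the projective space carries the two-orbit structure, whereas for non-reduced $\fn$ the torus contributes extra orbits, matching the classical correction $\sum_{\fm\mid\fn}\phi(\gcd(\fm,\fn/\fm))$ for the number of cusps. The geometric identification of cusps with $\G_0^r(\fn)$-orbits of lines should be imported from the references rather than reproved.
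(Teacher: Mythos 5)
Your overall route is the same as the paper's: identify the cusps with $\G_0^r(\fn)$-orbits on $\p^{r-1}(F)=\GL_r(A)/\G_\infty$, push the count down to $A/\fn$, split by CRT into the primes $\fp_i$, and finish with a local two-orbit statement for a maximal parabolic. The paper phrases the middle step via primitive vectors $(A/\fn)^r_{\mathrm{prim}}/\F_q^\times$ rather than double cosets, but that is cosmetic.

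There is, however, a genuine gap in your last step, and it traces back to a single oversight: $A^\times=\F_q^\times$ constrains \emph{determinants}, not just the $(1,1)$-entry. First, the reduction map $\pi:\GL_r(A)\to\GL_r(A/\fn)$ is \emph{not} surjective; its image is $\{g:\det g\in\F_q^\times\}$ (surjectivity holds for $\SL_r$, and $\det\GL_r(A)=\F_q^\times$). This particular error is harmless, since $\GL_r(A/\fn)=\overline Q\cdot\pi(\GL_r(A))$, so the double coset spaces over $\GL_r(A/\fn)$ and over $\pi(\GL_r(A))$ agree — but it needs to be said. Second, and fatally for the squeeze as written, $\pi(Q)$ is \emph{not} ``$\overline Q$ with the $(1,1)$-entry in $\F_q^\times$'': the lower-right block of an element of $Q=\G_\infty$ lies in $\GL_{r-1}(A)$, hence its reduction has determinant in $\F_q^\times$ at every $\fp_i$ (indeed the same element of $\F_q^\times$ at all $i$). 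Consequently $U=\prod_iQ_i^1\not\subseteq\pi(Q)$: already $\diag(1,u,1,\dots,1)$ with $u\in\F_{\fp_i}^\times\setminus\F_q^\times$ lies in $U$ but not in $\pi(Q)$. Without $U\subseteq\pi(Q)$ the inequality $\#(U\text{-orbits})\geq\#(\pi(Q)\text{-orbits})$ is unjustified and the sandwich collapses. The fix is to replace $U$ by a subgroup that genuinely sits inside $\pi(Q)$, e.g.\ $U'=\bigl(\begin{smallmatrix}1&\ast\\0&\SL_{r-1}(A/\fn)\end{smallmatrix}\bigr)$ (which is in the image because $\SL_{r-1}(A)\to\SL_{r-1}(A/\fn)$ is surjective), and then check that $U'$ still acts transitively on the complement of the fixed line in each $\p^{r-1}(\F_{\fp_i})$ — the unipotent part handles the first coordinate and $\SL_{r-1}(\F_{\fp_i})$ is transitive on nonzero vectors of $\F_{\fp_i}^{r-1}$ (with a separate one-line check when $r=2$). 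This transitivity verification is exactly the content of the paper's explicit matrix construction with prescribed last column, so it cannot be squeezed away.
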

\begin{proof} 
	From the analytic construction of $X^r_0(\fn)$ it follows that the cusps of $X^r_0(\fn)$ are in bijection 
	with the orbits of $\G^r_0(\fn)$ acting on the set of $(r-1)$-dimensional subspaces of $F^r$; cf.\ \cite[(1.2)]{Kapranov}, \cite[p.\ 75]{GekelerSatake}.  
	This set 
	of orbits is in natural bijection with the set of orbits of $\G_0^r(\fn)$ acting on $\p^{r-1}(F)=\p^{r-1}(A)$ from the left. 
	Note that $\GL_r(A)$ acts transitively on $\p^{r-1}(A)$. (For any column vector $(b_1, \dots, b_r)^t$ 
	with $\gcd(b_1, \dots, b_r)=1$ we can find a matrix $\gamma$ in $\GL_r(A)$ whose first column is $(b_1, \dots, b_r)^t$. Then 
	$\gamma (1, 0, \dots, 0)^t=(b_1, \dots, b_r)^t$.) The stabilizer of $[1:0:\cdots:0]\in \p^{r-1}(A)$ in $\GL_r(A)$ 
	is $\G_\infty$. 
	Thus, $\p^{r-1}(A)=\GL_r(A)/\G_\infty$. 
	
By associating to a matrix in $\GL_r(A)$ its first column, one obtains a bijective map 
\begin{align*}
	\G^r(\fn)\bs \p^{r-1}(A) = \G^r(\fn)\bs \GL_r(A)/\G_\infty \longleftrightarrow (A/\fn)^r_{\mathrm{prim}}/\F_q^\times,
\end{align*}	
where $(A/\fn)^r_{\mathrm{prim}}$ denotes the set of primitive vectors in $(A/\fn)^r$ (i.e., those that span a direct summand $\neq 0$).  
Let $\fn=\fp_1\cdots \fp_s$ be the prime decomposition of $\fn$. By reducing a given primitive vector modulo the primes $\fp_i$, we obtain a bijection 
$$
(A/\fn)^r_{\mathrm{prim}}/\F_q^\times\To \left(\prod_{i=1}^s (\F_{\fp_i}^r-\vec{0})\right)/\F_q^\times. 
$$
Thus, the cusps of $X_0^r(\fn)$ are in bijection with the orbits of $\overline{\G^r_0(\fn)}\cong \prod_{i=1}^s \overline{\G^r_0(\fp_i)}$ acting on 
this latter set, where $\overline{\G^r_0(\fn)}:=\G^r_0(\fn)/\G^r(\fn)$. The action of $\F_q^\times$ on 
$(A/\fn)^r_{\mathrm{prim}}$ can be subsumed into the action of $\overline{\G^r_0(\fn)}$, so it is enough to show 
that $\overline{\G^r_0(\fp)}$ acting on $(\F_\fp^r-\vec{0})$ (as on column vectors) has two orbits for a prime $\fp$.  

Note that 
$\overline{\G^r_0(\fp)}$ is the subgroup of $\begin{pmatrix} \GL_1(\F_\fp) & \ast \\ 0 & \GL_{r-1}(\F_\fp)\end{pmatrix}$ consisting of matrices 
whose determinant is in $\F_q^\times$. Clearly, the set $\{(a,0,\dots,0)^t\mid a\in \F_\fp^\times\}$ is one of the orbits.  
On the other hand, any $(b_2, \dots, b_r)^t$, where $b_i\in \F_\fp$ are not all zero, 
can be the last column of $\GL_{r-1}(\F_\fp)$.  It is easy to see that there is a matrix in $\overline{\G^r_0(\fp)}$ 
whose last column is $(\ast, b_2,\dots, b_r)^t$, where $\ast$ is an arbitrary element of $\F_\fp$. 
This implies that the orbit of $(0, 0,\dots, 1)$ includes all nonzero vectors of $\F_\fp^r$ except those of the form $(a,0,\dots,0)$, $a\in \F_\fp$. 
\end{proof}

\begin{thm}\label{thmModUnits}
	Assume $\fn$ is squate-free with $s$ prime factors. 
	\begin{enumerate}
		\item The group $(\cO(\Omega^r)^\times)^{\G_0^r(\fn)}/\Ci^\times$ is a free abelian group of rank $2^s-1$. 
		\item If $r\geq 3$, then the $2^s-1$ harmonic $1$-cochains 
		$$\{\cP(\Theta_\fm)\ :\quad  \fm\mid \fn, \quad \fm\neq 1\},  
		$$
		form a basis of $\Har^1(\cB^r, \Q)^{\G_0^r(\fn)}$. 
	\end{enumerate}
\end{thm}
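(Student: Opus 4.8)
The plan is to realize $(\cO(\Omega^r)^\times)^{\G_0^r(\fn)}$ as a group of cuspidal divisors, compute its rank there, and then transport the conclusion to harmonic cochains via the long exact sequence \eqref{eqLongCohomSeq}. Since $\G^r(\fn)\subseteq \G_0^r(\fn)$, Proposition~\ref{propMerUnits} applies: every $f\in (\cO(\Omega^r)^\times)^{\G_0^r(\fn)}$ is meromorphic at the cusps, hence extends to a rational function on the projective normal variety $X_0^r(\fn)$ whose divisor is supported on the $2^s$ cusps $C_1,\dots,C_{2^s}$ (Lemma~\ref{lemNumbercusps}). Recording the orders of vanishing along the cusps defines a homomorphism $\div\colon (\cO(\Omega^r)^\times)^{\G_0^r(\fn)}\to \Div_{\mathrm{cusp}}\cong \Z^{2^s}$. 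First I would verify $\ker(\div)=\Ci^\times$: if $\div(f)=0$ then $f$ has neither zeros nor poles anywhere on $X_0^r(\fn)$, so it is a global unit on a connected projective variety over an algebraically closed field, hence constant. Thus $\div$ embeds $(\cO(\Omega^r)^\times)^{\G_0^r(\fn)}/\Ci^\times$ into a free abelian group of rank $2^s$, which already forces this quotient to be free of rank $\le 2^s$.

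For part (1) the lower bound is immediate: the $2^s-1$ units $\Theta_\fm$ ($1\neq \fm\mid \fn$) all lie in $(\cO(\Omega^r)^\times)^{\G_0^r(\fn)}$, and Corollary~\ref{corLinIndepTheta} shows their images $\cP(\Theta_\fm)$ are $\Q$-linearly independent; since $\ker(\cP)=\Ci^\times$ by \eqref{eqGvdPseq}, the classes of the $\Theta_\fm$ are independent modulo $\Ci^\times$, so the rank is $\ge 2^s-1$. The crux is the matching upper bound $\le 2^s-1$. For this I would fix an ample class $H$ on $X_0^r(\fn)$ and set $d_i:=(C_i\cdot H^{r-2})>0$, the $H$-degree of the codimension-one cusp $C_i$ (for $r=2$ simply $d_i=1$). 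Because the divisor of a rational function is numerically trivial, $\div(f)\cdot H^{r-2}=0$, i.e.\ every element of the image of $\div$ satisfies the single linear relation $\sum_i (\ord_{C_i} f)\, d_i=0$ with all $d_i>0$. Hence the image is contained in a hyperplane of $\Z^{2^s}$ and has rank $\le 2^s-1$. Combining the two bounds gives rank exactly $2^s-1$, proving (1).

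For part (2), assume $r\ge 3$. Then Theorem~\ref{thmKazdanT} shows $(\G_0^r(\fn))^\ab$ is finite, so $\Hom(\G_0^r(\fn),\Ci^\times)=\Hom((\G_0^r(\fn))^\ab,\Ci^\times)$ is finite. The long exact sequence \eqref{eqLongCohomSeq} then exhibits $\cP$ as a map $(\cO(\Omega^r)^\times)^{\G_0^r(\fn)}/\Ci^\times\to \Har^1(\cB^r,\Z)^{\G_0^r(\fn)}$ with cokernel injecting into this finite group, hence with finite cokernel. In particular the source and target have the same rank, so by part (1) we get $\dim_\Q \Har^1(\cB^r,\Q)^{\G_0^r(\fn)}=2^s-1$. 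As the $2^s-1$ cochains $\cP(\Theta_\fm)$ are $\Q$-linearly independent by Corollary~\ref{corLinIndepTheta}, they are a linearly independent family of the correct cardinality in a space of dimension $2^s-1$, hence a $\Q$-basis, giving (2).

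The main obstacle is the upper bound in (1): showing the rank is $2^s-1$ rather than $2^s$. I expect the cleanest route to be the numerical-triviality argument above, which produces exactly one relation among the cusp-orders by intersecting with $H^{r-2}$; crucially this avoids invoking the finiteness of the cuspidal divisor class group $\cC^r(\fn)$ (Theorem~\ref{thmB}), whose own proof uses the present theorem. A secondary point to confirm is the compatibility between rigid-analytic units on $Y_0^r(\fn)$ and rational functions on $X_0^r(\fn)$, which is precisely what Proposition~\ref{propMerUnits} supplies.
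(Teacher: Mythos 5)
Your proposal is correct, and it matches the paper's proof in its overall architecture (meromorphy at the cusps via Proposition~\ref{propMerUnits}, identification of units with rational functions supported on the $2^s$ cusps, the exact sequence $1\to \Ci^\times\to (\cO(\Omega^r)^\times)^{\G_0^r(\fn)}\to \bigoplus_{c}\Z$, the lower bound from Corollary~\ref{corLinIndepTheta}, and part (2) from Theorem~\ref{thmKazdanT}). Where you genuinely diverge is the crucial upper bound, i.e.\ producing the one linear relation $\sum_c \ord_c(f)\,d_c=0$. The paper instead proves $\sum_c\ord_c(f)=0$ (all weights equal to $1$) by a norm argument: it passes to the Galois cover $X^r(\fn)\to X^r_0(\fn)$, observes that $\widetilde{\deg}$ is preserved under the $\GL_r(A)$-action on the cusps of $X^r(\fn)$, forms the product $g=\prod_{\gamma\in\G_0^r(\fn)\backslash\GL_r(A)}f_\gamma$, and uses $(\cO(\Omega^r)^\times)^{\GL_r(A)}=\Ci^\times$ (Lemma~\ref{lemH(1)=0} plus \eqref{eqLongCohomSeq}) to force $\widetilde{\deg}(f)=0$. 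Your route intersects $\div(f)$ with $H^{r-2}$ for an ample $H$ and uses that a principal divisor is rationally equivalent to zero while each cusp has positive $H$-degree; this is valid, with the small caveat that $X^r_0(\fn)$ is only normal and the cusps need not be Cartier, so one should either invoke that a positive multiple of each cusp is Cartier (as the paper notes, citing Kapranov) or work with the cycle-level cap product $c_1(\cO(H))^{r-2}\cap[-]$, which respects rational equivalence and is additive over the Weil decomposition of $\div(f)$. Your argument is shorter and more standard algebro-geometrically; the paper's argument stays internal to the modular-units formalism, avoids intersection theory on a singular variety, and as a by-product gives the sharper statement that the relation is the unweighted degree $\sum_c\ord_c(f)=0$, which is what is actually used later in the computation of $\cC^r(\fp)$. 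One further point: the passage from "meromorphic at the cusps" to "rational function on $X^r_0(\fn)$" is not purely Proposition~\ref{propMerUnits}; the paper also needs a GAGA-type identification (the argument of Lemma 10.7 of \cite{BBP2}), which you should acknowledge rather than attribute entirely to that proposition.
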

\begin{proof}
By Proposition \ref{propMerUnits}, the elements of $(\cO(\Omega^r)^\times)^{\G_0^r(\fn)}$ are 
	meromorphic at the cusps of $\G_0^r(\fn)$. 
	Let $C$ be the set of cusps of $X^r_0(\fn)$. For $c\in C$, 
	let $\cI_c$ be the sheaf of ideals of $\cO_{X^r_0(\fn)}$ defining the divisor $c$. 
	By \cite[2.2]{Kapranov}, some positive multiple of $c$ is a Cartier divisor,  i.e., $\cI_{c}^{\otimes n_c}$ is invertible for some $n_c\in \Z_{>0}$.
	Let $\cI=\bigotimes_{c\in C} \cI_{c}^{\otimes n_c}$. 
	Repeating the argument of the proof of Lemma 10.7 in \cite{BBP2}, with $\cL^k$ in that lemma replaced by $\cO_{X^r_0(\fn)}\otimes \cI^{-m}$, 
	$m\geq 0$, one can identify $(\cO(\Omega^r)^\times)^{\G_0^r(\fn)}$ 
	with a subgroup of the group $\cF$ of nonzero rational function on $X^r_0(\fn)$ whose divisors are supported at $C$.
	On the other hand, every $f \in \cF$ is uniquely determined by its restrictions to $Y^r_0(\fn)$. Thus, the pull back of $f$ to $\Omega^r$ gives a modular unit invariant by $\Gamma_0^r(\fn)$. Therefore we may identify
	$(\cO(\Omega^r)^\times)^{\G_0^r(\fn)}$ with $\cF$.
	
	Since $X^r_0(\fn)$ is normal, for a nonzero rational function $f$ on $X^r_0(\fn)$ the valuation $\ord_c(f)$ of $f$ at $c\in C$ is well-defined, so we get a 
	map 
	$$
\div: (\cO(\Omega^r)^\times)^{\G_0^r(\fn)} \To \bigoplus_{c\in C} \Z, \qquad f\longmapsto \div(f):=(\ord_c(f))_{c\in C}. 
	$$
	If $\div(f_1)=\div(f_2)$, then $f_1/f_2$ 
	is a rational function which is regular in codimension one. Since $X^r_0(\fn)$ is normal,
$f_1/f_2$ is everywhere regular by \cite[Thm. 4.1.14]{Liu}.  
	 On the other hand, $X^r_0(\fn)$ is projective and connected, so $f_1/f_2\in \Ci^\times$; cf. \cite[Ex. II.4.5]{Hartshorne}. 
	Thus, there is an exact sequence 
	$$
	1\To \Ci^\times \To (\cO(\Omega^r)^\times)^{\G_0^r(\fn)} \overset{\div} {\To}\bigoplus_{c\in C} \Z
	$$
	We will show that $\deg(f)=0$ for $f\in (\cO(\Omega^r)^\times)^{\G_0^r(\fn)}$, where 
	$$\deg(f):=\sum_{c\in C} \ord_c(f).$$ 
	Assuming this for the moment, 
	we conclude that $(\cO(\Omega^r)^\times)^{\G_0^r(\fn)}/\Ci^\times$ is a free abelian group of rank $\leq \#C-1$. 
	
	Let $X^r(\fn)$ be the Satake compactifications of $Y^r(\fn)= \Gamma^r(\fn)\backslash \Omega^r$,
	and let $\widetilde{C}$ be the set of cusps of $X^r(\fn)$.
	For a non-zero rational function $\tilde{f}$ on $X^r(\nfk)$ whose divisor is supported at $\widetilde{C}$, define
	$$
	\widetilde{\deg}(\tilde{f}):= \sum_{\tilde{c} \in \widetilde{C}} \ord_{\tilde{c}}(\tilde{f}).
	$$
	Since $X^r(\nfk)$ is a Galois covering of $X^r_0(\nfk)$, for each $f \in (\cO(\Omega^r)^\times)^{\G_0^r(\fn)}$ one has
	$$
	\widetilde{\deg}(f) = [\Gamma^r_0(\nfk): \Gamma^r(\nfk)\cdot \FF_q^\times]\cdot \deg (f).
	$$
	Moreover, for each $\gamma\in \GL_r(A)$ we have $\gamma \widetilde{C} = \widetilde{C}$.
	Put $f_\gamma(\boz):=f(\gamma\boz)$.
	Observe that $\ord_{\gamma^{-1} \tilde{c}}(f_\gamma)=\ord_{\tilde{c}}(f)$ for every cusp $\tilde{c} \in \widetilde{C}$.
	Hence $\widetilde{\deg}(f)=\widetilde{\deg}(f_\gamma)$. Now consider the modular unit 
	$$g(\boz)=\prod_{\gamma \in \G_0^r(\fn)\bs \GL_r(A)} f_\gamma(\boz),$$
	where the product is over a set of left coset representatives of $\G_0^r(\fn)$ in $\GL_r(A)$. It is clear that 
	$g$ is well-defined and $\GL_r(A)$-invariant. By \eqref{eqLongCohomSeq} and Lemma \ref{lemH(1)=0},  $(\cO(\Omega^r)^\times)^{\GL_r(A)}=\Ci^\times$. 
On the other hand, if $\deg(f)\neq 0$, then $\widetilde{\deg}(f)\neq 0$, and so 
$$\widetilde{\deg}(g) = [\GL_r(A):\Gamma_0^r(\fn)]\cdot \widetilde{\deg}(f) \neq 0.
$$
Thus $g$ 
cannot be a constant function, since at some cusp $\tilde{c}$ in $\widetilde{C}$ it must have nonzero order. 
Thus, $\deg(f)=0$. 

Now assume $\fn$ is square-free with $s$ prime factors. By Lemma \ref{lemNumbercusps}, the number of cusps is $2^s$. Hence 
	$$\rank_\Z((\cO(\Omega^r)^\times)^{\G_0^r(\fn)}/\Ci^\times)\leq 2^s-1. 
	$$
	On the other hand, by \eqref{eqLongCohomSeq}, $(\cO(\Omega^r)^\times)^{\G_0^r(\fn)}/\Ci^\times$ embeds via $\cP$ into 
	$\Har^1(\cB^r, \Z)^{\G_0^r(\fn)}$. 
	Since by Corollary \ref{corLinIndepTheta} the harmonic 1-cochains $\cP(\Theta_\fm)$,  $\fm\mid \fn$, $ \fm\neq 1$, 
	are linearly independent over $\Q$, we conclude that the rank of $(\cO(\Omega^r)^\times)^{\G_0^r(\fn)}/\Ci^\times$ is $2^s-1$. 
	This proves part (1). Part (2) follows from the previous discussion and Theorem \ref{thmKazdanT}. 
\end{proof}

\begin{defn} 
	The \textit{cuspidal divisor group} $\cC^r(\fn)$ of $X^r_0(\fn)$ is the subgroup  
	of the divisor class group of $X^r_0(\fn)$ generated by the Weil divisors $c-c'$, where $c$ and $c'$ 
	run over the cusps of $X^r_0(\fn)$. 
\end{defn}

\begin{thm}\label{thmCDG}\hfill 
	\begin{enumerate}
		\item If $\fn$ is square-free, then $\cC^r(\fn)$ is a finite group. 
		\item If $\fn=\fp$ is prime, then $\cC^r(\fp)$ is a cyclic group of order 
		$$
		\frac{|\fp|^{r-1}-1}{\gcd(|\fp|-1, q^r-1)}
		$$
	\end{enumerate}
\end{thm}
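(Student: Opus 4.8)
The plan is to read off the whole computation from the structure of the group of cuspidal modular units already established in Theorem~\ref{thmModUnits}. Let $C$ be the set of cusps of $X_0^r(\fn)$, let $D=\bigoplus_{c\in C}\Z$, and let $D^0=\ker(\deg)$ be its degree-zero subgroup, where $\deg=\sum_{c\in C}\ord_c$. By definition $\cC^r(\fn)$ is the image of $D^0$ in the divisor class group, so $\cC^r(\fn)\cong D^0/(D^0\cap\mathrm{Prin})$. In the proof of Theorem~\ref{thmModUnits} the group $(\cO(\Omega^r)^\times)^{\G_0^r(\fn)}$ was identified with the group $\cF$ of nonzero rational functions on $X_0^r(\fn)$ whose divisors are supported on $C$; since $\deg(f)=0$ for every such $f$, the divisor map lands in $D^0$, and a principal divisor supported on cusps is exactly the divisor of an element of $\cF$, so $D^0\cap\mathrm{Prin}=\div(\cF)=\div(U)$ where $U:=(\cO(\Omega^r)^\times)^{\G_0^r(\fn)}/\Ci^\times$. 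Hence
\[
\cC^r(\fn)\;\cong\; D^0/\div(U).
\]

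\textbf{Part (1).} If $\fn$ is square-free with $s$ prime factors, then $\#C=2^s$ by Lemma~\ref{lemNumbercusps}, so $D^0$ is free of rank $2^s-1$. By Theorem~\ref{thmModUnits}(1), $U$ is free of rank $2^s-1$, and the map $\div\colon U\to D^0$ is injective (as recorded in that proof). An injection between free abelian groups of the same finite rank has image of finite index, so $\cC^r(\fn)=D^0/\div(U)$ is finite.

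\textbf{Part (2), reduction to an order computation.} When $\fn=\fp$ is prime we have $s=1$, so $\#C=2$ and $D^0=\Z\cdot\delta$ with $\delta=c_0-c_\infty$; thus $\cC^r(\fp)=\Z\delta/\div(U)$ is automatically cyclic, of order $[D^0:\div(U)]$. To compute this index I first produce a generator of the rank-one group $U$. Put $\vartheta:=\theta_\fp^{q-1}$. Since $\omega_\fp$ is $\F_q^\times$-valued, $\vartheta$ is $\G_0^r(\fp)$-invariant, and by Proposition~\ref{propCharacteroftheta} the order $o(\omega_\fp)=q-1$ makes $\vartheta$ the smallest invariant power of $\theta_\fp$; moreover $\vartheta^{q^\kappa-1}=\theta_\fp^{(q-1)(q^\kappa-1)}$ is a constant multiple of $\Theta_\fp$ by \eqref{eqTheta_fn_def}--\eqref{eq_def_theta_fp}. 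Combining Theorem~\ref{thmRootTheta} (so that every root of $\Theta_\fp$ in $\cO(\Omega^r)^\times$ is, up to constants, a power of $\theta_\fp$) with Proposition~\ref{propCharacteroftheta}, one checks that the largest $\G_0^r(\fp)$-invariant root of $\Theta_\fp$ has index exactly $q^\kappa-1$; since $U$ has rank one, $\vartheta$ cannot be a proper power in $U$ and hence generates it. Therefore $\div(U)=\Z\cdot\div(\vartheta)$, and writing $\div(\vartheta)=n\delta$ we obtain $|\cC^r(\fp)|=|n|=|\ord_{c}(\vartheta)|=|\ord_{c}(\Theta_\fp)|/(q^\kappa-1)$ for either cusp $c$. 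It remains to prove that $|\ord_{c}(\Theta_\fp)|=|\fp|^{r-1}-1$; since $q^\kappa-1=\gcd(|\fp|-1,q^r-1)$, this yields the stated order.

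\textbf{The order at the cusps, and the main obstacle.} By the quotient formula \eqref{eqTheta_fn_def}, $\Theta_\fp=(\text{const})\cdot F_\fp^{q^r-1}/\Delta^{|\fp|^{r-1}-1}$, so it suffices to determine the orders of $F_\fp$ and of $\Delta$ at one cusp. At the standard cusp $c_\infty$ I would use the $u$-expansion of Theorem~\ref{thm-u-expansion}: by Example~\ref{exampleE_u} each Eisenstein factor $E_{(0,\vec u')}$ with $\vec u'\neq0$ has nonzero boundary value $E_{\vec u'}(\boz')$ and hence order $0$, so $F_\fp$ is a unit at $c_\infty$ and the entire contribution comes from $\Delta$. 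One then reads off the leading $u$-term of $\Delta_r$ (from \cite{BB}, \cite{BBP3}) and converts this $u$-order into the genuine divisor order of $\Theta_\fp$ on the coarse space $X_0^r(\fp)$, obtaining $\ord_{c_\infty}(\Theta_\fp)=-(|\fp|^{r-1}-1)$; the order at the remaining cusp then follows for free from $\deg(\Theta_\fp)=0$, giving $\ord_{c_0}(\Theta_\fp)=|\fp|^{r-1}-1$. The main obstacle is precisely this conversion: one must pin down the width of the cusp (the action of the stabilizer of the cusp in $\G_0^r(\fp)$ on the parameter $u$) so that the $u$-expansion orders combine to exactly $\pm(|\fp|^{r-1}-1)$ rather than a spurious multiple of it. The classical $r=2$ case treated by Gekeler, where the resulting divisor order is $|\fp|-1$, serves as a consistency check for the normalization.
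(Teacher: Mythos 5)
Your overall strategy coincides with the paper's: part (1) and the reduction of part (2) to computing $[\cD^0:\div(U)]$, the identification of $\theta_\fp^{q-1}$ as the generator of $(\cO(\Omega^r)^\times)^{\G_0^r(\fp)}/\Ci^\times$ via Theorem \ref{thmRootTheta} and Proposition \ref{propCharacteroftheta}, and the reduction to $\lvert\ord_{[\infty]}(\Theta_\fp)\rvert=|\fp|^{r-1}-1$ are all exactly what the paper does. The factorization $\Theta_\fp=(\text{const})\cdot F_\fp^{q^r-1}/\Delta^{|\fp|^{r-1}-1}$ and the observation (via Example \ref{exampleE_u}) that the Eisenstein factors do not vanish at $[\infty]$ are also the paper's steps.

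The one genuine gap is the step you yourself flag as ``the main obstacle'': you never actually establish that $\ord_{[\infty]}(\Delta)=1$ as a divisorial order on $X_0^r(\fp)$, and the route you propose (reading off the leading $u$-term of $\Delta_r$ and then normalizing by the ``width'' of the cusp) is left unexecuted and is harder than necessary. The paper avoids any $u$-expansion or width computation for $\Delta$ entirely: it uses that $X_0^r(1)=\mathrm{Proj}(\Ci[g_1,\dots,g_r])$ is a weighted projective space in which the unique cusp is precisely the vanishing locus of $g_r=\Delta$, with order of vanishing $1$ (citing \cite[p.~79]{GekelerSatake}); it then observes that under the natural morphism $X_0^r(\fp)\to X_0^r(1)$ the cusp $[\infty]$ is \emph{unramified} over the cusp of $X_0^r(1)$, because the stabilizers of $(1,0,\dots,0)$ in $\G_0^r(1)/\G^r(\fp)$ and in $\G_0^r(\fp)/\G^r(\fp)$ coincide. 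Pulling back then gives $\ord_{[\infty]}(\Delta)=1$ directly, whence $\ord_{[\infty]}(\Theta_\fp)=-(|\fp|^{r-1}-1)$. To complete your argument you would need either to supply this unramifiedness argument or to carry out the width normalization you allude to; as written, the key numerical input of part (2) is asserted rather than proved.
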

\begin{proof} The fact that $\cC^r(\fn)$ is finite for any $\fn$ follows from \cite{Kapranov}. We give a different proof 
	for square-free $\fn$. 
	
	Let $C$ be the set of cusps of $X^r_0(\fn)$. 
	Let $\cD=\bigoplus_{c\in C}\Z$ and $\cD^0$ be the kernel of the augmentation map $\cD\to \Z$. 
	Let $\cF$ be the subgroup of nonzero rational function on $X^r_0(\fn)$ whose divisors 
	are supported on the cusps. As in the proof of Theorem \ref{thmModUnits}, one can identify $\cF$ with $(\cO(\Omega^r)^\times)^{\G_0^r(\fn)}$, and we have the following
	exact sequence 
	$$
	1\To \C_\infty^\times \To (\cO(\Omega^r)^\times)^{\G_0^r(\fn)} \xrightarrow{\div} \cD^0\To \cC^r(\fn)\To 1. 
	$$
	By Theorem \ref{thmModUnits}, when $\fn$ is square-free, 
	the rank of $(\cO(\Omega^r)^\times)^{\G_0^r(\fn)}/\Ci^\times$ is equal to the rank of $\cD^0$. Hence $\cC^r(\fn)$ 
	is a finite group, which proves (1). 
	
	Now assume $\fn=\fp$ is prime. By Theorem \ref{thmRootTheta}, the 
	largest integer $m$ such that there exists an $m$-th root of $\Theta_\fp$ in $\cO(\Omega^r)^\times$ 
	is $(q-1)\cdot \gcd(|\fp|-1, q^r-1)$. By definition, $\theta_\fp$ is such a root; see \eqref{eq_def_theta_fp}. 
	The character $\omega_\fp$ of $\theta_\fp$ has exact order $(q-1)$; see Proposition \ref{propCharacteroftheta}. Hence 
	$\theta_\fp^{q-1}$, but no smaller power of $\theta_\fp$, is invariant under $\G^r_0(\fp)$. 
	By Theorem \ref{thmModUnits}, $(\cO(\Omega^r)^\times)^{\G_0^r(\fp)}/\Ci^\times$ is a free abelian group of rank $1$. 
	A generator of this group is a root of $\Theta_\fp$ invariant under $\G_0^r(\fp)$. Hence 
	$\theta_\fp^{q-1}$ is a generator. 
	
	By Lemma \ref{lemNumbercusps}, $X_0^r(\fp)$ has two cusps. 
	Denote the cusp corresponding to the orbit of $(1, 0, \dots, 0) \in (A/\fp)^r_{\mathrm{prim}}/\F_q^\times$ 
	under the action of $\overline{\G^r_0(\fp)}$ by $[\infty]$. By the previous paragraph, to prove (2), it is enough to 
	show that $\ord_{[\infty]}(\Theta_\fp)=1-|\fp|^{r-1}$. 
	The order of vanishing of $\Delta$ at the unique cusp of $X^r_0(1)$ is $1$; see \cite[p.\ 79]{GekelerSatake}. 
	(In fact, $X^r_0(1)$ is a weighted projective space $\mathrm{Proj}(\Ci[g_1, \dots, g_r=\Delta])$, and   
	the cusp is the vanishing locus of $g_r$.) Under the 
	natural morphism $X^r_0(\fp)\to X^r_0(1)$ the cusps of $X^r_0(\fp)$ map to the unique cusp of $X^r_0(1)$. 
	The cusp $[\infty]$ of $X^r_0(\fp)$ is unramified over the unique cusp of $X^r_0(1)$ 
	since the stabilizers of $(1,0,\cdots,0)$ in $\G^r_0(1)/\G^r(\fp)$ and $\G^r_0(\fp)/\G^r(\fp)$ are the same. 
	This implies that the order of vanishing of $\Delta$ at $[\infty]$ is also $1$. 
	On the other hand, by \eqref{eqG_fn_def} and Example \ref{exampleE_u}, the 
	numerator of $\Theta_\fp$ in \eqref{eqTheta_fn_def} is a product of Eisenstein series which do not vanish at $[\infty]$. 
	Hence $\Theta_\fp$ has a pole at $[\infty]$ of order $|\fp|^{r-1}-1$, as was required to show. 
\end{proof}

\begin{rem}
If $r\geq 3$, then the Gekeler--van der Put homomorphism  
	$$(\cO(\Omega^r)^\times)^{\G_0^r(\fn)}\overset{\cP}{\To} \Har^1(\cB^r, \Z)^{\G_0^r(\fn)}$$ 
	has finite cokernel by Theorem \ref{thmKazdanT}. From the proof of Theorem \ref{thmCDG} 
	it is easy to see that $\cP$ is generally not surjective. Indeed, $\cP(\theta_\fp)\in \Har^1(\cB^r, \Z)^{\G_0^r(\fp)}$,  
	since the character $\omega_\fp$ of $\theta_\fp$ disappears after applying $\cP$. 
	On the other hand, $\cP((\cO(\Omega^r)^\times)^{\G_0^r(\fp)})$ is generated by $\cP(\theta_\fp^{q-1})=(q-1)\cP(\theta_\fp)$. 
	In fact, we claim that 
	$$0\To \Ci^\times \To (\cO(\Omega^r)^\times)^{\G_0^r(\fp)}\overset{\cP}{\To} \Har^1(\cB^r, \Z)^{\G_0^r(\fp)}\To \Z/(q-1)\Z\To 0$$
	is exact, so $\Har^1(\cB^r, \Z)^{\G_0^r(\fp)}$ is generated by $\cP(\theta_\fp)$. To prove this claim, observe that 
	$\cP(\theta_\fp)$ is an integer multiple of a generator of $\Har^1(\cB^r, \Z)^{\G_0^r(\fp)}\cong \Z$. Thus, to show that 
	$\cP(\theta_\fp)$ itself is a generator, it is enough to show that the greatest common divisor of its values is $1$. 
	More generally, we claim that the gcd of the values of $\cP(\Theta_\fn)$ is $(q-1)\cdot \gcd(|\fn|-1, q^r-1)$. 
	To see this, the computations in Corollary \ref{corRootTheta} indicates that
	the gcd in question divides $(q-1)\cdot \gcd(|\fn|-1, q^r-1)$.
	On the other hand, we have
	$$\cP(\Theta_\fn) = (q-1)\cdot \gcd(|\fn|-1, q^r-1) \cdot \cP(\theta_\fn).$$
	Therefore the claim holds.
	 
\end{rem}

\subsection*{Acknowledgements} Part of this work was carried out while the first 
author was visiting the National Center for Theoretical Sciences in Hsinchu. 
He thanks the institute for its hospitality and excellent working conditions.


\renewcommand{\bibliofont}

\bibliographystyle{acm}
\bibliography{Bibliography.bib}

\end{document}